\newcommand{\ZZ}{\mathbb{Z}}
\newcommand{\NN}{\mathbb{N}}
\newcommand{\QQ}{\mathbb{Q}}
\newcommand{\FF}{\mathbb{F}}
\newcommand{\EE}{\mathbb{E}}
\DeclareMathOperator{\Hom}{Hom}
\newcommand{\catC}{\mathcal{C}}
\newcommand{\op}{\text{op}}
\newcommand{\cond}[1]{\underline{#1}}
\newcommand{\condHom}{\cond{\Hom}}
\newcommand{\Space}{\mathcal{S}}
\newcommand{\ProVect}{\operatorname{Pro} (\Vect_k^\omega)}
\newcommand{\ProVectsigma}{\ProVect_{< \sigma}}
\newcommand{\LSym}{\mathbb{L}\Sym}
\renewcommand{\nu}{\operatorname{nu}}
\newcommand{\colim@}[2]{%
  \vtop{\m@th\ialign{##\cr
    \hfil$#1\operator@font colim$\hfil\cr
    \noalign{\nointerlineskip\kern-\ex@}\cr}}%
}
\newcommand{\colim}{%
  \mathop{\mathpalette\colim@{\scriptscriptstyle}}\nmlimits@
}
\DeclareMathOperator{\Solid}{Solid}
\DeclareMathOperator{\RHom}{RHom}
\DeclareMathOperator{\ED}{ED}
\DeclareMathOperator{\Fun}{Fun}
\DeclareMathOperator{\Sp}{Sp}
\DeclareMathOperator{\Map}{Map}
\DeclareMathOperator{\Ind}{Ind}
\DeclareMathOperator{\Cond}{Cond}
\DeclareMathOperator{\Lie}{Lie}
\DeclareMathOperator{\im}{Im}
\DeclareMathOperator{\CAlg}{CAlg}
\DeclareMathOperator{\Poly}{Poly}
\DeclareMathOperator{\cofib}{cofib}
\DeclareMathOperator{\fib}{fib}
\DeclareMathOperator{\End}{End}
\DeclareMathOperator{\coeq}{coeq}
\DeclareMathOperator{\Set}{Set}
\DeclareMathOperator{\coker}{coker}
\DeclareMathOperator{\Vect}{Vect}
\DeclareMathOperator{\Pro}{Pro}
\DeclareMathOperator{\pro}{pro}
\DeclareMathOperator{\Gr}{Gr}
\DeclareMathOperator{\Tot}{Tot}
\DeclareMathOperator{\forget}{forget}
\DeclareMathOperator{\ft}{ft}
\renewcommand{\Bar}{\operatorname{Bar}}
\DeclareMathOperator{\id}{id}
\DeclareMathOperator{\Tor}{Tor}
\DeclareMathOperator{\Sym}{Sym}
\DeclareMathOperator{\art}{art}
\DeclareMathOperator{\LMod}{LMod}
\DeclareMathOperator{\ProFin}{ProFin}
\DeclareMathOperator{\Shv}{Shv}
\DeclareMathOperator{\Mod}{Mod}
\DeclareMathOperator{\Der}{Der}
\DeclareMathOperator{\aperf}{aperf}
\DeclareMathOperator{\cN}{cN}
\DeclareMathOperator{\perf}{perf}
\DeclareMathOperator{\ds}{ds}
\DeclareMathOperator{\Alg}{Alg}
\DeclareMathOperator{\Comm}{Comm}
\DeclareMathOperator{\cn}{cn}
\DeclareMathOperator{\gr}{gr}
\DeclareMathOperator{\Moduli}{Moduli}
\DeclareMathOperator{\hyper}{hyp}
\renewcommand{\epsilon}{\varepsilon}
\newtheorem{theorem}{Theorem}
\newtheorem{lemma}[theorem]{Lemma}
\newtheorem{prop}[theorem]{Proposition}
\newtheorem{cor}[theorem]{Corollary}
\theoremstyle{definition}
\newtheorem{defi}[theorem]{Definition}
\newtheorem{rem}[theorem]{Remark}
\newtheorem{example}[theorem]{Example}
\newtheorem{construction}[theorem]{Construction}
\numberwithin{equation}{section}
\numberwithin{lemma}{section}
\numberwithin{theorem}{section}
\numberwithin{cor}{section}
\numberwithin{defi}{section}
\numberwithin{rem}{section}
\numberwithin{example}{section}
\numberwithin{prop}{section}
\numberwithin{construction}{section}
\numberwithin{notation}{section}
\numberwithin{conjecture}{section}
\numberwithin{problem}{section}
\begin{document}

\title{Ultrasolid Homotopical Algebra}
\author{Sofía Marlasca Aparicio}
\date{}

\maketitle

\begin{abstract}
	Solid modules over $\QQ$ or $\FF_p$, introduced by Clausen and Scholze, are a well-behaved variant of complete topological vector spaces that forms a symmetric monoidal Grothendieck abelian category. For a discrete field $k$, we construct the category of ultrasolid $k$-modules, which specialises to solid modules over $\QQ$ or $\FF_p$. In this setting, we show some commutative algebra results like an ultrasolid variant of Nakayama's lemma. We also explore higher algebra in the form of animated and $\EE_\infty$ ultrasolid $k$-algebras, and their deformation theory. We focus on the subcategory of complete profinite $k$-algebras, which we prove is contravariantly equivalent to equal characteristic formal moduli problems with coconnective tangent complex.
\end{abstract}

\tableofcontents

\section{Introduction}

Condensed mathematics is a new framework recently introduced by Clausen and Scholze (see \cite{condensed} and \cite{analytic}) that makes topology and algebra interact in a more well-behaved manner. For example, in the case of condensed modules (\cite{condensed}, Definition 1.2), this provides a new generalisation of topological modules that forms a complete and cocomplete abelian category.

\begin{example}
	If $R$ is a commutative ring and $S$ is a compactly generated Hausdorff space, we can form the condensed $R$-module $R[S]$, which is the free condensed $R$-module on the space $S$.

	If $S = \varprojlim S_i$ is a profinite space, we can form this free $R$-module in a way that takes into account the profinite structure of $S$. Define
	\begin{align*}
		R[S]^\blacksquare := \varprojlim R[S_i]
	\end{align*}
	which should be thought of as the profinite completion of $R[S]$. Note that this is independent of the inverse limit chosen to represent $S$.
\end{example}

Condensed modules are too general for some constructions, so it is convenient to move to a suitable subcategory of "complete" objects.

\begin{defi}
	A condensed $R$-module $M$ is \emph{solid} if for every profinite space $S$ the map $R[S] \to R[S]^\blacksquare$ induces an isomorphism
	\begin{align*}
		\Hom (R[S]^\blacksquare , M) \xrightarrow{\simeq} \Hom (R[S], M)
	\end{align*}
\end{defi}

\begin{rem}
	One can define other "completions" of $R[S]$, which gives rise to the more general notion of analytic rings and solid modules over an analytic ring (cf.\ \cite{condensed}, \S VII).

	A \emph{preanalytic ring structure on $R$} is a functor from extremally disconnected spaces to condensed $R$-modules, $S \mapsto \widehat{R[S]}$, taking finite disjoint unions to products, and a natural transformation $R[S] \to \widehat{R[S]}$, so we can think of a choice of $\widehat{R[S]}$ as a choice of completion of $R[S]$. In the cases $R = \ZZ, \FF_p$ or $ \QQ$, the preanalytic ring defined above forms an analytic ring, which means that many desirable properties are satisfied and the theory of solid modules is very well-behaved: they form a reflexive subcategory of condensed $R$-modules, it is compactly generated by projectives of the form $R[S]^\blacksquare$  for $S$ extremally disconnected, and it is stable under all limits and colimits computed in condensed $R$-modules.

	However, for general $R$, the preanalytic ring defined above is not necessarily an analytic ring, making the desired category of solid modules not as well-behaved, since the category of solid modules is not necessarily closed under colimits. An example of this is when $R$ is a field that is not the localisation of a finitely generated $\ZZ$-algebra (\cite{goodtheorysolid}).

	For our purposes, we will now fix $R = k$ a field. This is to make some aspects of the theory more convenient. For example, for $R = \ZZ$, it was proved by Efimov that projective solid abelian groups need not be flat, which creates problems for some homological constructions. Additionally, working with a field gives many convenient properties of complexes with profinite homology (\cref{prop-complexesprohomology}), which we use, for example, to embed pro-Artinian $k$-algebras into the category of ultrasolid $k$-algebras (\cref{theorem-proartinian}).

	Our work builds a category that captures all of the desirable properties of the theory of solid $k$-modules, even when $k$ is not the localisation of a finitely generated $\ZZ$-algebra. That is, we can form an abelian category with the expected compact projective generators. 
\end{rem}

\begin{defi}
	We write $\Pro (\Vect_k ^\omega)$ for the pro-completion (\cite{categories}, Definition 6.1.1) of finite-dimensional vector spaces. This is the category obtained by formally adding cofiltered limits to $\Vect_k^\omega$. By duality, this is equivalent to $\Vect_k ^\op$, so its objects are of the form $\prod_I k$ for some set $I$ and $\Hom (\prod_I k, k) = \bigoplus_I k$. The product $\prod_I k$ carries a profinite structure by writing it as $\varprojlim _{J \subset I} \bigoplus_J k$, where the limit is taken over all finite subsets of $I$.

	The category $\Solid_k ^\heartsuit$ of \emph{ultrasolid $k$-modules} is the ($1$-categorical) sifted cocompletion (\cite{sifted}, \S 2) of $\ProVect$. That is, $\Solid_k^\heartsuit$ is the category obtained by freely adding filtered colimits and reflexive coequalizers to $\Pro(\Vect_k^\omega)$. This can be identified with finite-product-preserving functors $(\ProVect)^{\op} \to \Vect_k$ that satisfy a left Kan extension condition (\cref{defi-ultrasolid}).
\end{defi}

\begin{rem}
	We will later see that every object in $\ProVect$ being projective implies that $\Solid_k^\heartsuit$ is the $\Ind$-completion of $\ProVect$ (\cref{lemma-ultrasolidind}).
\end{rem}

Our goal will be to develop this theory of ultrasolid modules and show settings where it can be useful.

\begin{example}
	The category of ultrasolid modules can be endowed with a symmetric monoidal structure. This tensor product should be thought of as some kind of completed tensor product. This is exhibited, for example, by the fact that
	\begin{align*}
		k[[x]] \otimes k[[y]] \cong k[[x,y]]
	\end{align*}
	where the tensor product is taken in the category of ultrasolid modules (\cref{section-tensorproduct}). This is not the case if we work with $k$-algebras, as the power series $\sum_i x^i y^i$ belongs to the right-hand side but not the left-hand side.
\end{example}

This tensor product leads to the notion of ultrasolid $k$-algebras and their derived variants: $\EE_\infty$ ultrasolid $k$-algebras and animated ultrasolid $k$-algebras. We can then talk about categories of modules and perform several constructions from deformation theory. 

We will write $\CAlg_k^\blacksquare$ for the $\infty$-category of $\EE_\infty$ ultrasolid $k$-algebras; that is, chain complexes of ultrasolid modules equipped with a multiplication that is associative and commutative up to coherent homotopy.

Additionally, let $\CAlg_k^\blacktriangle$ be the $\infty$-category of animated ultrasolid $k$-algebras. The more classical setup of animated $k$-algebras goes back to the work of Quillen on simplicial commutative rings (\cite{quillen}), and in this paper we follow a more modern treatment with $\infty$-categories, akin to that in (\cite{SAG}, \S 25). Animated ultrasolid $k$-algebras are the $\infty$-categorical sifted cocompletion (\cite{HTT}, \S 5.5.8) of the category spanned by the free ultrasolid $k$-algebras on profinite vector spaces. 

We have many results that mirror the classical theory:

\begin{theorem}
	Let $k$ be a field.
	\begin{enumerate}
		\item There is a conservative, monadic and comonadic functor $\CAlg_k^\blacktriangle \to \CAlg_k^\blacksquare$, $A \mapsto A^\circ$.
		\item Given an $n$-connective map $A \to B$ in $\CAlg_k^\blacktriangle$, there is an $(n+2)$-connective map $L_{B^{\circ}/A^{\circ}} \to L_{B/A}$ between the topological and the algebraic cotangent complex.
		\item Let $A \in \CAlg_k^\blacktriangle$. Then, every step in the Postnikov tower 
			\begin{align*}
				\dots \to \tau_{\leq 2} A \to \tau_{\leq 1} A \to \tau_{\leq 0} A
			\end{align*}
		is a square-zero extension. The same holds for the spectral variant.
	\end{enumerate}
\end{theorem}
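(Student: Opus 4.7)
The plan is to address the three parts in turn, leveraging that $\CAlg_k^\blacktriangle$ is the $\infty$-categorical sifted cocompletion of the full subcategory of free animated ultrasolid $k$-algebras $\LSym(V)$ for $V \in \ProVect$.

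For part (1), I would construct $(-)^\circ$ via this universal property. Each $\LSym(V)$ admits a canonical $\EE_\infty$ refinement, so the inclusion into $\CAlg_k^\blacksquare$ extends uniquely to a sifted-colimit-preserving functor $(-)^\circ \colon \CAlg_k^\blacktriangle \to \CAlg_k^\blacksquare$. The forgetful functors from both categories down to chain complexes of ultrasolid modules are conservative and compatible with $(-)^\circ$, hence $(-)^\circ$ is conservative. Limits in both $\CAlg$'s are computed at the level of underlying ultrasolid modules, so $(-)^\circ$ preserves all limits; and it preserves all colimits, as it preserves sifted colimits by construction and also binary coproducts (which are tensor products of underlying ultrasolid modules on both sides). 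Having both adjoints and being conservative, Barr--Beck--Lurie delivers both monadicity and comonadicity.

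For part (2), my approach is to use part (3) to decompose the $n$-connective map $A \to B$ into iterated square-zero extensions whose fibers are concentrated in degrees $\geq n$, and then to bound the comparison of cotangent complexes extension by extension. At each step, the square-zero extension formula for the cotangent complex reduces the analysis to the comparison map on the fiber $M$ itself together with contributions coming from $\LSym^i$ versus $\Sym^i_{\EE_\infty}$ for $i \geq 2$. The key local input is that for an $n$-connective ultrasolid module $M$ (with $n \geq 0$), the natural map $\Sym^i_{\EE_\infty}(M) \to \LSym^i(M)$ has $(n+2)$-connective fiber for all $i \geq 2$: in characteristic zero the two functors agree, and in positive characteristic the discrepancy is concentrated in sufficiently high degrees coming from $\Sigma_i$-homotopy orbits versus strict symmetric powers. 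The connectivity bound is preserved under the iterated extension because the cotangent complex in both settings is compatible with base change and geometric realizations.

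For part (3), I would apply the standard deformation-theoretic recognition criterion: a map $\widetilde B \to B$ in $\CAlg_k^\blacktriangle$ is a square-zero extension by a $B$-module $M$ precisely when it is classified by a derivation $d \colon L_{B/k} \to M[1]$, equivalent to a $B$-algebra section of $B \oplus M[1] \to B$. To recognize $\tau_{\leq n}A \to \tau_{\leq n-1}A$ as such an extension by $\pi_n(A)[n]$, I would compute the relative cotangent complex $L_{\tau_{\leq n-1}A/\tau_{\leq n}A}$ and verify it is equivalent to $\pi_n(A)[n+1]$, yielding the desired classifying derivation. This identification is a consequence of the fiber sequence $\pi_n(A)[n] \to \tau_{\leq n}A \to \tau_{\leq n-1}A$ together with the usual connectivity properties of the cotangent complex for connective animated algebras. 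The argument transfers verbatim to $\CAlg_k^\blacksquare$ for the spectral variant.

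The main obstacle I anticipate is the sharp $(n+2)$-connectivity estimate in part (2). Establishing it uniformly across all weights $i \geq 2$ requires controlling the natural transformation $\Sym^i_{\EE_\infty} \to \LSym^i$ in arbitrary characteristic, where divided-power and Tate-type corrections appear; this may demand either an explicit filtration-theoretic calculation or an abstract connectivity argument comparing the two monads on $n$-connective inputs. The induction via iterated square-zero extensions also needs care, since preservation of connectivity through sifted colimits and base change in both the $\EE_\infty$ and animated settings must be tracked explicitly to avoid losing the $n+2$ bound.
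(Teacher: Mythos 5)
Part (1) of your proposal is essentially the paper's argument (define $(-)^\circ$ by sifted-colimit extension from free algebras, check it preserves limits and colimits via the underlying-module functor, apply Barr--Beck--Lurie) and is fine. The serious problem is part (2). Your key local input --- that for an $n$-connective $M$ the fibre of $\Sym^i(M)\to\LSym^i(M)$ is $(n+2)$-connective for all $i\geq 2$ --- is false in positive characteristic. Take $k=\FF_2$, $M=k[0]$, $i=2$: then $\Sym^2(k[0])\simeq C_*(B\Sigma_2;k)$ has $\pi_1\cong k$ (the Dyer--Lashof class $Q^1x$), while $\LSym^2(k[0])=k[0]$, so the fibre has nonvanishing $\pi_1$ and is not $2$-connective as your claim requires for $n=0$; similarly $\Sym^2(k[1])$ has $\pi_2\neq 0$ while $\LSym^2(k[1])\simeq 0$, so the $n=1$ case fails as well. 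In addition, an arbitrary $n$-connective map $A\to B$ does not factor as a composite of square-zero extensions --- part (3) only decomposes the Postnikov tower of a single algebra --- so the proposed induction has no decomposition to start from. The paper's route is different: it builds $B$ from $A$ by attaching cells, i.e.\ pushouts $A_{i+1}=A_i\otimes_{\LSym^*(M)}k$ with $M=\fib(A_i\to B)$ of increasing connectivity, and the only estimate needed is the connectivity of $\LSym^m$ on highly connective inputs (\cref{lemma-freedeltaalgebraslims}); this produces the $(n+2)$-connective map $\cofib(f)\otimes_A B\to L_{B/A}$ of \cref{prop-cofibconnective}. One then factors this map through $L_{B^\circ/A^\circ}$, invokes the $(2n+2)$-connectivity of $\cofib(f)\otimes_A B\to L_{B^\circ/A^\circ}$ from the spectral theory, and concludes by two-out-of-three. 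No pointwise comparison of $\Sym^i$ with $\LSym^i$ is ever made.

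Part (3) also has a gap, and your overall plan is circular. In the animated ultrasolid setting there is no off-the-shelf recognition criterion for square-zero extensions: Lurie's criterion lives in the spectral world, and a direct animated version would require identifying $\Sp(\CAlg^{\blacktriangle}_{k/ /A})$, which the paper explicitly does not do. The paper instead applies the spectral criterion to $A^\circ\to B^\circ$ and then uses the $(n+2)$-connectivity of $L_{B^\circ/A^\circ}\to L_{B/A}$ --- i.e.\ part (2) --- to identify the relevant mapping spaces $\Map(L_{B/A},\Sigma\fib(f))\simeq\Map(L_{B^\circ/A^\circ},\Sigma\fib(f))$ and lift the classifying derivation to $\CAlg_k^\blacktriangle$, concluding via conservativity and limit-preservation of $(-)^\circ$. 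So in the paper (3) depends on (2), whereas you propose to deduce (2) from (3). Finally, your identification $L_{\tau_{\leq n-1}A/\tau_{\leq n}A}\simeq\pi_n(A)[n+1]$ is false in general: only its $(n+1)$-truncation has this form, and the full relative cotangent complex of a square-zero extension is typically unbounded. The truncated statement suffices to produce a derivation, but producing a derivation is not the content of being a square-zero extension --- showing the resulting square is a pullback is, and that is precisely where the spectral input and the comparison of part (2) enter.
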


We will consider \emph{complete profinite animated ultrasolid $k$-algebras}, which are a generalisation of complete local Noetherian $k$-algebras. Remember that a complete local Noetherian animated $k$-algebra is an augmented animated $k$-algebra $R$ such that $\pi_0 (R)$ is complete, local and Noetherian, and $\pi_i (R)$ is a finitely generated $\pi_0 (R)$-module for all $i$.

\begin{defi}
	A \emph{complete profinite animated ultrasolid $k$-algebra} is an augmented object $R \in \CAlg_{k/ /k}^\blacktriangle$ such that $\pi_0 (R)$ is complete with respect to its augmentation ideal and $\pi_i (R) \in \Solid_k^\heartsuit$ is a profinite vector space for all $i$.
\end{defi}

\begin{rem}
	By $\pi_0 (R)$ being complete we mean that if $\mathfrak{m}$ is its augmentation ideal, then $\pi_0 (R) \cong \varprojlim \pi_0 (R)/\mathfrak{m}^n$. We will see later that we can take this inverse limit either in ultrasolid modules or chain complexes and the result is the same due to inverse limits being exact in profinite vector spaces (\cref{prop-complexesprohomology}).
\end{rem}

\begin{example}
	An example of a complete profinite ultrasolid animated $k$-algebra is $\widehat{\LSym}^* (\prod_\NN k)$. This consists of all possible power series on variables $x_1,x_2, x_3, \dots $ (in particular, this ring includes power series of the form $\sum_n x_n$).

	This is not complete local Noetherian. Indeed, we have the infinite chain of ideals
	\begin{align*}
		(x_1) \subsetneq (x_1, x_2) \subsetneq (x_1, x_2, x_3) \subsetneq \dots
	\end{align*}
\end{example}

These complete profinite objects satisfy many nice properties:

\begin{prop}
	Let $\widehat{\CAlg}_{k/ /k}^\blacktriangle$ be the $\infty$-category of complete profinite animated ultrasolid $k$-algebras.
	\begin{enumerate}
		\item There is a fully faithful embedding from the $\infty$-category of complete local Noetherian animated $k$-algebras into $\widehat{\CAlg}_{k/ /k}^\blacktriangle$. Its essential image are those objects $R$ such that $\cot (R) := k \otimes_R L_{R/k}$  satisfies that $\pi_i (\cot(R))$ is a finite-dimensional $k$-vector space for all $i$.
		\item The $\infty$-category $\widehat{\CAlg}_{k/ /k}^\blacktriangle$ is equivalent to the pro-completion of augmented Artinian animated $k$-algebras.
		\item Let $A \to B$ be a map in $\widehat{\CAlg}_{k/ /k}^\blacktriangle$. If $L_{B/A}$ vanishes, then $A \to B$ is an equivalence.
	\end{enumerate}
\end{prop}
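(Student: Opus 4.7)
The plan is to handle the three parts in sequence: (1) by equipping a classical complete local Noetherian animated $k$-algebra with its natural profinite topology and recognising the essential image via the cotangent complex, (2) by exhibiting mutually inverse functors based on inverse limits and Artinian quotients, and (3) by an ultrasolid Nakayama argument followed by Postnikov-tower induction. The unifying tool is that every object $R \in \widehat{\CAlg}_{k//k}^\blacktriangle$ is reconstructed from the presentation $\pi_0(R) \cong \varprojlim \pi_0(R)/\mathfrak{m}^n$ of the bottom layer together with its Postnikov tower, whose stages are square-zero extensions by the theorem already stated.

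For (1), I would send a classical complete local Noetherian $R$ to itself viewed ultrasolidly, equipping each $\pi_i(R)$ with the profinite topology $\varprojlim \pi_i(R)/\mathfrak{m}^n \pi_i(R)$; this is well-defined because $\pi_i(R)$ is finitely generated over $\pi_0(R)$. Fully faithfulness reduces to computing mapping spaces on both sides as $\varprojlim \Map(R/\mathfrak{m}^n, S)$. For the essential image: if $R$ is classical complete local Noetherian, finite generation of each $\pi_i(R)$ over a Noetherian $\pi_0(R)$, together with the base case $\pi_0(\cot(R)) = \mathfrak{m}/\mathfrak{m}^2$, forces each $\pi_i(\cot(R))$ to be finite-dimensional. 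Conversely, given $R$ with $\pi_i(\cot(R))$ finite-dimensional, ultrasolid Nakayama applied to $\pi_0(\cot(R)) = \mathfrak{m}/\mathfrak{m}^2$ produces finite generators of $\mathfrak{m}$ and so makes $\pi_0(R)$ Noetherian; I would then induct up the Postnikov tower, using that each square-zero extension is classified by the cotangent complex, so that finiteness of $\pi_{n+1}(\cot(R))$ together with Nakayama forces $\pi_{n+1}(R)$ to be finitely generated over $\pi_0(R)$.

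For (2), the functor $\Pro(\CAlg_{k//k}^{\art}) \to \widehat{\CAlg}_{k//k}^\blacktriangle$ is ``take the inverse limit'', which preserves complete profiniteness since an inverse limit of Artinian (hence finite-dimensional in each $\pi_i$) algebras is automatically profinite. The inverse functor assigns to $R$ the cofiltered system of its Artinian augmented quotients $R/(I+\mathfrak{m}^{n+1})$ indexed by $n \geq 0$ and finite-codimensional subspaces $I \subset \mathfrak{m}$. Mutual inverseness amounts to showing that this system is cofinal in every pro-presentation of $R$, which follows from the presentation of $\pi_0(R)$ as a limit of its Artinian quotients combined with the Postnikov decomposition on the higher part.

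For (3), I would apply the transitivity cofiber sequence $B \otimes_A L_{A/k} \to L_{B/k} \to L_{B/A}$: vanishing of $L_{B/A}$ gives $B \otimes_A L_{A/k} \simeq L_{B/k}$, and base changing along $B \to k$ yields $\cot(A) \simeq \cot(B)$. In particular $\mathfrak{m}_A/\mathfrak{m}_A^2 \cong \mathfrak{m}_B/\mathfrak{m}_B^2$, and ultrasolid Nakayama applied to the profinite $\pi_0(A)$-module $\fib(\pi_0(A) \to \pi_0(B))$ promotes this to $\pi_0(A) \cong \pi_0(B)$. For higher homotopies I would induct along the Postnikov tower: once $\tau_{\leq n} A \simeq \tau_{\leq n} B$, the next stages are square-zero extensions classified by data in the cotangent complex, and $L_{B/A} = 0$ ensures the classifying cocycles match. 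The main obstacle is running Nakayama on higher homotopy groups without classical Noetherian hypotheses; I expect the cleanest remedy is to pass through the pro-Artinian equivalence from (2) and reduce levelwise to the classical fact that a map of augmented Artinian $k$-algebras with vanishing relative cotangent complex is an equivalence.
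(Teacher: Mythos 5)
There are genuine gaps in parts (2) and (3), and these are exactly the places where the paper has to work hardest. For part (2), exhibiting the two functors is not the issue; the issue is proving that the inverse-limit functor $\Pro(\CAlg_{k/ /k}^{\art, \Delta}) \to \widehat{\CAlg}_{k/ /k}^\blacktriangle$ is \emph{fully faithful}, which by (\cite{HTT}, Proposition 5.3.5.11) amounts to showing Artinian algebras are cocompact: $\Map(\varprojlim R_i, A) \simeq \varinjlim \Map(R_i, A)$ for $A$ Artinian. Your proposal only addresses generation under inverse limits. Cocompactness is where essentially all of the machinery goes: one reduces to $A = k \oplus V[n]$, rewrites the mapping space as $\Map(\cot(-), V[n])$, and then needs that $\cot$ commutes with inverse limits of complete profinite algebras (\cref{prop-cotinverselimits}); that in turn rests on the monadicity of $\widehat{\LSym}^*_{\nu}$ over $\Solid_{k, \geq 0}^{\aperf}$ and on the computation that $\Sym^*(V) \to \widehat{\Sym}^*(V)$ has trivial cotangent fibre (\cref{theorem-cotangentcomplexcompletion}), which is proved via Dyer--Lashof operations, a flatness-type identity for the homotopy of $\widehat{\Sym}^*$, and Koszul resolutions. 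None of this appears in your sketch. (Also, ``the Artinian augmented quotients $R/(I+\mathfrak{m}^{n+1})$'' only sees $\pi_0$; for an animated ring one must additionally induct up the Postnikov tower and decompose each profinite $\pi_{n+1}(R)[n+2]$ into finite-dimensional pieces inside a square-zero extension diagram, as in the proof of \cref{theorem-proartinian}.) Part (1) has a similar status: the embedding is not actually constructed as a functor of $\infty$-categories by decreeing topologies on homotopy groups, and the paper instead obtains it by composing the equivalence $\widehat{\catC}^{\op} \simeq \Moduli_{k,\leq 0}$ of \cref{theorem-partitionmonadicadj} with the known embedding of complete local Noetherian algebras into formal moduli problems.

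For part (3), the step ``ultrasolid Nakayama applied to $\fib(\pi_0(A) \to \pi_0(B))$ promotes $\mathfrak{m}_A/\mathfrak{m}_A^2 \cong \mathfrak{m}_B/\mathfrak{m}_B^2$ to $\pi_0(A) \cong \pi_0(B)$'' does not go through as stated. Nakayama does give surjectivity of $\pi_0(A) \to \pi_0(B)$ from surjectivity on cotangent spaces, but to kill the kernel $I$ you need $I = \mathfrak{m}I$, and an isomorphism on $\mathfrak{m}/\mathfrak{m}^2$ gives no such thing. The missing input is that $\pi_1 L_{B/A} = 0$ forces $I = I^2$ (\cref{lemma-II2}), proved by exhibiting $\pi_0(A)/I^2 \to \pi_0(B)$ as a square-zero extension and producing a section; only then does $I = I^2 \subseteq \mathfrak{m}I \subseteq I$ put you in a position to apply ultrasolid Nakayama, as in \cref{prop-cotcompleteiso}. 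Your proposed fallback of ``reducing levelwise to Artinian quotients'' does not repair this: vanishing of $L_{B/A}$ is not inherited by the Artinian stages of pro-presentations of $A$ and $B$, so there is nothing levelwise to which the classical fact could be applied. Finally, once $\pi_0(A) \xrightarrow{\simeq} \pi_0(B)$ is established no Postnikov induction with ``matching cocycles'' is needed: \cref{lemma-cotangentcomplexiso} (and its animated analogue) already says that a map inducing an isomorphism on $\pi_0$ with vanishing relative cotangent complex is $n$-connective for every $n$, hence an equivalence.
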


Remember that an (equal characteristic) \emph{formal moduli problem} is a functor $\catC_{\art} \to \Space$ that preserves certain pullbacks and satisfies $X(k) \simeq *$, where $\catC_{\art}$ is the $\infty$-category of augmented Artinian animated $k$-algebras and $\Space$ is the $\infty$-category of spaces (c.f. DAGX, Definition 1.1.14). Notice that the objects in $\catC_{\art}$ don't carry a condensed or ultrasolid structure.

We call a formal moduli problem \emph{corepresentable} if it is equivalent to $\Map(R,-)$ for some $R \in \CAlg_{k/ /k}^\blacktriangle$. We obtain the following extension of the Lurie-Schlessinger criterion in equal characteristic.

\begin{theorem}
	A formal moduli problem is corepresentable by a complete profinite ultrasolid $k$-algebra if and only if its tangent fibre is coconnective.
\end{theorem}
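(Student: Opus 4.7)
The plan is to prove each direction separately.

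For the easier direction, suppose $F \simeq \Map(R, -)$ for some $R \in \widehat{\CAlg}_{k//k}^\blacktriangle$. One first verifies that $F$ is an FMP; preservation of the defining pullbacks follows from generalities on corepresentable functors on $\catC_{\art}$. For the tangent fibre, the adjunction between augmented derivations and the cotangent complex gives
\[ \pi_n T_F = \pi_0 \Map_{k//k}(R, k \oplus k[n]) = \pi_0 \Map_k(\cot(R), k[n]), \]
where $\cot(R) = k \otimes_R L_{R/k}$. Since $R$ is animated, $L_{R/k}$ is connective over $R$, and so is $\cot(R)$; thus $\pi_n T_F = 0$ for all $n > 0$, proving coconnectivity.

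For the hard direction, given an FMP $F$ with coconnective $T_F$, I would construct $R$ using the pro-Artinian equivalence $\widehat{\CAlg}_{k//k}^\blacktriangle \simeq \Pro(\catC_{\art})$ from part (2) of the previous proposition, which turns the problem of exhibiting $R$ into the problem of writing $F$ as a filtered colimit of Artinian-corepresentable FMPs. I would build such a colimit by an inductive Postnikov construction on $T_F$: starting from the trivial FMP and iteratively attaching the Postnikov pieces $\pi_{-n}(T_F)[-n]$ as square-zero extensions of FMPs, then lifting each such extension via the square-zero extension theory developed for $\CAlg_k^\blacktriangle$ in the preceding theorem to an extension of Artinian animated ultrasolid $k$-algebras. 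Coconnectivity of $T_F$ ensures all attached modules sit in non-positive degrees, so the resulting tower stays within the animated (rather than $\EE_\infty$) setting. Fully-faithfulness of the corepresentation functor on $\widehat{\CAlg}_{k//k}^\blacktriangle$ then follows from part (3) of the proposition: a map in the source is an equivalence iff its relative cotangent complex vanishes, which by $k$-linear duality corresponds to an equivalence of tangent fibres of FMPs.

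The main obstacle is the convergence and naturality of this iterative construction. One must show that the pro-system of Artinian approximations indexed by the Postnikov filtration of $T_F$ is genuinely functorial, that its pro-limit is well-defined as an object of $\widehat{\CAlg}_{k//k}^\blacktriangle$, and that under the equivalence with $\Pro(\catC_{\art})$ it corepresents $F$. This relies on exactness of inverse limits in profinite vector spaces (invoked in the definition of complete profinite ultrasolid algebras) and on a careful matching of the classifications of square-zero extensions on the FMP side and the algebra side, so that each attaching map lifts coherently rather than merely stage by stage.
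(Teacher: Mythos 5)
Your easy direction is the paper's argument (cotangent adjunction plus the duality $\Solid_{k,\geq 0}^{\aperf}\simeq \Mod_{k,\leq 0}^{\op}$, giving $T_F\simeq \cot^\vee(R)$), but note an indexing slip: $\pi_0\Map(R,k\oplus k[n])$ computes $\pi_{-n}T_F$, not $\pi_n T_F$. Read literally, your formula gives $\pi_n T_F\cong(\pi_n\cot(R))^\vee$, which would make $T_F$ \emph{connective}; the correct bookkeeping gives $\pi_n T_F\cong(\pi_{-n}\cot(R))^\vee$, which vanishes for $n>0$ by connectivity of $\cot(R)$. The conclusion is right, the displayed identity is not.

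The hard direction has a genuine gap. You propose to build $R$ by ``attaching the Postnikov pieces $\pi_{-n}(T_F)[-n]$'' of the tangent fibre itself; but $T_F$ as a chain complex does not determine $X$ (the Lie/operadic structure on $T_F$ is essential), so a tower assembled from the Postnikov filtration of $T_F$ alone cannot recover $X$ and there is no canonical way to choose the attaching maps. The working mechanism, which the paper uses, is to induct on a map $F_{R^i}\to X$ already in hand and measure its failure by the \emph{relative} tangent fibre $T_{X/R^i}$: one sets $V=\pi_{-i-2}T_{X/R^i}$ and forms $R^{i+1}_{j+1}=R^{i+1}_j\times_{k\oplus V^\vee[i+2]}k$. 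Even then, a single such attachment does not kill $\pi_{-i-2}$ of the relative tangent fibre; it only makes the induced map $\pi_{-i-2}T_{R^{i+1}_j/X}\to\pi_{-i-2}T_{R^{i+1}_{j+1}/X}$ vanish, so one needs an inner induction over $j$ and a passage to the inverse limit (using cocompactness of Artinian objects to commute $F_{(-)}$ with this limit) before the outer Postnikov-style induction can advance. Your proposal collapses this double induction into one step and explicitly defers ``convergence and naturality,'' which is exactly where the content lies. Two further points you would need: the extension of the pullback-preservation property of $X$ from $k\oplus k[n]$ to $k\oplus V[n]$ for \emph{profinite} $V$ (obtained by writing such squares as inverse limits of Artinian small extensions), and the fact that detecting the final equivalence $F_R\simeq X$ happens on tangent fibres of formal moduli problems rather than via part (3) of the proposition, which lives on the algebra side.
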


\begin{rem}
	It was already known that any equal characteristic formal moduli problem with coconnective tangent fibre is corepresentable by a pro-Artinian $k$-algebra (\cite{DAGX}, Corollary 2.3.6 and \cite{galatius}, Theorem 4.33). Our framework gives a new description of this pro-completion.
\end{rem}

We have the following layout for this paper:

\begin{itemize}
	\item In \cref{section-basicproperties} we perform some basic constructions with ultrasolid modules, endow them with a symmetric monoidal structure and study the $\infty$-category of chain complexes of ultrasolid $k$-modules.
	\item In \cref{section-commalg} we establish some commutative algebra results in the ultrasolid setting, like a variant of Nakayama's lemma, and discuss coherence of ultrasolid $k$-algebras.
	\item In \cref{section-EEinfty} and \cref{section-animated} we construct the spectral and derived variants of ultrasolid $k$-algebras. This includes some results about the cotangent complex, as well as a comparison between the spectral and derived ultrasolid theory.
	\item In \cref{section-fmp} we finally prove our generalisation of the Lurie-Schlessinger criterion.
\end{itemize}

\section*{Acknowledgements}

We are immensely grateful to Lukas Brantner for his guidance throughout this project and all of his helpful feedback. We would also like to thank Dustin Clausen, for first suggesting working with the category of ultrasolid modules and his very insightful advice.

We wish to thank Juan Esteban Rodríguez Camargo, whose help in computing the cotangent complex of completed free algebras was essential for this project; as well as Rhiannon Savage, for our lively discussions about the more difficult parts of this paper, and Adrià Marín Salvador, for discussing every aspect of this project from its beginning.

Finally, we want to express our gratitude to the Mathematical Institute, University of Oxford, for their funding and support.

\section{Basic properties of ultrasolid modules}
\label{section-basicproperties}

\subsection{Sifted cocompleting a large category}

Sifted cocompletions are only known to exist for small categories. By (\cite{sifted}, Corollary 2.8), if $\mathcal{A}$ is a small category admitting finite coproducts, its sifted cocompletion can be described as finite-product-preserving functors $\mathcal{A}^{\op} \to \Set$. However, if $\mathcal{A}$ is large, this notion does not necessarily work. We can fix this by writing $\mathcal{A}$ as a suitable filtered colimit.

Many of the arguments in this section are paraphrased from \cite{condensed} and suitably adapted to our situation.

\begin{defi}
	Let $\sigma$ be an uncountable strong limit cardinal. Write $\Pro(\Vect_k^\omega)_{< \sigma} \subset \ProVect$ for the full subcategory consisting of objects of the form $\prod_I k$ with $|I| < \sigma$.

	The category $\Solid_k^{\heartsuit, \sigma}$ of \emph{$\sigma$-ultrasolid $k$-modules} is the sifted cocompletion of $\ProVectsigma$.

	That is, $\Solid_k ^{\heartsuit, \sigma}$ can be identified with functors $(\ProVectsigma)^\op \to \Vect_k$ that preserve finite products.
\end{defi}

We now see that we could work with $\Ind$-completions instead of sifted cocompletions.

\begin{lemma}
	\label{lemma-ultrasolidind}
	Let $\sigma$ be an uncountable strong limit cardinal. Then, there is an equivalence
	\begin{align*}
		\Ind(\ProVectsigma) \cong \Solid_k^{\heartsuit, \sigma}
	\end{align*}
\end{lemma}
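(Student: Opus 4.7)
The plan is to realise both sides as full subcategories of the presheaf category $\Fun((\ProVectsigma)^{\op}, \Vect_k)$ and show the two subcategories coincide. By the definition already recorded in the excerpt, $\Solid_k^{\heartsuit,\sigma}$ is the subcategory spanned by finite-product-preserving functors. On the other hand, since $\ProVectsigma \simeq (\Vect_k^{<\sigma})^{\op}$ has all finite colimits, the standard description of the Ind-completion identifies $\Ind(\ProVectsigma)$ with the subcategory spanned by finite-limit-preserving functors on $(\ProVectsigma)^{\op}$. The inclusion $\Ind(\ProVectsigma) \subseteq \Solid_k^{\heartsuit,\sigma}$ is then automatic, so the content of the lemma is the reverse containment: every finite-product-preserving functor is already finite-limit-preserving.

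Under the equivalence $(\ProVectsigma)^{\op} \simeq \Vect_k^{<\sigma}$, this amounts to showing that any $k$-linear functor $F \colon \Vect_k^{<\sigma} \to \Vect_k$ that preserves finite direct sums automatically preserves equalizers. The crux, as foreshadowed by the remark preceding the lemma, is that every object of $\Vect_k^{<\sigma}$ is projective; equivalently, every short exact sequence of vector spaces splits. Concretely, given $f, g \colon V \to W$ in $\Vect_k^{<\sigma}$ with equaliser $K = \ker(f-g)$, I would choose a splitting $V = K \oplus C$ so that the restriction $(f-g)|_C \colon C \to W$ is a split injection. Since $F$ preserves both the direct-sum decomposition of $V$ and split injections, one obtains $F(V) = F(K) \oplus F(C)$ together with a factorisation of $F(f-g)$ as the projection $F(V) \twoheadrightarrow F(C)$ followed by the injection $F(C) \hookrightarrow F(W)$; a direct inspection then identifies $\ker F(f-g)$ with $F(K)$, giving equaliser preservation.

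I expect the main obstacle to be not the core argument but the bookkeeping around the $k$-linear enrichment: one must verify that the standard descriptions of the sifted and Ind-cocompletions (typically formulated $\Set$-valued) specialise correctly to $\Vect_k$-valued functors on a $k$-linear site, using that a $k$-linear functor between $k$-linear categories automatically respects finite direct sums. Once this is pinned down, the mathematical input reduces to the splitting of short exact sequences of vector spaces, which is exactly what makes the reflexive-coequaliser condition in the sifted cocompletion redundant on top of filtered colimits, and it matches the conceptual explanation indicated in the preceding remark.
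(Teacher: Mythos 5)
Your argument is correct, but it takes a genuinely different route from the paper's. The paper invokes the criterion of (\cite{HTT}, Proposition 5.3.5.11): since the objects of $\ProVectsigma$ are compact in $\Solid_k^{\heartsuit, \sigma}$, it suffices to show that they generate under filtered colimits, and this is done by checking that the comma category of maps $V \to X$ with $V \in \ProVectsigma$ is filtered; the key input there is that $\ProVectsigma$ is closed under finite colimits computed in $\Solid_k^{\heartsuit, \sigma}$, which the paper justifies by the projectivity of every object of $\ProVectsigma$. You instead use the other standard characterization of the $\Ind$-completion of a category admitting finite colimits, namely as left-exact presheaves, and verify directly that every finite-product-preserving functor $(\ProVectsigma)^{\op} \simeq \Vect_k^{<\sigma} \to \Vect_k$ is automatically left exact via the splitting $V = \ker(f-g) \oplus C$. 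Both proofs hinge on the same underlying fact --- semisimplicity of $\Vect_k$, equivalently projectivity of all objects of $\ProVectsigma$ --- but yours is more self-contained (no appeal to the generation criterion or to compactness), at the cost of the enrichment bookkeeping you correctly flag: one must check that finite-product-preserving $\Set$-valued presheaves on an additive category canonically lift to $\Vect_k$, that such a functor is automatically additive on hom-sets (so that $F(f) - F(g) = F(f-g)$ and the equalizer of $F(f), F(g)$ is $\ker F(f-g)$), and that the left-exactness description of $\Ind$ is unaffected by passing to $\Vect_k$-valued functors. All of these points are standard, and your outline goes through.
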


\begin{proof}
	By (\cite{HTT}, Proposition 5.3.5.11), we only need to show that $\ProVectsigma$ generates all of $\Solid_k^{\heartsuit, \sigma}$ under filtered colimits. Let $X : \ProVectsigma ^{\op} \to \Vect_k$ be a $\sigma$-ultrasolid $k$-module. Then, for $V \in \ProVectsigma$, by the Yoneda embedding, we can identify $X(V)$ with maps $V \to X$ (here we are identifying $V$ with its image in $\sigma$-ultrasolid $k$-modules under the Yoneda embedding). Then, it is clear that
	\begin{align*}
		X \cong \colim_{V \to X} V
	\end{align*}
	where the colimit runs over all $V \in \ProVectsigma$ with a map $V \to X$. Hence, it suffices to show that this category is filtered. Given a finite diagram $\{V_i \to X\}$, where each $V_i \in \ProVectsigma$, we can let $W := \colim V_i$, where the colimit is taken in the category of $\sigma$-ultrasolid $k$-modules. Since every object in $\ProVectsigma$ is projective, the Yoneda embedding commutes with finite colimits that exist in $\ProVectsigma$, so $W \in \ProVectsigma$. This concludes the proof.
\end{proof}

Clearly, each category $\Solid_k^{\heartsuit, \sigma}$ is abelian, complete and cocomplete, since we can just take limits and colimits pointwise (this is possible due to finite products commuting with all limits and colimits in $\Vect_k$).

\begin{rem}
	If $\sigma < \sigma'$ we have a pair of adjoint functors $\Solid_k^{\heartsuit, \sigma} \leftrightarrows \Solid_k ^{\heartsuit, \sigma'}$. The right adjoint is given by restriction and the left adjoint is given by left Kan extension from $\ProVectsigma^{\op}$.
\end{rem}

\begin{defi}
	\label{defi-ultrasolid}
	The category $\Solid_k^\heartsuit$ of \emph{ultrasolid $k$-modules} is the filtered colimit of $\Solid_k^{\heartsuit, \sigma}$ over all uncountable strong limit cardinals $\sigma$.

	Equivalently, $\Solid_k^\heartsuit$ is the category of finite-product-preserving functors $V: (\ProVect)^\op \to \Vect_k$ such that $V$ is the left Kan extension of its own restriction to $\ProVectsigma^{\op}$ for some uncountable strong limit cardinal $\sigma$.
\end{defi}

Hence, for $V, W \in \Solid_k^\heartsuit$,
\begin{align*}
	\Hom_{\Solid_k ^\heartsuit} (V,W) = \Hom _{\Solid_k^{\heartsuit, \sigma}} (V_{|\sigma}, W_{ | \sigma})
\end{align*}
for some uncountable strong limit cardinal $\sigma$.

We now check that profinite vector spaces do embed into our construction.

\begin{lemma}
	The Yoneda embedding gives a well-defined functor $\ProVect \to \Solid_k ^\heartsuit$.
\end{lemma}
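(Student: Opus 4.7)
The plan is to define the functor on objects by $V \mapsto \Hom_{\ProVect}(-, V)$, regarded as a functor $\ProVect^{\op} \to \Vect_k$, and on morphisms by the standard (contravariant) Yoneda functoriality. Functoriality in $V$ will then be automatic; the nontrivial content will be to verify that for every $V \in \ProVect$, the presheaf $y(V) := \Hom_{\ProVect}(-, V)$ lies in the subcategory $\Solid_k^\heartsuit \subset \Fun(\ProVect^{\op}, \Vect_k)$ described in \cref{defi-ultrasolid}.

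According to that definition, two conditions must be checked: (i) $y(V)$ preserves finite products, and (ii) $y(V)$ is the left Kan extension of its restriction to $\ProVectsigma^{\op}$ for some uncountable strong limit cardinal $\sigma$. I would dispatch (i) by noting that finite products in $\ProVect^{\op}$ are finite coproducts in $\ProVect$, and the contravariant $\Hom$ always converts coproducts to products. For (ii), I would write $V \cong \prod_I k$ and choose $\sigma$ with $|I| < \sigma$, so that $V \in \ProVectsigma$.

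It then remains to show that, for every $W \in \ProVect$, the canonical map coming from the left Kan extension formula
\[
\colim_{(U,\, f : W \to U),\, U \in \ProVectsigma} \Hom_{\ProVect}(U, V) \longrightarrow \Hom_{\ProVect}(W, V),
\]
sending $(U, f, g) \mapsto g \circ f$, is an isomorphism. Surjectivity will follow by exhibiting, for each $h \in \Hom_{\ProVect}(W, V)$, the triple $(V, h, \id_V)$, which is available precisely because $V \in \ProVectsigma$. For injectivity, two triples $(U_1, f_1, g_1)$ and $(U_2, f_2, g_2)$ mapping to the same $h$ will each be identified with $(V, h, \id_V)$ in the colimit: the map $g_i : U_i \to V$ satisfies $g_i \circ f_i = h$ and hence defines a morphism $(V, h) \to (U_i, f_i)$ in the relevant slice category, and the induced transition sends $\id_V \in \Hom(V, V)$ to $g_i \in \Hom(U_i, V)$.

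The main obstacle will be bookkeeping with the direction of morphisms in the slice category for the LKE; this can be sidestepped by appealing to the standard fact that left Kan extension along a fully faithful functor preserves representable presheaves whenever the representing object already lies in the smaller category.
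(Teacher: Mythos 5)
Your proposal is correct and follows essentially the same route as the paper: the only substantive point is the left Kan extension condition, which both you and the paper dispatch by choosing $\sigma$ large enough that $V = \prod_I k$ itself lies in $\ProVectsigma$, so that every map $W \to V$ factors (trivially) through an object of the small subcategory. You spell out the colimit comparison and the finite-product condition in more detail than the paper, which simply records that any $\sigma > \dim V$ works, but the content is the same.
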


\begin{proof}
	One only needs to check the left Kan extension condition. This amounts to showing that if $V \in \ProVect$, there is a fixed cardinal $\sigma$ such that for any map $W \to V$ of profinite vector spaces, there exists $W' \in \ProVectsigma$ such that the map factorises $W \to W' \to V$. Any $\sigma > \dim V$ works, where we define the dimension of a profinite vector space to be the dimension of its dual.
\end{proof}

\begin{prop}
	The category $\Solid_k^\heartsuit$ of ultrasolid $k$-modules is abelian, complete and cocomplete, satisfying the same Grothendieck axioms as $\Vect_k$. Additionally, all small limits and colimits can be computed pointwise.
\end{prop}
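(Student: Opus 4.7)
The plan is to lift the proposition from the sub-level categories $\Solid_k^{\heartsuit,\sigma}$ to the filtered colimit $\Solid_k^\heartsuit$, exploiting that any small diagram in the latter factors through a single $\sigma$-level.

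First I would establish the claim for $\Solid_k^{\heartsuit,\sigma}$. By the definition at the $\sigma$-level, this is the full subcategory of $\Fun((\ProVectsigma)^{op}, \Vect_k)$ spanned by finite-product-preserving functors. Since $\Vect_k$ is additive, its finite products agree with finite direct sums and hence commute with every small limit and colimit; thus the finite-product-preservation condition is preserved under all pointwise limits and colimits in the functor category. This transports every property of $\Vect_k$---abelianness, completeness, cocompleteness, and the Grothendieck axioms satisfied by $\Vect_k$ such as AB5 and AB4$^{*}$---to $\Solid_k^{\heartsuit,\sigma}$, with (co)limits computed pointwise on $\ProVectsigma$.

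Next I would pass to $\Solid_k^\heartsuit$. Any small diagram factors through some $\Solid_k^{\heartsuit,\sigma}$: each object in the diagram is left Kan extended from some $\ProVect_{<\sigma_i}$, so one takes $\sigma$ larger than all the $\sigma_i$. The inclusion $\Solid_k^{\heartsuit,\sigma} \hookrightarrow \Solid_k^\heartsuit$ is the fully faithful left adjoint to restriction (fully faithful because left Kan extension along the fully faithful inclusion $\ProVectsigma \hookrightarrow \ProVect$ is so), hence it preserves colimits, so colimits at the $\sigma$-level give colimits in $\Solid_k^\heartsuit$. For limits, I would let $V$ be the limit of the diagram in $\Solid_k^{\heartsuit,\sigma}$; for any $X \in \Solid_k^\heartsuit$ the adjunction yields $\Hom(X, V) \cong \Hom(X|_\sigma, V) \cong \lim_i \Hom(X|_\sigma, V_i) \cong \lim_i \Hom(X, V_i)$, exhibiting $V$ as the limit in $\Solid_k^\heartsuit$. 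The Grothendieck axioms transfer from $\Solid_k^{\heartsuit,\sigma}$ to $\Solid_k^\heartsuit$ because each axiom only involves small diagrams, which are $\sigma$-bounded; in particular finite limits remain pointwise on all of $\ProVect$ since filtered colimits commute with finite limits in $\Vect_k$, which is enough for AB5 and the relevant exactness statements.

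The main subtlety I expect to track is the precise meaning of ``pointwise'' for limits: colimits are automatically pointwise on all of $\ProVect$ because left Kan extension commutes with colimits, whereas a general limit of ultrasolid modules agrees with the pointwise limit only on $\ProVectsigma$ for $\sigma$ large enough to contain the diagram---on larger profinite vector spaces the value is given by left Kan extension, since pointwise arbitrary products in $\Vect_k$ need not commute with the filtered colimit defining the Kan extension. Once the reduction to a single $\sigma$-level is made, however, both cases are controlled uniformly by the adjunction above.
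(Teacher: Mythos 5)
There is a genuine gap in the limit half of your argument, and it sits exactly where the paper's proof does its real work. Your chain $\Hom(X,V)\cong\Hom(X|_\sigma,V)\cong\lim_i\Hom(X|_\sigma,V_i)\cong\lim_i\Hom(X,V_i)$ uses the adjunction in the wrong direction: restriction is the \emph{right} adjoint of left Kan extension, so the adjunction gives $\Hom(\mathrm{LKE}(W),X)\cong\Hom(W,X|_\sigma)$, not $\Hom(X,\mathrm{LKE}(V))\cong\Hom(X|_\sigma,V)$ (the latter would make $\mathrm{LKE}$ a right adjoint, i.e.\ right Kan extension). The first isomorphism is only valid when the test object $X$ is itself left Kan extended from $\ProVectsigma$, and to verify the universal property of a limit you must test against \emph{all} $X\in\Solid_k^\heartsuit$, which range over arbitrarily large cardinality levels. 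Once you enlarge $\sigma$ to $\sigma'$ to accommodate $X$, you are forced to compare the pointwise limit formed at level $\sigma$ with the pointwise limit formed at level $\sigma'$, i.e.\ to show that the left Kan extension functor $\Solid_k^{\heartsuit,\sigma}\to\Solid_k^{\heartsuit,\sigma'}$ commutes with the given small limit. This is precisely the step your proposal omits, and your closing paragraph in fact asserts it fails ("pointwise arbitrary products in $\Vect_k$ need not commute with the filtered colimit defining the Kan extension"), which contradicts the statement you are proving: the proposition claims limits \emph{are} computed pointwise on all of $\ProVect$.

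The paper's resolution is a cofinality argument: given a small diagram, choose $\sigma$ so large that the diagram is $\lambda$-small for $\lambda$ the cofinality of $\sigma$. The left Kan extension is computed by $X'(V)=\colim_{V\to W}X(W)$ over $W\in\ProVectsigma$, and one checks this index category is $\lambda$-cofiltered (a $\lambda$-small limit of $\sigma$-small profinite vector spaces is still $\sigma$-small by the choice of $\lambda$). Since $\lambda$-small limits commute with $\lambda$-filtered colimits in $\Vect_k$, left Kan extension to every higher level commutes with the limit, so the pointwise limit over all of $(\ProVect)^{\op}$ still satisfies the left Kan extension condition and is therefore the limit in $\Solid_k^\heartsuit$. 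Your reduction to a single $\sigma$-level and your treatment of colimits are fine, but without this commutation argument the existence and pointwise computation of limits is unproven.
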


\begin{proof}
	We will prove that limits and colimits are computed pointwise. Then, the statements about the category being abelian and the Grothendieck axioms follow from the equivalent results about $\Vect_k$.

	The statements about colimits are obvious because if $\{F_i : \ProVect^{\op} \to \Vect_k\}_{i \in I}$ are left Kan extended from $\ProVectsigma^{\op}$, then $\colim F_i$ is left Kan extended from $\ProVectsigma^{\op}$. Hence, we can just compute them pointwise to obtain the desired colimit.

	For limits, let $\sigma < \sigma'$ be uncountable strong limit cardinals, and let $\lambda$ be the cofinality of $\sigma$. We will show that the left Kan extension functor $\Solid_k^{\heartsuit, \sigma} \to \Solid_k ^{\heartsuit, \sigma'}$ commutes with $\lambda$-small limits.

	Let $X \in \Solid_k^{\heartsuit, \sigma}$ and let $X' \in \Solid_k^{\heartsuit, \sigma'}$ be its left Kan extension. Then, for $V \in \Pro(\Vect_k^\omega)_{< \sigma'}$,
	\begin{align*}
		X'(V) = \colim _{V \to W} X(W)
	\end{align*}
	where the colimit runs over all $W \in \Pro(\Vect_k^\omega)_{< \sigma}$ with a map from $V$ (\cite{maclane}, \S X, Corollary 4). Given that $\lambda$-small limits commute with $\lambda$-filtered colimits in $\Vect _k$ (\cite{adamek_rosicky}, Theorem 1.59) and hence in $\Solid_k^{\heartsuit, \sigma'}$, we only need to show that the category of all $W \in \Pro(\Vect_k^\omega)_{< \sigma}$ with a map $V \to W$, where $V \in \Pro(\Vect_k^\omega)_{< \sigma'}$, is $\lambda$-cofiltered.

	This reduces to showing that if for some $\lambda$-small index category $I$, we have a diagram $\{W_i\}_{i \in I}$ of $\sigma$-small profinite vector spaces with compatible maps $V \to W_i$ then there is a $\sigma$-small profinite vector space $U$ with a map $V \to U$ over which all $V \to W_i$ factor compatibly. It suffices to choose $U = \lim W_i$. This is a subspace of $\prod_I W_i$ which has dimension $< \sigma$ due to our choice of $\lambda$.

	Hence, we see that if we have a $\sigma$-ultrasolid vector space, a pointwise limit will still be the left Kan extension from some $\ProVectsigma$. This shows that ultrasolid modules have small limits and they can be computed pointwise.
\end{proof}

\begin{rem}
	Given that limits and colimits are computed pointwise, we have that a map $V \to W$ in $\Solid_k^\heartsuit$ is a monomorphism/epimorphism if and only if for all $U \in \ProVect$, the map $V(U) \to W(U)$ is injective/surjective. Hence, we will often talk about injections or surjections of ultrasolid $k$-modules.
\end{rem}

This gives us a category that is the sifted cocompletion of $\Pro (\Vect_k^\omega)$.

\begin{cor}
	Let $\catC$ be a $1$-category that admits sifted colimits. Then, restriction gives an equivalence
	\begin{align*}
		\Fun_\Sigma (\Solid_k ^\heartsuit, \catC) \xrightarrow{\simeq} \Fun (\Pro (\Vect_k ^\omega), \catC)
	\end{align*}
	where $\Fun_\Sigma$ refers to the full subcategory of functors that preserve sifted colimits. The inverse is given by left Kan extension.

	The Yoneda embedding $\Pro (\Vect_k ^\omega) \to \Solid_k ^\heartsuit$ commutes with all small limits and finite colimits. Its essential image are precisely the compact objects, which are also projective.
\end{cor}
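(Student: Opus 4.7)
The plan is to reduce the universal property to the small case of each $\Solid_k^{\heartsuit,\sigma}$, then handle the Yoneda claims by pointwise computation.

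For the universal property, given $G:\ProVect\to\catC$, I would restrict $G$ to each $\ProVectsigma$. Since $\ProVectsigma$ is small (with finite coproducts given by direct sums), the classical universal property of sifted cocompletion gives a unique sifted-colimit-preserving extension $\tilde{G}_\sigma:\Solid_k^{\heartsuit,\sigma}\to\catC$. For $\sigma<\sigma'$, the composite of $\tilde{G}_{\sigma'}$ with the left Kan extension $\Solid_k^{\heartsuit,\sigma}\to\Solid_k^{\heartsuit,\sigma'}$ preserves sifted colimits and extends $G|_{\ProVectsigma}$, so uniqueness forces it to equal $\tilde{G}_\sigma$. The collection $\{\tilde{G}_\sigma\}$ therefore assembles into a functor $\tilde{G}:\Solid_k^\heartsuit\to\catC$. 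A small sifted diagram in $\Solid_k^\heartsuit$ lies inside some $\Solid_k^{\heartsuit,\sigma}$; its colimit is computed pointwise by the preceding proposition and preserved by $\tilde{G}_\sigma$, so $\tilde{G}$ preserves sifted colimits. Uniqueness of $\tilde{G}$ reduces to the $\sigma$-level statement.

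For the Yoneda embedding, limits in $\Solid_k^\heartsuit$ are computed pointwise, so Yoneda from $\ProVect\simeq\Vect_k^\op$ preserves all limits just as in a presheaf category. For finite colimits, use that every $U\in\ProVect$ is projective: given a cokernel sequence $V\to W\to C$ in $\ProVect$, exactness of $\Hom(U,-)$ yields $y(C)(U)=\coker(\Hom(U,V)\to\Hom(U,W))$, which matches the cokernel of $y(V)\to y(W)$ computed pointwise. Finite coproducts are handled by $\Hom(U,V\oplus W)=\Hom(U,V)\oplus\Hom(U,W)$.

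For the essential image, each $V\in\ProVect$ is compact because filtered colimits are pointwise, giving $\Hom(V,\colim X_i)=(\colim X_i)(V)=\colim X_i(V)=\colim\Hom(V,X_i)$; it is projective because epimorphisms are pointwise surjections while $\Hom(V,X)=X(V)$. Conversely, if $X$ is compact, \cref{lemma-ultrasolidind} presents $X$ as a filtered colimit of profinite vector spaces $V_i$; compactness factors $\id_X$ through some $V_i$, exhibiting $X$ as a retract of $V_i$. Since $\ProVect\simeq\Vect_k^\op$ is Karoubi complete and Yoneda is fully faithful, this retract is itself in the essential image. The main delicate point will be coordinating the $\sigma$-level data into a coherent functor on $\Solid_k^\heartsuit$; this relies crucially on the smallness of the sifted diagrams involved, which always forces everything into some $\Solid_k^{\heartsuit,\sigma}$ where the classical theory applies.
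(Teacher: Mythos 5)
Your proposal is correct and follows essentially the same route as the paper: reduce the universal property to the classical sifted-cocompletion result for each small $\ProVectsigma$ and assemble over cardinals, then verify compactness, projectivity, and closure under limits and finite colimits by pointwise computation, with the retract argument plus idempotent completeness of $\ProVect$ handling the converse. Your treatment of finite colimits via projectivity of $U$ and right-exactness of $\Hom(U,-)$ is a slightly more explicit rendering of the paper's one-line appeal to every object of $\ProVect$ being (co)projective, but it is the same underlying fact.
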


\begin{proof}
	For the first part, we can just take the filtered colimit of the usual result with sifted cocompletions of small categories (\cite{sifted}, Corollary 2.8). That is, for an uncountable strong limit cardinal $\sigma$, we have an equivalence
	\begin{align*}
		\Fun_\Sigma (\Solid_k^{\heartsuit, \sigma}, \catC) \cong \Fun (\ProVectsigma, \catC)
	\end{align*}
	The result now follows by noting that $\Solid_k^\heartsuit = \colim \Solid_k^{\heartsuit, \sigma}$ and $\ProVect = \colim \ProVectsigma$, so we can take filtered colimits on each category of functors. That is,
	\begin{align*}
		\Fun_\Sigma (\Solid_k^{\heartsuit}, \catC) \cong \varprojlim \Fun_\Sigma (\Solid_k^{\heartsuit, \sigma}, \catC) \cong \varprojlim \Fun (\ProVectsigma, \catC) \cong \Fun(\ProVect, \catC)
	\end{align*}

	Remember that we can think of ultrasolid $k$-vector spaces as finite-product-preserving functors $\Pro (\Vect_k^\omega)^\op \to \Vect_k$. For $V \in \ProVect$, the functor $\Hom_{\Solid_k^\heartsuit} (V, -)$ is precisely evaluation at $V$. Since all limits and colimits are computed pointwise, it follows that all profinite vector spaces are compact projective.

	The category $\Pro(\Vect_k^\omega)$ is obviously closed under all limits. It is also closed under finite colimits because every object in $\Pro (\Vect_k^\omega)$ is injective so the Yoneda embedding commutes with reflexive coequalizers. Also, since $\Pro (\Vect_k ^\omega)$ is idempotent complete, the image of the Yoneda embedding is closed under retracts.

	We must now show that every compact object is of this form. Let $V \in \Solid_k^\heartsuit$ and suppose that $V$ is left Kan extended from $\Pro(\Vect_k^\omega)^{\op}_{< \sigma}$ for some $\sigma$. Then,
	\begin{align*}
		V = \colim _{W \to V} W
	\end{align*}
	where the colimit is taken over all $W \in \ProVectsigma$ with a map $W \to V$.

	This category is obviously filtered since given any finite collection of maps $\{W_i \to V\}$ we can assemble them into a map $\colim W_i \to V$. Hence, if $V$ is compact, the identity map $V \to V$ factors through some $W  \in \Pro(\Vect_k^\omega)_{< \sigma}$, so it is a retract of $W$. We are now done since this category is closed under retracts.
\end{proof}

\subsection{Comparison with condensed vector spaces}

When $k = \QQ$ or $\FF_p$, there is a well-behaved theory of solid $k$-modules that fits in the condensed framework. Solid $k$-modules can be described as the Bousfield localisation at the maps $k[S] \to k[S]^\blacksquare := \varprojlim k[S_i]$, where $S = \varprojlim S_i$ is profinite. They then form a reflexive subcategory of condensed $k$-modules that is stable under all limits and colimits so there is a pair of adjoint functors $\Cond(\Vect_k) \leftrightarrows \Solid_k^\heartsuit$ (\cite{condensed}, Proposition 7.5 and Theorem 8.1).

For more general fields $k$, we can't quite fit this in the same way within $\Cond(\Vect_k)$ but it is possible to still obtain part of this comparison.

\begin{prop}
	There is a fully faithful embedding
	\begin{align*}
		\Pro (\Vect_k^\omega) \to  \Cond(\Vect_k)
	\end{align*}
\end{prop}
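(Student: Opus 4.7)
The plan is to define a functor $F: \ProVect \to \Cond(\Vect_k)$ by sending $V = \varprojlim_i V_i$, with each $V_i$ finite-dimensional, to the limit $\varprojlim_i V_i$ computed in $\Cond(\Vect_k)$, where each finite-dimensional $V_i$ is regarded as a discrete condensed $k$-module via the fully faithful embedding $\Vect_k \hookrightarrow \Cond(\Vect_k)$. The first step is to verify that this is well-defined and functorial: since limits in $\Cond(\Vect_k)$ are computed sectionwise, for every extremally disconnected $S$ one has $F(V)(S) = C(S, V)$, the set of continuous maps from $S$ into $V$ equipped with its profinite topology. In particular the construction is independent of the chosen pro-presentation.

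For full faithfulness I would exploit the standard formula
\begin{align*}
\Hom_{\ProVect}(V, W) = \varprojlim_j \varinjlim_i \Hom_k(V_i, W_j),
\end{align*}
valid whenever $V = \varprojlim_i V_i$ and $W = \varprojlim_j W_j$ are presented as cofiltered limits of finite-dimensional vector spaces. Since $\Hom$ on both sides sends limits in the target variable to limits, the claim reduces to showing $\Hom_{\Cond(\Vect_k)}(F(V), W_j) = \varinjlim_i \Hom_k(V_i, W_j)$ for each finite-dimensional $W_j$. Decomposing $W_j$ as a finite direct sum of copies of $k$ further reduces the problem to establishing
\begin{align*}
\Hom_{\Cond(\Vect_k)}(F(V), k) = \varinjlim_i \Hom_k(V_i, k).
\end{align*}

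The last identity is the heart of the argument. A morphism $f: F(V) \to k$ of condensed $k$-modules is a natural family of $k$-linear maps $f_S: C(S, V) \to C(S, k)$ indexed by extremally disconnected $S$; naturality along the inclusions of points forces $f_S(\phi) = f_{\mathrm{pt}} \circ \phi$, so $f$ is determined by the underlying $k$-linear map $f_{\mathrm{pt}}: V \to k$. The requirement that $f_{\mathrm{pt}} \circ \phi$ be locally constant for every continuous $\phi: S \to V$ is exactly the continuity of $f_{\mathrm{pt}}$ with respect to the profinite topology on $V$ and the discrete topology on $k$. Any such continuous linear map factors through some finite-dimensional quotient $V_i$ and thus represents an element of $\varinjlim_i \Hom_k(V_i, k)$, with the converse being immediate.

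The main obstacle I anticipate is this last step: rigorously identifying $\Hom_{\Cond(\Vect_k)}(F(V), k)$ with continuous linear maps $V \to k$. This requires carefully justifying that naturality over the site of extremally disconnected sets forces the pointwise formula for $f_S$, and that the sheaf condition on $k$ captures topological continuity in the expected way. Once this dictionary between the condensed and topological pictures is in place, the rest of the argument is a formal chain of reductions via the pro-$\Hom$ formula.
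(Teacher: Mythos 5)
Your overall strategy coincides with the paper's: define the functor as the limit-preserving extension of the inclusion of finite-dimensional vector spaces, reduce full faithfulness to the single computation $\Hom_{\Cond(\Vect_k)}(\prod_I k, k) \cong \bigoplus_I k$, and then identify condensed-module maps $\prod_I k \to k$ with suitably continuous linear functionals. The reductions via the pro-$\Hom$ formula and the pointwise description $F(V)(S) = C(S,V)$ are fine.

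However, there is a genuine gap at the step you yourself flag as the heart of the argument. Naturality over extremally disconnected $S$ does force $f_S(\phi) = f_{\mathrm{pt}}\circ\phi$, but the resulting condition --- that $f_{\mathrm{pt}}\circ\phi$ be locally constant for every continuous $\phi\colon S \to \prod_I k$ --- is \emph{not} equivalent to continuity of $f_{\mathrm{pt}}$ for the limit (product) topology. It is equivalent to continuity for the compactly generated refinement of that topology, i.e.\ continuity on every compact subset. When $k$ is infinite, $\prod_I k$ (with $k$ discrete) is not compact, and its $k$-ification may be strictly finer than the product topology, so "continuous on compacts" does not tautologically imply "open kernel contains $\prod_{I\setminus J}k$ for some finite $J$." This is exactly the point where the paper has to do real work: it tests the kernel against the compact subset $\prod_I\{0,1\}$ to produce a finite $J$ with $\prod_{I\setminus J}\{0,1\}\subset\ker f$, and then against the auxiliary compact sets $L = \bigl(\prod_J\{0\}\bigr)\times\bigl(\prod_{I\setminus J}\{0,a_i\}\bigr)$ to upgrade this to $\prod_{I\setminus J}k\subset\ker f$. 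Your proof is complete only in the case of a finite field, where $\prod_I k$ is already compact; for general $k$ you need to supply an argument of this kind rather than asserting the dictionary "naturality $=$ continuity for the profinite topology."
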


\begin{proof}
	This map is given by the inverse-limit-preserving extension that is the identity on finite-dimensional vector spaces. We now need to compute the relevant $\Hom$-sets in condensed vector spaces. It is enough to show that
	\begin{align*}
		\Hom_{\Cond(\Vect_k)} (\prod_I k, k) \cong \bigoplus _I k
	\end{align*}
	We need to check that any map $\prod_I k \to k$ factors through $\prod_J k \to k$ where $J \subset I$ is finite. Using the left adjoint of the inclusion from topological spaces to condensed sets (\cite{condensed}, Proposition 1.7),
	\begin{align*}
		\Hom _{\Cond(\Set)} (\prod_I k, k) = C\left( ( \prod_I k ) ^\sim , k \right)
	\end{align*}
	where the superscript $\sim$ means the $k$-ification of a space (a set is open in $(\prod_I k)^\sim$ if and only if its intersection with every compact subset of $\prod_I k$ is open). 

	Hence, maps of condensed $k$-modules $\prod_I k \to k$ are in one-to-one correspondence with maps of vector spaces $\prod_I k \to k$ that are continuous where $\prod_I k$ is has as its topology the $k$-ification of the product topology.

	Suppose we have a continuous map $f: (\prod_I k)^\sim \to k$. Then, the kernel is an open set containing $0$. The space $\prod_I \{0,1\}$ is compact. Since $\ker f$ is open, $\ker f \cap \prod_I \{0,1\}$ is open in $\prod_I \{0,1\}$ and contains $0$. Then, there is some finite set $J \subset I$ such that $\prod_{I \setminus J} \{0,1\} \subset \ker f$. We claim that $\prod_{I \setminus J} k \subset \ker f$.

	Firstly, it is clear that $\bigoplus_{I \setminus J} k \subset \ker f$ by extending linearly. Let $(a_i)_{i \in I} \in \prod_{I \setminus J} k$, so $a_j = 0$ for $j \in J$. We now consider the compact set
	\begin{align*}
		L := \left( \prod_J \{0\} \right) \times \left( \prod_{I \setminus J} \{0,a_i\} \right)
	\end{align*}
	Its intersection with the kernel is nonempty and open. Again by looking at basic opens, we see that there is some $(b_i)_{i \in I} \in \ker f$ with $b_j = 0$ for $j \in J$ and $b_i \neq a_i$ for finitely many $i$. But then $(a_i) - (b_i) \in \bigoplus_{I \setminus J} k \subset \ker f$. This implies $(a_i)_{i \in I} \in \ker f$.

	Hence, $\prod_I k \to k$ factors through $\prod_J k$ for some finite $J \subset I$, as required.
\end{proof}

\begin{construction}[Functor from ultrasolid modules to condensed $k$-modules]
	The above implies that there is a map $\phi: \Solid_k^\heartsuit \to \Cond(\Vect_k)$ by extending sifted colimits. We can also describe it as follows. There is a functor from profinite sets to profinite vector spaces defined as
	\begin{align*}
		\varprojlim S_i \mapsto \varprojlim k[S_i] 
	\end{align*}
	so composition gives a functor $\Solid_k^\heartsuit \to \Cond(\Vect_k)$. Limits and colimits are computed pointwise in both functor categories, so this functor commutes with small limits and colimits. In particular, it commutes with sifted colimits so it is left Kan extended from $\ProVect$. Since it coincides with $\phi$ on $\ProVect$, we see that they are the same functor.

	This functor is also conservative, so it is monadic and comonadic (\cite{categories}, Theorem 4.3.8). However, the author doesn't know a description of this monad or comonad, or whether the functor $\Solid_k^\heartsuit \to \Cond(\Vect_k)$ is fully faithful.
\end{construction}

\begin{rem}
	When $k = \FF_p$ or $\QQ$, the functor $\Solid_k^\heartsuit \to \Cond(\Vect_k)$ is fully faithful and its image are precisely solid $k$-modules.

	To see this, remember that the preanalytic ring $\varprojlim S_i \mapsto \varprojlim k[S_i]$ is analytic (\cite{condensed}, Theorem 8.13), so the category of solid $k$-modules is generated under colimits by profinite vector spaces, which are all compact (\cite{condensed}, Proposition 7.5). By \cref{lemma-ultrasolidind} and (\cite{HTT}, Proposition 5.3.5.11), the functor from ultrasolid $k$-modules to condensed $k$-modules is fully faithful, and to check that its image are solid $k$-modules we only need to check that profinite vector spaces generate under filtered colimits. For a solid $k$-module $X$, we can write
	\begin{align*}
		X \cong \colim _{V \to X} X	
	\end{align*}
	where the colimit is taken over all $V \in \ProVect$ with a map $V \to X$ (restricting to a suitable cardinality). We only need to check that this category is filtered, but this is true since any finite colimit of profinite vector spaces is again a profinite vector space.
\end{rem}

\begin{construction}[The solidification functor]
	Write $\catC_0 \subset \Cond(\Vect_k)$ for the full subcategory of condensed vector spaces spanned by objects of the form $k[S]$ for $S$ extremally disconnected. Then, for extremally disconnected $S,T$ we have that
	\begin{align*}
		\Hom_{\Cond(\Vect_k)} (k[S], k[T]) \cong k[T] (S) = \colim _{\sqcup S_i = S} \prod_i k[C(S_i, T)]
	\end{align*}
	where the colimit runs over all decompositions of $S$ into finitely many disjoint clopens, ordered by refinement, and $C(S_i, T)$ is the set of continuous functions $S_i \to T$. The condensed $k$-module $k[S]$ is the sheafification of the functor $S \mapsto k[C(S,T)]$.

	We now construct a functor $\catC_0 \to \Solid_k^\heartsuit$, by mapping $k[S] \mapsto C(S,k)^\vee \in \ProVect \subset \Solid_k^\heartsuit$. Now for any map $k[S] \to k[T]$ we need to construct a map $C(S,k)^\vee \to C(T,k)^\vee$. By the above characterization of the mapping sets in $\catC_0$, it is enough to construct compatible such maps for any continuous map $S_i \to T$, where $S_i \subset S$ is clopen.

	Given a map $S_i \to T$, we have a map $C(T,k) \to C(S,k)$ as follows: $f \in C(T,k)$ is mapped to $\tilde{f}$, where $\tilde{f}$ maps $S \setminus S_i$ to $0$, and maps $S_i$ as $S_i \to T \xrightarrow{f} k$. By taking duals, we get the desired map $C(S,k)^\vee \to C(T,k)^\vee$.

	We have constructed a functor $\catC_0 \to \Solid_k^\heartsuit$, and by left Kan extending we obtain a sifted-colimit-preserving functor $\Cond(\Vect_k) \to \Solid_k^\heartsuit$, which we will call \emph{solidification}.
\end{construction}

\subsection{Completed tensor product}
\label{section-tensorproduct}

We now describe a symmetric monoidal structure on ultrasolid $k$-vector spaces.

\begin{prop}
	The category $\Solid_k^\heartsuit$ is symmetric monoidal with a tensor product that commutes with small colimits in each variable. It is given on compact objects by
	\begin{align*}
		\prod_I k \otimes \prod_J k = \prod_{I \times J} k
	\end{align*}
\end{prop}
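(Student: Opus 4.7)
The plan is to first define the tensor product on the compact projective generators $\ProVect$ and then extend to $\Solid_k^\heartsuit$ using the $\Ind$-completion universal property. The duality $\ProVect \simeq \Vect_k^{\op}$ that sends $\prod_I k \leftrightarrow \bigoplus_I k$ transports the usual tensor product on $\Vect_k$ to a symmetric monoidal structure on $\ProVect$ with unit $k$, satisfying the formula $\prod_I k \otimes \prod_J k = \prod_{I \times J} k$ and inheriting all associativity, unit, and symmetry coherences from $\Vect_k$. For any uncountable strong limit cardinal $\sigma$, the strong limit property ensures $|I|, |J| < \sigma \implies |I \times J| < \sigma$, so $\ProVectsigma$ is closed under this tensor product.

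Using $\Solid_k^{\heartsuit, \sigma} \simeq \Ind(\ProVectsigma)$ from \cref{lemma-ultrasolidind}, the next step is to invoke the universal property of the $\Ind$-completion of a small symmetric monoidal category (equivalently, a Day convolution argument) to obtain a unique symmetric monoidal structure on $\Solid_k^{\heartsuit, \sigma}$ whose tensor product preserves filtered colimits in each variable and restricts to the given structure on $\ProVectsigma$. For $\sigma < \sigma'$, the left Kan extension $\Solid_k^{\heartsuit, \sigma} \to \Solid_k^{\heartsuit, \sigma'}$ restricts to the inclusion on generators and preserves filtered colimits, hence is symmetric monoidal by uniqueness of the extension. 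Passing to the filtered colimit over $\sigma$ then assembles these into a symmetric monoidal structure on $\Solid_k^\heartsuit$ satisfying the stated formula on compacts.

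For the cocontinuity claim, by symmetry it suffices to show $-\otimes Y$ preserves all small colimits for each fixed $Y$. Filtered colimits are preserved by construction. Finite coproducts are preserved first on compacts, where one computes $\prod_{I \sqcup I'} k \otimes \prod_J k = \prod_{(I \sqcup I') \times J} k = \prod_{I \times J} k \oplus \prod_{I' \times J} k$, and this is then extended to all of $\Solid_k^\heartsuit$ by writing objects as filtered colimits of compacts. Finally, since $- \otimes Y$ is now additive and $\Solid_k^\heartsuit$ is abelian, cocontinuity reduces to preserving cokernels; the cokernel of $f : A \to B$ may be presented as the coequalizer of the reflexive pair $(f, \id_B), (0, \id_B) : A \oplus B \rightrightarrows B$ with common section the inclusion $B \hookrightarrow A \oplus B$, and reflexive coequalizers are sifted colimits, preserved by construction.

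The main technical obstacle is the coherent extension step: one must carefully verify that the tensor product extends through the $\Ind$-completion (and the subsequent filtered colimit over $\sigma$) with full symmetric monoidal structure intact, which is where Day convolution or an explicit check of coherence cells is required. Once this is in place, all remaining verifications reduce to direct computations on the compact generators combined with preservation of filtered and sifted colimits.
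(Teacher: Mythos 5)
Your proof is correct in substance but routes through the $\Ind$-completion where the paper goes directly through the sifted cocompletion. The paper takes the unique sifted-colimit-preserving extension of the tensor product on $\ProVect$ (using that $\Solid_k^\heartsuit$ is by definition the sifted cocompletion), after which the only remaining verification is finite coproducts; you instead extend along $\Ind(\ProVectsigma)$ via Day convolution, which makes the symmetric monoidal coherences more visibly automatic but leaves more colimits to check by hand (finite coproducts and cokernels). Both are viable, and your observation that strong limit cardinals keep $\ProVectsigma$ closed under the tensor product, plus the compatibility of the extensions as $\sigma$ grows, is exactly the bookkeeping the paper also needs.

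The one place you are too quick is the last step: reflexive coequalizers are sifted colimits, but your construction only directly guarantees preservation of \emph{filtered} colimits, and an additive, filtered-colimit-preserving functor on an abelian category need not preserve cokernels (think of $\Hom(A,-)$ for $A$ finitely presented but not projective). So "preserved by construction" is not yet justified. The gap closes in one line: since every object of $\ProVectsigma$ is compact \emph{and projective}, both the filtered-colimit-preserving extension and the sifted-colimit-preserving extension of a functor off $\ProVectsigma$ are computed by the same pointwise left Kan extension $\colim_{V \to X} F(V)$, whose indexing category is filtered by \cref{lemma-ultrasolidind}; hence the two extensions coincide, and the sifted one preserves reflexive coequalizers by the universal property of the sifted cocompletion. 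State this explicitly (or simply switch to the sifted-colimit-preserving extension from the start, as the paper does) and the argument is complete.
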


\begin{proof}
	Remember that $\Pro (\Vect_k ^\omega) \cong \Vect_k^\op$, and the functor defined on compact objects is precisely the one inherited from vector spaces. We can now extend this functor in the unique sifted-colimit-preserving way to obtain a bifunctor $- \otimes - : \Solid_k ^\heartsuit \times \Solid_k ^\heartsuit \to \Solid_k ^\heartsuit$.

	Hence, the tensor product commutes with sifted colimits on each variable, so we only need to check that it commutes with finite sums. It clearly commutes with finite sums on $\Pro (\Vect_k ^\omega)$, and this will be preserved by left Kan extension.
\end{proof}

\begin{prop}
	Given $V \in \Solid_k^\heartsuit$, there is an adjunction
	\begin{align*}
		- \otimes V : \Solid_k^\heartsuit \leftrightarrows \Solid_k^\heartsuit : \condHom (V, -)
	\end{align*}
	such that
	\begin{align*}
		\condHom (V, W) (\prod_I k) = \Hom (\prod_I k \otimes V, W)
	\end{align*}
\end{prop}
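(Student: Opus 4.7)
The plan is to define $\condHom(V, W)$ pointwise via the claimed formula on profinite vector spaces, verify this defines a genuine ultrasolid module, and then deduce the adjunction by reducing to compact generators.

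First, I would set $\condHom(V, W) : (\ProVect)^{\op} \to \Vect_k$ to be the functor $U \mapsto \Hom_{\Solid_k^\heartsuit}(U \otimes V, W)$, with functoriality inherited from $\otimes$ and $\Hom$. To see this defines an ultrasolid module in the sense of \cref{defi-ultrasolid}, finite-product preservation is immediate: finite products in $\ProVect$ coincide with finite coproducts, $- \otimes V$ preserves finite coproducts by the previous proposition, and $\Hom(-, W)$ converts coproducts into products.

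Next I would verify the left Kan extension condition. Pick a strong limit cardinal $\sigma$ large enough that both $V$ and $W$ lie in $\Solid_k^{\heartsuit, \sigma}$; the strong-limit hypothesis is what ensures that the tensor product of two $\sigma$-bounded objects remains $\sigma$-bounded, so the cardinal bookkeeping closes up. Unpacking the LKE condition amounts to showing that for arbitrary $U \in \ProVect$, every map $U \otimes V \to W$ factors through $U' \otimes V$ for some $U' \in \ProVectsigma$ receiving a map from $U$. This I would arrange using that $W$, being $\sigma$-ultrasolid, is determined by its values on $\sigma$-bounded objects, combined with cofinality of $\sigma$-bounded quotients of $U$ inside the relevant comma category.

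Finally I would verify the adjunction isomorphism $\Hom(U \otimes V, W) \cong \Hom(U, \condHom(V, W))$. For compact $U = \prod_I k$ it is tautological: $\Hom(U, \condHom(V, W)) = \condHom(V, W)(U) = \Hom(U \otimes V, W)$, the first equality being Yoneda for compact projectives in $\Solid_k^\heartsuit$ and the second being the defining formula. For general $U$, I would write $U$ as a sifted colimit of profinite vector spaces; both sides of the purported isomorphism convert colimits in $U$ into limits (the left since $- \otimes V$ preserves colimits and $\Hom(-, W)$ is continuous, the right directly), so agreement on compact generators forces the isomorphism globally, and naturality gives the adjunction.

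The main obstacle is the left Kan extension condition in the second step, which is really a cardinal bookkeeping exercise: one must produce an explicit $\sigma$ controlling $\condHom(V, W)$ from cardinals controlling $V$ and $W$. Once one trusts that the strong-limit property of $\sigma$ is enough to make everything $\sigma$-bounded simultaneously, the rest is a clean adjunction argument on compact generators.
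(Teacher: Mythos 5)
Your architecture --- define $\condHom(V,W)$ pointwise by the formula $U \mapsto \Hom(U\otimes V, W)$, verify it is an honest ultrasolid module, then check the adjunction against compact generators --- is a legitimate and more explicit alternative to the paper's proof, which instead invokes the adjoint functor theorem in each presentable slice $\Solid_k^{\heartsuit,\sigma}$ and assembles the resulting right adjoints over $\sigma$. You also correctly isolate the left Kan extension condition as the only real content. However, the way you propose to discharge it does not work: choosing $\sigma$ merely so that $V$ and $W$ lie in $\Solid_k^{\heartsuit,\sigma}$ is not enough, and the factorization claim you reduce to is false for such $\sigma$. Concretely, take $V = \bigoplus_{a\in A} k$ and $W = k$; both are $\sigma$-ultrasolid for \emph{every} uncountable strong limit cardinal $\sigma$, since $V$ is a filtered colimit of finite-dimensional spaces. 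For $U = \prod_I k$ one has $U \otimes V \cong \bigoplus_A U$, hence $\Hom(U\otimes V, W) \cong \prod_A \bigoplus_I k$, and a family $(f_a)_{a\in A}$ factors through some $U'\otimes V$ with $U' \in \ProVectsigma$ under $U$ precisely when the joint span of the $f_a$ inside $\bigoplus_I k$ has dimension $<\sigma$. If $|A| \geq \sigma$ and $|I| \geq \sigma$, a family enumerating $\sigma$-many linearly independent functionals admits no such factorization, so the functor $U \mapsto \Hom(U\otimes V, W)$ is \emph{not} left Kan extended from $\ProVectsigma^{\op}$ for that $\sigma$.

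The gap is repairable, but the correct cardinal bound depends on more than the smallest $\sigma$ with $V, W \in \Solid_k^{\heartsuit,\sigma}$: you must fix filtered presentations $V = \colim_j V_j$ and $W = \colim_i W_i$ by profinite modules and take $\sigma$ to dominate the cardinalities of both index categories as well as the dimensions of all the $V_j$ and $W_i$ (in the example above any $\sigma > |A|$ suffices, since an $A$-indexed family then spans a subspace of dimension $<\sigma$). With such a choice the span/support argument closes up, and the remaining steps of your proof (finite-product preservation, Yoneda on compact generators, extension along the canonical colimit) are sound. It is worth noting that the same phenomenon is the reason the right adjoints $\condHom_\sigma(V,-)$ produced by the adjoint functor theorem at different cardinals are compatible under restriction but not under left Kan extension, so the paper's ``taking filtered colimits'' step quietly requires the same stabilization; your route at least makes the issue visible, which is a point in its favour once the bookkeeping is done correctly.
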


\begin{proof}	
	The tensor product commutes with small colimits in each variable so by the adjoint functor theorem (\cite{HTT}, Corollary 5.5.2.9), we have such an adjunction in $\Solid_k^{\heartsuit, \sigma}$ for any uncountable strong limit cardinal $\sigma$ (we are doing this one cardinal at a time so that our categories are presentable and we can apply the adjoint functor theorem). Taking filtered colimits, we get the desired right adjoint.

	The rest follows by the adjunction and the Yoneda embedding.
\end{proof}

Although the category of ultrasolid modules is more complicated than the category of vector spaces, we have similar behaviour with the tensor product.

\begin{lemma}
	Every $U \in \ProVect$ is flat as an object in $\Solid_k^{\heartsuit}$.
\end{lemma}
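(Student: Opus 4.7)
The goal is to show $-\otimes U$ is exact. Since it is a left adjoint by the previous proposition, it automatically preserves colimits, so the content lies in preserving monomorphisms. My plan is a two-step reduction from $\Solid_k^\heartsuit \cong \Ind(\ProVect)$ to the base case on $\ProVect$, where the tensor structure is inherited from $\Vect_k$ and exactness is essentially formal.

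For the base case on $\ProVect$: the anti-equivalence $\ProVect \cong \Vect_k^{\op}$ converts monomorphisms into epimorphisms of vector spaces, and the formula $\prod_I k \otimes \prod_J k = \prod_{I \times J} k$ identifies the monoidal structure on $\ProVect$ with the ordinary one on $\Vect_k$ via dualization. Since tensor of vector spaces is exact, it preserves epimorphisms, so $-\otimes U$ preserves monomorphisms on $\ProVect$.

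For the extension to $\Solid_k^\heartsuit$, let $V' \hookrightarrow V$ be a monomorphism. First I would reduce to the case $V \in \ProVect$: pick a filtered Ind-presentation $V = \colim_\alpha V_\alpha$ with $V_\alpha \in \ProVect$, form the pullbacks $\tilde V'_\alpha := V' \times_V V_\alpha \hookrightarrow V_\alpha$, and use the AB5 identity $V' = \colim_\alpha \tilde V'_\alpha$; since $-\otimes U$ preserves colimits and filtered colimits of monomorphisms are monomorphisms, the problem reduces to each $\tilde V'_\alpha \hookrightarrow V_\alpha$. Next, with $V \in \ProVect$, write $V' = \colim_\beta V'_\beta$ by its Ind-presentation and set $W_\beta := \im(V'_\beta \to V)$, formed in $\ProVect$; this agrees with the image in $\Solid_k^\heartsuit$ because the Yoneda embedding is exact. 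The factorizations $V'_\beta \twoheadrightarrow W_\beta \hookrightarrow V$ assemble into $V' \twoheadrightarrow \colim_\beta W_\beta \hookrightarrow V$ whose composite is the original injection, forcing $V' \cong \colim_\beta W_\beta$. Applying the base case to each $W_\beta \hookrightarrow V$ and taking the filtered colimit (AB5 once more) yields $V' \otimes U \hookrightarrow V \otimes U$.

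The main obstacle is that subobjects of profinite vector spaces in $\Solid_k^\heartsuit$ need not themselves be profinite (for instance $\bigoplus_\NN k \hookrightarrow \prod_\NN k$), which prevents a direct one-step reduction to $\ProVect$. The second reduction above circumvents this by taking profinite \emph{images} of the Ind-presentation of $V'$, rather than trying to realise $V'$ itself as a profinite subobject of $V$.
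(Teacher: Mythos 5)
Your proof is correct, but it takes a different route through the reduction than the paper does. The paper's proof treats the whole arrow at once: using $\Fun(\Delta^1,\Ind(\mathcal{C}))\simeq\Ind(\Fun(\Delta^1,\mathcal{C}))$ (\cite{HTT}, Proposition 5.3.5.15), it writes an arbitrary map $V\to W$ as a filtered colimit of maps $V_i\to W_i$ with $V_i,W_i\in\ProVect$, and then concludes by commuting $\ker$ and $-\otimes U$ past the filtered colimit, using flatness of $U$ in $\ProVect$ for the compact pieces. Your two-stage argument --- first pulling back along an $\Ind$-presentation of the target, then replacing the (possibly non-profinite) subobject of a profinite module by the filtered system of profinite images $W_\beta=\im(V'_\beta\to V)$ --- accomplishes the same reduction by hand, and your observation that a one-step subobject reduction fails (e.g.\ $\bigoplus_\NN k\hookrightarrow\prod_\NN k$) is exactly the point that the arrow-category trick sidesteps. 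Both arguments rest on the same ingredients: flatness in $\ProVect\cong\Vect_k^{\op}$, exactness of filtered colimits, preservation of finite (co)limits by the Yoneda embedding, and the tensor product commuting with colimits. The paper's version is shorter at the cost of citing one more categorical fact; yours is more elementary and makes the structure of subobjects of $\Ind$-objects explicit. All the individual steps you use (pullbacks of monos are monos, filtered colimits of monos are monos, images of maps in $\ProVect$ agree with images in $\Solid_k^\heartsuit$, the epi--mono factorisation forcing $V'\cong\colim_\beta W_\beta$) check out in $\Solid_k^\heartsuit$ since it satisfies AB5 and limits/colimits are computed pointwise.
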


\begin{proof}
	Fix $U \in \ProVect$. Suppose we have a map of ultrasolid modules $V \to W$. Without loss of generality, we may restrict ourselves to some cardinal $\sigma$. We have that $\Solid_k^{\heartsuit, \sigma} \cong \Ind (\ProVectsigma)$ (\cref{lemma-ultrasolidind}), and by (\cite{HTT}, Proposition 5.3.5.15) $\Fun (\Delta^1, \Solid_k^{\heartsuit, \sigma}) \simeq \Ind (\Fun(\Delta^1, \ProVectsigma))$. Then, we may write $V \to W$ as a filtered colimit of maps $V_i \to W_i$ where each $V_i, W_i \in \ProVectsigma$. Given that finite limits commute with filtered colimits,
	\begin{align*}
		\ker (V \to W) \cong \colim \ker (V_i \to W_i)
	\end{align*}
	Since $U$ is flat in $\Pro(\Vect_k^\omega)$ and the tensor product commutes with filtered colimits, we get that
	\begin{align*}
		\ker (U \otimes V \to U \otimes W) & \cong \colim_i \ker (U \otimes V_i \to U \otimes W_i)	\cong \colim_i U \otimes (\ker (V_i \to W_i)) \\
		& \cong U \otimes \colim_i \ker (V_i \to W_i) \cong U \otimes \ker (V \to W)
	\end{align*}
\end{proof}

\begin{prop}
	\label{prop-flat}
	Every ultrasolid module is flat.
\end{prop}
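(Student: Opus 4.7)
The plan is to deduce flatness of an arbitrary ultrasolid module from the previous lemma (flatness of profinite vector spaces) by writing it as a filtered colimit of profinite vector spaces and exploiting the fact that filtered colimits are exact and commute with the tensor product.

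First, I would fix an ultrasolid module $M$ and a monomorphism $V \hookrightarrow W$ in $\Solid_k^\heartsuit$; by restricting to a sufficiently large uncountable strong limit cardinal $\sigma$, we may assume all three objects lie in $\Solid_k^{\heartsuit, \sigma}$. Invoking \cref{lemma-ultrasolidind}, we have $\Solid_k^{\heartsuit, \sigma} \cong \Ind(\ProVectsigma)$, so I may write $M \cong \colim_i M_i$ as a filtered colimit with each $M_i \in \ProVectsigma$.

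Next, since the tensor product $-\otimes V$ commutes with small colimits in the first variable (and similarly for $W$), we have $M \otimes V \cong \colim_i (M_i \otimes V)$ and $M \otimes W \cong \colim_i (M_i \otimes W)$. Filtered colimits commute with finite limits in $\Solid_k^{\heartsuit,\sigma}$ (as limits and colimits are computed pointwise and this holds in $\Vect_k$), so
\begin{align*}
\ker(M \otimes V \to M \otimes W) \cong \colim_i \ker(M_i \otimes V \to M_i \otimes W).
\end{align*}
By the previous lemma, each $M_i \in \ProVect$ is flat, so every term in this filtered colimit vanishes, hence the whole colimit vanishes. Therefore $M \otimes V \to M \otimes W$ is a monomorphism, as desired.

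The argument is essentially routine once the two ingredients—the $\Ind$-description of $\Solid_k^{\heartsuit,\sigma}$ and flatness of profinite vector spaces—are in place, so there is no significant obstacle. The only point that requires a moment of care is verifying that the presentation of $M$ as a filtered colimit of profinite vector spaces can be chosen compatibly with the size restriction, but this is precisely guaranteed by \cref{lemma-ultrasolidind} once we pass to a large enough $\sigma$.
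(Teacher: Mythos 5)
Your proof is correct and follows essentially the same route as the paper: express the ultrasolid module as a filtered colimit of profinite (hence flat) vector spaces, then use that the tensor product commutes with colimits and that filtered colimits are exact to conclude. The only difference is that you spell out the kernel computation explicitly, which the paper leaves implicit.
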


\begin{proof}
	Every ultrasolid module can be written as a filtered colimit of profinite vector spaces, all of which are flat. Hence, it suffices to see that filtered colimits of flat objects are flat. But this follows from the fact that the tensor product commutes with colimits and filtered colimits are exact.
\end{proof}

\begin{example}
	This completed tensor product is well-behaved with respect to profinite constructions. For example, we can consider the ultrasolid $k$-algebra (see \cref{section-rings}) $k[[x]]$ of formal power series in one variable. The underlying ultrasolid module is $\prod_{n \geq 0} k$.

	One can now see that $k[[x]] \otimes k[[y]] = k[[x,y]]$. This is because we can write $k[[x]] = \prod_{i \geq 0} k x^i $, so that $k[[x]] \otimes k[[y]] = \prod_{i,j \geq 0} k x^i y^j = k[[x,y]]$. This equality is not true if one just takes the usual tensor product of algebras, since the power series $\sum_n x^n y^n$ would be on the right-hand side but not on the left-hand side.
\end{example}

We finally establish some nice properties of the subcategory $\Pro(\Vect_k^\omega) \subset \Solid_k^\heartsuit$.

\begin{prop}
	\label{prop-profinite}
	The category of profinite modules $\Pro(\Vect_k^\omega)$ is closed under all small limits and finite colimits, and inverse limits are exact in this category.

	Additionally, the tensor product commutes with limits in each variable when restricted to profinite modules.
\end{prop}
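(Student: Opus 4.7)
The plan is to exploit the duality $\Pro(\Vect_k^\omega) \cong \Vect_k^{\op}$ recalled in the definition of $\Pro(\Vect_k^\omega)$, under which $\prod_I k$ corresponds to $\bigoplus_I k$. Each of the three assertions then becomes a classical statement about $\Vect_k$. For closure under small limits and finite colimits, small colimits and finite limits exist in $\Vect_k$, so dually small limits and finite colimits exist in $\Pro(\Vect_k^\omega)$; to see that these coincide with the corresponding constructions computed inside $\Solid_k^\heartsuit$, I would invoke the earlier corollary stating that the Yoneda embedding $\Pro(\Vect_k^\omega) \to \Solid_k^\heartsuit$ preserves all small limits and finite colimits.

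For exactness of inverse limits, a cofiltered system of short exact sequences in $\Pro(\Vect_k^\omega)$ dualises to a filtered system of short exact sequences in $\Vect_k$, and the inverse limit in $\Pro(\Vect_k^\omega)$ becomes a filtered colimit in $\Vect_k$. Since $\Vect_k$ satisfies AB5, filtered colimits there are exact, which yields the claim after transporting back along the equivalence.

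For the tensor product, the defining formula $\prod_I k \otimes \prod_J k = \prod_{I \times J} k$ translates under the duality to $\bigoplus_I k \otimes_k \bigoplus_J k = \bigoplus_{I \times J} k$, which is the ordinary tensor product on $\Vect_k$. Since the latter admits $\Hom_k(V,-)$ as a right adjoint and thus commutes with all small colimits in each variable, the induced tensor product on $\Pro(\Vect_k^\omega) \cong \Vect_k^{\op}$ commutes with all small limits in each variable. The one step that takes some care, and is probably the main obstacle, is identifying the restriction of the tensor product on $\Solid_k^\heartsuit$ (constructed by sifted-colimit extension from compact objects) with the classical tensor product on $\Vect_k$ under the duality; this is ultimately forced by the fact that both symmetric monoidal structures agree on the generators $\prod_I k$ of $\Pro(\Vect_k^\omega)$, so their restrictions to $\Pro(\Vect_k^\omega)$ must coincide as bifunctors.
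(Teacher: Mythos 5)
Your proposal is correct and follows essentially the same route as the paper: the paper's proof also reduces everything to $\Vect_k^{\op}$, deducing exactness of inverse limits from exactness of filtered colimits in $\Vect_k$ and the limit-compatibility of the tensor product from the fact that $\otimes_k$ commutes with colimits. The one point you flag as delicate (identifying the restricted tensor product with the one on $\Vect_k^{\op}$) is in fact immediate from the construction of the monoidal structure on $\Solid_k^\heartsuit$, which is defined as the sifted-colimit extension of the bifunctor inherited from $\Vect_k^{\op}$ on compact objects.
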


\begin{proof}
	The first part is clear. To see that inverse limits are exact, notice that $\Pro(\Vect_k^\omega) \cong \Vect_k^{\op}$, so the result follows from the fact that filtered colimits of vector spaces are exact.

	For the last part, it is easy to see that the monoidal structure on $\Pro(\Vect_k^\omega)$ is induced by the one on $\Vect_k^{\op}$, so the result follows from the fact that the tensor product of vector spaces commutes with all colimits in each variable.
\end{proof}

\subsection{Chain complexes of ultrasolid modules}

The category of ultrasolid $k$-modules admits enough projectives, given that for any $V \in \Solid_k^\heartsuit$ we have a surjection $\bigoplus_{\prod_I k \to V} \prod_I k \to V$ (restricting to a suitable cardinality). We can then form the derived category $\Solid_k$, which we will always treat as an $\infty$-category (\cite{HA}, Definition 1.3.5.8).

We will also write $\Solid_k^\sigma$ for the derived category of $\Solid_k^{\heartsuit, \sigma}$. That is, the $\infty$-category of chain complexes of $\sigma$-ultrasolid $k$-modules.

\begin{rem}
	\label{rem-Modk}
	Write $\Mod_k$ for the $\infty$-category of $k$-module spectra, which can also be identified with chain complexes of $k$-modules (\cite{HA}, Proposition 7.1.1.15). Then, there is a fully faithful symmetric monoidal colimit-preserving functor $\Mod_k \to \Solid_k$, given by $k[0] \mapsto k[0]$. This is true on connective chain complexes by (\cite{HTT}, Proposition 5.5.8.22) and then we can stabilise.

	Alternatively, we can also describe $\Mod_k$ as finite-product-preserving functors $(\Vect_k^\omega)^{\op} \to \Sp$, where $\Sp$ is the $\infty$-category of spectra.
\end{rem}

For connective chain complexes we have the following, which is a consequence of (\cite{HA}, Proposition 1.3.3.14) and (\cite{HTT}, Proposition 5.5.8.15) by taking filtered colimits. For a reference on $\infty$-categorical sifted cocompletions, also known as the $\mathcal{P}_\Sigma$ construction, see (\cite{HTT}, \S 5.5.8).

\begin{prop}
\label{prop-ultrasolidPSigma}
	Let $\sigma$ be an uncountable strong limit cardinal. Then, there is an equivalence of $\infty$-categories
	\begin{align*}
		\mathcal{P}_\Sigma (\ProVectsigma) \simeq \Solid_{k, \geq 0}^\sigma
	\end{align*}

	Let $\catC$ be an $\infty$-category with sifted colimits. Then, there is an equivalence of $\infty$-categories
	\begin{align*}
		\Fun_\Sigma (\Solid_{k, \geq 0}, \catC) \xrightarrow{\simeq} \Fun (\Pro(\Vect_k), \catC)
	\end{align*}
	where $\Fun_\Sigma$ refers to the full subcategory of sifted-colimit-preserving functors. The inverse is given by left Kan extension.
\end{prop}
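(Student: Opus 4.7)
The plan is to prove the proposition in two stages: first at each cardinal level $\sigma$, and then to pass to the colimit over all uncountable strong limit cardinals, mirroring what was done at the heart in \cref{defi-ultrasolid}. The first stage amounts to a direct invocation of the two cited results, while the second stage is a formal consequence of the universal properties involved.

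For the first equivalence, I would apply \cite{HA}, Proposition 1.3.3.14, to the Grothendieck abelian category $\Solid_k^{\heartsuit, \sigma}$. The required hypotheses have already been verified: we established that $\Solid_k^{\heartsuit, \sigma}$ is Grothendieck abelian with limits and colimits computed pointwise, and the corollary after \cref{lemma-ultrasolidind} identifies $\ProVectsigma$ as a small full subcategory of compact projective generators closed under finite coproducts. The cited result then delivers the equivalence $\mathcal{P}_\Sigma (\ProVectsigma) \simeq \Solid_{k, \geq 0}^\sigma$ directly.

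For the second equivalence, the universal property of $\mathcal{P}_\Sigma$ (\cite{HTT}, Proposition 5.5.8.15) combined with the first part gives, for each $\sigma$ and any $\infty$-category $\catC$ admitting sifted colimits, an equivalence
\begin{align*}
    \Fun_\Sigma (\Solid_{k, \geq 0}^\sigma, \catC) \xrightarrow{\simeq} \Fun (\ProVectsigma, \catC).
\end{align*}
To pass from the $\sigma$-level to the full ultrasolid statement, I would write $\Solid_{k, \geq 0} \simeq \colim_\sigma \Solid_{k, \geq 0}^\sigma$, where the transition maps are obtained by deriving the left Kan extension functors $\Solid_k^{\heartsuit, \sigma} \to \Solid_k^{\heartsuit, \sigma'}$ used at the heart level. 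Applying $\Fun_\Sigma(-, \catC)$ turns this filtered colimit into a limit of $\infty$-categories of sifted-colimit-preserving functors, and under the equivalences above this limit matches $\lim_\sigma \Fun(\ProVectsigma, \catC) \simeq \Fun(\colim_\sigma \ProVectsigma, \catC) \simeq \Fun(\ProVect, \catC)$, as desired. The inverse is realized by left Kan extension along $\ProVect \hookrightarrow \Solid_{k, \geq 0}$, consistent with the same description at each $\sigma$.

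The main obstacle I anticipate is the bookkeeping required to legitimise the identification $\Solid_{k, \geq 0} \simeq \colim_\sigma \Solid_{k, \geq 0}^\sigma$ in the correct $\infty$-category of presentable $\infty$-categories (morphisms being sifted-colimit-preserving functors), and to verify that the transition maps between the derived levels are indeed the derived versions of the transitions at the heart level. Once this bookkeeping is in place, the combination of the two cited universal properties yields both equivalences without further work.
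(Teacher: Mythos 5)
Your proposal is correct and follows essentially the same route as the paper, which obtains the result as a consequence of (\cite{HA}, Proposition 1.3.3.14) applied to $\Solid_k^{\heartsuit,\sigma}$ together with the universal property of $\mathcal{P}_\Sigma$ from (\cite{HTT}, Proposition 5.5.8.15), and then passes to filtered colimits over the uncountable strong limit cardinals $\sigma$. The cardinal bookkeeping you flag is exactly the step the paper also handles by the colimit/limit exchange $\Fun_\Sigma(\colim_\sigma \Solid_{k,\geq 0}^\sigma,\catC)\simeq \lim_\sigma \Fun(\ProVectsigma,\catC)$, so no further ideas are needed.
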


\begin{rem}
	\label{rem-Solidasfunctors}
	In particular, we can identify $\Solid_{k, \geq 0}$ with finite-product-preserving functors $(\Pro(\Vect_k^\omega))^{\op} \to \Space$ that are the left Kan extension from $(\ProVectsigma)^{\op}$ for some uncountable strong limit cardinal $\sigma$.

	Then, $\Solid_k \simeq \Sp (\Solid_{k, \geq 0})$ is the stabilization of $\Solid_{k, \geq 0}$ (see \cite{HA}, \S 1.4.2 for more on stabilization), so it consists of finite-product-preserving functors $(\ProVect)^{\op} \to \Sp$ that are left Kan extended from $(\ProVectsigma)^{\op}$ for some uncountable strong limit cardinal $\sigma$.  

	The $k$-linear structure on $\ProVect$ means that if $X : (\ProVect)^{\op} \to \Sp$ is an object in $\Solid_k$, the functor $X$ lands on $k$-module spectra. To see this, we can associate to $X$ a functor $(\ProVect)^{\op} \to \Fun ((\Vect_k^\omega)^{\op}, \Sp)$ given by $V \mapsto X(V \otimes -)$. It is clear that each functor $X(V \otimes -)$ preserves finite products, so by \cref{rem-Modk} we just constructed a functor $\tilde{X} : (\ProVect)^{\op} \to \Mod_k$ given by $\tilde{X} (V) = X(V \otimes -)$. It is clear that we obtain $X$ by forgetting $\tilde{X}$ from $k$-module spectra to spectra. Hence, $X$ can be promoted to a finite-product-preserving functor to $k$-module spectra in a canonical way.

	Hence, $\Solid_k$ can be identified with finite-product-preserving functors $(\ProVect)^{\op} \to \Mod_k$ that are left Kan extended from $(\ProVectsigma)^{\op}$ for some $\sigma$.

	Since $\Solid_k$ is a stable $\infty$-category, for $V, W \in \Solid_k$, the mapping space $\Map (V,W)$ can be promoted to a spectrum $\Map^S (V,W)$ with $\Omega^\infty (\Map^S (V,W)) \simeq \Map (V,W)$. The $k$-linear structure on $\Solid_k$ implies that $\Map^S (V,W)$ has the structure of a $k$-module spectrum, which we will write as $\RHom (V,W)$.

	For $X \in \Solid_k$ and $V \in \ProVect$, the $k$-module spectrum $X(V)$ is the mapping spectrum $\RHom (V[0], X)$.
\end{rem}

\begin{prop}
	\label{prop-filteredhomology}
	Filtered colimits commute with homology in $\Solid_k$.
\end{prop}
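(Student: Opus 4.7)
My plan is to reduce the statement to the standard fact that filtered colimits commute with homology in the derived $\infty$-category of a Grothendieck abelian category whose filtered colimits are exact (AB5). The reduction is via the cardinal filtration $\Solid_k = \varinjlim_\sigma \Solid_k^\sigma$.

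First I would fix a small filtered diagram $\{X_i\}$ in $\Solid_k$ and observe that, since the indexing category is small and $\Solid_k = \varinjlim_\sigma \Solid_k^\sigma$, the diagram lives in $\Solid_k^\sigma$ for $\sigma$ large enough. The inclusion $\Solid_k^\sigma \hookrightarrow \Solid_k$ is the derived functor of the abelian-level left Kan extension $\Solid_k^{\heartsuit,\sigma} \hookrightarrow \Solid_k^\heartsuit$. The latter is fully faithful by the pointwise-colimit formula, and it is exact because, as shown in the proof that $\Solid_k^\heartsuit$ has pointwise limits, it commutes with finite (indeed all $\mathrm{cf}(\sigma)$-small) limits. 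Consequently the derived functor is fully faithful, $t$-exact, and a left adjoint, hence preserves filtered colimits and intertwines $H_n$ on both sides. Thus it suffices to prove the statement inside $\Solid_k^\sigma$.

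Second, once inside $\Solid_k^\sigma = \mathcal{D}(\Solid_k^{\heartsuit,\sigma})$, the claim is classical. The earlier proposition exhibits $\Solid_k^{\heartsuit,\sigma}$ as a Grothendieck abelian category satisfying AB5, because all small limits and colimits are computed pointwise from $\Vect_k$. In any AB5 Grothendieck abelian category, filtered colimits in the derived $\infty$-category can be computed degreewise on chain complexes (filtered colimits preserve quasi-isomorphisms by AB5), and AB5 then lets one commute the colimit past the kernel/image construction defining $H_n$, giving $H_n(\varinjlim X_i) \simeq \varinjlim H_n(X_i)$.

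The main care point is the first step — specifically, that the categorical left Kan extension is exact so that its derived version is $t$-exact, letting one read off homology from the $\sigma$-level. The second step is a standard result for Grothendieck abelian categories, so the proof there is essentially a citation.
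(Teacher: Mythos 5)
Your proof is correct, but it takes a different route from the paper. The paper's argument is a two-line pointwise reduction: by \cref{rem-Solidasfunctors} an object $X \in \Solid_k$ is a finite-product-preserving functor $(\ProVect)^{\op} \to \Mod_k$, its $n$th homology is the pointwise composite $\pi_n \circ X$, and filtered colimits in $\Solid_k$ are also computed pointwise, so the statement collapses to the fact that homotopy groups commute with filtered colimits in $\Mod_k$. You instead descend to a fixed cardinal level $\Solid_k^\sigma = \mathcal{D}(\Solid_k^{\heartsuit,\sigma})$ and invoke the classical fact that in the derived $\infty$-category of an AB5 Grothendieck abelian category filtered colimits are computed degreewise and commute with $H_n$. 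Both steps of your reduction are sound: a small filtered diagram does land in some $\Solid_k^\sigma$, and the transition functors are exact at the abelian level (the paper's pointwise-limits proposition shows they preserve $\mathrm{cf}(\sigma)$-small, in particular finite, limits) and preserve projectives, so their derived versions are $t$-exact colimit-preserving embeddings. What your approach buys is generality and independence from the functor-of-points model — it works verbatim for any filtered union of AB5 Grothendieck abelian categories — at the cost of the cardinal bookkeeping and of verifying that filtered colimits and homology are computed at the $\sigma$-level; the paper's pointwise argument sidesteps all of that by exploiting the explicit description of $\Solid_k$ as a functor category, where both operations are visibly evaluated objectwise in $\Mod_k$.
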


\begin{proof}
	Let $X \in \Solid_k$, which we can consider as a functor $(\ProVect)^{\op} \to \Mod_k$. Then, the $n$th homology is precisely $\pi_n \circ X : (\ProVect)^{\op} \to \Vect_k$. The result now follows from the fact that homology commutes with filtered colimits in $\Mod_k$, and colimits of ultrasolid modules are computed pointwise.
\end{proof}

Compact objects in the $\infty$-category $\Solid_k$ will be particularly simple.

\begin{defi}
	Write $\Solid_k^{\perf}$ for the full subcategory of $V \in \Solid_k$ such that $\pi_i (V) \in \Pro(\Vect_k^\omega)$ for all $i$ and $\pi_i (V) = 0$ for $|i| \gg 0$. These are the \emph{perfect} ultrasolid $k$-modules.

	Since all of the objects in $\Pro(\Vect_k^\omega)$ are projective, this can be identified with bounded complexes of profinite vector spaces. This is also the smallest stable subcategory of $\Solid_k$ containing $V [0]$ for all $V \in \ProVect$ that is closed under retracts.

	Finally, write $\Solid_k^{\perf, \sigma}$ for the perfect ultrasolid modules all of whose homology is $\sigma$-ultrasolid.

	A chain complex $V$ of ultrasolid modules is \emph{almost perfect} if for every $n$ it admits an $n$-connective map $M \to V$ from a perfect chain complex $M$. We will write $\Solid_k^{\aperf}$ for the full subcategory of almost perfect chain complexes. By definition, any such complex is bounded below and has profinite homology in every degree. Conversely, any bounded below complex with profinite homology is split, since all of its homology is projective, so it is clear that it is almost perfect.
\end{defi}

\begin{rem}
	By taking duals, we can see that $\Solid_k^{\perf}$ is contravariantly equivalent to the $\infty$-category of bounded chain complexes of $k$-vector spaces (since all complexes are split in both categories, the dual is straightforward to compute). Similarly, $\Solid_k^{\aperf}$ is contravariantly equivalent to bounded above chain complexes of $k$-vector spaces.
\end{rem}

\begin{prop}
	\label{prop-complexesInd}
	Let $\sigma$ be an uncountable strong limit cardinal. Then, there is an equivalence of $\infty$-categories
	\begin{align*}
		\Ind(\Solid_k^{\perf, \sigma}) \simeq \Solid^\sigma_k
	\end{align*}
	and an object is compact if and only if it is perfect.
\end{prop}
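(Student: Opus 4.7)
The plan is to apply Proposition 5.3.5.11 of \cite{HTT} to the inclusion $\Solid_k^{\perf,\sigma} \hookrightarrow \Solid_k^\sigma$. This requires verifying that $\Solid_k^{\perf,\sigma}$ is essentially small (clear, since it is obtained from the essentially small $\ProVectsigma$ by closing under finite colimits, shifts and retracts), that every perfect object is compact, and that perfect objects generate $\Solid_k^\sigma$ under filtered colimits. Granted these three points, the proposition supplies the equivalence $\Ind(\Solid_k^{\perf,\sigma}) \simeq \Solid_k^\sigma$, and the characterization of compact objects follows at once: compact objects in an $\Ind$-completion are retracts of objects in the original subcategory, and $\Solid_k^{\perf,\sigma}$ is idempotent complete because, as noted just above the statement, it is contravariantly equivalent to bounded chain complexes of $k$-vector spaces.

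To establish compactness of perfect objects, I would combine the functorial description of \cref{rem-Solidasfunctors} (restricted to the cardinal $\sigma$) with the Yoneda lemma: $\Solid_k^\sigma$ identifies with finite-product-preserving functors $(\ProVectsigma)^{\op} \to \Mod_k$, and for $V \in \ProVectsigma$ this gives $\RHom(V[0], X) \simeq X(V)$. Since all colimits in this functor category are computed pointwise, the functor $\RHom(V[0], -)$ preserves filtered colimits, so each $V[0]$ is compact. The class of compact objects in any stable $\infty$-category is closed under finite colimits, shifts and retracts, so it contains the smallest such subcategory generated by the $V[0]$, which is precisely $\Solid_k^{\perf,\sigma}$.

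For generation under filtered colimits, my approach is to show that $\{V[n] : V \in \ProVectsigma,\, n \in \ZZ\}$ is a set of compact generators of the stable presentable $\infty$-category $\Solid_k^\sigma$. Compactness was just verified; joint detection of zero follows because $\pi_n\RHom(V[0], X) \simeq \pi_n X(V)$, homology is computed pointwise by \cref{prop-filteredhomology}, and $\ProVectsigma$ generates $\Solid_k^{\heartsuit,\sigma}$ by \cref{lemma-ultrasolidind}. A standard result on compactly generated stable $\infty$-categories (\cite{HA}, \S1.4.4) then identifies $\Solid_k^\sigma$ with the $\Ind$-completion of the smallest stable full subcategory closed under retracts generated by the $V[n]$, which is exactly $\Solid_k^{\perf,\sigma}$.

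The main obstacle is making the generation step rigorous without appealing to a black box. A direct alternative is to write a general $X \in \Solid_k^\sigma$ as the sequential colimit $\colim_n \tau_{\geq -n} X$, reducing to the bounded-below case, and then to use \cref{prop-ultrasolidPSigma} to present each connective cover as a sifted colimit of objects in $\ProVectsigma$, before converting these sifted colimits into filtered colimits of perfect complexes via simplicial bar-type resolutions. Organizing the two layers of colimits while preserving the perfect diagram is the most delicate bookkeeping in the argument.
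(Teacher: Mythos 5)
Your proposal is correct, and the first two-thirds of it (compactness of $V[0]$ via the pointwise computation of filtered colimits of finite-product-preserving functors, closure of compact objects under finite colimits and retracts, the appeal to \cite{HTT}, Proposition 5.3.5.11, and the retract argument showing compact implies perfect) coincides with the paper's proof essentially verbatim. The only divergence is in the generation step. Your primary route — exhibiting $\{V[n] : V \in \ProVectsigma\}$ as a set of compact generators of the presentable stable $\infty$-category $\Solid_k^\sigma$ and invoking the standard identification of such a category with the $\Ind$-completion of the thick subcategory they generate — is valid here precisely because the paper works one cardinal at a time so that $\Solid_k^\sigma$ is presentable; joint detection of zero follows from \cref{prop-filteredhomology} and pointwise computation of homology as you say. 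The paper instead takes exactly the route you list as your ``direct alternative'': write $V \simeq \colim_n \tau_{\geq n} V$ to reduce to the connective case, and then the bookkeeping you were worried about is discharged in one stroke by citing \cite{HTT}, Proposition 5.5.8.10, which says that $\mathcal{P}_\Sigma(\ProVectsigma) \simeq \Solid^\sigma_{k,\geq 0}$ (\cref{prop-ultrasolidPSigma}) is already compactly generated by retracts of finite colimits of the objects $V[0]$ — all of which are perfect — so no explicit conversion of sifted colimits into filtered colimits of perfect complexes is needed. Either route is fine; the paper's is shorter because it outsources the delicate step to Lurie's general result on $\mathcal{P}_\Sigma$, while yours buys a slightly more self-contained statement at the cost of needing the (standard but less elementary) machinery of compactly generated stable $\infty$-categories from \cite{HA}, \S 1.4.4.
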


\begin{proof}
	First we show that if $V \in \ProVectsigma$, then $V[0]$ is compact. By \cref{rem-Solidasfunctors}, for $X \in \Solid_k$, $\RHom (V[0], X) = X(V)$. Filtered colimits of finite-product-preserving functors $(\ProVectsigma)^{\op} \to \Mod_k$ are computed pointwise, so it follows that $\RHom (V[0], -)$ commutes with filtered colimits. Then, we have that $\Omega^{\infty} \circ  \RHom (V[0], -) \simeq \Map (V[0], -): \Solid_k^\sigma \to \Space$, and $\Omega^{\infty}$ commutes with filtered colimits, so $V[0]$ is compact.

	Since compact objects are closed under finite limits, colimits and retracts in a stable $\infty$-category, all perfect objects are compact.

	By (\cite{HTT}, Proposition 5.3.5.11), there is a fully faithful functor $\Ind (\Solid_k ^{\perf, \sigma}) \to \Solid_k^\sigma$ that preserves filtered colimits. To show it is an equivalence, we only need to show that the image generates the entire category under filtered colimits.

	For $V \in \Solid_k$, we can write $V = \colim_{ n} \tau_{\geq n} V$, so we only need to show that perfect objects generate connective chain complexes. We have that $\Solid_{k, \geq 0} \simeq \mathcal{P}_\Sigma (\Pro(\Vect_k^\omega))$ (\cref{prop-ultrasolidPSigma}) and by (\cite{HTT}, Proposition 5.5.8.10) this category is compactly generated by retracts of finite colimits of objects of the form $V[0]$ for $V \in \ProVect$, all of which are perfect. 

	Finally, we must show that any compact object is perfect. For this, if $V \in \Solid_k^\sigma$ is compact, we can write $V$ as a filtered colimit $V \simeq \colim V_i$, where each $V_i$ is perfect. Then, $\Map(V,V) \simeq \colim \Map(V, V_i)$. This means that the identity map $V \to V$ factors through some $V_i$, so $V$ is a retract of a perfect chain complex. We are now done since perfect chain complexes are closed under retracts.
\end{proof}

We can take filtered colimits of the $\infty$-categories of functors on (\cite{HTT}, Proposition 5.3.5.10) to obtain the following.

\begin{cor}
	Let $\catC$ be an $\infty$-category that admits filtered colimits. Then, restriction gives an equivalence of $\infty$-categories
	\begin{align*}
		\Fun_\kappa (\Solid_k , \catC) \xrightarrow{\simeq} \Fun (\Solid_k^{\perf}, \catC)
	\end{align*}
	where $\Fun_\kappa$ refers to the full subcategory of filtered-colimit-preserving functors. The inverse is given by left Kan extension.
\end{cor}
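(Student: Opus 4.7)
The plan is to bootstrap from the small-category version at each uncountable strong limit cardinal $\sigma$. By \cref{prop-complexesInd}, $\Solid_k^\sigma \simeq \Ind(\Solid_k^{\perf, \sigma})$, so the universal property of Ind-completion (\cite{HTT}, Proposition 5.3.5.10) immediately gives the restriction equivalence
\[
\Fun_\kappa(\Solid_k^\sigma, \catC) \xrightarrow{\simeq} \Fun(\Solid_k^{\perf, \sigma}, \catC).
\]
I would then check that these equivalences are natural in $\sigma$: the transition functors on the left are the left Kan extensions $\Solid_k^\sigma \to \Solid_k^{\sigma'}$ (which are left adjoints, hence preserve filtered colimits), and on the right they are the fully faithful inclusions $\Solid_k^{\perf, \sigma} \hookrightarrow \Solid_k^{\perf, \sigma'}$; these are compatible with the Ind-extension.

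Next, by construction $\Solid_k \simeq \colim_\sigma \Solid_k^\sigma$, and similarly $\Solid_k^\perf \simeq \colim_\sigma \Solid_k^{\perf, \sigma}$, since every perfect chain complex has all its homology in some $\ProVectsigma$. Applying $\Fun(-,\catC)$ converts these filtered colimits of $\infty$-categories into limits, yielding
\[
\Fun(\Solid_k, \catC) \simeq \lim_\sigma \Fun(\Solid_k^\sigma, \catC), \qquad \Fun(\Solid_k^\perf, \catC) \simeq \lim_\sigma \Fun(\Solid_k^{\perf, \sigma}, \catC).
\]

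The main obstacle, which I expect to require the most care, is showing that $\Fun_\kappa$ on the left intertwines with this limit, i.e.\ $\Fun_\kappa(\Solid_k, \catC) \simeq \lim_\sigma \Fun_\kappa(\Solid_k^\sigma, \catC)$. One direction is clear: since the inclusions $\Solid_k^\sigma \hookrightarrow \Solid_k$ preserve filtered colimits (their left adjoints are the left Kan extensions, which are colimit-preserving), any filtered-colimit-preserving $F: \Solid_k \to \catC$ restricts to a filtered-colimit-preserving functor on each $\Solid_k^\sigma$. For the converse, given a small filtered diagram $X: I \to \Solid_k$, every object $X_i$ lies in some $\Solid_k^{\sigma_i}$, and since $I$ is set-sized one can choose a single $\sigma$ large enough to contain all $X_i$ and to accommodate their colimit; the colimit of $X$ in $\Solid_k$ then agrees with the one computed in $\Solid_k^\sigma$. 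Consequently, if each restriction $F|_{\Solid_k^\sigma}$ preserves filtered colimits, then so does $F$.

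Stringing these equivalences together, the restriction functor realizes
\[
\Fun_\kappa(\Solid_k, \catC) \simeq \lim_\sigma \Fun_\kappa(\Solid_k^\sigma, \catC) \simeq \lim_\sigma \Fun(\Solid_k^{\perf, \sigma}, \catC) \simeq \Fun(\Solid_k^\perf, \catC),
\]
as required, with the inverse given by (the colimit over $\sigma$ of) left Kan extension.
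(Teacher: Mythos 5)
Your proposal is correct and follows essentially the same route as the paper, which proves this corollary in one line by "taking filtered colimits of the $\infty$-categories of functors" in (\cite{HTT}, Proposition 5.3.5.10) applied to the equivalences $\Solid_k^\sigma \simeq \Ind(\Solid_k^{\perf,\sigma})$ of \cref{prop-complexesInd}. You have simply spelled out the details (naturality in $\sigma$ and the identification of $\Fun_\kappa$ of the colimit with the limit of the $\Fun_\kappa$'s) that the paper leaves implicit.
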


\begin{rem}
	$\Solid_k^\heartsuit$ has a symmetric monoidal structure that is inherited by its model category of chain complexes and then by $\Solid_k$ (\cite{HA}, Proposition 4.1.7.10). We can apply the Künneth spectral sequence (see \cref{lemma-spectralsequence}) and since every ultrasolid module is flat we get that for $A, B \in \Solid_k$,
	\begin{align*}
		\pi_* (A \otimes B) = \bigoplus_{i + j = *} \pi_i (A) \otimes \pi_j (B)
	\end{align*}
	where the tensor product is derived.

	This implies that there is no ambiguity between the derived and the underived tensor product of chain complexes, since they always give the same result.
\end{rem}

Chain complexes with profinite homology are very well-behaved.

\begin{prop}
	\label{prop-complexesprohomology}
	Let $\Solid_k^{\pro} \subset \Solid_k$ be the full subcategory of objects with profinite homology; that is, those $V \in \Solid_k$ with $\pi_i (V) \in \ProVect$ for all $i$. Then,
	\begin{enumerate}
		\item $\Solid_k^{\pro}$ is stable under small limits and finite colimits. 
		\item Inverse limits commute with homology in $\Solid_k^{\pro}$.
		\item The tensor product commutes in each variable with small limits in $\Solid_k^{\pro}$.
		\item $\Solid_{k, \geq 0}^{\pro} \simeq \Solid_{k, \geq 0}^{\aperf}$ is closed under geometric realisations and inverse limits.
	\end{enumerate}
\end{prop}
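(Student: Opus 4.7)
The decisive abelian-level input for (1) and (2) is \cref{prop-profinite}: $\Pro(\Vect_k^\omega)$ is closed in $\Solid_k^\heart$ under small limits and finite colimits, and inverse limits are exact there; moreover every profinite module is projective in $\Solid_k^\heart$, so extensions of profinite modules split and remain profinite. For (1), I would reduce an arbitrary small limit to products and pullbacks: $\pi_n(\prod X_\alpha) = \prod \pi_n(X_\alpha)$ is profinite by closure under products, while for a pullback the long exact sequence identifies $\pi_n$ as an extension of a cokernel by a kernel of maps in $\Pro(\Vect_k^\omega)$, hence profinite. Finite colimits follow because cofibers are shifts of fibers in a stable $\infty$-category. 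For (2), I would invoke the Bousfield--Kan type spectral sequence $E_2^{p,q} = R^p\lim \pi_q(X_\alpha) \Rightarrow \pi_{q-p}(\lim X_\alpha)$; exactness of inverse limits on $\Pro(\Vect_k^\omega)$ forces $R^p \lim = 0$ for $p > 0$, so the spectral sequence collapses and yields $\pi_n(\lim X_\alpha) = \lim \pi_n(X_\alpha)$.

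For (3), given $A \in \Solid_k^{\pro}$ and a limit diagram $\{B_\alpha\}$ in $\Solid_k^{\pro}$, I would show the canonical map $A \otimes \lim B_\alpha \to \lim (A \otimes B_\alpha)$ is an equivalence by comparing homotopy groups degree by degree. The Künneth formula combined with (2) and \cref{prop-profinite} yields
\[
\pi_n(A \otimes \lim B_\alpha) = \bigoplus_{i+j=n} \pi_i(A) \otimes \lim_\alpha \pi_j(B_\alpha) = \bigoplus_{i+j=n} \lim_\alpha \bigl(\pi_i(A) \otimes \pi_j(B_\alpha)\bigr).
\]
When $A$ is bounded, the Künneth sum is finite in each degree, each $A \otimes B_\alpha$ lies in $\Solid_k^{\pro}$, and (2) identifies the above expression with $\pi_n(\lim A \otimes B_\alpha)$. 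For unbounded $A$, I would reduce to the bounded case by writing $A$ as the limit of its Postnikov truncations and appealing to the previous case together with (2). The main obstacle I anticipate is the interchange of the potentially infinite Künneth direct sum with the inverse limit, which is not valid for a general diagram of ultrasolid modules; the Postnikov reduction, made possible by (2), is what rescues the argument in the profinite setting.

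For (4), the identification $\Solid_{k, \geq 0}^{\pro} \simeq \Solid_{k, \geq 0}^{\aperf}$ is essentially by construction: projectivity of profinite modules forces any connective complex with profinite homology to split as a direct sum of shifted profinite modules, exhibiting it as almost perfect, and conversely any almost perfect complex has profinite homology by definition. Closure under inverse limits follows from (1), together with (2), which ensures connectivity is preserved since $\pi_i$ commutes with inverse limits on $\Solid_k^{\pro}$. For closure under geometric realisations, I would apply the first-quadrant spectral sequence $E_2^{p,q} = \pi_p(\pi_q(X_\bullet)) \Rightarrow \pi_{p+q}(|X_\bullet|)$ attached to a simplicial object $X_\bullet$ in $\Solid_{k, \geq 0}^{\pro}$. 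Each $\pi_q(X_\bullet)$ is a simplicial profinite module, so its chain homology $\pi_p$ is a finite (co)limit in $\Pro(\Vect_k^\omega)$ and is profinite by \cref{prop-profinite}; the $E_\infty$ entries are subquotients in $\Pro(\Vect_k^\omega)$ and stay profinite; finally, first-quadrant convergence gives a finite-length filtration on $\pi_n(|X_\bullet|)$ with profinite subquotients, and iterated extensions of profinite modules are profinite, so $\pi_n(|X_\bullet|)$ is profinite.
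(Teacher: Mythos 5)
Your route is genuinely different from the paper's. The paper observes that every object of $\Solid_k^{\pro}$ splits as a direct sum of shifted profinite modules, deduces a symmetric monoidal contravariant equivalence $\Solid_k^{\pro}\simeq \Mod_k^{\op}$ given by duality, and then reads off all four statements from the corresponding facts about colimits in $\Mod_k$. Your parts (1) and (4) work as alternatives: the reduction of small limits to products and pullbacks is fine (products of profinite modules are profinite and $\Solid_k^\heartsuit$ has exact products since limits are pointwise and $\Vect_k$ is AB4*, while extensions of profinite modules split by projectivity), and your simplicial spectral sequence for geometric realisations is a valid substitute for the paper's argument via $\tau_{\leq n}\lvert V_\bullet\rvert\simeq\tau_{\leq n}\colim_{\Delta_{\leq n+1}^{\op}}V_\bullet$ and closure under finite colimits.

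Parts (2) and (3), however, have genuine gaps. In (2), the inference ``inverse limits are exact on $\Pro(\Vect_k^\omega)$, hence $R^p\lim=0$ for $p>0$'' is not valid: the groups $R^p\lim$ in your spectral sequence are the homotopy groups of the \emph{derived} limit in $\Solid_k$, i.e.\ the right derived functors of $\lim\colon\Fun(I,\Solid_k^\heartsuit)\to\Solid_k^\heartsuit$, computed by injective resolutions that leave $\Pro(\Vect_k^\omega)$; exactness of the restriction of the $1$-categorical limit functor to a subcategory does not make diagrams in that subcategory $\lim$-acyclic. The statement is true, but the proof that works is the dual one: a cofiltered diagram $\{V_i\}$ in $\ProVect$ is $\{\RHom(W_i,k)\}$ for a filtered diagram of discrete (hence projective) vector spaces $W_i$, so $\lim V_i[0]\simeq\RHom(\colim W_i[0],k[0])$ is discrete. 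In (3), you correctly identify the interchange of the infinite K\"unneth sum with the inverse limit as the obstacle, but the proposed Postnikov repair is circular: replacing $A$ by $\lim_m\tau_{\leq m}A$ requires commuting $-\otimes\lim_\alpha B_\alpha$ past that inverse limit, which is again an instance of (3). And the interchange really does fail in general: an infinite direct sum of inverse limits of profinite modules need not agree with the inverse limit of the direct sums (relatedly, infinite direct sums of profinite modules are not profinite). The idea that rescues both (2) and (3) at once is the paper's: under the duality $\Solid_k^{\pro}\simeq\Mod_k^{\op}$, which is symmetric monoidal, limits become colimits, homology commutes with filtered colimits, and the tensor product commutes with all colimits. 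I would rebuild (2) and (3) on that equivalence rather than on degreewise computations.
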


\begin{proof}
	Any complex in $\Solid_k^{\pro}$ is split and has projective homology, so we can see that for $V \in \Solid_k^{\pro}$ its dual satisfies $V^\vee \simeq \bigoplus_n \pi_n (V)^\vee[-n]$, so $V^\vee$ is in $\Mod_k \subset \Solid_k$ and $\pi_n (V^\vee) = \pi_{-n} (V)^\vee$. We can take duals again to obtain $V$ as its own double dual. Objects being split with projective homology in both $\Solid_k^{\pro}$ and $\Mod_k$ makes the computation of mapping spaces trivial, so we can see that duality gives an equivalence $\Solid_k^{\pro} \simeq \Mod_k^{\op}$. We can check directly on objects that this is symmetric monoidal as well.

	Duality takes colimits to limits, and since $\Mod_k$ is closed under colimits we see that $\Solid_k^{\pro}$ is closed under all limits. It is stable, so it is also closed under finite colimits. Statements $2$ and $3$ now follow from the dual statements in $\Mod_k$.

	For the last part, we saw that $\Solid_k^{\pro}$ is closed under inverse limits and that inverses limits commute with homology in $\Solid_k^{\pro}$, so $\Solid_{k, \geq 0}^{\pro}$ is closed under inverse limits. Let $V_\bullet$ be a simplicial object in $\Solid_{k, \geq 0}^{\aperf}$. Then, $\tau_{\leq n} |V_\bullet| = \tau_{\leq n} \colim_{\Delta_{\leq n+1}^{\op}} V_\bullet$ (\cite{HTT}, Lemma 6.5.3.10). Since $\Solid_{k, \geq 0}^{\pro}$ is closed under finite colimits, we see that $\colim_{\Delta_{\leq n+1}^{\op}} V_\bullet \in \Solid_{k, \geq 0}^{\pro}$. This is true for every $n$ so it has profinite homology and $|V_\bullet| \in \Solid_{k, \geq 0}^{\pro}$.
\end{proof}

The dual statement of the last part gives us that $\Mod_{k, \leq 0} \subset \Solid_k$ is closed under totalisations.

\begin{lemma}
	\label{prop-inversegeom}
	Filtered colimits commute with totalisations in $\Mod_{k, \leq 0}$. Dually, inverse limits commute with geometric realisations in $\Solid_{k, \geq 0}^{\aperf}$.
\end{lemma}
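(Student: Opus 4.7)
The plan is to use the duality $\Solid_k^{\pro} \simeq \Mod_k^{\op}$ established in the proof of \cref{prop-complexesprohomology} to reduce the second statement to the first, and then to prove the first by reducing totalisations to finite truncations. Under this duality the full subcategory $\Solid_{k, \geq 0}^{\aperf} = \Solid_{k, \geq 0}^{\pro}$ corresponds to $\Mod_{k, \leq 0}^{\op}$; inverse limits in $\Solid_{k, \geq 0}^{\aperf}$ correspond to filtered colimits in $\Mod_{k, \leq 0}$, and geometric realisations correspond to totalisations. So it is enough to show that filtered colimits commute with totalisations in $\Mod_{k, \leq 0}$.

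Given a small filtered diagram $\{V^\bullet_i\}_{i \in I}$ of cosimplicial objects in $\Mod_{k, \leq 0}$, both $\colim_i \Tot V^\bullet_i$ and $\Tot \colim_i V^\bullet_i$ are again coconnective: the former because homotopy groups commute with filtered colimits in $\Mod_k$, the latter because the inclusion $\Mod_{k, \leq 0} \hookrightarrow \Mod_k$ preserves limits. It therefore suffices to check that the canonical comparison map becomes an equivalence after applying $\tau_{\geq -n}$ for every $n \geq 0$. The key input is the standard connectivity estimate for cosimplicial Tot, dual to the estimate used in the proof of \cref{prop-complexesprohomology}(4): for any cosimplicial $W^\bullet$ in $\Mod_{k, \leq 0}$, the canonical map $\Tot W^\bullet \to \Tot_{\leq n+1} W^\bullet$ from the full totalisation to the partial totalisation over $\Delta_{\leq n+1}$ is an equivalence after applying $\tau_{\geq -n}$. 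This is exactly (\cite{HTT}, Lemma 6.5.3.10) transported through the opposite category equivalence on bounded objects.

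Granted this estimate, the rest is mechanical. The truncation $\tau_{\geq -n}$ commutes with filtered colimits (since $\pi_j$ does), and $\Tot_{\leq n+1}$, being a limit over the finite $1$-category $\Delta_{\leq n+1}$, commutes with filtered colimits in the stable presentable $\infty$-category $\Mod_k$. Combining these identifications gives
\begin{align*}
    \tau_{\geq -n} \colim_i \Tot V^\bullet_i \simeq \colim_i \tau_{\geq -n} \Tot_{\leq n+1} V^\bullet_i \simeq \tau_{\geq -n} \Tot_{\leq n+1} \colim_i V^\bullet_i \simeq \tau_{\geq -n} \Tot \colim_i V^\bullet_i,
\end{align*}
which is the required equivalence. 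The only nontrivial step is the connectivity estimate, and since it is a direct dualisation of a result already invoked in the paper, the main obstacle reduces to carefully unwinding the opposite-category equivalence; everything else is bookkeeping with truncations and finite limits.
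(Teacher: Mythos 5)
Your proof is correct and follows essentially the same route as the paper's: reduce to truncations, replace $\Tot$ by the partial totalisation over $\Delta_{\leq n+1}$ via (HTT, 6.5.3.10), and use that finite limits commute with filtered colimits in a stable $\infty$-category, with the second statement obtained by the duality $\Solid_{k,\geq 0}^{\aperf} \simeq \Mod_{k,\leq 0}^{\op}$. The only difference is that you spell out the duality reduction and the coconnectivity bookkeeping, which the paper leaves implicit.
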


\begin{proof}
	Let $\{V^\bullet _i\}$ be a filtered system of cosimplicial objects in $\Mod_{k, \leq 0}$. It suffices to show that the map $ \tau_{\geq n} \varinjlim \Tot (V_i^\bullet) \to \tau_{\geq n} \Tot (\varinjlim V^\bullet _i)$ is an equivalence for every $n < 0$. Since homology commutes with filtered colimits in $\Mod_k$, we need to show that the natural map $\varinjlim \tau_{\geq n} \Tot (V_i^\bullet) \to \tau_{\geq n} \Tot(\varinjlim V^\bullet_i)$ is an equivalence.

	By (\cite{HTT}, Proposition 6.5.3.10), it suffices to show that the map
	\begin{align*}
		\varinjlim \lim_{\Delta_{\leq n+1}} V_i^\bullet \to \lim_{\Delta_{\leq n+1}} \varinjlim V_i^\bullet
	\end{align*}
	is $n$-coconnective for all $n$. This is true since colimits and finite limits commute in a stable $\infty$-category.
\end{proof}

\section{Ultrasolid commutative algebra}
\label{section-commalg}

\subsection{Ultrasolid $k$-algebras and their modules}
\label{section-rings}

\begin{defi}
	An \emph{ultrasolid $k$-algebra} is a commutative algebra object in the category $\Solid_k^\heartsuit$. We write $\CAlg_k^{\blacksquare, \heartsuit}$ for the category of ultrasolid $k$-algebras.
\end{defi}

\begin{rem}
	\label{rem-ringmonad}
	The category of ultrasolid $k$-algebras is the category of algebras over the free utrasolid $k$-algebra monad on $\Solid_k^\heartsuit$ (\cite{HA}, Example 4.7.3.11), which we will write as $\LSym^*$. By (\cite{HA}, Proposition 3.1.3.13), this monad is given on $V \in \Solid_k^\heartsuit$ by
	\begin{align*}
		\LSym^* (V) := \bigoplus_{n \geq 0} V^{\otimes n}_{\Sigma_n}
	\end{align*}
	We will write $\LSym^n : \Solid_k^\heartsuit \to \Solid_k^\heartsuit$ for the functor $V \mapsto V^{\otimes n} _{\Sigma_n}$.

	For $V = \prod_I k$, we have that
	\begin{align*}
		\LSym^* (V) = \bigoplus_{n \geq 0} \prod_{I^n_{\Sigma_n}} k
	\end{align*}
	This can be thought of as the power series ring on variables $\{x_i\}_{i \in I}$ where we allow arbitrary sums of same degree monomials. That is, we allow $\sum_{i \in I} x_i$ and $\sum_{i,j \in I} x_i x_j$ but we don't allow $\sum_{n \geq 0} x_i^n$. More precisely, this $k$-algebra has a natural grading, and \emph{as a graded $k$-algebra}, we can write it as $\varprojlim _{J \subset I} \LSym^* (\prod_J k)$, where the inverse limit is taken over all finite subsets $J \subset I$. Instead, we can take this inverse limit in the category of graded ultrasolid $k$-algebras, embedding each $\LSym^* (\prod_J k)$ into ultrasolid $k$-algebras via the symmetric monoidal embedding $\Mod_k^\heartsuit \xhookrightarrow{} \Solid_k^\heartsuit$. The underlying ultrasolid $k$-algebra is $\LSym^* (V)$.

	We can also consider completions of these algebras with respect to their maximal ideal $\bigoplus_{n > 0} \LSym^n (V)$.
	\begin{align*}
		\widehat{\LSym^*} (V) = \prod_{n \geq 0} V^{\otimes n}_{\Sigma_n}
	\end{align*}
	(see \cref{construction-freedeltacomplete} for the multiplicative structure on $\widehat{\LSym}^* (V)$). Similarly, this is the ring of formal power series on variables $\{x_i\}_{i \in I}$ where we allow any infinite sum of distinct monomials of any degree.
\end{rem}

\begin{rem}
	Even though we are not working with derived functors yet, we are still referring to the symmetric algebra monad as $\LSym^*$. This will be in order to not confuse it with the $\Sym^*$ monad used to define $\EE_\infty$ ultrasolid $k$-algebras.
\end{rem}

\begin{defi}
	Let $A$ be an ultrasolid $k$-algebra. Then, we will write $\Solid_A^\heartsuit$ for the category of $A$-modules in $\Solid_k^\heartsuit$.	
\end{defi}

\begin{prop}
	\label{prop-monadicadj}
	There is a monadic adjunction
	\begin{align*}
		- \otimes A : \Solid_k ^\heartsuit \leftrightarrows \Solid_A^\heartsuit : \forget
	\end{align*}
	The category $\Solid_k^\heartsuit$ is generated by compact projectives of the form $V \otimes A$, where $V \in \ProVect$, and there are enough projectives. It is abelian, complete and cocomplete and satisfies the same Grothendieck axioms as $\Solid_k^\heartsuit$.
\end{prop}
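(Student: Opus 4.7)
The plan is to deduce everything from the corresponding structure on $\Solid_k^\heartsuit$ via the tensor-forget adjunction. The adjunction itself is standard: the $A$-action on $V \otimes A$ is given by multiplication on the right factor, the counit on an $A$-module $M$ is the action map $M \otimes A \to M$, and the triangle identities are immediate. For monadicity, I observe that $\Solid_A^\heartsuit$ is by construction the category of module objects over the commutative algebra $A$ in $\Solid_k^\heartsuit$, hence the Eilenberg-Moore category of the monad $T := - \otimes A$. One can also verify Beck's theorem directly: $\forget$ is conservative (maps of $A$-modules are isomorphisms iff they are so on underlying ultrasolid modules), and $T$ preserves all colimits since it admits the right adjoint $\condHom(A,-)$ constructed in \cref{section-tensorproduct}, so in particular $\forget$-split coequalizers are created.

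To transfer the categorical properties, I would compute limits and colimits through the adjunction. Limits in $\Solid_A^\heartsuit$ are computed on underlying ultrasolid modules, since $\forget$ creates them (being monadic with left adjoint preserving all colimits). Colimits exist via the standard monadic presentation: every $A$-module sits in a reflexive coequalizer $M \otimes A \otimes A \rightrightarrows M \otimes A \to M$, so colimits reduce to the free case, where they are computed by applying $- \otimes A$ to colimits in $\Solid_k^\heartsuit$. The abelian structure transfers because $- \otimes A$ is exact: every ultrasolid module is flat by \cref{prop-flat}, so both kernels and cokernels may be computed on underlying modules. The Grothendieck axiom AB5 follows since filtered colimits are computed in $\Solid_k^\heartsuit$, where they are already exact.

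For the generation claim (which I read as concerning $\Solid_A^\heartsuit$), I would use the adjunction isomorphism $\Hom_{\Solid_A^\heartsuit}(V \otimes A, N) \cong \Hom_{\Solid_k^\heartsuit}(V, \forget N)$. Since $V \in \ProVect$ is compact projective in $\Solid_k^\heartsuit$ and $\forget$ preserves filtered colimits and epimorphisms (both detected pointwise), $V \otimes A$ inherits compact projectivity. For generation, for any $M \in \Solid_A^\heartsuit$ the canonical map $\bigoplus_{V \to \forget M} V \otimes A \to M$ indexed over $V \in \ProVectsigma$ with a map $V \to \forget M$ (for $\sigma$ sufficiently large) is surjective; this simultaneously yields generation and enough projectives. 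The only nontrivial input throughout is the flatness of ultrasolid modules, which ensures that $- \otimes A$ is exact and hence that the abelian structure and Grothendieck axioms pass to $\Solid_A^\heartsuit$ without loss; no step is expected to present genuine difficulty.
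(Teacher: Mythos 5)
Your proposal is correct and follows essentially the same route as the paper: monadicity via the forgetful functor preserving (indeed creating) limits and colimits, compact projectivity of $V \otimes A$ via the adjunction isomorphism $\Hom_{\Solid_A^\heartsuit}(V \otimes A, -) \cong \Hom_{\Solid_k^\heartsuit}(V, -)$, and generation plus enough projectives via the canonical surjection $\bigoplus_{V \to M} V \otimes A \to M$. The appeal to flatness of ultrasolid modules is harmless but unnecessary — the abelian structure and Grothendieck axioms transfer simply because the conservative forgetful functor computes both limits and colimits in $\Solid_k^\heartsuit$, which is also the paper's (terser) justification; you also rightly read the misprint ``$\Solid_k^\heartsuit$ is generated by\dots'' as referring to $\Solid_A^\heartsuit$.
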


\begin{proof}
	The proof follows in the same way as the classical case. Since the forgetful functor commutes with small limits and colimits, it is a monadic adjunction.

	This adjunction makes it clear that $V \otimes A$, where $V \in \ProVect$, generate. Also, $\Hom_{\Solid_A^\heartsuit} (V \otimes A, -) \cong \Hom _{\Solid_k^\heartsuit} (V, -)$, and since $V$ is compact projective in $\Solid_k^\heartsuit$ and colimits of $A$-modules are computed in $\Solid_k^\heartsuit$, the objects $V \otimes A$ are compact and projective in $\Solid_A^\heartsuit$.

	For $M \in \Solid_A^\heartsuit$, we have a surjection $\bigoplus_{V \to M} A \otimes V$, where we are taking the sum over all $V \in \ProVect$ with a map $V \to M$ (by restricting to a suitable cardinality), so there are enough projectives.

	The last statement follows from the equivalent result in $\Solid_k^\heartsuit$ since limits and colimits can be computed there and the forgetful functor is conservative.
\end{proof}

\begin{defi}
	We write $\Solid_A$ for the $\infty$-category of chain complexes of $A$-modules.
\end{defi}

The $\infty$-category $\Solid_A$ inherits a symmetric monoidal structure from $\Solid_A^\heartsuit$. In order to compute $\Tor$ groups, we need to check that projectives are flat.

\begin{prop}
	Let $A$ be an ultrasolid $k$-algebra and $V \in \ProVect$. Then, $V \otimes A$ is flat as an $A$-module.
\end{prop}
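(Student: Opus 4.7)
The plan is to reduce flatness of $V \otimes A$ as an $A$-module to flatness of $V$ as an object of $\Solid_k^\heartsuit$, which was established in \cref{prop-flat}.

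First I would prove the natural isomorphism
\begin{align*}
    (V \otimes A) \otimes_A M \;\cong\; V \otimes M
\end{align*}
for any $M \in \Solid_A^\heartsuit$, where the right-hand side is formed in $\Solid_k^\heartsuit$ and carries the $A$-action inherited from $M$. Both sides are functors of $V \in \Solid_k^\heartsuit$ that commute with arbitrary colimits (using the adjunction $-\otimes A \dashv \forget$ from \cref{prop-monadicadj} on the left, and the fact that $-\otimes M$ commutes with colimits on the right), and both evaluate to $M$ when $V = k$. Since $\Solid_k^\heartsuit$ is the sifted cocompletion of $\ProVect$ and the Yoneda embedding preserves finite colimits, this forces the two functors to agree.

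Next I would observe that the forgetful functor $\Solid_A^\heartsuit \to \Solid_k^\heartsuit$ is exact, since by \cref{prop-monadicadj} it is both a left and a right adjoint and kernels/cokernels of $A$-module maps are computed in $\Solid_k^\heartsuit$. Therefore any short exact sequence $0 \to M_1 \to M_2 \to M_3 \to 0$ in $\Solid_A^\heartsuit$ restricts to a short exact sequence in $\Solid_k^\heartsuit$.

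Now applying the natural isomorphism above termwise, the sequence obtained after tensoring with $V \otimes A$ over $A$ agrees with $V \otimes M_1 \to V \otimes M_2 \to V \otimes M_3$. By \cref{prop-flat} every ultrasolid module is flat, so this sequence is exact in $\Solid_k^\heartsuit$, and hence also in $\Solid_A^\heartsuit$ (again because exactness is detected after forgetting to $\Solid_k^\heartsuit$). This proves $V \otimes A$ is flat over $A$. The only nontrivial step is the identification $(V \otimes A)\otimes_A M \cong V \otimes M$; everything else is formal from the adjunction structure and the flatness result already in hand.
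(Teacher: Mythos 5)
Your overall strategy coincides with the paper's: reduce to the identification $(V\otimes A)\otimes_A M\cong V\otimes M$, observe that exactness of $A$-modules is detected after forgetting to $\Solid_k^\heartsuit$, and invoke \cref{prop-flat}. The paper simply asserts the identification; you attempt to justify it, and that is where your argument has a flaw.

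Two colimit-preserving functors out of $\Solid_k^\heartsuit$ that agree at $V=k$ need not agree: $k$ is not a colimit-generator of $\Solid_k^\heartsuit$. The compact projective generators are the profinite vector spaces $\prod_I k$, which are \emph{inverse} limits of finite-dimensional spaces and are not built from $k$ by colimits (this is precisely the point at which ultrasolid modules differ from ordinary vector spaces, where $k$ does generate under colimits). So "both functors preserve colimits and agree on $k$, hence agree" does not close; you would need agreement on all of $\ProVect$, which is no easier than the original claim. The identification itself is of course true and can be obtained more directly: either from associativity and unitality of the relative tensor product, $(V\otimes A)\otimes_A M\cong V\otimes(A\otimes_A M)\cong V\otimes M$, or by checking that both functors $\Solid_k^\heartsuit\to\Solid_A^\heartsuit$, $V\mapsto (V\otimes A)\otimes_A M$ and $V\mapsto V\otimes M$, have the same right adjoint $N\mapsto \condHom_A(M,N)$ via the tensor-hom adjunction. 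With that repair the rest of your argument is correct and matches the paper's proof.
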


\begin{proof}
	For $M \in \Solid_A^\heartsuit$, we can write $M \otimes_A (V \otimes A) = M \otimes V$. The forgetful functor $\Solid_A ^\heartsuit \to \Solid_k ^\heartsuit$ preserves small limits and colimits, so tensoring by $V \otimes A$ preserves exactness, since $V$ is flat as an ultrasolid $k$-module (\cref{prop-flat}).
\end{proof}

\begin{rem}
	In particular, to compute $M \otimes_A^L N$ for $M,N \in \Solid_A^\heartsuit$, we can resolve $M$ by projectives of the form $\bigoplus_I  V_i \otimes A$ with $V_i \in \ProVect$, and take the tensor product with $N$.
\end{rem}

\subsection{Coherent ultrasolid $k$-algebras}

\begin{example}
	Remember that a commutative ring $R$ is \emph{coherent} if every finitely generated ideal of $R$ is finitely presented.

	Finitely presented $R$-modules are precisely the compact objects in $\Mod_R^\heartsuit$, so another way to phrase coherence is that a ring is coherent if and only if the subcategory of compact objects of $\Mod_R^\heartsuit$ is closed under finite limits (\cite{coherent}). Notice that then $(\Mod_R^\heartsuit)^\omega$ forms an abelian category.
\end{example}

Coherence is arguably a more well-behaved notion than that of a Noetherian ring, which is often too strong. In this section we discuss coherent ultrasolid $k$-algebras and some of their properties.

\begin{defi}
	An ultrasolid $k$-algebra $R$ is \emph{coherent} if the subcategory of compact objects of $\Solid_R^\heartsuit$ is closed under finite limits.
\end{defi}

We will first try to characterise the compact objects in $\Solid_R^\heartsuit$ and then find some examples of ultrasolid $k$-algebras that are indeed coherent. Throughout the rest of this section we fix an ultrasolid $k$-algebra $R$.

\begin{defi}
	Let $R$ be an ultrasolid $k$-algebra.

	An $R$-module $M$ is \emph{profinitely generated} if it admits a surjection $R \otimes V \twoheadrightarrow M$ for some $V \in \ProVect$.

	An $R$-module $M$ is \emph{profinitely presented} if there is an exact sequence of the form
	\begin{align*}
		R \otimes V \to R \otimes W \to M \to 0
	\end{align*}
	for some $V,W \in \ProVect$.
\end{defi}

\begin{prop}
	An $R$-module is profinitely presented if and only if it is a compact object in $\Solid_R^\heartsuit$.
\end{prop}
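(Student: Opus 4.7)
The plan is to leverage \cref{prop-monadicadj}, which establishes that $R \otimes V$ is compact projective for $V \in \ProVect$ and that these objects generate $\Solid_R^\heartsuit$. The ``if'' direction is then immediate: a profinitely presented $M$ is the cokernel of a map between two such compact objects, hence a finite colimit of compact objects, hence compact.

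For the converse, the strategy is to first exhibit any compact $M$ as a retract of some $R \otimes V$ with $V \in \ProVect$, and then extract the presentation from the retraction. To get the retraction, I will show
\begin{align*}
    M \simeq \colim_{R \otimes V \to M} R \otimes V,
\end{align*}
where the colimit runs over the comma category of maps from objects $R \otimes V$ with $V \in \ProVect$ into $M$ (after restricting to a suitable cardinality bound so the diagram is small, as throughout the paper). This indexing category is filtered because $\ProVect$ is closed under finite coproducts: any finite collection of maps $\{R \otimes V_i \to M\}$ is dominated by $R \otimes (\bigoplus_i V_i) \to M$. Compactness of $M$ then forces the identity $M \to M$ to factor through some $R \otimes V$, exhibiting $M$ as a retract.

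The final step is routine: given a retraction $R \otimes V \twoheadrightarrow M \hookrightarrow R \otimes V$, write $e: R \otimes V \to R \otimes V$ for the resulting idempotent. Then $M \cong \coker(\id - e)$, which gives the required profinite presentation (with both free modules equal to $R \otimes V$).

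The main subtlety is smallness/cardinality bookkeeping in the filtered-colimit description of $M$; this mirrors the argument in \cref{lemma-ultrasolidind} and the generation statement of \cref{prop-monadicadj}, where one restricts $V$ to have dimension below a sufficiently large uncountable strong limit cardinal $\sigma$ so that the colimit is genuinely indexed by a small filtered category.
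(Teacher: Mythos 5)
Your ``if'' direction is fine, but the converse contains a fatal error: the claim that
$M \simeq \colim_{R \otimes V \to M} R \otimes V$, with the colimit taken over the comma category of \emph{free} modules mapping to $M$. The index category is indeed filtered and the canonical map from the colimit to $M$ is an epimorphism, but it is not a monomorphism in general: a relation, i.e.\ an element of $\ker(R \otimes V \to M)$, need not die at any finite stage of the comma category, because the transition maps are required to commute with the maps to $M$. If your identity held, compactness would exhibit every compact $M$ as a retract of some $R \otimes V$, hence as a projective (and flat) $R$-module. This is false: take $R = k[[x]]$ (complete, profinite, hence coherent by \cref{lemma-profinitecoherent}) and $M = k = \coker(R \xrightarrow{x} R)$. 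Then $M$ is profinitely presented, hence compact, but the Koszul resolution gives $\Tor_1^R(k,k) \cong k \neq 0$, so $k$ is not flat and cannot be a retract of any free module. The analogous statement in ordinary algebra — that compact objects of $\Mod_R^{\heartsuit}$ are the finitely presented modules, which are generally not retracts of finite free modules — should have been a warning sign. The colimit formula you are implicitly borrowing from \cref{lemma-ultrasolidind} works there precisely because $\ProVect$ already \emph{is} the full subcategory of compact objects of $\Solid_k^{\heartsuit}$; here the frees $R \otimes V$ are a strictly smaller class than the compacts.

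The paper's proof repairs exactly this point: it writes an arbitrary $M$ as a filtered colimit of \emph{profinitely presented} modules (first expressing $M$ as a filtered colimit of its profinitely generated submodules, then resolving each of those), so compactness only yields that $M$ is a retract of a profinitely presented module. One then still has to show that such retracts are profinitely presented, which is not a one-line idempotent-splitting argument; the paper proves the stronger statement that in a short exact sequence $0 \to M_1 \to M_2 \to M_3 \to 0$ with $M_2$ profinitely presented and $M_1$ profinitely generated, the quotient $M_3$ is profinitely presented. Your final step ($M \cong \coker(\id - e)$) is the one part of your retract strategy that would survive, but only after replacing ``retract of a free module'' by ``retract of a profinitely presented module,'' and then it no longer gives a presentation for free.
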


\begin{proof}
	Let $V \in \ProVect$. Remember that from the adjunction with ultrasolid $k$-modules,
	\begin{align*}
		\Hom_{\Solid_R^\heartsuit} (R \otimes V, -) \cong \Hom _{\Solid_k^\heartsuit} (V, -)
	\end{align*}
	Colimits of $R$-modules are formed in $\Solid_k^\heartsuit$ and $V$ is compact in $\Solid_k^\heartsuit$. It follows that $R \otimes V$ is compact. Given that compact objects are closed under finite colimits, we get that all profinitely presented $R$-modules are compact.

	Conversely, suppose $M \in \Solid_R^\heartsuit$ is compact. We will first show that $M$ can be written as a filtered colimit of profinitely presented modules. For $V \in \ProVect$, any element in $M(V)$ can be identified with a map $V \otimes R \to M$, so $M(V)$ lies in the image of a profinitely generated $R$-module. Then, we see that $M$ can be written as a colimit of its profinitely generated submodules (restricting to a suitable cardinality). This colimit is filtered because if $\{M_i\}$ a finite diagram of profinitely generated modules admitting each a surjection from $V_i \otimes R$ with $V_i \in \ProVect$, then $\colim M_i$ admits a surjection from $ R \otimes \bigoplus V_i$. We may then just show that a profinitely generated module can be written as a filtered colimit of profinitely presented modules. 

	If $M$ is profinitely generated, then $M \cong \coker (N \to (V \otimes R))$ for some $N \in \Solid_R^\heartsuit$ and $V \in \ProVect$. Again, we can write $N$ as a filtered colimit of profinitely generated modules, which gives us a surjection $\varinjlim W_i \otimes R \to N$, where each $W_i \in \ProVect$. This expresses $M$ as a filtered colimit of profinitely presented modules. Hence, any $R$-module can be obtained as a filtered colimit of profinitely presented modules.

	$M$ is compact, so the identity map must factor through a profinitely presented module, which implies that $M$ is a retract of a profinitely presented $R$-module. Hence, it suffices to show that retracts of profinitely presented $R$-modules are profinitely presented. We will prove the slightly more general fact that if
	\begin{align*}
		0 \to M_1 \to M_2 \to M_3 \to 0
	\end{align*}
	is a short exact sequence of $R$-modules, $M_2$ is profinitely presented and $M_1$ is profinitely generated, then $M_3$ is profinitely presented. The argument is taken from \cite[\href{https://stacks.math.columbia.edu/tag/0517}{Tag 0517}]{stacks-project} and adapted to our situation.

	Choose a profinite presentation
	\begin{align*}
		R \otimes U \to R \otimes V \to M_2 \to 0
	\end{align*}
	and a surjection $R \otimes W \twoheadrightarrow M_1$ for some $U,V,W \in \ProVect$. By projectivity, we can factorise the composition $R \otimes W \to M_1 \to M_2$ as $R \otimes W \to R \otimes V \to M_2$. We now claim that the sequence 	\begin{align*}
		R \otimes (U \oplus W) \xrightarrow{f} R \otimes V \xrightarrow{g} M_3 \to 0
	\end{align*}
	is exact (we have labelled the arrows for ease of notation).

	$g$ is surjective because it is a composition of surjections $R \otimes V \to M_2 \to M_3$. It is clear that $g \circ f = 0$ by construction, so $\im f \subset \ker g$. It remains to show that $\im f \supset \ker g$. Since $R \otimes U$ is contained in the kernel, it is enough to show the surjectivity after quotienting by $R \otimes U$. We get a sequence $R \otimes W \xrightarrow{\tilde{f}} M_2 \to M_3$, but we have a factorisation $R \otimes W \twoheadrightarrow M_1 \to M_2 \to M_3$, so it is clear that $R \otimes W$ surjects onto the kernel of $M_2 \to M_3$.
\end{proof}

We now introduce a rich variety of coherent ultrasolid $k$-algebras.

\begin{lemma}
	\label{lemma-profinitecoherent}
	Let $R$ be an ultrasolid $k$-algebra such that the underlying ultrasolid $k$-module is profinite. Then, it is coherent.	

	Additionally, an $R$-module is profinitely presented if and only if the underlying ultrasolid module is profinite.
\end{lemma}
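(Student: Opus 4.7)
The plan is to prove the characterization of profinitely presented $R$-modules first, since coherence will follow easily from it together with \cref{prop-profinite}.

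For the easy direction of the characterization, suppose $M$ is profinitely presented, so it fits in an exact sequence $R \otimes V \to R \otimes W \to M \to 0$ with $V, W \in \ProVect$. Since $R$ is profinite and the tensor product on profinite modules is given by $\prod_I k \otimes \prod_J k = \prod_{I \times J} k$, both $R \otimes V$ and $R \otimes W$ are profinite. By \cref{prop-profinite}, $\ProVect$ is closed under finite colimits, so the cokernel $M$ is profinite as an ultrasolid $k$-module.

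For the converse, suppose $M$ is an $R$-module whose underlying ultrasolid module $M^{\flat}$ is profinite. The counit of the free-forgetful adjunction (\cref{prop-monadicadj}) gives the multiplication map $R \otimes M^{\flat} \twoheadrightarrow M$, which is surjective since $1_R \otimes m \mapsto m$. Thus $M$ is profinitely generated, and its kernel $N$ is a submodule of the profinite module $R \otimes M^{\flat}$. By \cref{prop-profinite}, kernels of maps between profinite modules are profinite, so $N^{\flat}$ is profinite. Applying the same argument to $N$, we get a surjection $R \otimes N^{\flat} \twoheadrightarrow N$, and composing yields an exact sequence
\begin{align*}
    R \otimes N^{\flat} \to R \otimes M^{\flat} \to M \to 0
\end{align*}
which exhibits $M$ as profinitely presented.

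From this characterization, coherence of $R$ follows formally: profinitely presented $R$-modules are precisely the compact objects in $\Solid_R^\heartsuit$, and a finite limit of such modules (computed in $\Solid_R^\heartsuit$, equivalently in $\Solid_k^\heartsuit$ by \cref{prop-monadicadj}) has profinite underlying ultrasolid module, since profinite modules are closed under finite limits by \cref{prop-profinite}. Hence the limit is again profinitely presented, so compact objects are closed under finite limits, which is the definition of coherence. The main subtlety is in the converse direction of the characterization, where one must be careful that the \emph{free} $R$-module on a profinite ultrasolid module remains profinite—this is exactly where the profiniteness hypothesis on $R$ is used, via the explicit formula for tensor products in $\ProVect$.
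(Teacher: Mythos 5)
Your proof is correct and follows essentially the same route as the paper: reduce coherence to the characterization of profinitely presented modules via closure of $\ProVect$ under finite limits, and prove the converse direction by surjecting $R \otimes M$ onto $M$ and iterating on the (profinite) kernel. The only cosmetic difference is that you phrase the surjection as the counit of the free--forgetful adjunction, whereas the paper extends the identity $M \to M$ of ultrasolid modules to an $R$-module map $R \otimes M \to M$ --- these are the same map.
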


\begin{proof}
	We may only show the second part, since any limit of $R$-modules is computed in $k$-modules, and finite limits of profinite modules are profinite.

	Suppose $M \in (\Solid_{R}^\heartsuit)^\omega$ (that is, $M$ is a compact object in $\Solid_R^\heartsuit$). Then, there is an exact sequence
	\begin{align*}
		R \otimes V \to R \otimes W \to M \to 0
	\end{align*}
	where $V, W \in \ProVect$. Since $R$ is profinite, $R \otimes V$ must be profinite (\cref{prop-profinite}). Hence, $M$ is a cokernel of profinite modules, so it is profinite.

	Conversely, suppose $M \in \Solid_R^\heartsuit$ is such that the underlying ultrasolid $k$-module is profinite. Then, it admits a surjection of ultrasolid $k$-modules $V \to M$, where $V \in \ProVect$ (for example, by taking $V = M$), which extends to a surjection of $R$-modules $R \otimes V  \to M$. Clearly, the kernel is profinite so we can repeat the argument to get an exact sequence of the form $R \otimes W \to R \otimes V \to M \to 0$, where $W \in \ProVect$.
\end{proof}

\begin{example}
	In particular, if $R$ is an ordinary complete local Noetherian $k$-algebra with residue field $k$, we can write $R = \varprojlim R/\mathfrak{m}^i$, where $\mathfrak{m}$ is its maximal ideal. The fact that it is Noetherian implies that all of the $R/\mathfrak{m}^i$ are finite-dimensional $k$-vector spaces, so if we take this inverse limit as an ultrasolid $k$-module we get a profinite $k$-module. Hence, by \cref{lemma-profinitecoherent}, complete local Noetherian $k$-algebras with residue field $k$ are coherent as ultrasolid $k$-algebras (when given the appropriate ultrasolid structure coming from the maximal ideal).
\end{example}

\begin{rem}
	It is not known to the author whether a discrete Noetherian $k$-algebra is coherent when considered as a discrete ultrasolid $k$-algebra. If we consider the ring $k[x]$ as an ultrasolid $k$-algebra, the projectives of the category of ultrasolid $k[x]$-modules are of the form $k[x] \otimes \prod_I k$. We can now write this as
	\begin{align*}
		\bigoplus_{n \geq 0} \prod_{i \in I} k x_i^n
	\end{align*}
	and there is no obvious reason why the kernel of a map between $k[x]$-modules of this form should be profinitely generated.
\end{rem}

\subsection{Complete profinite ultrasolid $k$-algebras}

We have already seen in \cref{lemma-profinitecoherent} that a good class of examples of coherent ultrasolid $k$-algebras are those whose underlying ultrasolid $k$-module is profinite. If we actually ask for completeness, we can have many properties similar to those of complete local Noetherian $k$-algebras.

\begin{defi}
	An \emph{augmented ultrasolid $k$-algebra} is an ultrasolid $k$-algebra $R$ equipped with a map $R \to k$ such that its composition with the unit $k \to R$ is the identity on $k$. We write $\CAlg_{k/ /k}^{\blacksquare, \heartsuit}$ for the category of augmented ultrasolid $k$-algebras.
	
	Given $R \in \CAlg^{\blacksquare, \heartsuit}_{k/ /k}$, there is an \emph{augmentation ideal} $\mathfrak{m}$, defined as the kernel of the map $R \to k$. The  \emph{adic filtration} on $R$ given by the image of $\mathfrak{m}^{\otimes_R n}$ in $R$. This induces a sequence of successive quotients
	\begin{align*}
		\dots \to R/\mathfrak{m}^3 \to R/\mathfrak{m}^2 \to R/\mathfrak{m} \cong k
	\end{align*}
	An augmented ultrasolid $k$-algebra is \emph{complete} if the natural map $R \to \varprojlim R/\mathfrak{m}^n$ is an isomorphism.

	A \emph{complete profinite ultrasolid $k$-algebra} is a complete augmented ultrasolid $k$-algebra whose underlying ultrasolid $k$-module is profinite.
\end{defi}

\begin{rem}
	Each map $R \to R/\mathfrak{m}^n$ and $R/\mathfrak{m}^{n+1} \to R/\mathfrak{m}^n$ is the corresponding quotient map of algebras, so the map $R \to R/\mathfrak{m}^n$ is a map of ultrasolid $k$-algebras. Hence, if $R$ is complete, $R \cong \varprojlim R/\mathfrak{m}^n$ \emph{as an ultrasolid $k$-algebra}.
\end{rem}

\begin{prop}
	\label{prop-mapsfromcompletediscrete}
	Suppose $R \in \CAlg_{k/ /k}^{\blacksquare, \heartsuit}$ is complete. Then, for $V \in \Solid_k^\heartsuit$, the map $\LSym^* (V) \to \widehat{\LSym}^* (V)$ induces an equivalence
	\begin{align*}
		\Hom_{\CAlg_{k/ /k}^{\blacksquare, \heartsuit}} (\widehat{\LSym}^* (V) , R) \xrightarrow{\cong} \Hom_{\CAlg_{k/ /k}^{\blacksquare, \heartsuit}} (\LSym (V), R)
	\end{align*}
\end{prop}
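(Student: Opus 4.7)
The plan is to identify $\widehat{\LSym}^* (V)$ with the completion of $\LSym^* (V)$ at its augmentation ideal, and then exploit the completeness of $R$ to extend maps uniquely across the comparison.

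First I would verify that, writing $I = \bigoplus_{n > 0} \LSym^n (V)$ for the augmentation ideal of $\LSym^* (V)$, the image of the $k$-fold multiplication $I^{\otimes k} \to \LSym^* (V)$ (tensor taken over $\LSym^*(V)$) equals $I^k = \bigoplus_{n \geq k} \LSym^n (V)$. Hence $\LSym^* (V)/I^k = \bigoplus_{n < k} \LSym^n (V)$, and $\varprojlim_k \LSym^* (V)/I^k = \prod_{n \geq 0} \LSym^n (V) = \widehat{\LSym}^* (V)$ as ultrasolid $k$-algebras, provided the multiplicative structure matches the one from \cref{construction-freedeltacomplete}. The same computation applied to $\widehat{\LSym}^* (V)$ identifies its augmentation ideal $\widehat{I}$ with $\prod_{n > 0} \LSym^n (V)$, its $k$th power with $\prod_{n \geq k} \LSym^n (V)$, and therefore the quotient $\widehat{\LSym}^* (V)/\widehat{I}^k$ with $\LSym^* (V)/I^k$.

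Granted these identifications, the bijection is built as follows. Given $\phi \colon \LSym^* (V) \to R$, the map sends $I^k$ into $\mathfrak{m}^k$ and so descends to compatible quotients $\LSym^* (V)/I^k \to R/\mathfrak{m}^k$; taking $\varprojlim$ and using $R \cong \varprojlim R/\mathfrak{m}^k$ yields an extension $\widetilde{\phi} \colon \widehat{\LSym}^* (V) \to R$ whose restriction along $\LSym^*(V) \to \widehat{\LSym}^*(V)$ recovers $\phi$ by construction. For uniqueness, any $\psi \colon \widehat{\LSym}^* (V) \to R$ similarly induces maps $\widehat{\LSym}^* (V)/\widehat{I}^k \to R/\mathfrak{m}^k$ which, under $\widehat{\LSym}^* (V)/\widehat{I}^k \cong \LSym^* (V)/I^k$, are determined by $\psi|_{\LSym^* (V)}$; completeness of $R$ then recovers $\psi$ uniquely.

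The main obstacle is the identification $\widehat{I}^k \cong \prod_{n \geq k} \LSym^n (V)$. While the analogous statement for $I$ inside $\LSym^* (V)$ is a formal consequence of the grading, the completed version requires that the image of the multiplication $\widehat{I}^{\otimes k} \to \widehat{\LSym}^* (V)$ fills the entire product rather than merely a dense subspace. I expect this to follow from the compatibility of the completed tensor product with inverse limits on $\Pro(\Vect_k^\omega)$ established in \cref{prop-profinite}, which reduces the computation to the case where $V$ is finite-dimensional and $\widehat{\LSym}^* (V)$ is an ordinary power series ring.
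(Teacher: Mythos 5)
Your proposal is correct and follows essentially the same route as the paper: both use completeness of $R$ to reduce to the quotients $R/\mathfrak{m}^n$, use nilpotence of the augmentation ideal there to factor maps through $\widehat{\LSym}^*(V)/\widehat{I}^n$, and identify this quotient with $\LSym^*(V)/I^n \cong \bigoplus_{m<n}\LSym^m(V)$. The identification $\widehat{I}^k = \prod_{n \geq k}\LSym^n(V)$ that you single out as the main obstacle is asserted without further justification in the paper's own proof, so your extra care there (reducing via \cref{prop-profinite} to finite-dimensional $V$, at least when $V$ is profinite) only strengthens the argument.
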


\begin{proof}
	Let $\mathfrak{m}$ be the augmentation ideal of $R$. Then,
	\begin{align*}
		\Hom_{\CAlg_{k/ /k}^{\blacksquare, \heartsuit}} (\widehat{\LSym}^* (V), R) = \varprojlim \Hom_{\CAlg_{k/ /k}^{\blacksquare, \heartsuit}} (\widehat{\LSym}^* (V), R/\mathfrak{m}^n)
	\end{align*}
	Since we are restricting to maps of augmented $k$-algebras, the augmentation ideal $I$ of $\widehat{\LSym}^* (V)$ maps to $\mathfrak{m}$. This ideal is nilpotent in each $R/\mathfrak{m}^n$, so any map $\widehat{\LSym}^* (V) \to R/\mathfrak{m}^n$ factors through $\widehat{\LSym}^* (V)/I^n$. This gives an isomorphism
	\begin{align*}
		\Hom_{\CAlg_{k/ /k}^{\blacksquare, \heartsuit}} (\widehat{\LSym}^* (V), R/\mathfrak{m}^n) & \cong \Hom_{\CAlg_{k/ /k}^{\blacksquare, \heartsuit}} (\widehat{\LSym}^* (V)/I^n , R/\mathfrak{m}^n) \\
																																					 & \cong \Hom_{\CAlg_{k/ /k}^{\blacksquare, \heartsuit}} (\LSym^* (V)/J^n , R/\mathfrak{m}^n)
	\end{align*}
	where $J$ is the augmentation ideal of $\LSym^* (V)$. Here we used that $\LSym^* (V)/J^n \cong \bigoplus_{m < n} \LSym^m (V) \cong \widehat{\LSym}^* (V)/I^n$. By a similar argument,
	\begin{align*}
		\Hom_{\CAlg_{k/ /k}^\blacksquare} (\LSym^* (V)/J^n, R/\mathfrak{m}^n) \cong \Hom_{\CAlg_{k/ /k}^\blacksquare} (\LSym^* (V), R/\mathfrak{m}^n)	
	\end{align*}
	The result follows.
\end{proof}

\begin{rem}
	If $R$ is a complete local Noetherian $k$-algebra with residue field $k$ and augmentation ideal $\mathfrak{m}$, we can construct the ultrasolid $k$-algebra $\varprojlim R/\mathfrak{m}^i$. For a complete local Noetherian $k$-algebra with residue field $k$ and augmentation ideal $\mathfrak{n}$, we have that
	\begin{align*}
		\Hom_{\CAlg^{\blacksquare, \heartsuit}_{k/ /k}} (\varprojlim R/\mathfrak{m}^i, \varprojlim S/\mathfrak{n}^j) \cong \varprojlim \Hom_{\CAlg_{k/ /k}^{\blacksquare, \heartsuit}} (R/\mathfrak{m}^i, S/\mathfrak{n}^i)
	\end{align*}
	The same argument from the previous proof gives the same mapping space computed in the category of complete local Noetherian $k$-algebras with residue field $k$. Thus, we have an embedding from complete local Noetherian $k$-algebras with residue field $k$ into ultrasolid $k$-algebras whose image is precisely those complete ultrasolid $k$-algebras whose successive quotients of the adic filtration are all finite-dimensional vector spaces.
\end{rem}

\begin{prop}
	\label{prop-compactcomplete}
	Let $A$ be an augmented ultrasolid $k$-algebra that is complete and profinite and let $M$ be an $A$-module that is profinitely presented. Then, $M$ is complete.

	That is, if $\mathfrak{m}$ is the augmentation ideal of $A$, the natural map $M \xrightarrow{\simeq} \varprojlim_n M/\mathfrak{m}^n M$ is an isomorphism.
\end{prop}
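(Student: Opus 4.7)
The plan is to reduce the statement to the case of free modules $A \otimes V$ with $V \in \ProVect$, using that a profinitely presented module fits into a presentation by two such free modules, and then extract completeness of the cokernel from the exactness of inverse limits on profinite objects.

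First, I would invoke \cref{lemma-profinitecoherent}, which tells us that $M$ profinitely presented is equivalent to saying the underlying ultrasolid $k$-module of $M$ is profinite; in particular $M$ fits into an exact sequence
\begin{align*}
A \otimes V \to A \otimes W \to M \to 0
\end{align*}
with $V,W \in \ProVect$. Next I would check that $A/\mathfrak{m}^n$ is profinite for every $n$: the ideal $\mathfrak{m} = \ker(A \to k)$ is profinite since $\ProVect$ is closed under finite limits (\cref{prop-profinite}); the powers $\mathfrak{m}^n$, being images of $\mathfrak{m}^{\otimes_A n}$, are profinite because the tensor product of profinite modules is profinite and kernels/cokernels of profinite modules are profinite; hence $A/\mathfrak{m}^n$ is profinite. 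It follows that $(A/\mathfrak{m}^n) \otimes V$, $(A/\mathfrak{m}^n)\otimes W$ and $M/\mathfrak{m}^n M$ are profinite as well.

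The key intermediate step is to show free $A$-modules are complete. Since the $A$-action on $A \otimes V$ is through the first factor, $\mathfrak{m}^n(A \otimes V) = \mathfrak{m}^n \otimes V$, so $(A \otimes V)/\mathfrak{m}^n(A \otimes V) \cong (A/\mathfrak{m}^n) \otimes V$. Using that the tensor product commutes with inverse limits on profinite modules (\cref{prop-profinite}) together with completeness of $A$, we get
\begin{align*}
\varprojlim_n (A \otimes V)/\mathfrak{m}^n (A \otimes V) \cong \varprojlim_n (A/\mathfrak{m}^n) \otimes V \cong \bigl(\varprojlim_n A/\mathfrak{m}^n\bigr) \otimes V \cong A \otimes V,
\end{align*}
and similarly for $W$. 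Thus $A \otimes V$ and $A \otimes W$ are both complete.

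Finally, applying the right-exact functor $-\otimes_A A/\mathfrak{m}^n$ to the presentation of $M$ yields an exact sequence
\begin{align*}
(A/\mathfrak{m}^n) \otimes V \to (A/\mathfrak{m}^n) \otimes W \to M/\mathfrak{m}^n M \to 0
\end{align*}
in which every term is profinite. Since inverse limits are exact on profinite modules (\cref{prop-profinite}), taking $\varprojlim_n$ preserves the cokernel, giving
\begin{align*}
\varprojlim_n M/\mathfrak{m}^n M \cong \coker\bigl(A \otimes V \to A \otimes W\bigr) \cong M,
\end{align*}
as required. The only real obstacle is bookkeeping: ensuring that every module appearing in the filtered system is profinite so that exactness of $\varprojlim$ applies and the tensor product can be pulled through the limit. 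Everything else is formal.
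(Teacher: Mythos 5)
Your proof is correct and follows essentially the same route as the paper: first establish completeness for free modules $A \otimes V$ using that the tensor product commutes with inverse limits of profinite modules, then deduce the general case from a presentation (a finite colimit of such free modules) using exactness of inverse limits in $\Pro(\Vect_k^\omega)$. Your version just spells out the cokernel presentation and the profiniteness bookkeeping more explicitly than the paper does.
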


\begin{proof}
	We first show this for $M = A \otimes V$, where $V \in \Pro(\Vect_k^\omega)$. Then, $M/\mathfrak{m}^n M \cong V \otimes (A/\mathfrak{m}^n)$. Each piece $A/\mathfrak{m}^n$ is profinite, since it is a finite colimit of profinite modules. The tensor product of profinite ultrasolid modules commutes with inverse limits (\cref{prop-profinite}), so the result follows by the completeness of $A$.

	For general $M$, it can be written as a finite colimit of $A$-modules of the form $A \otimes V$ for $V \in \Pro(\Vect_k^\omega)$. We checked the result for this class of $A$-modules, and inverse limits are exact and commute with the tensor product in profinite ultrasolid modules (\cref{prop-profinite}), so we are done.
\end{proof}

\begin{prop}[Ultrasolid Nakayama]
	Let $A \in \CAlg_{k/ /k}^{\blacksquare, \heartsuit}$ be complete and profinite. Suppose that $M \in \Solid_A^\heartsuit$ is profinite and $\mathfrak{m} \otimes_A M = M$, where $\mathfrak{m}$ is the augmentation ideal of $A$. Then, $M = 0$.
\end{prop}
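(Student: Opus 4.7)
The plan is to reduce ultrasolid Nakayama to the completeness statement of \cref{prop-compactcomplete}. The hypothesis $\mathfrak{m} \otimes_A M = M$ says that the multiplication map has image all of $M$, i.e.\ $\mathfrak{m} M = M$ as a submodule of $M$. Iterating this (since $\mathfrak{m}(\mathfrak{m}^{n-1} M) = \mathfrak{m}^{n-1}(\mathfrak{m} M) = \mathfrak{m}^{n-1} M$), one obtains $\mathfrak{m}^n M = M$ for every $n \geq 1$, hence $M/\mathfrak{m}^n M = 0$ for all $n$.

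Next I would use \cref{lemma-profinitecoherent}: since $A$ is itself profinite, an $A$-module is profinitely presented precisely when its underlying ultrasolid $k$-module is profinite. Our $M$ has profinite underlying module by hypothesis, so $M$ is profinitely presented as an $A$-module. Now \cref{prop-compactcomplete} applies to give a canonical isomorphism
\begin{align*}
    M \xrightarrow{\simeq} \varprojlim_n M/\mathfrak{m}^n M.
\end{align*}
Combining this with the previous paragraph, the inverse system is identically zero, hence $M = 0$.

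The argument is short because the work has been front-loaded into \cref{lemma-profinitecoherent} and \cref{prop-compactcomplete}; the only conceptual step is observing that $\mathfrak{m} \otimes_A M = M$ propagates to $\mathfrak{m}^n M = M$ for all $n$. The most delicate point, which is already handled by \cref{prop-compactcomplete} via the fact that inverse limits are exact and commute with the tensor product on profinite ultrasolid modules (\cref{prop-profinite}), is that completeness is automatic for profinite modules over a complete profinite base; without this one would have to argue completeness of $M$ directly and Nakayama would fail as usual without a finiteness assumption.
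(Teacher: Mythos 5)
Your proof is correct and follows essentially the same route as the paper's: both deduce $M/\mathfrak{m}^n M = 0$ for all $n$ by iterating the hypothesis $\mathfrak{m}\otimes_A M = M$, and both then conclude via \cref{lemma-profinitecoherent} and \cref{prop-compactcomplete} that $M \cong \varprojlim_n M/\mathfrak{m}^n M = 0$. The only cosmetic difference is that you iterate on the submodules $\mathfrak{m}^n M \subseteq M$ directly while the paper iterates on the quotients $M/\mathfrak{m}^n$; the content is identical.
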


\begin{proof}
	By \cref{lemma-profinitecoherent}, being profinite is equivalent to being profinitely presented, and by \cref{prop-compactcomplete} $M$ must be complete. We have $\mathfrak{m} \otimes_A M = M$, so $\mathfrak{m} \otimes_A M /\mathfrak{m}^n = M /\mathfrak{m}^n$. Inductively $ M/\mathfrak{m}^n \cong \mathfrak{m}^n \otimes_A M/\mathfrak{m}^n \cong 0$ so we can conclude that $M/\mathfrak{m}^n = 0$. Hence, $M = \varprojlim M/\mathfrak{m}^n = 0$.
\end{proof}

\section{$\EE_\infty$ ultrasolid $k$-algebras}
\label{section-EEinfty}

\subsection{Free algebras and profinite chains of condensed anima}
\label{subsection-Einftyfree}

We have seen that $\Solid_k$ is a symmetric monoidal $\infty$-category. Hence, we can do the usual constructions.

\begin{defi}
	An \emph{$\EE_\infty$ ultrasolid $k$-algebra} is a commutative algebra object in $\Solid_k$ (\cite{HA}, Definition 2.1.3.1).  We will write $\CAlg_k^{\blacksquare}$ for the $\infty$-category of $\EE_\infty$ ultrasolid $k$-algebras, and $\CAlg_k^{\blacksquare, \cn}$ for the full subcategory of connective objects.

	Given an $\EE_\infty$ ultrasolid $k$-algebra $A$, the $\infty$-category of modules over $A$ is written as $\Solid_A$.
\end{defi}

\begin{rem}
	\label{rem-monadicadjmodules}
	An analogue of \cref{prop-monadicadj} is true for modules over $\EE_\infty$ ultrasolid $k$-algebras, with an identical proof.
\end{rem}

\begin{rem}
	The $\infty$-category of $\EE_{\infty}$ ultrasolid $k$-algebras is the $\infty$-category of algebras over the $\Sym^*$  monad (\cite{HA}, Example 4.7.3.11). For $V \in \Solid_k$ we have
	\begin{align*}
		\Sym^* (V) = \bigoplus_{n \geq 0} V^{\otimes n}_{h \Sigma_n}
	\end{align*}
	And we will refer to each functor $V \mapsto V^{\otimes n}_{h \Sigma_n}$ as $\Sym^n$.

	We will write $\CAlg_A^\blacksquare := (\CAlg_k)_{A/}^\blacksquare$ for the $\infty$-category of $\EE_\infty$ ultrasolid $k$-algebras under $A \in \CAlg_k^\blacksquare$. This $\infty$-category can also be written as algebras over the $\Sym_A^*$ monad on $\Solid_{A, \geq 0}$ (\cite{HA}, Example 4.7.3.11) given by
	\begin{align*}
		\Sym_A^* (V) = \bigoplus_{n \geq 0} V^{\otimes_A n}_{h \Sigma_n}
	\end{align*}
	We can apply the Barr-Beck-Lurie Theorem (\cite{HA}, Theorem 4.7.3.5) to see that $\CAlg_A^\blacksquare$ is also monadic over $\Solid_k$, and the monad is given by $A \otimes \Sym^*$.
\end{rem}

In the classical setting, $\Sym^* (k[0])$ can be described via the homology of the symmetric groups, since $\Sym^n (k[0]) \simeq C_* (B\Sigma_n, k)$. We can use condensed anima to obtain a similar description of free $\EE_\infty$ ultrasolid $k$-algebras. See \cref{example-condani} for a definition of the category of condensed anima.

\begin{construction}
	Let $S$ be an extremally disconnected space, which we may write as an inverse limit of finite discrete spaces $S = \varprojlim S_i$.

	We define the \emph{profinite chains} of $S$ to be
	\begin{align*}
		\widehat{C}_\bullet (S,k) := \varprojlim C_\bullet (S_i, k)
	\end{align*}
	where the inverse limit is taken in $\Solid_k$ and the functor $C_\bullet(-,k)$ refers to $k$-valued chains on a space.

	By \cref{example-condani}, this uniquely extends to a sifted-colimit-preserving functor
	\begin{align*}
		\widehat{C}_\bullet (-,k) : \Cond(\Space) \to \Solid_{k, \geq 0}
	\end{align*}
	where $\Cond(\Space)$ is the $\infty$-category of condensed anima.

	Actually, profinite chains commute with small colimits. This is because it clearly commutes with finite coproducts of extremally disconnected spaces, so it will commute with small colimits when we left Kan extend (\cite{HTT}, Proposition 5.5.8.15).
\end{construction}

\begin{rem}
	\label{rem-homologyprospaces}
	We will use throughout this section the fact that if $S = \varprojlim S_i$ is a profinite space, then $\varprojlim k[S_i] = C(S,k) ^\vee$. This is because any continuous map $S \to k$ has finite image, so it factors through one of the $S_i$. Hence, 
	\begin{align*}
		C(S,k)^\vee \cong (\varinjlim C(S_i,k))^\vee \implies C(S,k) ^\vee \cong \varprojlim k[S_i]
	\end{align*}
	This is also true at the level of chain complexes by \cref{prop-complexesprohomology}, so that $\widehat{C}_\bullet (S,k) \simeq C(S,k)^\vee [0]$.
\end{rem}

Remember that by \cref{example-condani}, there are fully faithful embeddings both from the $1$-category of compact Hausdorff spaces and the $\infty$-category of anima into condensed anima.

\begin{lemma}
	If $X \in \Cond(\Space)$ is given by an anima, then profinite chains coincide with usual $k$-valued chains.
\end{lemma}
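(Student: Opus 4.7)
The plan is to reduce to the case of a point: both sides of the claimed equivalence are colimit-preserving functors $\Space \to \Solid_{k, \geq 0}$, and both send the terminal object $* \in \Space$ to $k[0]$.

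First, I would verify colimit-preservation on both sides. The profinite chains functor $\widehat{C}_\bullet(-,k) \colon \Cond(\Space) \to \Solid_{k, \geq 0}$ preserves all small colimits, as recorded in the parenthetical remark in its construction. The fully faithful embedding $\iota \colon \Space \to \Cond(\Space)$ of \cref{example-condani} preserves small colimits, being left adjoint to the ``underlying anima'' functor given by evaluation at a point. Hence the composite $\widehat{C}_\bullet(\iota(-),k) \colon \Space \to \Solid_{k, \geq 0}$ preserves small colimits. On the other side, the singular chains functor $C_\bullet(-,k) \colon \Space \to \Mod_{k, \geq 0}$ preserves small colimits, being the unique colimit-preserving extension of $* \mapsto k[0]$; composing with the colimit-preserving fully faithful inclusion $\Mod_k \hookrightarrow \Solid_k$ from \cref{rem-Modk} realises it as a colimit-preserving functor into $\Solid_{k, \geq 0}$.

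Next, both functors send $* \in \Space$ to $k[0]$: for profinite chains, the point is extremally disconnected with constant defining inverse system, giving $k[0]$ directly; for the usual chains this is a definitional normalisation. Since $\Space$ is the free cocompletion of the terminal $\infty$-category, any two colimit-preserving functors $\Space \to \Solid_{k, \geq 0}$ that agree on $*$ are canonically naturally equivalent, and this yields the desired identification.

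The only nontrivial step is verifying that $\iota \colon \Space \to \Cond(\Space)$ preserves colimits; this rests on the standard identification of the embedding of discrete anima as a left adjoint in the condensed formalism, and is the reason one is justified in computing $\widehat{C}_\bullet(X,k)$ via any presentation of $X$ as a colimit of points.
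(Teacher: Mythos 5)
Your argument is correct. For comparison: the paper's own proof is a one-liner that reduces to finite discrete sets under \emph{sifted} colimits — the embedding $\Space \to \Cond(\Space)$ is by construction the sifted-colimit-preserving extension from finite sets (\cref{example-condani}), profinite chains and ordinary chains both preserve sifted colimits, and the two visibly agree on a finite discrete set $S$, where both give $k[S][0]$. You instead reduce all the way to the point under \emph{all} small colimits. This buys a cleaner endpoint ($\Space$ as the free cocompletion of the terminal $\infty$-category) at the cost of one extra input that the paper's route does not need, namely that $\iota \colon \Space \to \Cond(\Space)$ preserves finite coproducts in addition to sifted colimits. You correctly identify this as the only nontrivial step and justify it by exhibiting $\iota$ as left adjoint to evaluation at the point; alternatively one can see it directly, since finite coproducts of finite sets are again finite sets and the Yoneda embedding of $\ProFin$ into condensed anima takes finite disjoint unions to coproducts, so the sifted-colimit-preserving extension automatically preserves all colimits. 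Either way the argument goes through, and the two proofs are close relatives: agreement on a generating subcategory plus preservation of the colimits that generate from it.
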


\begin{proof}
	This immediately follows from the fact that it agrees on finite discrete spaces and it preserves sifted colimits.
\end{proof}

\begin{prop}
	\label{prop-prohomologycs}
	Let $S$ be a compact Hausdorff space $S$, and write $R\Gamma (S,k) \in \Mod_{k, \leq 0}$ for the sheaf cohomology of the locally constant sheaf with value $k$ on $S$. Then,
	\begin{align*}
		\widehat{C}_\bullet (S,k) \simeq R\Gamma (S, k)^\vee
	\end{align*}
\end{prop}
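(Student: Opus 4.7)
The plan is to reduce to the profinite case via hypercover descent, then apply the duality between profinite-homology ultrasolid complexes and discrete $k$-module complexes from \cref{prop-complexesprohomology}.

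First I would verify the statement when $S$ is profinite. Writing $S = \varprojlim S_i$ as an inverse limit of finite discrete sets, \cref{rem-homologyprospaces} gives $\widehat{C}_\bullet(S,k) \simeq C(S,k)^\vee[0]$. Since $S$ is compact and totally disconnected, the constant sheaf with value $k$ has no higher cohomology, so $R\Gamma(S,k) \simeq C(S,k)[0]$ is a discrete vector space in cohomological degree $0$. Its dual in $\Solid_k$ is $C(S,k)^\vee[0]$, matching the left-hand side.

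Next, for $S$ compact Hausdorff in general, I would choose a hypercover $S^\bullet \to S$ with each $S^n$ extremally disconnected, which exists because extremally disconnected spaces generate $\Cond(\Space)$ under colimits, so that $S \simeq |S^\bullet|$ in $\Cond(\Space)$. Since $\widehat{C}_\bullet(-,k)$ was constructed to commute with small colimits in $\Cond(\Space)$, the profinite case applied degreewise yields
\[
    \widehat{C}_\bullet(S,k) \;\simeq\; |\widehat{C}_\bullet(S^\bullet,k)| \;\simeq\; |C(S^\bullet,k)^\vee|.
\]
On the other side, cohomological descent along a hypercover by acyclic objects (extremally disconnected spaces have no higher cohomology) gives
\[
    R\Gamma(S,k) \;\simeq\; \Tot\bigl(R\Gamma(S^\bullet,k)\bigr) \;\simeq\; \Tot\bigl(C(S^\bullet,k)\bigr).
\]
To conclude, I would invoke the contravariant symmetric monoidal equivalence $\Solid_k^\pro \simeq \Mod_k^\op$ furnished by \cref{prop-complexesprohomology}, which exchanges totalisations with geometric realisations and sends each $V[0]$ to $V^\vee[0]$. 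Applied to $\Tot(C(S^\bullet,k))$, it produces $|C(S^\bullet,k)^\vee|$, identifying $R\Gamma(S,k)^\vee$ with $\widehat{C}_\bullet(S,k)$.

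The main obstacle is the descent step: verifying that a suitable hypercover exists in $\Cond(\Space)$ and that hypercohomological descent applies to $R\Gamma$ along it for the (possibly unbounded) complex $R\Gamma(S,k) \in \Mod_{k,\leq 0}$. Both are standard in the Clausen–Scholze framework, but one must be careful that the hypercover is refined enough (e.g.\ built from iterated Čech nerves) so that the Bousfield–Kan spectral sequence for $\Tot$ converges. The duality step, by contrast, is essentially formal once we know $R\Gamma(S,k) \in \Mod_{k,\leq 0}$ and $\widehat{C}_\bullet(S,k) \in \Solid_{k,\geq 0}^\pro$, with \cref{prop-complexesprohomology} ensuring these subcategories are preserved under the relevant $\Tot$ and geometric realisation.
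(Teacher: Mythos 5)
Your proposal is correct and follows essentially the same route as the paper: establish the profinite/extremally disconnected base case via $\widehat{C}_\bullet(S,k) \simeq C(S,k)^\vee[0]$ and the vanishing of higher sheaf cohomology on profinite sets, pass to a hypercover by extremally disconnected spaces, and dualise using the equivalence $\Solid_k^{\pro} \simeq \Mod_k^{\op}$ to trade the geometric realisation for a totalisation. The only step you leave schematic --- hypercohomological descent for $R\Gamma$ along the hypercover --- is exactly where the paper works concretely, identifying $R\Gamma(S,k)$ with $\RHom_{\Cond(\Mod_k)}(k[S],k)$ and using the exactness of $\dots \to k[T_1] \to k[T_0] \to k[S] \to 0$ (\cref{lemma-hypercoverfree}) to obtain $\Tot(C(T_\bullet,k)) \simeq R\Gamma(S,k)$, so you may wish to substitute that argument for the appeal to general descent.
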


\begin{proof}
	First we prove this for extremally disconnected $S$. Then, we have defined $\widehat{C}_\bullet (S,k) \simeq (\varprojlim k[S_i]) [0]$ where $S = \varprojlim S_i$ and the $S_i$ are all finite and discrete. Also, $\widehat{C}_\bullet (S,k) \simeq C(S,k)^\vee [0]$ (\cref{rem-homologyprospaces}). By (\cite{Wiegand}, Theorem 5.1), this is precisely $R\Gamma (S,k)^\vee$.

	For general compact Hausdorff $S$, take a hypercover $T_\bullet \to S$ by extremally disconnected spaces. By (\cite{HTT}, Lemma 6.5.3.11), $|T_\bullet| \simeq S$ as condensed anima. Since profinite chains commute with sifted colimits, $\widehat{C}_\bullet (S,k) \simeq |\widehat{C}_\bullet (T_i,k)| \simeq |C(T_i,k)^\vee [0] |$.

	Given that $\Solid_{k, \geq 0} ^{\aperf}$ is closed under geometric realisations (\cref{prop-complexesprohomology}), this geometric realisation belongs to $\Solid_{k, \geq 0}^{\aperf}$ and it is equivalent to its double dual. Hence, $\widehat{C}_\bullet (S,k) \simeq \Tot (C(T_i,k)[0])^\vee$.

	We now claim that $\Tot(C(T_i,k)[0]) \simeq R \Gamma (S,k) $. This totalisation will live in $\Mod_{k, \leq 0} \subset \Solid_k$, since there is a totalisation-preserving embedding $\Mod_{k, \leq 0} \to \Solid_k$, from coconnective chain complexes of $k$-modules to chain complexes of ultrasolid $k$-modules. Hence, we can perform the computation in $\Mod_k$.

	For a condensed set $X$, we write $k[X]$ for the free condensed $k$-module on $X$, which is projective if $X$ is an extremally disconnected space (\cite{condensed}, Lecture II). Then, by projectivity, $\RHom_{\Cond(\Mod_k)} (k[T_i][0] , k[0] ) \simeq \Hom_{\Cond(\Mod_k^\heartsuit)} (k[T_i], k) [0] \simeq C(T_i,k)[0] $. Also, by \cref{lemma-hypercoverfree}, $k[S] [0] \simeq |k[T_\bullet][0]|$. Hence,
	\begin{align*}
		\Tot(C(T_i,k)[0]) & \simeq \Tot (\RHom_{\Cond(\Mod_k)} (k[T_i][0], k[0] )) \simeq \RHom_{\Cond(\Mod_k)} (k[S][0], k[0])
	\end{align*}
	We are now done by (\cite{condensed}, Theorem 3.1) since this is exactly the sheaf cohomology of $S$.
\end{proof}

\begin{lemma}
	There is a Künneth isomorphism for profinite chains. That is, for $X,Y \in \Cond(\Space)$, 
	\begin{align*}
		\widehat{C}_\bullet (X \times Y, k) \simeq \widehat{C}_\bullet (X, k) \otimes \widehat{C}_\bullet (Y,k)
	\end{align*}
\end{lemma}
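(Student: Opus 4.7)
The plan is to reduce to the case where $X$ and $Y$ are both extremally disconnected, and then compute each side as an inverse limit of tensor products of finite-dimensional vector spaces, exploiting \cref{prop-complexesprohomology}(3) to commute the tensor product past the inverse limit.

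The first step is to check that both sides preserve small colimits separately in each variable. Profinite chains preserve all small colimits by construction, and the tensor product on $\Solid_k$ is separately cocontinuous. For the left-hand side, the product $X \times -$ in $\Cond(\Space)$ preserves colimits because $\Cond(\Space)$ is an $\infty$-topos, so $\widehat{C}_\bullet(X \times -, k)$ is also separately cocontinuous. The cartesian structure on $\Cond(\Space)$ together with the elementary identity $k[A] \otimes k[B] \cong k[A \times B]$ for finite discrete $A, B$ promotes $\widehat{C}_\bullet(-, k)$ to a symmetric monoidal functor, yielding a natural Künneth comparison map $\widehat{C}_\bullet(X, k) \otimes \widehat{C}_\bullet(Y, k) \to \widehat{C}_\bullet(X \times Y, k)$. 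Since $\Cond(\Space)$ is generated under sifted colimits by extremally disconnected spaces, it suffices to prove this map is an equivalence when $X = S$ and $Y = T$ are both extremally disconnected.

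For such $S = \varprojlim_i S_i$ and $T = \varprojlim_j T_j$ (with $S_i, T_j$ finite discrete), the product $S \times T$ is profinite, hence compact Hausdorff and of covering dimension zero, so by \cref{prop-prohomologycs} and the vanishing of higher sheaf cohomology on profinite spaces, $\widehat{C}_\bullet(S \times T, k) \simeq C(S \times T, k)^{\vee}$ in degree zero. Combined with $C(S \times T, k) \cong \varinjlim_{i, j} k^{S_i \times T_j}$ and passage to $k$-linear duals, this gives $\widehat{C}_\bullet(S \times T, k) \simeq \varprojlim_{i, j} k[S_i \times T_j]$. Likewise, by \cref{rem-homologyprospaces}, $\widehat{C}_\bullet(S, k) \simeq \varprojlim_i k[S_i]$ and $\widehat{C}_\bullet(T, k) \simeq \varprojlim_j k[T_j]$, both of which lie in $\Solid_k^{\pro}$.

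Since the two factors are profinite, \cref{prop-complexesprohomology}(3) permits commuting the tensor product past the inverse limits, giving
\[
\widehat{C}_\bullet(S, k) \otimes \widehat{C}_\bullet(T, k) \simeq \varprojlim_{i, j} \bigl( k[S_i] \otimes k[T_j] \bigr) \simeq \varprojlim_{i, j} k[S_i \times T_j] \simeq \widehat{C}_\bullet(S \times T, k),
\]
where the middle equivalence is the finite-discrete Künneth identity. The main obstacle I expect is coherence: namely, verifying that the abstract equivalence produced by this limit manipulation agrees with the natural Künneth comparison map of the first step, so that the equivalences on ED pairs assemble into a natural equivalence of bifunctors that extends by cocontinuity to all of $\Cond(\Space) \times \Cond(\Space)$. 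This should reduce to checking agreement on pairs of finite discrete spaces, where both maps coincide by construction with the canonical isomorphism $k[A] \otimes k[B] \cong k[A \times B]$.
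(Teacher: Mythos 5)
Your proof is correct and follows essentially the same route as the paper: reduce to extremally disconnected $X,Y$ via cocontinuity in each variable, identify profinite chains on a profinite space with the inverse limit $\varprojlim k[S_i][0]$ using \cref{prop-prohomologycs}, and then commute the tensor product past the inverse limits via \cref{prop-complexesprohomology}. Your added attention to the coherence of the comparison map is a reasonable refinement, but the underlying argument is the same.
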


\begin{proof}
	By expanding sifted colimits in each variable, we may just show this for $X$ and $Y$ extremally disconnected. We now have the problem that $X \times Y$ is not necessarily extremally disconnected, although it will always be profinite. 

	If $S= \varprojlim S_i$ is profinite, by \cref{prop-prohomologycs}, $\widehat{C}_\bullet (S,k) \simeq R\Gamma (S,k)^\vee$, and by (\cite{Wiegand}, Theorem 5.1) this is concentrated in degree zero, so $\widehat{C}_\bullet (S,k) \simeq C(S,k)^\vee  [0] \simeq \varprojlim k[S_i][0]$. Hence, we can see that the $\widehat{C}_\bullet: \ProFin \to \Solid_k$ coincides with the inverse-limit-preserving extension of the functor sending a finite set $S$ to $k[S][0] $. Writing $X = \varprojlim X_i$ and $Y = \varprojlim Y_i$ as inverse limits of finite sets,
	\begin{align*}
		\widehat{C}_\bullet (X \times Y, k) \simeq \widehat{C}_\bullet (\varprojlim X_i \times Y_i, k) \simeq \varprojlim \widehat{C}_\bullet (X_i \times Y_i, k) \simeq \varprojlim (\widehat{C}_\bullet (X_i, k) \otimes \widehat{C}_\bullet (Y_i, k)) \simeq \widehat{C}_\bullet (X,k) \otimes \widehat{C}_\bullet (Y,k)
	\end{align*}
	where in the last equality we used that inverse limits of profinite modules commute with the tensor product (\cref{prop-complexesprohomology}).
\end{proof}

\begin{example}
	Remember that if $V$ is a vector space, we can think of $V^{\otimes n}_{h \Sigma_n}$ as $k$-valued chains on the homotopy orbits of the space $((S^0)^{\vee \dim V})^{\wedge n}$. It is possible to make a similar interpretation for $V = \prod_I k [0]$.

	Given a set $I$, we can associate the profinite space $\widehat{I}$. This space is the inverse limit of $\{*\} \sqcup S_i$ where $S_i \subset I$ is finite. The transition maps are contravariant with respect to inclusion and project extra elements onto the base point.

	$V = \prod_I k$ is equivalent to profinite chains on the condensed anima $\widehat{I}$, due to our characterization of profinite chains on profinite spaces. By the Künneth isomorphism, $V^{\otimes n} = \widehat{C}_\bullet (\widehat{I}^{\wedge n}, k)$. Profinite chains commute with colimits, so $V^{\otimes n}_{h \Sigma_n}$ are profinite chains on the homotopy orbits $\widehat{I}^{\wedge n}_{h \Sigma_n}$.

	We can think of $\widehat{I}$ as a profinite wedge of $0$-spheres. When $V = \prod_I k[n]$ we can take the suspension of this construction to obtain a profinite wedge of spheres and have a higher-dimensional analogue.
\end{example}

We will now give an explicit description of $\pi_* (\Sym^* (V[0]))$ for $V \in \ProVect$ in terms of Dyer-Lashof operations. These operations and their relations were originally computed by Araki-Kudo \cite{Kudo}, Dyer-Lashof \cite{Dyer-Lashof}, Cohen-Lada-May \cite{homologyloopspaces} and Bruner-May-McClure-Steinberger \cite{Hinfty}.

Firstly, we will try to understand each piece $\Sym^n$ on profinite vector spaces.

\begin{lemma}
	\label{lemma-Symnlimits}
	Let $n \geq 0$.
	\begin{enumerate}
		\item $\Sym^n$ and $\Sym^*$ commute with sifted colimits.
		\item $\Sym^n$ preserves the category $\Solid_{k, \geq 0}^{\aperf}$ and commutes with inverse limits when restricted to this category.
	\end{enumerate}
\end{lemma}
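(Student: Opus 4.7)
For part (1), I would decompose $\Sym^n$ as the composite of the diagonal $V \mapsto (V, \ldots, V) \colon \Solid_k \to (\Solid_k)^n$, the $n$-fold tensor product, and the homotopy orbits $(-)_{h\Sigma_n}$. The diagonal preserves sifted colimits by the defining property of siftedness (commutation with finite products); the $n$-fold tensor product preserves colimits separately in each variable, so it preserves sifted colimits of $n$-tuples; and $(-)_{h\Sigma_n}$, being a $B\Sigma_n$-indexed colimit, preserves all colimits. The claim for $\Sym^* = \bigoplus_{n \geq 0} \Sym^n$ then follows because direct sums commute with sifted colimits.

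For part (2), my strategy is to pass to the dual coconnective side via the symmetric monoidal contravariant equivalence $(-)^\vee : \Solid_k^{\pro} \xrightarrow{\simeq} \Mod_k^{\op}$ of Proposition~\ref{prop-complexesprohomology}, which restricts to $\Solid_{k, \geq 0}^{\aperf} \simeq \Mod_{k, \leq 0}^{\op}$. Under this duality, $\Sym^n(V) = (V^{\otimes n})_{h\Sigma_n}$ corresponds to $((V^\vee)^{\otimes n})^{h\Sigma_n}$, since the equivalence sends the $B\Sigma_n$-colimit to the corresponding $B\Sigma_n$-limit. For $W \in \Mod_{k, \leq 0}$, the K\"unneth description of $\pi_\ast$ recalled after Proposition~\ref{prop-complexesInd} gives $W^{\otimes n} \in \Mod_{k, \leq 0}$, and $(W^{\otimes n})^{h\Sigma_n}$ is a totalisation over $B\Sigma_n$, which stays in $\Mod_{k, \leq 0}$ by the closure of $\Mod_{k, \leq 0}$ under totalisations noted just before Lemma~\ref{prop-inversegeom}. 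Dualising back places $\Sym^n(V)$ in $\Solid_{k, \geq 0}^{\aperf}$. For commutation with inverse limits: cofiltered limits in $\Solid_{k, \geq 0}^{\aperf}$ correspond to filtered colimits in $\Mod_{k, \leq 0}$ under duality; the tensor power preserves filtered colimits (filtered being sifted), and $(-)^{h\Sigma_n}$ preserves filtered colimits on $\Mod_{k, \leq 0}$ by Lemma~\ref{prop-inversegeom}. Composing and dualising back yields the desired commutation of $\Sym^n$ with inverse limits.

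\textbf{Main obstacle.} The delicate point is verifying that the symmetric monoidal duality cleanly interchanges $\Sigma_n$-equivariant colimits and limits, i.e.\ that the equivalence of Proposition~\ref{prop-complexesprohomology}, as a symmetric monoidal functor, extends to $\Sigma_n$-objects in a manner that identifies $(V^{\otimes n})_{h\Sigma_n}$ with the dual of $((V^\vee)^{\otimes n})^{h\Sigma_n}$. This should be formal from the $\infty$-categorical machinery, but is the step that requires the most care to set up so that the tensor-bookkeeping and the group action are transported correctly.
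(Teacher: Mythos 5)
Part (1) is correct and is essentially the paper's argument. For part (2), your dual-side computations are sound, but the step you flag as the "main obstacle" hides a genuine circularity, and it is not just tensor-and-action bookkeeping. The contravariant equivalence $(-)^\vee \colon \Solid_k^{\pro} \simeq \Mod_k^{\op}$ of \cref{prop-complexesprohomology} turns colimits \emph{computed in $\Solid_k^{\pro}$} into limits in $\Mod_k$. But $\Sym^n(V) = (V^{\otimes n})_{h\Sigma_n}$ is by definition a colimit over $B\Sigma_n$ computed in the ambient category $\Solid_k$, and $\Solid_k^{\pro}$ is only closed under \emph{finite} colimits and geometric realisations there (it is not closed under infinite coproducts, for instance, so $B\Sigma_n$-indexed colimits are not automatically computed inside it). Equivalently: the formal identity $\bigl((V^{\otimes n})_{h\Sigma_n}\bigr)^\vee \simeq \bigl((V^\vee)^{\otimes n}\bigr)^{h\Sigma_n}$ does hold because $\RHom(-,k)$ sends colimits to limits, but to conclude that $\Sym^n(V)$ is the dual of the right-hand side you need $(V^{\otimes n})_{h\Sigma_n}$ to be reflexive, i.e.\ to have profinite homology --- which is exactly the first assertion of part (2) that you are trying to prove. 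So "dualising back" presupposes the conclusion.

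The standard repair is to resolve the homotopy orbits: by the Bousfield--Kan formula, $(V^{\otimes n})_{h\Sigma_n}$ is the geometric realisation of a simplicial object with terms $k[\Sigma_n^{\times i}] \otimes V^{\otimes n}$. Each term lies in $\Solid_{k,\geq 0}^{\aperf}$ and commutes with inverse limits by \cref{prop-complexesprohomology}, and $\Solid_{k,\geq 0}^{\aperf}$ is closed under geometric realisations, which commute with inverse limits there by \cref{prop-inversegeom}. This is in fact the paper's proof, carried out directly on the connective side; once you have it, your dualisation to homotopy fixed points in $\Mod_{k,\leq 0}$ becomes a valid (if now redundant) reformulation rather than an independent argument.
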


\begin{proof}
	Remember that $\Sym^n (V) \simeq V^{\otimes n}_{h \Sigma_n}$ so it is clear that it commutes with sifted colimits, since the tensor product does. Hence, $\Sym^* = \bigoplus_{n \geq 0} \Sym^n$ also commutes with sifted colimits.

	Let $V \in \Solid_{k, \geq 0}^{\aperf}$. By applying the Bousfield-Kan formula (\cite{bousfield-kan}, \S 12), we can write $\Sym^n (V)$ as a geometric realisation of objects of the form $k[\Sigma_n^i] \otimes V^{\otimes n}$. By \cref{prop-complexesprohomology} each functor $V \mapsto k[\Sigma_n^i] \otimes V^{\otimes n}$ preserves $\Solid_{k, \geq 0}^{\aperf}$ and commutes with inverse limits. By \cref{prop-inversegeom}, inverse limits commute with geometric realisations in $\Solid_{k, \geq 0}^{\aperf}$, so $\Sym^n$ must commute with inverse limits.
\end{proof}

We now recall some classical results regarding the homology of free $\EE_\infty$-algebras. We will treat these homotopy groups as bigraded objects, since they have both homotopical and polynomial grading.

\begin{defi}
	Given an $\infty$-category $\catC$, we will write $\Gr \catC$ for the $\infty$-category of non-negatively graded objects in $\catC$. That is, functors $\ZZ_{\geq 0}^{\ds} \to \catC$, where $\ZZ_{\geq 0}^{\ds}$ is the category of non-negative integers with only identity morphisms.

	To ease notation, we will write $\Gr^2 \catC := \Gr \Gr \catC$ for the $\infty$-category of bigraded objects. Given $X \in \Gr^2 \catC$, there are two possible directions for grading, which we will call \emph{homotopical} and \emph{polynomial}.

	If $\catC$ is symmetric monoidal, $\Gr \catC$ inherits a symmetric monoidal structure by using Day convolution (\cite{dayconvolution}).
\end{defi}

\begin{example}
	The functor $\pi_* \circ \Sym^* : \Solid_{k, \geq 0} \to \Gr^2 \Solid_k^\heartsuit$ naturally lands on bigraded ultrasolid modules, where each piece $\pi_* \circ \Sym^n$ has polynomial grading $n$.	
\end{example}

\begin{defi}
	We will write $F_{\gr}: \Gr^2 \Solid_k^\heartsuit \to \Gr^2 \Solid_k^\heartsuit$ for the free commutative algebra functor. That is, for $V \in \Gr^2 \Solid_k^\heartsuit$,
	\begin{align*}
		F_{\gr}(V) = \bigoplus_{n \geq 0} V^{\otimes n}_{\Sigma_n}
	\end{align*}
\end{defi}

\begin{rem}
	Although we are calling some of the grading "homological", the previous construction is completely $1$-categorical. The reason for this naming is that the homology of the free algebras $\pi_* \Sym^* (-)$ is naturally bigraded. This has an obvious homological grading and the polynomial grading comes from the $\Sym^n$ pieces.
\end{rem}

\begin{construction}[Dyer-Lashof operations]
	\label{construction-DLoperations}
	Let $p$ be the characteristic of $k$.

	If $p > 0$, for any $n, r \in \ZZ_{\geq 0}$ there is a map $Q^r: \Sym^* (k[n+r]) \to \Sym^* (k[n])$, which we can consider as a homotopy operation on $\EE_\infty$ $k$-algebras $\pi_n (-) \to \pi_{n+r}(-)$. We also have the homotopy operation given by the Bockstein homomorphism $\pi_n (-) \to \pi_{n-1} (-)$. They satisfy the following relations:
	\begin{enumerate}
		\item (Additivity) $Q^r (x+y) = Q^r (x) + Q^r (y)$.
		\item (Instability) $Q^r x = 0$ if $r < |x|$ for $p = 2$ or $r < 2 |x|$ for $p > 2$.
		\item (Frobenius) $Q^r x = x^p$ if $r = |x|$.
		\item (Unit) $Q^r 1 = 0$ for $r \neq 0$.
		\item (Cartan formula) $Q^r (xy) = \bigoplus_{p+q = r} Q^p (x) Q^q (y)$.
		\item (Adem relations) $Q^r Q^s = \sum_i (-1)^{r+i}\binom{(p-1)(i-s)-1}{pi-r} Q^{r+s-i} Q^i$ for $r > ps$.
		\item (Stability) The natural suspension $\Sigma Q_r : \pi_{n+1} \to \pi_{n+r+1}$ coincides with $Q_r$.
		\item (Bockstein) If $p > 2$ and $r \geq ps$, 
			\begin{align*}
				Q^r \beta Q^s = \sum_i (-1)^{r+i} \binom{(p-1)(i-s)-1}{pi-r} \beta Q^{r+s+i} Q^i - (-1)^{r+i}\binom{(p-1)(i-s)-1}{pi-r-1}Q^{r+s-i}Q\beta Q^i
			\end{align*}
	\end{enumerate}
	Suppose $p = 2$. Let $V_0 $ be the bigraded $k$-vector space with basis elements $Q^J  := Q^{j_1} \dots Q^{j_r}$, where $J = (j_1,\dots , j_r)$ is a sequence of non-negative integers such that $j_i \leq 2j_{i+1}$ and $j_1 - j_2 - \dots - j_r > 0$. The element $Q^J$ has homological grading $j_1 + \dots + j_r$ and polynomial grading $p^r$.

	Suppose $p > 2$. Let $V_0$ be the bigraded $k$-vector space with basis elements $Q^J   := \beta^{\epsilon_1} Q^{j_1} \dots \beta^{\epsilon_r} Q^{j_r} $, where $J = (j_1, \epsilon_1, \dots , j_r, \epsilon_r)$ is a sequence with the $j_i$ non-negative integers and $\epsilon_i \in \{0,1\}$ such that $ps_i - \epsilon_i \geq s_{i-1}$ for $2 \leq j \leq r$ and
	\begin{align*}
		2s_1 - \sum_{i = 2}^r 2s_i (p-1) - \epsilon_i > 0
	\end{align*}
	$Q^Je_0$ has homological degree $ \sum_{i=1}^r 2s_i (p-1) - \epsilon_i$ and polynomial grading $p^r$.

	Notice that, for every $i$, the subspace $(V_0)_i$ of elements in homological degree $i$ is finite-dimensional.

	If $p = 0$, we set $V_0 = k$, with polynomial degree $1$ and homological degree $0$.
\end{construction}

\begin{theorem}[\cite{Hinfty}, IX, Theorem 2.1]
	\label{theorem-DLfinite}
	There is an isomorphism of bigraded $k$-algebras
	\begin{align*}
		\pi_* \Sym^* (k[0]) \cong F_{\gr}(V_0)
	\end{align*}
\end{theorem}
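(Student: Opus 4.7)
The plan is to reduce the ultrasolid computation to the classical Dyer--Lashof calculation and then quote the cited reference. First, I observe that $k[0]$ lies in the image of the fully faithful, symmetric monoidal, colimit-preserving embedding $\Mod_k \hookrightarrow \Solid_k$ of \cref{rem-Modk}. Since each functor $\Sym^n(V) = V^{\otimes n}_{h \Sigma_n}$ is built from the tensor product and homotopy colimits --- both preserved by this embedding --- the object $\Sym^*(k[0])$ computed in $\Solid_k$ is the image under the embedding of the same object computed in $\Mod_k$. In particular, the bigraded homotopy groups $\pi_* \Sym^*(k[0])$ together with all of their $k$-algebra and Dyer--Lashof structure agree in the two settings.

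With this reduction in hand, the theorem becomes the classical identification of $\pi_* \Sym^*(k[0])$ with the free bigraded commutative $k$-algebra on the admissible Dyer--Lashof monomials applied to the fundamental class. Concretely, I would define the candidate map $F_{\gr}(V_0) \to \pi_* \Sym^*(k[0])$ by sending each basis element $Q^J$ to the class obtained by applying the corresponding string of Dyer--Lashof operations (and Bocksteins, when the characteristic is odd) to the generator of $\pi_0 \Sym^1(k[0]) \cong k$, and then extending multiplicatively via the algebra structure on $\Sym^*(k[0])$. Surjectivity follows from the Adem relations, which rewrite every iterated operation as a sum of admissible ones, together with the Cartan formula, which expresses arbitrary products as products of operations applied to the generator.

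The injectivity --- i.e.\ the absence of further relations --- is the technical heart of the statement and the main obstacle. This is exactly the content of \cite{Hinfty}, IX, Theorem 2.1, proved there via a direct comparison with the $\FF_p$-homology of the extended powers of spheres $(S^0)^{\wedge n}_{h\Sigma_n}$ and their iterates. In characteristic zero the computation collapses: rational homotopy orbits of $\Sigma_n$ acting on $k^{\otimes n}$ yield $k$ in degree $0$ and vanish elsewhere, so $\pi_* \Sym^*(k[0]) \cong k[x]$ with $x$ of polynomial degree $1$, which matches $F_{\gr}(V_0) = F_{\gr}(k) = k[x]$. Since both sides are bigraded $k$-algebras and the map is compatible with both gradings by construction, once we have a bijection of underlying bigraded $k$-vector spaces we obtain the claimed isomorphism of bigraded $k$-algebras.
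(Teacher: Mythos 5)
Your proposal is correct and matches the paper's treatment: the paper states this result as a citation to \cite{Hinfty}, IX, Theorem 2.1 and gives no proof, deferring exactly the same hard content (no relations beyond Adem/Cartan/instability) to that reference, and handling characteristic zero in a following remark. Your additional observation that $\Sym^*(k[0])$ in $\Solid_k$ agrees with the classical computation in $\Mod_k$ — because the embedding of \cref{rem-Modk} is fully faithful, symmetric monoidal, and colimit-preserving — is the correct (and only) reduction needed to make the citation apply, and is left implicit in the paper.
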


\begin{rem}
	This isomorphism is also true when the characteristic of $k$ is zero by \cref{prop-derivedcomparison} (3).
\end{rem}

\begin{rem}
	This gives us a good understanding of the functor $\Vect_k^\omega \to \Gr^2 \Vect_k$ given by $V \mapsto \pi_* \Sym^* (V[0])$. Since the free algebra functor takes coproducts to tensor products, we see that
	\begin{align*}
		\pi_* \Sym^* (V[0]) \cong F_{\gr}(V_0 \otimes V)
	\end{align*}
	However, in order to make this isomorphism natural we need to adjust the $k$-action on $V_0$ by an appropriate power of the Frobenius.
\end{rem}

\begin{construction}[Frobenius twist]
	We endow $V_0$ with the structure of a $k$-bimodule as follows: we give it its natural left $k$-action, and a right $k$-action where if $x$ is of polynomial degree $r$, $x \cdot \lambda := \lambda^{p^r} x$.

	To emphasize this Frobenius twist, for $V \in \Vect_k^\omega$, we will write $V_0 \otimes^{\Phi} V$ for the tensor product with respect to the right $k$-action described above. The $k$-action on $V_0 \otimes ^{\Phi} V$ is given by the left $k$-action on $V_0$.

	For $V \in \Vect_k^\omega$, $V_0 \otimes^{\Phi} V$ must be in $\Gr^2 \Vect_k^\omega$ (this is easily verifiable for $V = k$ and we can expand other finite-dimensional vector spaces as sums). Thus, by right Kan extending we obtain a functor $\ProVect \to \Gr^2 \ProVect$, which we will still write as $V \mapsto V_0 \otimes^{\Phi} V$.
\end{construction}

\begin{prop}
	\label{prop-DLprofinite}
	Let $V \in \ProVect$. Then, there is a natural isomorphism of bigraded ultrasolid algebras
	\begin{align*}
		F_{\gr}(V_0 \otimes^{\Phi} V) \cong \pi_* \Sym^* (V[0]) 	
	\end{align*}
	where the element $Q^J \otimes^{\Phi} v$ is sent to $Q^J (v)$.
\end{prop}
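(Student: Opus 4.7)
The plan is to reduce the profinite case to the finite-dimensional case (\cref{theorem-DLfinite}) via a cofiltered inverse limit argument, relying on the good interaction between $\Sym^n$, homology, and inverse limits for almost perfect objects. Write $V = \varprojlim_i V_i$ as a cofiltered inverse limit of finite-dimensional vector spaces, which is possible since $V \in \ProVect$. Since each $V_i[0] \in \Solid_{k, \geq 0}^{\aperf}$ and this category is preserved by $\Sym^n$ with $\Sym^n$ commuting with inverse limits there (\cref{lemma-Symnlimits}), I get
\begin{align*}
\Sym^n(V[0]) \simeq \varprojlim_i \Sym^n(V_i[0]) \quad \text{in } \Solid_{k, \geq 0}^{\aperf}.
\end{align*}
By \cref{prop-complexesprohomology}, inverse limits commute with homology in $\Solid_k^{\pro}$, so $\pi_* \Sym^n(V[0]) \cong \varprojlim_i \pi_* \Sym^n(V_i[0])$ as profinite bigraded vector spaces.

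Next, I apply the finite-dimensional Dyer-Lashof computation functorially to identify $\pi_* \Sym^n(V_i[0]) \cong F_{\gr}^n(V_0 \otimes^\Phi V_i)$, where $F_{\gr}^n$ denotes the polynomial-degree-$n$ summand. By the very definition of $V_0 \otimes^\Phi (-)$ as the right Kan extension from $\Vect_k^\omega$, we have $V_0 \otimes^\Phi V \cong \varprojlim V_0 \otimes^\Phi V_i$ in $\Gr^2 \ProVect$. Moreover, $F_{\gr}^n = (-)^{\otimes n}_{\Sigma_n}$ commutes with inverse limits of profinite graded objects: the tensor product and finite colimits both commute with inverse limits in $\ProVect$ by \cref{prop-profinite}. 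Combining these,
\begin{align*}
\pi_* \Sym^n(V[0]) \cong \varprojlim_i F_{\gr}^n(V_0 \otimes^\Phi V_i) \cong F_{\gr}^n(V_0 \otimes^\Phi V),
\end{align*}
and assembling the pieces across all polynomial degrees $n$ (which is compatible with the bigrading, since only one summand contributes in each bidegree) yields the desired isomorphism of bigraded ultrasolid modules. The multiplicative structure on both sides comes from the commutative algebra structure on $\Sym^*$ and $F_{\gr}$, which is preserved by all the functors involved, so this is an algebra map.

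The main obstacle will be upgrading \cref{theorem-DLfinite} to a natural statement in $V \in \Vect_k^\omega$ incorporating the Frobenius twist correctly. Concretely, one must verify that on $\pi_r \Sym^{p^r}(V[0])$ the $k$-action is scaled by $\lambda \mapsto \lambda^{p^r}$, so that $Q^J(\lambda v) = \lambda^{p^r} Q^J(v)$ for $Q^J$ of polynomial degree $p^r$. This follows from the Frobenius relation $Q^{|x|}(x) = x^p$ in \cref{construction-DLoperations} and an induction on the length of $J$, together with the Cartan formula to handle products. Once naturality in $V$ (with the twisted action) is in place for all finite-dimensional $V$, passing to the inverse limit as above gives the claimed isomorphism, and the explicit formula $Q^J \otimes^\Phi v \mapsto Q^J(v)$ is preserved at every stage by naturality of the Dyer-Lashof operations.
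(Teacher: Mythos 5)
Your proposal is correct and follows essentially the same route as the paper: establish the finite-dimensional case from \cref{theorem-DLfinite}, verify naturality by checking that the Frobenius relation and Cartan formula give $Q^J(\lambda v) = \lambda^{p^r} Q^J(v)$ (matching the twist in $\otimes^{\Phi}$), and then pass to profinite $V$ using that both sides commute with cofiltered inverse limits via \cref{lemma-Symnlimits}, \cref{prop-complexesprohomology} and \cref{prop-profinite}. The only cosmetic slip is identifying the polynomial-degree-$n$ summand $F_{\gr}^n(V_0 \otimes^{\Phi} V)$ with $(-)^{\otimes n}_{\Sigma_n}$ — since $V_0$ carries its own polynomial grading these differ — but this does not affect the inverse-limit argument, which works degreewise either way.
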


\begin{proof}
	We will first show the above natural isomorphism for $V \in \Vect_k^\omega$. Then it is true on objects by \cref{theorem-DLfinite}. We must now prove naturality.

	Write $\alpha: V_0 \otimes^{\Phi} V \to \pi_* \Sym^* (V[0])$ for the map given by $Q^J \otimes^{\Phi} v \mapsto Q^J (v)$. If $f: V \to W$ is a map in $\Vect_k^\omega$, we need to show that for $v \in V$, $Q^J (f(v)) = \alpha (f^* (Q^J \otimes^{\Phi} v))$, where $f^* : V_0 \otimes^{\Phi} V \to V_0 \otimes^{\Phi} W$ is induced by the map $V \to W$. Since Dyer-Lashof operations are additive and so is the tensor product, it is enough to show this for $V = W = k$. Hence, the map is given by multiplying by some $\lambda \in k$.

	Then, using the Cartan formula, instability and Frobenius, we get that $Q^J (\lambda v) = \lambda^{p^r} Q^J (v)$, where $r$ is the polynomial degree of $Q^J$. On the other hand, $Q^J \otimes^{\Phi} \lambda v = Q^J \cdot \lambda \otimes^{\Phi} v = \lambda^{p^r} (Q^J \otimes v)$. This shows the isomorphism is natural.

	To extend the natural isomorphism for all of $\ProVect$, it suffices to show that both functors commute with inverse limits. Each $\pi_* \Sym^n (-[0]): \ProVect \to \Gr^2 \Solid_k^\heartsuit$ commutes with inverse limits by \cref{lemma-Symnlimits} and \cref{prop-complexesprohomology}, and each of these pieces has different polynomial grading. It follows that $\pi_* \Sym^* (-[0])$ commutes with inverse limits.

	The functor $V \mapsto V_0 \otimes^{\Phi} V$ commutes with inverse limits by definition. Since the tensor product of profinite vector spaces commutes with inverse limits and finite colimits, each piece $(V_0 \otimes^{\Phi} V)_{\Sigma_n}$ commutes with inverse limits. Each of these pieces is made up of elements of polynomial degree at least $n$, so $F_{\gr}(V_0 \otimes^{\Phi} -): \ProVect \to \Gr^2 \Solid_k^\heartsuit$ must also commute with inverse limits.
\end{proof}

\subsection{The cotangent complex}

We will use the formalism of (\cite{HA}, Section 7) in order to be able to talk about the cotangent complex. We will need a few adjustments since usually presentability of categories is assumed. As usual, we will make the construction one cardinal at a time and then take a filtered colimit.

\begin{construction}
	Let $\sigma$ be an uncountable strong limit cardinal and let $\CAlg_k^{\blacksquare, \sigma}$ be the $\infty$-category of connective $\EE_\infty$ $\sigma$-ultrasolid $k$-algebras. Using (\cite{HA}, Theorem 7.3.4.18), there exists a tangent bundle $T_\sigma \to \CAlg_k^{\blacksquare, \sigma}$, where $T_\sigma$ consists of a category of pairs $(A,M)$ where $A \in \CAlg_k^{\blacksquare,\sigma}$ and $M \in \Sp (\CAlg_{/A}^{\blacksquare,\sigma}) \simeq \Solid_{A}^\sigma$ (\cite{HA}, Corollary 7.3.4.14).

	This leads to adjunctions
	\begin{align*}
		L: \CAlg_k^{\blacksquare, \sigma} \leftrightarrows T_\sigma: G
	\end{align*}
	where the left adjoint sends $A \in \CAlg_k^{\blacksquare,\sigma}$ to $L_A \in \Solid_A$ and $G$ sends $(A,M)$ to $A \oplus M$, the trivial square-zero extension of $A$ by $M$.

	We can form a relative version of the cotangent complex. That is, for any map $A \to B$ in $\CAlg_k^{\blacksquare, \sigma}$ we can assign $L_{B/A} \in \Solid_B$ such that there is always a cofibre sequence
	\begin{align*}
		B \otimes_A L_A \to L_B \to L_{B/A}
	\end{align*}
	The absolute cotangent $L_A$ is actually $L_{A/k}$, since $k$ is initial in the $\infty$-category $\CAlg_k^{\blacksquare,\sigma}$ (\cite{HA}, Corollary 7.3.3.15).

	More generally, for any maps $A \to B \to C$ in $\CAlg_k^{\blacksquare, \sigma}$, there is a cofibre sequence
	\begin{align*}
		C \otimes_B L_{B/A} \to L_{C/A} \to L_{C/B}
	\end{align*}
\end{construction}

\begin{rem}
	If $\sigma < \sigma'$, we can compute the cotangent complex of $A \in \CAlg_k^{\blacksquare, \sigma}$ either in $\CAlg_k^{\blacksquare, \sigma}$ or $\CAlg_k^{\blacksquare, \sigma'}$ by left Kan extending, leading to $L_A^\sigma$ and $L_A^{\sigma'}$. However, since left Kan extension is adjoint to restriction, for $V \in \Solid_{k, \geq 0}^{\sigma'}$.
	\begin{align*}
		\Map_{\Solid_k^{\sigma'}} (L_A^{\sigma'}, V) \simeq  \Map_{\CAlg_{k/ /A}^{\blacksquare, \sigma'}} (A, A \oplus V) \simeq \Map_{\CAlg_{k/ /A}^{\blacksquare, \sigma}} (A, A \oplus V) \simeq \Map_{\Solid_k^{\sigma}} (L_A^{\sigma}, V)
	\end{align*}
	so that $L_A^{\sigma'}$ is the left Kan extension of $L_A^{\sigma}$.

	This means that the cotangent complex does not change as we increase cardinality, so for $A \in \CAlg_k^\blacksquare$, we have a well-defined cotangent complex.
\end{rem}

The cotangent complex is useful for detecting connectivity. The following is a combination of (\cite{HA}, Lemma 7.4.3.17) and the proof of (\cite{HA}, Corollary 7.4.3.2).

\begin{lemma}
	\label{lemma-cotangentcomplexiso}
	Suppose that $f: A \to B$ is an $n$-connective map in $\CAlg_k^{\blacksquare, \cn}$. That is, $\pi_i (A) \to \pi_i (B)$ is an isomorphism for $i < n$ and surjective for $i = n$. Then, $L_{B/A}$ is $n$-connective. The converse holds if $f$ is an isomorphism on $\pi_0$.
\end{lemma}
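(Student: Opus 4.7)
The plan is to transport the proof from \cite{HA}, \S 7.4.3 to the ultrasolid setting. Both the cotangent complex and all homotopy groups are invariant under enlarging $\sigma$, so I will fix an uncountable strong limit cardinal $\sigma$ with $A, B \in \CAlg_k^{\blacksquare, \sigma}$ and work throughout in this presentable $\infty$-category. The abstract inputs Lurie's argument requires — namely $\Solid_k^\sigma$ a stable presentable symmetric monoidal $\infty$-category with an accessible $t$-structure compatible with filtered colimits, $\CAlg_k^{\blacksquare, \sigma}$ presentable, and the identification of each Postnikov stage of a connective algebra as a square-zero extension — are all in place via \cref{section-EEinfty}, so the classical proof applies essentially formally.

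For the forward direction, the plan is an induction along the Postnikov tower of $B$ over $A$. Since $f$ is $n$-connective, $\tau_{\leq n-1}(f) : \tau_{\leq n-1}A \to \tau_{\leq n-1}B$ is an equivalence, so the map $A \to \tau_{\leq n-1}B$ is canonically identified with the truncation $A \to \tau_{\leq n-1}A$. Applying the transitivity cofibre sequence for the cotangent complex iteratively along the tower $B \to \cdots \to \tau_{\leq m} B \to \tau_{\leq m-1} B \to \cdots \to \tau_{\leq n-1} B$ reduces the estimate to bounding the relative cotangent complex of each Postnikov stage $\tau_{\leq m} B \to \tau_{\leq m-1} B$ for $m \geq n$. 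Each such stage is a square-zero extension by a module concentrated in degree $m$, and the sq-zero classification forces its relative cotangent complex to be concentrated in degrees $\geq m+1 \geq n+1$; combining these connectivity bounds along the tower yields the $n$-connectivity of $L_{B/A}$.

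For the converse, I would argue by contradiction using a Hurewicz-type identification. Under the $\pi_0$-iso assumption, the canonical comparison map $\fib(f) \to L_{B/A}$ (up to an appropriate degree shift) induces an isomorphism on the first potentially-nontrivial homotopy group. If $f$ failed to be $n$-connective while $L_{B/A}$ is $n$-connective, this Hurewicz identification would force a nontrivial homotopy group of $L_{B/A}$ in a degree strictly below $n$, contradicting the hypothesis. The main obstacle is verifying the Hurewicz identification in the ultrasolid setting; this reduces to the statement that $L_{B/A}$ admits a natural filtration whose first associated graded piece is $\fib(f)$ (up to shift), a formal consequence of the monadic presentation of $\CAlg_k^{\blacksquare, \sigma}$ over $\Solid_k^\sigma$ via the $\Sym^*$ monad established in \cref{subsection-Einftyfree}.
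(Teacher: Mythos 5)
The paper does not actually write out a proof of this lemma: it simply records that the statement is the ultrasolid instance of (HA, Lemma 7.4.3.17) together with the proof of (HA, Corollary 7.4.3.2), whose engine is the Hurewicz-type comparison theorem asserting that for $f$ with $m$-connective cofibre the canonical map $\cofib(f)\otimes_A B \to L_{B/A}$ is $2m$-connective (the ultrasolid analogue of HA, Theorem 7.4.3.1 / Proposition 7.4.3.12, which the paper invokes elsewhere, e.g.\ in \cref{prop-cotcomparisongeneral}). Your treatment of the converse is essentially this argument: the ``first associated graded piece'' identification you describe is exactly that comparison map, and the bootstrapping by contradiction/induction on the connectivity of $\cofib(f)$ is the standard one. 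Note, though, that this identification is the real content of the lemma and is not a one-line ``formal consequence of the monadic presentation''; it is the nontrivial theorem being imported from HA.

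The forward direction as you propose it has a genuine gap, and moreover is circular as stated. The induction along the Postnikov tower of $B$ rests on two claims: (i) that $L_{\tau_{\leq m-1}B/\tau_{\leq m}B}$ is $(m+1)$-connective because the stage is a square-zero extension by $\pi_m(B)[m]$, and (ii) that the base of the induction, $L_{\tau_{\leq n-1}A/A}$ for the truncation map $A\to\tau_{\leq n-1}A$, is $n$-connective. Claim (ii) is itself an instance of the lemma being proved (the truncation map is $n$-connective), and claim (i) does not follow from ``the sq-zero classification'': knowing that $\tau_{\leq m}B\to\tau_{\leq m-1}B$ is pulled back from $\tau_{\leq m-1}B\to\tau_{\leq m-1}B\oplus\pi_m(B)[m+1]$ gives no direct control on the relative cotangent complex, since $L$ does not interact with pullbacks of algebras. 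The standard way to establish (i) is precisely the comparison theorem you defer to the converse — and once you have that theorem, the forward direction is immediate with no Postnikov induction at all: the cofibre of an $n$-connective map is $(n+1)$-connective, hence so is $\cofib(f)\otimes_A B$, hence so is $L_{B/A}$ up to a $2(n+1)$-connective error. So the detour through the Postnikov tower is both the source of the gap and unnecessary; the fix is to state and use the ultrasolid comparison theorem up front for both directions, which is what the paper's citation amounts to.
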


We recall the following definitions and results from (\cite{HA}, \S 7.4.1).

\begin{defi}
	A map $A \to B$ in $\CAlg_k^\blacksquare$ is a \emph{square-zero extension} if there is a pullback diagram
	\[
		\begin{tikzcd}
			A \arrow{r} \arrow{d} & B \arrow{d}{d_\eta} \\
			B \arrow{r}{d_0} & B \oplus M[1]
		\end{tikzcd}
	\]
	where $M \in \Solid_B$, the maps $d_0$ and $d_\eta$ are maps in $\CAlg^\blacksquare_{k/ /B}$ and the map $d_0$ is classified by the zero map $L_B \to M[1]$ (so it is the $B$-algebra map $B \to B \oplus M[1]$).

	A map $f: A \to B$ is an \emph{$n$-small extension} if
	\begin{enumerate}
		\item $\fib(f) \in \Solid_{k, [n,2n]}$
		\item The multiplication map $\fib(f) \otimes_A \fib(f) \to B$ is null-homotopic.
	\end{enumerate}
\end{defi}

\begin{rem}
	Suppose the map $f: A \to B$ is a square-zero extension. This means we have a commutative diagram
	\[
		\begin{tikzcd}
			A \arrow{r}{f} \arrow[d, swap, "g"] & B \arrow[d, swap, "d_\eta"]  \arrow{ddr}{\id} & \\
			B \arrow{r}{d_0} \arrow[drr, swap, "id"]  & B \oplus M[1] \arrow{dr} & \\
									  & & B
		\end{tikzcd}
	\]
	which implies $g = f$. This means that in the pullback diagram for a square-zero extension both maps $A \to B$ always coincide.
\end{rem}

\begin{prop}[\cite{HA}, Corollary 7.4.1.27]
	\label{prop-nsmallextensions}
	Any $n$-small extension is a square-zero extension.	
\end{prop}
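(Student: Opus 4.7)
The plan is to adapt Lurie's proof (\cite{HA}, Corollary 7.4.1.27), which is slightly streamlined in our setting because every ultrasolid module is flat (\cref{prop-flat}). Writing $I := \fib(f)$, the goal is to produce a $B$-module structure on $I$ together with a derivation $d_\eta : B \to B \oplus I[1]$ so that the pullback of $d_\eta$ along the trivial derivation $d_0$ recovers $A$, giving $M = I$ in the definition of a square-zero extension.

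First I would promote $I$ to a $B$-module. Tensoring the fibre sequence $I \to A \to B$ over $A$ with $I$ yields a fibre sequence $I \otimes_A I \to I \to I \otimes_A B$ whose first map is the multiplication on $I$; by flatness and the Künneth isomorphism, $I \otimes_A I$ sits in degrees $\geq 2n$, whereas $I \in \Solid_{k,[n,2n]}$. The nullhomotopy provided by condition $(2)$, combined with this connectivity gap, upgrades to an equivalence $I \xrightarrow{\simeq} I \otimes_A B$ and endows $I$ with a canonical $B$-module structure. The same mechanism identifies $B \otimes_A B$ as an augmented $\EE_\infty$-$B$-algebra whose augmentation ideal is $I[1]$ in $\Solid_B$.

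The next step is to split this augmentation as $B \otimes_A B \simeq B \oplus I[1]$ in augmented $\EE_\infty$-$B$-algebras. The obstructions to such a splitting are classified by maps from iterated tensor powers $(I[1])^{\otimes_B k}$ with $k \geq 2$ into $I[1]$; by the $n$-smallness hypothesis the source lies in degrees $\geq k(n+1) \geq 2n+2$, whereas the target lies in degrees $[n+1, 2n+1]$, so all such obstructions vanish for degree reasons. Defining $d_\eta$ as one of the inclusions $B \to B \otimes_A B$ composed with this splitting, and noting that the other inclusion produces $d_0$, the pullback identification $A \simeq B \times_{B \oplus I[1]} B$ then follows by realising $A$ as the equaliser of the two inclusions $B \rightrightarrows B \otimes_A B$, which can be verified on fibres using the original sequence $I \to A \to B$. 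The main obstacle throughout is this final step, upgrading the module-level splitting to one of $\EE_\infty$-algebras; the connectivity range $[n, 2n]$ in the definition of $n$-small is calibrated precisely so that these multiplicative obstructions are forced to vanish.
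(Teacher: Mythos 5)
The paper does not actually prove this statement: it is imported wholesale from (\cite{HA}, Corollary 7.4.1.27), whose proof works for commutative algebras in any presentable stable symmetric monoidal $\infty$-category and hence applies to $\Solid_k^\sigma$ cardinal by cardinal. Your reconstruction follows the standard architecture of that argument, but two of its load-bearing claims are false as stated. First, the map $I \to I \otimes_A B$ is \emph{not} an equivalence: the fibre sequence $I \otimes_A I \to I \to I \otimes_A B$ together with the nullhomotopy of the multiplication only gives a splitting $I \otimes_A B \simeq I \oplus (I \otimes_A I)[1]$, and $I \otimes_A I$ is $2n$-connective but nonzero in general (e.g.\ $\pi_{2n}$ sees $\pi_n(I) \otimes_{\pi_0 A} \pi_n(I)$). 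The correct statement is $I \simeq \tau_{\leq 2n}(I \otimes_A B)$, which is how $I$ acquires its $B$-module structure; likewise the augmentation ideal of $B \otimes_A B$ is $(I \otimes_A B)[1]$, not $I[1]$, so there is no equivalence $B \otimes_A B \simeq B \oplus I[1]$ — what one must produce is a \emph{map} of augmented $B$-algebras $B \otimes_A B \to B \oplus I[1]$ inducing the truncation on augmentation ideals. Your obstruction count via maps $(I[1])^{\otimes_B k} \to I[1]$ presupposes a tensor-power filtration of that augmentation ideal which is not available off the shelf for $\EE_\infty$-algebras; the clean route is that such augmented maps are classified by $B$-module maps $L_{B/A} \to I[1]$, combined with the $(2n+2)$-connectivity of $(I \otimes_A B)[1] \to L_{B/A}$ from (\cite{HA}, Theorem 7.4.3.1).

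Second, and more seriously, the final step fails: $A$ is \emph{not} the equaliser (pullback) of the two inclusions $B \rightrightarrows B \otimes_A B$. That square is a pushout, and comparing fibres — $\fib(A \to B) \simeq I$ versus $\fib(B \to B \otimes_A B) \simeq I \otimes_A B$ — shows the discrepancy is exactly $I \otimes_A I$, which does not vanish. The diagram only becomes a pullback after composing with the truncation map $B \otimes_A B \to B \oplus I[1]$, and this must be checked directly on fibres at that stage (this is precisely where the hypotheses $I \in \Solid_{k,[n,2n]}$ and the nullhomotopy are consumed); it cannot be deduced from the false identification of $A$ with the equaliser over $B \otimes_A B$. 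So while the overall shape of your argument matches the intended proof, the two truncation errors you elide are exactly the content of the theorem, and the concluding deduction rests on one of them.
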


We will later need the following lemma to understand maps with vanishing cotangent complex. The main argument can be found in the proof of (\cite{SAG}, Lemma B.1.2.1).

\begin{lemma}
	\label{lemma-II2}
	Let $f: A \to B$ be a map of connective $\EE_\infty$ ultrasolid $k$-algebras such that
	\begin{enumerate}
		\item $\pi_0 (A) \to \pi_0 (B)$ is surjective.
		\item $\pi_1 L_{B/A} = 0$.
	\end{enumerate}
	Let $I = \ker (\pi_0 (A) \to \pi_0 (B))$. Then, $I = I^2$.
\end{lemma}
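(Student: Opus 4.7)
The plan is to deduce $I = I^2$ by showing $I/I^2 = 0$, which I will achieve by computing $\pi_1 L_{\pi_0(B)/A}$ in two different ways via transitivity cofibre sequences. The two preliminary observations I need are: (a) $L_{B/A}$ is $2$-connective, and (b) for any connective $C \in \CAlg_k^\blacksquare$, the relative cotangent complex $L_{\pi_0(C)/C}$ is $2$-connective. For (a): by the universal property of the cotangent complex, derivations of $B$ over $A$ with values in a discrete module factor through derivations of $\pi_0(B)$ over $\pi_0(A)$, which vanish since $\pi_0(A) \to \pi_0(B)$ is surjective; hence $\pi_0 L_{B/A} = 0$, and the hypothesis $\pi_1 L_{B/A} = 0$ together with connectivity of $L_{B/A}$ yields $2$-connectivity. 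For (b): by the Postnikov tower result from the introduction, $\tau_{\leq 1}(C) \to \pi_0(C)$ is a square-zero extension by $\pi_1(C)[1]$, so $L_{\pi_0(C)/\tau_{\leq 1}(C)} \simeq \pi_1(C)[2]$; and $C \to \tau_{\leq 1}(C)$ is $2$-connective, so $L_{\tau_{\leq 1}(C)/C}$ is $2$-connective by \cref{lemma-cotangentcomplexiso}. The transitivity cofibre sequence for $C \to \tau_{\leq 1}(C) \to \pi_0(C)$ then gives the claim.

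With these in hand, I apply transitivity to $A \to B \to \pi_0(B)$:
\begin{align*}
    L_{B/A} \otimes_B \pi_0(B) \to L_{\pi_0(B)/A} \to L_{\pi_0(B)/B}.
\end{align*}
The first term is $2$-connective by (a) (tensoring a $2$-connective module with a connective one preserves $2$-connectivity), and $\pi_1$ of the third vanishes by (b), so the long exact sequence forces $\pi_1 L_{\pi_0(B)/A} = 0$. On the other hand, I apply transitivity to $A \to \pi_0(A) \to \pi_0(B)$:
\begin{align*}
    L_{\pi_0(A)/A} \otimes_{\pi_0(A)} \pi_0(B) \to L_{\pi_0(B)/A} \to L_{\pi_0(B)/\pi_0(A)}.
\end{align*}
The first term is $2$-connective by (b) applied to $C = A$, so the long exact sequence gives $\pi_1 L_{\pi_0(B)/A} \cong \pi_1 L_{\pi_0(B)/\pi_0(A)}$. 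The standard conormal-module identification for the $\EE_\infty$ cotangent complex of a surjection of discrete rings yields $\pi_1 L_{\pi_0(B)/\pi_0(A)} \cong I/I^2$, so combining the two computations yields $I/I^2 = 0$, i.e.\ $I = I^2$.

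The main obstacle I anticipate is the identification $\pi_1 L_{\pi_0(B)/\pi_0(A)} \cong I/I^2$ in the ultrasolid $\EE_\infty$ setting. My strategy is to apply the universal property of the cotangent complex to the classical square-zero extension $\pi_0(A)/I^2 \twoheadrightarrow \pi_0(B)$ of kernel $I/I^2$: this extension is classified by a map $L_{\pi_0(B)/\pi_0(A)} \to (I/I^2)[1]$ in $\Solid_{\pi_0(B)}$, and universality of this extension among square-zero extensions of $\pi_0(B)$ over $\pi_0(A)$ should realize $I/I^2$ precisely as $\pi_1 L_{\pi_0(B)/\pi_0(A)}$. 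Verifying that the classification of such extensions behaves identically to the ordinary commutative algebra case in our ultrasolid context is the delicate point, but it follows from the general square-zero extension formalism set up earlier.
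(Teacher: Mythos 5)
Your proof is correct in outline but takes a genuinely different route from the paper's. The paper argues directly with the square-zero extension $\pi_0(A)/I^2 \to \pi_0(B)$: since $L_{B/A}$ is $2$-connective, the space $\Map_{\CAlg^\blacksquare_{A//\pi_0(B)}}(B,\pi_0(B)\oplus\Sigma(I/I^2))$ is contractible, so the classifying map of that extension is null, the truncation $B \to \pi_0(B)$ lifts through $\pi_0(A)/I^2$, and on $\pi_0$ one obtains a $\pi_0(A)$-algebra section of $\pi_0(A)/I^2 \to \pi_0(A)/I$, whence $I \subseteq I^2$. You instead compute $\pi_1 L_{\pi_0(B)/A}$ via the two transitivity sequences for $A \to B \to \pi_0(B)$ and $A \to \pi_0(A) \to \pi_0(B)$ and identify it with $I/I^2$; this trades the explicit splitting argument for connectivity bookkeeping plus the Hurewicz-type identification of $\pi_1$ of the cotangent complex of a surjection of discrete rings with the conormal module. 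Your version is arguably more mechanical and generalises readily, while the paper's is shorter and produces the section explicitly. Two points in your write-up need repair, though both are fixable. First, $L_{\pi_0(C)/\tau_{\leq 1}(C)} \simeq \pi_1(C)[2]$ is false in general for a square-zero extension (already $L_{k/k[\epsilon]}$ has nontrivial homotopy above degree $1$); what is true, and all you need, is that this relative cotangent complex is $2$-connective, which follows either from the $(2n+2)$-connectivity of the map $\cofib(f)\otimes_A B \to L_{B/A}$ applied to the $1$-connective fibre $\pi_1(C)[1]$, or more directly by applying the connectivity estimate to the $1$-connective map $C \to \pi_0(C)$ without passing through the Postnikov tower at all. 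Second, the identification $\pi_1 L_{\pi_0(B)/\pi_0(A)} \cong I/I^2$ should not be left to a vague appeal to "universality of the square-zero extension"; it follows cleanly from the same $(2n+2)$-connectivity statement (\cite{HA}, Theorem 7.4.3.1, which the paper already invokes in the ultrasolid setting in \cref{prop-cotcomparisongeneral} and in the characterisation of small morphisms): for the surjection $\pi_0(A)\to\pi_0(B)$ with discrete fibre $I$, the $2$-connective map $I[1]\otimes_{\pi_0(A)}\pi_0(B) \to L_{\pi_0(B)/\pi_0(A)}$ gives $\pi_1 L_{\pi_0(B)/\pi_0(A)} \cong I\otimes_{\pi_0(A)}\pi_0(B) \cong I/I^2$. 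With these substitutions your argument closes correctly.
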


\begin{proof}
	Let $R = \pi_0 (A)/I^2$. By \cref{prop-nsmallextensions}, we have a pullback diagram
	\[
		\begin{tikzcd}
			R \arrow{r} \arrow{d} & \pi_0 B \arrow{d}{\eta} \\
			\pi_0 B \arrow{r}{\eta_0} & \pi_0 B \oplus \Sigma (I/I^2)
		\end{tikzcd}
	\]
	where the maps $\eta_0, \eta$ are in $\CAlg^\blacksquare_{k/ /\pi_0 (B)}$ and $\eta_0$ corresponds to the natural inclusion.
	
	Since $L_{B/A}$ is $2$-connective, we get that
	\begin{align*}
		\Map _{\CAlg^\blacksquare_{A/ /\pi_0 (B)}} (B, \pi_0 (B) \oplus \Sigma(I/I^2)) & \simeq \Map _{\Solid_B} (L_{B/A}, \Sigma(I/I^2)) \\
		  & \simeq *
	\end{align*}
	Hence, the map $B \xrightarrow{\tau_{\leq 0}} \pi_0 (B) \xrightarrow{\eta} \pi_0 (B) \oplus \Sigma (I/I^2)$ is the one corresponding to the composition $B \xrightarrow{\tau_{\leq 0}} \pi_0 (B) \xrightarrow{\eta_0} \pi_0 (B) \oplus \Sigma(I/I^2)$ of the truncation and the inclusion. By the universal property of the limit $R$, the truncation map $B \to \pi_0 (B)$ factors through $R$.

	Passing to connected components, the quotient map $\pi_0 (A) /I^2 \to \pi_0 (A)/I$ admits a section $s: \pi_0 (A)/I \to \pi_0 (A)/I^2$ as a map of $\pi_0 (A)$-algebras. We have the triangle
	\[
		\begin{tikzcd}
			\pi_0 (A) \arrow{r} \arrow{dr} & \pi_0 (A)/I \arrow{d}{s} \\
													 & \pi_0 (A)/I^2
		\end{tikzcd}
	\]
	where the maps with source $\pi_0 (A)$ are the natural quotient maps. Hence, the composition $I \xhookrightarrow{} \pi_0 (A) \to \pi_0 (A)/I^2$ is trivial, so $I \subseteq I^2$. The other inclusion is trivial so $I = I^2$.
\end{proof}

We will now show that each step in the Postnikov tower is a square-zero extension. This was first shown for $\EE_\infty$-rings by Basterra \cite{Basterra} and Kriz \cite{kriz}. 

We have the following characterization of truncated objects and the truncation functors. We adapt the proof of (\cite{HA}, Proposition 7.1.3.14).

\begin{prop}
	\label{prop-truncated}
	Let $R \in \CAlg_k^{\blacksquare, \cn}$ and $A \in \CAlg_R^{\blacksquare, \cn}$, with $R$ connective. Then, the following are equivalent:
	\begin{enumerate}
		\item $\pi_i (A) = 0$ for $i > n$.
		\item $A$ is an $n$-truncated object in $\CAlg_R^{\blacksquare, \cn}$. That is, for each $B \in \CAlg_R^{\blacksquare, \cn}$, the space $\Map_{\CAlg_R^\blacksquare} (B,A)$ is $n$-truncated (so its $i$th homotopy group vanishes for $i > n$).
	\end{enumerate}
	Write $\tau_{\leq n} : \CAlg_R^\blacksquare \to \tau_{\leq n}\CAlg_R^{\blacksquare}$ for the left adjoint to the inclusion of $n$-truncated objects. Then, it coincides with truncation on $\Solid_k$. That is, the underlying chain complex of $\tau_{\leq n} A$ is $n$-truncated and the map $A \to \tau_{\leq n} A$ is $n$-connective.
\end{prop}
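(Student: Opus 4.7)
The plan is to prove the equivalence $(1) \Leftrightarrow (2)$ first and then use it to identify the truncation functor. For $(2) \Rightarrow (1)$, I would test the truncation hypothesis against the free $R$-algebra $\Sym_R^*(R) \in \CAlg_R^{\blacksquare, \cn}$: by the monadic adjunction of \cref{rem-monadicadjmodules},
\begin{align*}
\Map_{\CAlg_R^\blacksquare}(\Sym_R^*(R), A) \simeq \Map_{\Solid_R}(R, A) \simeq \Omega^\infty A,
\end{align*}
whose $i$th homotopy group is $\pi_i A$. If the left-hand side is $n$-truncated, then $\pi_i A$ must vanish for $i > n$.

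For $(1) \Rightarrow (2)$, I would realize an arbitrary $B \in \CAlg_R^{\blacksquare, \cn}$ as the geometric realization $|B_\bullet|$ of its monadic Bar resolution, where each $B_p$ is a free algebra $\Sym_R^*(V_p)$ with $V_p \in \Solid_{R, \geq 0}$. Then
\begin{align*}
\Map_{\CAlg_R^\blacksquare}(B, A) \simeq \Tot \Map_{\Solid_R}(V_\bullet, A),
\end{align*}
and a totalization of $n$-truncated spaces is $n$-truncated, so it suffices to show $\Map_{\Solid_R}(V, A)$ is $n$-truncated for any connective $V$. This is a standard $t$-structure computation: for $i > n$ one has $\pi_i \Map_{\Solid_R}(V, A) \cong \pi_0 \Map_{\Solid_R}(V, A[-i])$, but $A[-i]$ lies in $\Solid_{R, \leq n-i} \subset \Solid_{R, < 0}$, so mapping from the connective $V$ yields zero.

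For the truncation functor, the strategy is to lift the module-theoretic truncation to algebras. Every ultrasolid module is flat by \cref{prop-flat}, so the Künneth isomorphism gives $\pi_*(X \otimes Y) \cong \pi_*(X) \otimes \pi_*(Y)$; in particular, the tensor product on $\Solid_k$ is $t$-exact. By the standard argument of (\cite{HA}, \S 7.1.3), the underlying truncation $\tau_{\leq n}: \Solid_k \to \Solid_{k, \leq n}$ is then a symmetric monoidal localization, hence descends to a functor on $\EE_\infty$-algebras. The resulting algebra $\tau_{\leq n} A$ has underlying chain complex the module-theoretic truncation, and by the already-established $(1) \Leftrightarrow (2)$ it is $n$-truncated in $\CAlg_R^\blacksquare$; the unit $A \to \tau_{\leq n} A$ then supplies the universal map to an $n$-truncated algebra, forcing the fiber to be $\tau_{\geq n+1} A$ and hence the map to be $(n+1)$-connective.

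The main obstacle will be the third part: verifying cleanly that the lift of the module-theoretic truncation to $\EE_\infty$-algebras really is the left adjoint to the inclusion of $n$-truncated algebras, rather than merely producing \emph{an} algebra whose underlying chain complex is $n$-truncated. This is where the compatibility of the $t$-structure with the symmetric monoidal structure on $\Solid_k$, and a careful comparison of universal properties via $(1) \Leftrightarrow (2)$, are essential.
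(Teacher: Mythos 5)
Your proposal is correct and follows essentially the same route as the paper: both directions of the equivalence are reduced to free algebras (the paper tests against generators $\Sym_R^*(R\otimes V)$ under sifted colimits, you use the Bar resolution and the single generator $\Sym_R^*(R)$, which amounts to the same adjunction argument plus closure of $n$-truncated spaces under limits), and your third part unpacks exactly the symmetric-monoidal-localization argument that the paper invokes by citing (\cite{HA}, Proposition 2.2.1.9).
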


\begin{proof}
	((1) $\implies$ (2)) Suppose $\pi_i (A) = 0$ for $i > n$. Consider the functor $X: (\CAlg_R^\blacksquare)^{\op} \to \Space$ represented by $A$. We wish to show that for any $B \in \CAlg_R^\blacksquare$, the space $X(B)$ is $n$-truncated.

	The functor $X$ commutes with limits and $n$-truncated spaces are closed under limits (\cite{HTT}, Proposition 5.5.6.5), so we only need to show this for $B$ of the form $\Sym_R^* (R \otimes V)$, where $V \in \Pro(\Vect_k^\omega)$, since these objects generate $\CAlg_R^{\blacksquare, \cn}$ under sifted colimits. Then,
	\begin{align*}
		\Map_{\CAlg_R^\blacksquare} (\Sym_R^* (R \otimes V), A) \simeq \Map_{\Solid_k} (V, A)
	\end{align*}
	Clearly, $A$ is an $n$-truncated object in $\Solid_{k, \geq 0}$ so we are done.

	((2) $\implies$ (1)) If $A$ is an $n$-truncated object in $\CAlg_R^{\blacksquare, \cn}$, we have that for $V \in \Solid_k$
	\begin{align*}
		\Map_{\CAlg_R^\blacksquare} (\Sym_R^* (R \otimes V), A) \simeq \Map_{\Solid_k} (V, A)
	\end{align*}
	It follows that $A$ is an $n$-truncated object in $\Solid_{k, \geq 0}$ so that $\pi_i (A) = 0$ for $i > n$.

	For the last part, we can just apply (\cite{HA}, Proposition 2.2.1.9).
\end{proof}

\begin{cor}[\cite{HA}, Lemma 7.4.1.28]
	\label{cor-EinftyPostnikov}
	Postnikov towers converge in the $\infty$-category $\CAlg_k^\blacksquare$ and each map in the Postnikov tower
	\begin{align*}
		\dots \to \tau_{\leq 2} A \to \tau_{\leq 1} A \to \tau_{\leq 0} A
	\end{align*}
	is a square-zero extension.
\end{cor}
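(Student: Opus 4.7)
The plan is to verify the two claims separately: first, that each successive map $\tau_{\leq n+1} A \to \tau_{\leq n} A$ in the tower is a square-zero extension, and second, that the tower converges to $A$.

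For the square-zero claim, I would invoke \cref{prop-nsmallextensions} and show that each map $\tau_{\leq n+1} A \to \tau_{\leq n} A$ is an $(n+1)$-small extension. By \cref{prop-truncated}, truncation in $\CAlg_k^\blacksquare$ restricts to $\Solid_k$-truncation on underlying complexes, and the forgetful functor $\CAlg_k^\blacksquare \to \Solid_k$ preserves limits (in particular fibres), so the fibre of $\tau_{\leq n+1} A \to \tau_{\leq n} A$ coincides with $\pi_{n+1}(A)[n+1]$; this lies in $\Solid_{k, [n+1, 2(n+1)]}$, supplying the required connectivity bound. For the multiplicative condition, the source $\pi_{n+1}(A)[n+1] \otimes_{\tau_{\leq n+1} A} \pi_{n+1}(A)[n+1]$ is $(2n+2)$-connective, while the target $\tau_{\leq n} A$ is $n$-truncated. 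Since $2n+2 > n$, the mapping space in $\Solid_{\tau_{\leq n+1} A}$ is contractible, so the multiplication is automatically null-homotopic.

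For convergence, I again use that the forgetful functor $\CAlg_k^\blacksquare \to \Solid_k$ preserves limits, reducing the problem to showing $A \simeq \lim_n \tau_{\leq n} A$ in $\Solid_k$. Since $\Solid_k^\heartsuit$ is Grothendieck abelian, the canonical $t$-structure on its derived $\infty$-category $\Solid_k$ is left complete (\cite{HA}, Proposition~1.3.5.21), which is exactly Postnikov convergence for connective objects. Concretely, the Milnor sequence gives $\pi_i(\lim_n \tau_{\leq n} A) \cong \pi_i(A)$, because each tower $\{\pi_{i+1}(\tau_{\leq n} A)\}_n$ stabilises to $\pi_{i+1}(A)$ for $n \geq i+1$, so the $\lim^1$ term vanishes.

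The only point requiring care is ensuring that truncation in $\CAlg_k^\blacksquare$ agrees with the underlying $\Solid_k$-truncation and that $\CAlg_k^\blacksquare$-level fibres can be computed after forgetting; both facts are supplied by \cref{prop-truncated} together with the standard observation that the forgetful functor from commutative algebras in a symmetric monoidal $\infty$-category preserves limits. Beyond that, everything reduces to routine connectivity bookkeeping.
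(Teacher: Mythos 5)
Your proof is correct and is essentially the argument the paper is invoking: the corollary is stated with a bare citation to (\cite{HA}, Lemma 7.4.1.28), whose proof is exactly your reduction — each truncation map has fibre $\pi_{n+1}(A)[n+1]$ concentrated in a single degree, hence is an $(n+1)$-small extension whose multiplication is null by connectivity, and \cref{prop-nsmallextensions} plus \cref{prop-truncated} finish the square-zero claim, while convergence follows from left completeness of the $t$-structure on $\Solid_k$ (products are computed pointwise and are exact). No gaps.
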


\subsection{Complete profinite $\EE_\infty$ ultrasolid $k$-algebras}
\label{subsection-completeprofiniteEinfty}

\begin{defi}
	We say $R \in \CAlg_{k/ /k}^\blacksquare$ is \emph{complete and profinite} if the underlying chain complex is in $\Solid_{k, \geq 0}^{\aperf}$ (so $\pi_i (R) \in \ProVect$ for all $i \geq 0$) and $\pi_0 (R) \in \CAlg_{k/ /k}^{\blacksquare, \heartsuit}$ is complete and profinite.
\end{defi}

\begin{rem}
	Notice that we are assuming connectivity of complete profinite objects.
\end{rem}

\begin{rem}
	Remember that the $\infty$-category $\CAlg_k^\blacksquare$ are algebras over the $\Sym^*$ monad. We can describe the $\infty$-category of augmented algebras $\CAlg_{k/ /k}^\blacksquare$ as algebras over the augmented monad, which we will write as $\Sym^*_{\nu}$ (equivalently, these are nonunital commutative algebra objects in $\Solid_k$). That is, for $V \in \Solid_k$,
	\begin{align*}
		\Sym^*_{\nu} (V) = \bigoplus_{n > 0} \Sym^n (V)
	\end{align*}
	Notice that the indexing starts at $1$, whereas it starts at $0$ for the $\Sym^*$ monad.

	We will sometimes write $\Sym^*_{\nu} (V)$ for the corresponding free algebra in $\CAlg_{k/ /k}^\blacksquare$, to emphasize that it is equipped with an augmentation map $\Sym^*_{\nu} (V) \to k$.
\end{rem}

\begin{construction}[completed free $\EE_\infty$ ultrasolid $k$-algebras]
	\label{construction-completedfree}
	Write $\Comm^{\nu}$ for the non-unital commutative operad. For $i > 0$, we can obtain the truncation of this operad $\Comm^{\nu} _{\leq i}$ (\cite{gijs}, \S 4.1), which satisfies 
	\begin{align*}
		\Comm^{\nu}_{\leq i} (n) = \begin{cases}
										\Comm^{\nu} (n) \; \; & \text{if } n \leq i \\
										*         \; \; & \text{otherwise}
									\end{cases}
	\end{align*}
	Algebras over this operad are precisely those nonunital $\EE_\infty$ $k$-algebras $R$ equipped with homotopy coherent trivialisations of every multiplication map $R^{\otimes n} \to R$ for $n > i$.

	We then have a sequence of $\infty$-operads
	\begin{align*}
		\Comm^{\nu} \to \dots \to \Comm^{\nu}_{\leq i} \to \dots \to \Comm^{\nu}_{\leq 2} \to \Comm^{\nu}_{\leq 1}
	\end{align*}
	By (\cite{HA}, Example 4.7.3.11), $\Comm^{\nu}_{\leq i}$-algebras in $\Solid_k$ are equivalent to algebras over the free $\Comm^{\nu}_{\leq i}$-algebra monad on $\Solid_k$, which we will write as $\Sym^{ \leq i}_{\nu}$. This monad is given by (\cite{HA}, Proposition 3.1.3.13)
	\begin{align*}
		\Sym^{\leq i}_{\nu} (X) = \bigoplus_{0 < n \leq i} X^{\otimes n}_{h \Sigma_n}
	\end{align*}
	We hence obtain a sequence of monads
	\begin{align*}
		\Sym^*_{\nu} \to \dots \to \Sym^{\leq i}_{\nu} \to \dots \to \Sym^{\leq 2}_{\nu} \to \Sym^{\leq 1}_{\nu}
	\end{align*}
	Define the monad
	\begin{align*}
		\widehat{\Sym}^*_{\nu} := \varprojlim \Sym^{\leq i}_{\nu}
	\end{align*}
	Monads on $\Solid_k$ are $\EE_1$-algebras in $\Fun(\Solid_k, \Solid_k)$, where the symmetric monoidal structure is given by composition. Hence, limits of monads are computed as the limit of the underlying functors (\cite{HA}, Corollary 3.2.2.5), so for an object $X \in \Solid_k$ we have
	\begin{align*}
		\widehat{\Sym}^* _{\nu} (X) \simeq \prod_{i > 0} X^{\otimes n}_{h \Sigma_n}
	\end{align*}
	The previous tower of monads induces a map of monads $\Sym^* _{\nu} \to \widehat{\Sym}^* _{\nu}$, so we get a functor
	\begin{align*}
		\Alg_{\widehat{\Sym}^*_{\nu}} (\Solid_k) \to \CAlg_{k/ /k}^{\blacksquare}
	\end{align*}
	that is the identity on the underlying chain complexes. This means that any $\widehat{\Sym}^* _{\nu}$-algebra has the structure of an augmented $\EE_\infty$ ultrasolid $k$-algebra.
	
	Given $X \in \Solid_k$, we will abuse notation and write $\widehat{\Sym}^*_{\nu}(X)$ for the augmented $\EE_\infty$ ultrasolid $k$-algebra that is the image of the free $\widehat{\Sym}^*_{\nu}$-algebra on $X$. Additionally, we will write $\widehat{\Sym}^* (X)$ for the $\EE_\infty$ ultrasolid $k$-algebra obtained after forgetting the augmentation.
\end{construction}

\begin{rem}
	It is important that we took the inverse limit of the monads $\Sym^{\leq i}_{\nu}$ rather than the inverse limit of the $\infty$-operads $\Comm^{\nu}_{\leq i}$, since the latter would give a different free algebra monad. This is because constructing the free algebra functor of an operad does not necessarily commute with inverse limits.
\end{rem}

We can also describe the completed free algebras in terms of Dyer-Lashof operations. We will first show a few nice properties of the completed free functor.

\begin{lemma}
	\label{lemma-completedfreelimits}
	The functor $\widehat{\Sym}^*: \Solid_{k, \geq 0} \to \Solid_{k, \geq 0}$ preserves the subcategory $\Solid_{k, \geq 0}^{\aperf}$, and commutes with inverse limits and geometric realisations in this subcategory.
\end{lemma}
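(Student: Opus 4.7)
The plan is to reduce everything to the piecewise functors $\Sym^n$ and then move the countable product past the relevant (co)limits. Writing $\widehat{\Sym}^*(X) \simeq \prod_{n \geq 0} \Sym^n(X)$, preservation of $\Solid_{k, \geq 0}^{\aperf}$ is immediate: by \cref{lemma-Symnlimits} each $\Sym^n(X)$ lies in $\Solid_{k, \geq 0}^{\aperf}$ whenever $X$ does, and a product is a particular inverse limit, so by \cref{prop-complexesprohomology} the total product is also in $\Solid_{k, \geq 0}^{\aperf}$.

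For compatibility with inverse limits, I would take a diagram $\{X_i\}$ in $\Solid_{k, \geq 0}^{\aperf}$ and compute
\begin{align*}
  \widehat{\Sym}^*(\varprojlim_i X_i) \simeq \prod_{n \geq 0} \Sym^n(\varprojlim_i X_i) \simeq \prod_{n \geq 0} \varprojlim_i \Sym^n(X_i) \simeq \varprojlim_i \prod_{n \geq 0} \Sym^n(X_i) \simeq \varprojlim_i \widehat{\Sym}^*(X_i),
\end{align*}
where the second equivalence is \cref{lemma-Symnlimits} (each $\Sym^n$ commutes with inverse limits on this subcategory) and the third is the general fact that limits commute with limits.

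The geometric realisation case is the most delicate step, and the main obstacle. For a simplicial object $X_\bullet$ in $\Solid_{k, \geq 0}^{\aperf}$ I would compute
\begin{align*}
  \widehat{\Sym}^*(|X_\bullet|) \simeq \prod_{n \geq 0} \Sym^n(|X_\bullet|) \simeq \prod_{n \geq 0} |\Sym^n(X_\bullet)| \simeq \Bigl| \prod_{n \geq 0} \Sym^n(X_\bullet) \Bigr| \simeq |\widehat{\Sym}^*(X_\bullet)|.
\end{align*}
The second equivalence uses that each $\Sym^n$ commutes with sifted colimits (and hence with geometric realisations) by \cref{lemma-Symnlimits}. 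The third equivalence is where the subtlety lies: it is the swap of a countable product with a geometric realisation, which is exactly the content of \cref{prop-inversegeom} after noting that each $\Sym^n(X_m)$ still lies in $\Solid_{k, \geq 0}^{\aperf}$ by \cref{lemma-Symnlimits}, so we stay within the subcategory where inverse limits and geometric realisations commute.
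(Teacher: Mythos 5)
Your proof is correct and follows essentially the same route as the paper: decompose $\widehat{\Sym}^*$ as $\prod_{n \geq 0}\Sym^n$, invoke \cref{lemma-Symnlimits} for each piece, use closure of $\Solid_{k,\geq 0}^{\aperf}$ under products and the commutation of limits with limits, and apply \cref{prop-inversegeom} to exchange the product with the geometric realisation. Your version just spells out the displayed chains of equivalences that the paper leaves implicit.
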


\begin{proof}
	Each $\Sym^n$ preserves $\Solid_{k, \geq 0}^{\aperf}$ and commutes with geometric realisations and inverse limits in this subcategory (\cref{lemma-Symnlimits}). Then, $\widehat{\Sym}^* \simeq \prod_{n \geq 0} \Sym^n$ preserves $\Solid_{k, \geq 0}^{\aperf}$, since this category is closed under products, and clearly commutes with inverse limits. Finally, we apply \cref{prop-inversegeom} to see that it commutes with geometric realisation in $\Solid_{k, \geq 0}^{\aperf}$. 
\end{proof}

\begin{defi}
	We will write $F: \Gr \ProVect \to \Gr \Solid_k^\heartsuit$ for the free commutative algebra functor, and $\widehat{F}: \Gr \ProVect \to \Gr \Solid_k^\heartsuit$ for its completion (we can proceed in the same way as \cref{construction-completedfree} to obtain a complete free algebra functor).
\end{defi}

We will now use the notation of \cref{construction-DLoperations}.

\begin{prop}
	\label{prop-DLcompleteprofinite}
	Let $k$ be a field of characteristic $p > 0$ and $V \in \ProVect$. Then, there is a natural isomorphism of graded ultrasolid algebras
	\begin{align*}
		\pi_* \widehat{\Sym}^* (V[0]) \cong \widehat{F} (V_0 \otimes^{\Phi} V)
	\end{align*}
	where we have forgotten the polynomial grading on $V_0 \otimes^{\Phi} V$.
\end{prop}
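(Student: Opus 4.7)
The plan is to reduce the statement to the finite polynomial degree case handled by \cref{prop-DLprofinite}, and then pass to the inverse limit by exploiting the fact that inverse limits commute with homotopy on chain complexes with profinite homology (\cref{prop-complexesprohomology}).

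By \cref{construction-completedfree}, the underlying chain complex of $\widehat{\Sym}^*(V[0])$ is $\prod_{n \geq 0} \Sym^n(V[0])$, since limits of monads are computed on underlying functors. Each factor $\Sym^n(V[0])$ lies in $\Solid_{k,\geq 0}^{\aperf}$: by \cref{prop-DLprofinite}, $\pi_* \Sym^n(V[0])$ is the polynomial-degree-$n$ piece $(V_0 \otimes^{\Phi} V)^{\otimes n}_{\Sigma_n}$, and since each $(V_0)_i$ is finite-dimensional by \cref{construction-DLoperations}, $V_0 \otimes^{\Phi} V$ is profinite in each homological degree. Finite tensor products and $\Sigma_n$-coinvariants of such objects remain profinite in each homological degree by \cref{prop-profinite}. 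Applying \cref{prop-complexesprohomology}(2), which says inverse limits commute with $\pi_*$ in $\Solid_k^{\pro}$, yields
\begin{align*}
\pi_* \widehat{\Sym}^*(V[0]) \;\cong\; \prod_{n \geq 0} \pi_* \Sym^n(V[0]) \;\cong\; \prod_{n \geq 0} (V_0 \otimes^{\Phi} V)^{\otimes n}_{\Sigma_n} \;\cong\; \widehat{F}(V_0 \otimes^{\Phi} V)
\end{align*}
naturally in $V$, as graded ultrasolid modules (with homological grading only, polynomial grading having collapsed into the product).

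It then remains to upgrade this to an isomorphism of graded ultrasolid \emph{algebras}. The multiplication on $\widehat{\Sym}^*(V[0])$ is inherited from the monad $\widehat{\Sym}^*$, which is the inverse limit of the truncated monads $\Sym^{\leq i}$ from \cref{construction-completedfree}; taking $\pi_*$ termwise identifies this with the multiplication on $\bigoplus_{n \leq i}(V_0 \otimes^{\Phi} V)^{\otimes n}_{\Sigma_n}$ via the \emph{algebra} isomorphism of \cref{prop-DLprofinite}. Since profinite inverse limits commute with both $\pi_*$ and with the tensor product (\cref{prop-complexesprohomology}(2)--(3)), and $\widehat{F}$ is defined by precisely the analogous limiting procedure, the multiplications match in the limit. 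Naturality in $V$ is inherited from \cref{prop-DLprofinite}. The main obstacle is bookkeeping the compatibility of the $\EE_\infty$-multiplication with the limit of monads presentation, so that one may legitimately interchange $\pi_*$, inverse limits, and algebra structure; once this is set up, the identification is essentially forced.
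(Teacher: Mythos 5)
Your overall strategy --- reduce to \cref{prop-DLprofinite} and pass to the inverse limit using the good behaviour of $\Solid_{k,\geq 0}^{\aperf}$ --- is the same as the paper's, and the first step ($\pi_* \widehat{\Sym}^*(V[0]) \cong \prod_n \pi_* \Sym^n(V[0])$ via \cref{prop-complexesprohomology}) is fine. But there is a genuine gap in the key identification: it is \emph{not} true that $\pi_* \Sym^n(V[0]) \cong (V_0 \otimes^{\Phi} V)^{\otimes n}_{\Sigma_n}$, i.e.\ the ``polynomial-degree-$n$ piece'' of $F_{\gr}(V_0 \otimes^{\Phi} V)$ is not the $n$-th symmetric power of the generators. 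Already for $n=1$: $\pi_* \Sym^1(V[0]) = V$ concentrated in homological degree $0$, whereas $(V_0 \otimes^{\Phi} V)^{\otimes 1}_{\Sigma_1} = V_0 \otimes^{\Phi} V$ contains all classes $Q^J \otimes^{\Phi} v$. The point is that a generator $Q^J \otimes^{\Phi} v$ of word length $1$ in $F_{\gr}$ has polynomial degree $p^r$, so it lives in $\pi_* \Sym^{p^r}(V[0])$, not in $\pi_* \Sym^1(V[0])$; conversely $\pi_* \Sym^{p}(V[0])$ contains both length-one generators $Q^r(v)$ and length-$p$ products $v_1 \cdots v_p$. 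So the truncation $\Sym^{\leq i}$ by symmetric-power degree does not correspond to the truncation $F^{\leq i}$ by word length in the Dyer--Lashof generators, and your claim that ``taking $\pi_*$ termwise identifies this with the multiplication on $\bigoplus_{n \leq i}(V_0 \otimes^{\Phi} V)^{\otimes n}_{\Sigma_n}$'' is false at each finite stage.

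This mismatch is precisely what the paper's proof is organised around: it introduces the functor $F_{\gr}^{\tau_{\leq n}}$ (apply $F_{\gr}$, discard everything of total polynomial degree $> n$, then forget the polynomial grading), proves $\pi_* \Sym^{\leq n}(V[0]) \cong F_{\gr}^{\tau_{\leq n}}(V_0 \otimes^{\Phi} V)$, and then shows that the comparison map $F^{\leq n}(V_0 \otimes^{\Phi} V) \to F_{\gr}^{\tau_{\leq n}}(V_0 \otimes^{\Phi} V)$ becomes an isomorphism only after taking the inverse limit over $n$, where the two indexings of the product regroup to the same object $\prod_{m}(V_0 \otimes^{\Phi} V)^{\otimes m}_{\Sigma_m}$ (using that each $(V_0)_i$ is finite-dimensional, so the regrouping is legitimate in each homological degree). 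Your final displayed isomorphism of graded modules is therefore correct, but the argument for it --- and in particular for the compatibility of multiplications --- rests on a term-by-term identification that does not hold; you need the regrading/regrouping step to make the limit comparison work.
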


\begin{proof}
	The main strategy is to use the isomorphism in \cref{prop-DLprofinite}. After forgetting polynomial grading,
	\begin{align*}
		\pi_* \Sym^* (V[0]) \cong F(V_0 \otimes^{\Phi} V)
	\end{align*}
	Write $F^{\leq n} : \Gr \Solid_k^\heartsuit \to \Gr \Solid_k^\heartsuit$ for the functor $V \mapsto \bigoplus_{k \leq n} V^{\otimes n}_{\Sigma_n}$. Then, we have that $ \pi_* \Sym^{\leq n} (V[0]) \ncong F^{\leq n} (V_0 \otimes^{\Phi} V)$ due to the natural polynomial grading on $V_0$. 

	To solve this, write $F_{\gr}^{\tau_{\leq n}} : \Gr^2 \Solid_k^\heartsuit \to \Gr \Solid_k^\heartsuit$ for the functor obtained by applying $F_{\gr}$, discarding everything in polynomial grading greater than $n$ and then forgetting polynomial grading. Then, we do have an equivalence
	\begin{align*}
		\pi_* \Sym^{\leq n} (V[0]) \cong F_{\gr}^{\tau_{\leq n}} (V_0 \otimes^{\Phi} V)
	\end{align*}
	Both sides are naturally algebras, and this is an isomorphism of algebras. $F^{\leq n} (V_0 \otimes^{\Phi} V)$ has a natural bigrading, so we also have a map of graded algebras
	\begin{align*}
		F^{\leq n} (V_0 \otimes V) \to F_{\gr}^{\tau_{\leq n}} (V_0 \otimes^{\Phi} V)
	\end{align*}
	discarding any element in polynomial degree greater than $n$. It is clear that this becomes an isomorphism after taking the inverse limit over $n$, since at the level of graded ultrasolid modules both are equal to $\prod_{n \geq 0} (V_0 \otimes^{\Phi} V)^{\otimes n}_{\Sigma_n}$.
\end{proof}

Our goal now will be to prove that the completion map $\Sym^* (V) \to \widehat{\Sym}^*  (V)$ has trivial cotangent complex. This is the spectral ultrasolid analogue of the map $k[x] \to k[[x]]$ having trivial cotangent complex.

We will first reduce the statement to computing a tensor product.

\begin{lemma}
	\label{lemma-idempotent}
	Let $A = \Sym^* (V)$ and $B = \widehat{\Sym}^* (V)$ for $V \in \Solid^{\aperf}_{k, \geq 0}$. If the unit map induces an isomorphism $ k \simeq B \otimes_A k $, then $ L_{B/A} \otimes_B k \simeq 0$.
\end{lemma}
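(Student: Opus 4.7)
The plan is to deduce the vanishing from the base change property of the relative cotangent complex in $\CAlg_k^\blacksquare$. The hypothesis $k \simeq B \otimes_A k$, where $k$ is viewed as an $A$-algebra via the augmentation $A = \Sym^*(V) \to k$, is precisely the statement that the diagram
\[
\begin{tikzcd}
A \arrow{r} \arrow{d} & B \arrow{d} \\
k \arrow{r}{\simeq} & B \otimes_A k
\end{tikzcd}
\]
is a pushout square in $\CAlg_k^\blacksquare$, with left vertical arrow the augmentation of $A$ and top arrow the completion map. After identifying $B \otimes_A k$ with $k$ via the unit, the right vertical arrow is the augmentation of $B$ and the bottom arrow becomes $\id_k$.

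Next, I would invoke the base change property of the relative cotangent complex (\cite{HA}, Proposition 7.3.3.7), which for any pushout square of $\EE_\infty$ algebras as above provides a natural equivalence $L_{(B \otimes_A k)/k} \simeq (B \otimes_A k) \otimes_B L_{B/A}$. Applied to our square, this yields
\[
0 \simeq L_{k/k} \simeq k \otimes_B L_{B/A} \simeq L_{B/A} \otimes_B k
\]
which is the desired conclusion. Here the $B$-algebra structure on $k$ used in the tensor product is given by the augmentation $B \to k$, which is consistent with how the pushout identifies $B \otimes_A k$ with $k$.

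The argument is essentially formal, so there is no real obstacle. The only point requiring attention is the applicability of the base change formula in the ultrasolid setting, but this is unproblematic: the cotangent complex in $\CAlg_k^\blacksquare$ was constructed earlier via the tangent bundle formalism of (\cite{HA}, \S 7.3), and base change is a direct consequence of the universal property of $L$ as a left adjoint to the split square-zero extension functor $G$, combined with the fact that pushouts in $\CAlg_k^\blacksquare$ are computed by the derived relative tensor product. In particular, no direct computation of $L_{B/A}$ itself is required to establish this lemma — the content lives entirely in verifying the hypothesis $k \simeq B \otimes_A k$, which will be the real task in the subsequent parts of the paper.
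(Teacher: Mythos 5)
Your proof is correct and takes essentially the same approach as the paper: both exploit the hypothesis that $k \simeq B \otimes_A k$ realises a pushout square together with the fact that the cotangent complex commutes with colimits. The only cosmetic difference is that you invoke the relative base change formula $L_{B'/A'} \simeq L_{B/A} \otimes_B B'$ directly, whereas the paper reaches the same conclusion by first showing the map $L_{A/k} \otimes_A k \to L_{B/k} \otimes_B k$ is an equivalence and then applying the fundamental cofibre sequence.
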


\begin{proof}
	We have that 
	\begin{align*}
		0 \simeq L_{k/k} \simeq L_{B \otimes_A k /k}
	\end{align*}
	Since the cotangent complex commutes with colimits, there is a pushout diagram
	\[
		\begin{tikzcd}
			L_{A/k} \otimes_A k \arrow{r} \arrow{d} & L_{B/k} \otimes_B k \arrow{d} \\
			L_{k/ k} \simeq 0 \arrow{r} & L_{B \otimes_A k/k} \simeq 0
		\end{tikzcd}
	\]
	so the top map must be an equivalence. By the fundamental cofibre sequence, we deduce that $L_{B/A} \otimes_B k \simeq 0$.
\end{proof}

Hence, we are now reduced to computing the tensor product $\widehat{\Sym}^* (V) \otimes_{\Sym^* (V)} k$. Computing this derived tensor product is difficult because there is no obvious resolution of either $k$ or $\widehat{\Sym}^* (V)$ as free $\Sym^* (V)$-modules. However, we can learn a lot about this tensor product from the associated spectral sequence.

We have the following analogue of (\cite{HA}, Proposition 7.2.1.19).

\begin{lemma}
	\label{lemma-spectralsequence}
	Let $A$ be an $\EE_\infty$ ultrasolid $k$-algebra and let $M,N \in \Solid_A$. Then, there is a spectral sequence $\{E_r^{p,q}, d_r\}_{r, \geq 2}$ with $E_2$ page given by $E_2^{p,q} = \Tor_p^{\pi_* A} (\pi_* M, \pi_* N)_q$, converging to $\pi_{p+q} (M \otimes_A N)$.
\end{lemma}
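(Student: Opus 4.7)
The plan is to mirror the classical construction of the Tor spectral sequence in spectral algebra, as in \cite[Proposition 7.2.1.19]{HA}, adapting it to the ultrasolid setting. The main idea is to build a simplicial resolution $P_\bullet \to M$ by free $A$-modules whose associated simplicial object of homotopy groups provides a free resolution of $\pi_* M$ over $\pi_* A$, then extract the spectral sequence from the skeletal filtration of the associated geometric realization.

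First, using the monadic free-forgetful adjunction $- \otimes A : \Solid_k \leftrightarrows \Solid_A$ from \cref{rem-monadicadjmodules}, I would construct a simplicial object $P_\bullet \in \Fun(\Delta^{\op}, \Solid_A)$ with $|P_\bullet| \simeq M$, each $P_n$ of the form $A \otimes V_n$ for some $V_n \in \Solid_k$. To achieve the desired behaviour on homotopy groups I would use a CW-type inductive construction: at each stage, lift a chosen set of generators of the relevant $\pi_*A$-module to a surjection $\bigoplus_\alpha \Sigma^{n_\alpha} A \twoheadrightarrow M$ in $\Solid_A$, take fibres, and iterate; converting the resulting filtered tower to a simplicial object produces $P_\bullet$ with $\pi_* P_\bullet$ a free resolution of $\pi_* M$ over $\pi_* A$.

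Next, because $- \otimes_A N$ preserves colimits, one has $M \otimes_A N \simeq |P_\bullet \otimes_A N|$. Since every ultrasolid module is flat (\cref{prop-flat}), each $P_p = A \otimes V_p$ is flat over $A$, giving the identification $\pi_*(P_p \otimes_A N) \cong \pi_* P_p \otimes_{\pi_* A} \pi_* N$ without any higher derived terms. The Bousfield-Kan skeletal filtration spectral sequence for the simplicial object $P_\bullet \otimes_A N$ then yields $E_1^{p,q} = \pi_q(P_p \otimes_A N)$ with $d_1$-differential the alternating sum of face maps. Thus the $E_1$-page is the chain complex $\pi_* P_\bullet \otimes_{\pi_* A} \pi_* N$, and taking homology identifies $E_2^{p,q} \cong \Tor_p^{\pi_* A}(\pi_* M, \pi_* N)_q$, abutting to $\pi_{p+q}(M \otimes_A N)$.

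The main obstacle will be producing the resolution $P_\bullet$ with the prescribed behaviour on homotopy groups: the naive two-sided bar construction lands in free $A$-modules but its simplicial homotopy groups do not automatically form a projective resolution of $\pi_* M$, so the CW-style inductive lifting must be executed carefully, choosing generators stage by stage and verifying that the lifts assemble into a simplicial object in $\Solid_A$. A secondary subtlety is convergence, which the standard argument guarantees strongly under a boundedness assumption on $M$; in the unbounded case one should either restrict to connective $M$ or state the convergence conditionally via the filtration on $|P_\bullet \otimes_A N|$.
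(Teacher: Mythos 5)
The paper offers no proof of this lemma beyond declaring it the analogue of (\cite{HA}, Proposition 7.2.1.19), and your argument is exactly the standard proof of that result transported to the ultrasolid setting, using the ingredients the paper has already established (enough projectives of the form $A \otimes V$ with $V \in \ProVect$, flatness of every ultrasolid module, and the degeneration $\pi_*(V \otimes N) \cong V \otimes \pi_* N$). The one adjustment to make explicit is that ``lifting a set of generators'' must mean choosing surjections from objects $\bigoplus_\alpha \Sigma^{n_\alpha}(A \otimes V_\alpha)$ with $V_\alpha \in \ProVect$ rather than from $\bigoplus_\alpha \Sigma^{n_\alpha} A$, since $\Solid_A^\heartsuit$ is generated by the $A \otimes V$ and not by $A$ alone; your setup ($P_n = A \otimes V_n$) already accommodates this, so the proposal is essentially the intended argument.
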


\begin{rem}
	A useful notion is that of \emph{flatness} (\cite{HA}, \S 7.2.2). If $N$ is a flat $A$-module, we can write $\pi_* N \simeq \pi_0 N \otimes^L_{\pi_0 A} \pi_* A$. When this identity holds, by associativity of the tensor product, the $E_2$ term of the above spectral sequence turns into $\Tor_p^{\pi_0 A} (\pi_* M, \pi_0 N)_q$.

	In our case, we are not going to be able to prove that $\widehat{\Sym}^* (V)$ is a flat $\Sym^* (V)$-module, but we will still be able to prove the above identity in order to simplify the spectral sequence.
\end{rem}

\begin{prop}
	\label{prop-tensorpi0}
	Let $A = \Sym^* (V)$ and $B = \widehat{\Sym}^* (V)$ for $V \in \Pro(\Vect_k^\omega)$. Then, there is an equivalence
	\begin{align*}
		\pi_* (B) \simeq \pi_0 (B) \otimes^L_{\pi_0 (A)} \pi_* (A)
	\end{align*}
\end{prop}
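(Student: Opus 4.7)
The plan is to use the explicit Dyer-Lashof descriptions of both $\pi_* A$ and $\pi_* B$ from Propositions \ref{prop-DLprofinite} and \ref{prop-DLcompleteprofinite} to reduce the proposition to a bookkeeping identity between ultrasolid modules. Set $W := V_0 \otimes^{\Phi} V$, viewed as a bigraded ultrasolid module. Inspecting Construction \ref{construction-DLoperations}, the homological degree zero part of $W$ is precisely $V$ sitting in polynomial degree $1$, since every basis element $Q^J$ of $V_0$ indexed by a nonempty sequence $J = (j_1, \ldots, j_r)$ has homological degree at least $r \geq 1$. Decompose $W = V \oplus W_+$ accordingly; in characteristic zero $W_+ = 0$ and the statement is trivial, so the interesting case is $p > 0$.

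Next, the algebra splitting $F(W) \cong F(V) \otimes F(W_+)$ exhibits $\pi_* A \cong F(W)$ as a free (hence flat) module over $\pi_0 A \cong F(V)$ with basis $F(W_+)$. Therefore the derived tensor product degenerates to
\[
\pi_0(B) \otimes^L_{\pi_0(A)} \pi_*(A) \simeq \widehat{F}(V) \otimes F(W_+),
\]
and the natural comparison map to $\pi_*(B) \simeq \widehat{F}(W)$ is the multiplication induced by the inclusions of $\widehat{F}(V)$ and $F(W_+)$ into $\widehat{F}(W)$.

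It then remains to check that this multiplication map is an equivalence of ultrasolid modules. I would verify this in each fixed homological degree $d$. Since $W_+$ sits in positive homological degrees and basis elements $Q^J$ in homological degree at most $d$ lie in polynomial degree at most $p^d$, the piece $F(W_+)_d$ is a finite direct sum, supported in only finitely many polynomial degrees $q'$. Invoking \cref{prop-profinite} to commute the ultrasolid tensor product with the inverse limit defining $\widehat{F}(V) = \prod_a V^{\otimes a}_{\Sigma_a}$, both $(\widehat{F}(V) \otimes F(W_+))_d$ and $\widehat{F}(W)_d$ then reduce to the same product $\prod_{(a, q')} V^{\otimes a}_{\Sigma_a} \otimes ((F(W_+))_d)^{(q')}$ indexed by pairs of polynomial degrees, with the comparison map acting as the identity on each summand.

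The main obstacle is this last interchange of infinite products, finite direct sums, and completed tensor products: it requires carefully tracking the polynomial grading even though the statement forgets it, and exploiting that for fixed homological degree $d$ only finitely many polynomial degrees contribute through the $W_+$-factor, so that the naïvely distinct decompositions of both sides collapse to the same product indexed by $(a, q')$.
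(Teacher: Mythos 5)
Your proposal is correct and follows essentially the same route as the paper: both identify $\pi_*A \cong F(V_0\otimes^{\Phi}V)$ and $\pi_*B \cong \widehat{F}(V_0\otimes^{\Phi}V)$, split off the homological-degree-zero part $V$ (in polynomial degree $1$) so that $\pi_*A$ becomes free over $\pi_0A = F(V)$, and then reduce to the exponential identity $\widehat{F}(U\oplus U') \cong \widehat{F}(U)\otimes\widehat{F}(U')$ together with the observation that completion does nothing to the positive-degree factor because only finitely many polynomial degrees contribute in each homological degree. Your degree-by-degree verification of the multiplication map is just a repackaging of the paper's explicit product manipulation, resting on the same input (\cref{prop-profinite}) that the tensor product of profinite modules commutes with inverse limits.
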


\begin{proof}
	By \cref{prop-DLprofinite}, we know that $\pi_* (\Sym^* (V [0] )) \cong F(V_0 \otimes V)$, where $V_0 \in \Gr \Vect_k^\omega$. Additionally, $\pi_* \widehat{\Sym}^* (V \otimes V_0) \cong \widehat{F}(V_0 \otimes V)$ (\cref{prop-DLcompleteprofinite}).

	We now claim that for $U,W \in \Gr\ProVect$, $\widehat{F}(U \oplus W) \cong \widehat{F} (U) \otimes \widehat{F}(W)$. There is a clear map $\widehat{F}(U) \to \widehat{F}(U \oplus W)$ induces by the inclusion, and similar for $W$. Hence, we have a map $\widehat{F}(U) \otimes \widehat{F}(W) \to \widehat{F}(U \oplus W)$. We now check that this is an equivalence on the underlying graded ultrasolid modules. We can explicitly compute 
	\begin{align*}
		\widehat{F}(U \oplus U') \cong \prod_{n \geq 0} (U \oplus U')^{\otimes n}_{\Sigma_n} \cong \prod_{n \geq 0} \prod_{0 \leq i \leq n} U ^{\otimes i}_{\Sigma_i} \otimes U'^{\otimes n-i}_{\Sigma_{n-i}} \cong \prod_{i,j \geq 0} U^{\otimes i}_{\Sigma_i} \otimes U'^{\otimes j}_{\Sigma_j} \cong \widehat{F} (U) \otimes \widehat{F}(U')
	\end{align*}
	where the orbits are taken in the category of graded ultrasolid modules. Here we crucially used that inverse limits of profinite vector spaces commute with the tensor product (\cref{prop-profinite}). 

	Write $W := V_0 \otimes V$ to simplify notation, and let $W_0 \subset W$ be the elements in degree $0$, and $W_{\geq 1} \subset W$ the elements in positive degree, so that $W = W_0 \oplus W_{\geq 1}$. By the above we have that $\widehat{F} (W) \cong \widehat{F} (W_0) \otimes \widehat{F}(W_{\geq 1})$. Since all ultrasolid modules are flat, this is also true of the derived tensor product. Obviously, the same identity is true of the uncompleted free functor, since it sends finite sums to tensor products, so $F(W) \cong F(W_0) \otimes F(W_{\geq 1})$.

	Also, $\widehat{F}(W_{\geq 1}) \cong F(W_{\geq 1})$ since there are only finitely many summands of each homological degree. We can finally conclude that
	\begin{align*}
		\widehat{F} (W) & \simeq \widehat{F}(W_0) \otimes^L F(W_{\geq 1}) \\
							 & \simeq \widehat{F}(W_0) \otimes^L_{F(W_0)} (F(W_0) \otimes^L F(W_{\geq 1})) \\
							 & \simeq\widehat{F} (W_0) \otimes^L_{F(W_0)} F(W)
	\end{align*}
	as required.
\end{proof}

Hence, all that remains to compute in the spectral sequence is $k \otimes^L_{\pi_0 \Sym^* (V)} \pi_0 \widehat{\Sym}^* (V)$. This can be done via Koszul resolutions. For $V \in \Pro(\Vect_k^\omega)$, $\pi_0 \Sym^* (V) = \LSym^* (V)$ and $\pi_0 \widehat{\Sym}^* (V) = \widehat{\LSym}^*(V)$.

\begin{example}
	Suppose $R$ is an ordinary commutative ring and we have an ideal $I = (x_1,\dots , x_n)$ in $R$ generated by a regular sequence. Then, there is an exact sequence
	\begin{align*}
		0 \to \bigwedge\nolimits^n R^n \to \dots \to \bigwedge\nolimits^2 R^n \to R^n \to R/I \to 0
	\end{align*}
	where $\bigwedge^i$ refers to the $i$th exterior power (\cite{weibel}, Corollary 4.5.5). This gives an explicit resolution of $R/I$ by free $R$-modules. In the case $R = k[x_1,\dots , x_n]$, the variables $x_1,\dots , x_n$ form a regular sequence so we always have an exact complex as above. 
\end{example}

We will now generalize these resolutions to the free algebras on a profinite vector space.

\begin{construction}[Exterior powers]
	Let $V \in \Solid_k^\heartsuit$. For $n > 0$, we can consider the action of $\Sigma_n$ on $V^{\otimes n}$, twisted by the sign action. We define the \emph{$n$th exterior power} of $V$ to be
	\begin{align*}
		\bigwedge\nolimits^n V := (V^{\otimes n, \text{sign}})_{\Sigma_n}
	\end{align*}
	where the orbits are taken in the category $\Solid_k^\heartsuit$.

	It is clear from the definition that for $V = \prod_I k$, when $\operatorname{char} k \neq 2$, 
	\begin{align*}
		\bigwedge\nolimits^n V  = \prod_{J \subset I, |J| = n} k
	\end{align*}
\end{construction}

We can now write a Koszul resolution of the free algebras, in a very similar way to (\cite{analyticderham}, Corollary 2.4.4).

\begin{construction}[Koszul resolutions]
	\label{construction-koszul}
	Let $R \in \CAlg_k^{\blacksquare, \heartsuit}$ and $V \in \Solid_k^\heartsuit$ with a map $s: V \to R$. Then, for each $n$, we have $n$ maps $d_1,\dots , d_n$ from $V^{\otimes n} \to V \otimes V^{\otimes n-1}$, by singling out each variable. We can compose them with the projection $V \otimes V^{\otimes n-1} \to V \otimes \bigwedge^{n-1} V \xrightarrow{s} R \otimes \bigwedge^{n-1} V$, to obtain maps $\tilde{d}_1, \dots, \tilde{d}_n: V^{\otimes n} \to R \otimes \bigwedge\nolimits^{n-1}$. The alternating sum $\sum_{i = 1}^n (-1)^i \tilde{d}_i$ is $\Sigma_n$-invariant (twisting $V^{\otimes n}$ by the sign representation), so we get an induced map $\tilde{d}: \bigwedge^n V \to R \otimes \bigwedge^{n-1} V $. Tensoring, we get a map of $R$-modules $R \otimes \bigwedge^n V \to R \otimes \bigwedge^{n-1} V$. One can check that the composition of two such maps is trivial, so we get a Koszul chain complex
	\begin{align*}	
		K(s) := \dots \to R \otimes \bigwedge\nolimits ^i V \to \dots  \to R \otimes \bigwedge\nolimits^1  V \to R \to 0
	\end{align*}
	where $R \otimes \bigwedge\nolimits^i V$ is in degree $i$.
\end{construction}

\begin{lemma}
	\label{lemma-koszul}
	Let $V \in \Solid_k^\heartsuit$ and let $i: V \to \LSym^* (V)$ be the natural inclusion. Then, the augmentation $\LSym^* (V) \to k$ induces an equivalence $K(i) \simeq k[0]$.

	Let $W \in \ProVect$ and let $j : W \to \widehat{\LSym}^* (W)$ be the natural inclusion. Then, the augmentation $\widehat{\LSym}^* (W) \to k$ induces an equivalence $K(j) \simeq k[0] $.
\end{lemma}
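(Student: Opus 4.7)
The key observation is that both Koszul complexes are graded by \emph{total polynomial degree} $n = $ (degree in $\LSym$) $+$ (exterior power degree): the differential multiplies by $v \in V$, raising the symmetric degree by one while lowering the exterior degree by one, so $n$ is preserved. For the uncompleted complex, this gives $K(i) = \bigoplus_{n \geq 0} K(i)^{(n)}$ with $K(i)^{(n)}_p = \LSym^{n-p}(V) \otimes \bigwedge^p V$ for $0 \leq p \leq n$. For the completed complex, since tensoring by the profinite $\bigwedge^p W$ commutes with the product $\widehat{\LSym}^*(W) = \prod_k \LSym^k(W)$ by \cref{prop-profinite}, one obtains $K(j) = \prod_{n \geq 0} K(j)^{(n)}$, and crucially $K(j)^{(n)} = K(i)^{(n)}|_W$ since only a single polynomial degree enters each piece. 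Direct sums and products in $\ProVect$ are both exact (the latter by duality with direct sums in $\Vect_k$, cf.\ \cref{prop-profinite}), so both statements reduce to showing $K(i)^{(n)}$ is exact for $n \geq 1$ and $K(i)^{(0)} \simeq k[0]$.

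\textbf{Reduction to the finite-dimensional case.} For the first statement, $\LSym^k$ and $\bigwedge^p$ are built from finite tensor powers and finite group coequalizers, all of which preserve filtered colimits; and by \cref{lemma-ultrasolidind} every $V \in \Solid_k^\heartsuit$ is a filtered colimit of profinite vector spaces. Since filtered colimits are exact, it suffices to prove exactness of each $K(i)^{(n)}$ on $\ProVect$. Given $V = \prod_I k \cong \varprojlim_J (\bigoplus_J k)$ over finite subsets $J \subset I$, the functors $\LSym^{n-p}$ and $\bigwedge^p$ commute with cofiltered limits in $\ProVect$: the tensor power commutes by \cref{prop-profinite}, and the coequalizer by $\Sigma_n$ (resp.\ by the sign action) corresponds under the duality $\ProVect \simeq \Vect_k^{\op}$ to a finite-group-invariants functor, which commutes with filtered colimits in $\Vect_k$ as a finite limit. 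Together with the tensor product commuting with inverse limits in $\ProVect$, this gives $K(i)^{(n)}(V) \simeq \varprojlim_J K(i)^{(n)}(\bigoplus_J k)$, and the same identity holds for $K(j)^{(n)}$ on profinite $W$.

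\textbf{Finite-dimensional case and conclusion.} For $V_J = \bigoplus_J k$ finite-dimensional, $K(i)(V_J)$ is the classical Koszul resolution of $k$ over the polynomial ring $\LSym^*(V_J) = k[x_j : j \in J]$ along the regular sequence $(x_j)$, which is exact. Splitting by total polynomial degree, the augmentation to $k$ comes from $K(i)^{(0)}_0 = k$, so each $K(i)^{(n)}(V_J)$ for $n \geq 1$ is exact (it carries no image in $k$). Inverse limits in $\ProVect$ are exact (\cref{prop-profinite}), so $K(i)^{(n)}(V)$ is exact for all profinite $V$ and $n \geq 1$, completing both claims. The most delicate step will be verifying that $\LSym^n$ and $\bigwedge^n$ commute with cofiltered limits in $\ProVect$, which hinges on the duality-translation of coequalizers into equalizers and on the exactness of cofiltered limits of profinite vector spaces.
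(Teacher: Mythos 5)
Your proof is correct and follows essentially the same route as the paper's: split the Koszul complex into its homogeneous pieces by total polynomial degree (a direct sum in the uncompleted case, a product in the completed case), reduce from general ultrasolid modules to profinite ones via exact filtered colimits, then to finite-dimensional ones via exact inverse limits commuting with tensor products and coinvariants in $\ProVect$, and finally invoke the classical Koszul resolution of $k$ over a polynomial ring on a regular sequence. The only cosmetic difference is that you make the commutation of $\LSym^n$ and $\bigwedge^p$ with cofiltered limits explicit via the duality $\ProVect \simeq \Vect_k^{\op}$, which the paper leaves implicit in its citation of \cref{prop-profinite}.
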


\begin{proof}
	The first sequence splits into a sum of sequences.
	\begin{align*}
		0 \to \bigwedge\nolimits ^n  V \to \dots \to V^{\otimes n-1}_{\Sigma_{n-1}}  \otimes \bigwedge \nolimits ^1 V  \to V^{\otimes n}_{\Sigma_n} \to 0
	\end{align*}
	So we only need to show that these are exact. These sequences commute with filtered colimits, which are exact, so we may just show this for $V \in \ProVect$. Given that inverse limits are exact and commute with the tensor product in $\ProVect$ (\cref{prop-profinite}), each term in the sequence commutes with inverse limits, so we can just show the result for $V \in \Vect_k^\omega$. But this follows from writing $k$ as a complete local intersection $k = k[x_1,\dots , x_n] /(x_1,\dots , x_n)$ (\cite{weibel}, Corollary 4.5.5).

	The second sequence splits into a product of the finite sequences, and we are now done by noting that inverse limits are exact in $\ProVect$.
\end{proof}

\begin{theorem}
	\label{theorem-cotangentcomplexcompletion}
	Let $V \in \Solid_{k, \geq 0}^{\aperf}$. Then, the completion map $\Sym^* (V) \to \widehat{\Sym}^* (V)$ has trivial cotangent fibre. That is,
	\begin{align*}
		L_{\widehat{\Sym}^* (V)/\Sym^* (V)} \otimes _{\widehat{\Sym}^* (V)} k \simeq 0
	\end{align*}
\end{theorem}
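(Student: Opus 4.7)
The plan is to apply \cref{lemma-idempotent}, reducing the claim to showing the unit map $k \to \widehat{\Sym}^*(V) \otimes_{\Sym^*(V)} k$ is an equivalence. The argument then splits into an easy computation for $V \in \ProVect$ concentrated in degree zero, and a reduction phase bringing the general $V \in \Solid_{k, \geq 0}^{\aperf}$ down to this case.

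Assume first that $V \in \ProVect$, and set $A := \Sym^*(V)$, $B := \widehat{\Sym}^*(V)$. The Künneth-type spectral sequence of \cref{lemma-spectralsequence} has
\begin{align*}
E_2^{p,q} = \Tor^{\pi_* A}_p(\pi_* B, k)_q \Rightarrow \pi_{p+q}(B \otimes_A k).
\end{align*}
By \cref{prop-tensorpi0}, $\pi_* B \simeq \pi_0 B \otimes^L_{\pi_0 A} \pi_* A$, so associativity of the derived tensor product collapses the $E_2$ page to $\Tor^{\LSym^*(V)}_p(\widehat{\LSym}^*(V), k)_q$. Now invoke the two Koszul resolutions of \cref{lemma-koszul}: the first resolves $k$ by free $\LSym^*(V)$-modules, and its base change along $\LSym^*(V) \to \widehat{\LSym}^*(V)$ is tautologically the second Koszul complex, which also resolves $k$. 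Hence this Tor is $k$ in degree zero and vanishes otherwise; the spectral sequence degenerates, giving $B \otimes_A k \simeq k$.

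For general $V$, use the splitting $V \simeq V_0 \oplus V_{\geq 1}$ with $V_0 := \pi_0(V)[0] \in \ProVect$ and $V_{\geq 1}$ one-connective. Since $\Sym^n(V_{\geq 1})$ is $n$-connective, only finitely many summands $\Sym^n(V_{\geq 1})$ contribute in each homological degree, so the natural map $\Sym^*(V_{\geq 1}) \to \widehat{\Sym}^*(V_{\geq 1})$ is already an equivalence. Combined with the splittings $\Sym^*(V) \simeq \Sym^*(V_0) \otimes \Sym^*(V_{\geq 1})$ and $\widehat{\Sym}^*(V) \simeq \widehat{\Sym}^*(V_0) \otimes \Sym^*(V_{\geq 1})$ (the latter requiring \cref{prop-profinite} to handle the infinite product), base-changing along the augmentation $\Sym^*(V_{\geq 1}) \to k$ yields $\widehat{\Sym}^*(V) \otimes_{\Sym^*(V)} k \simeq \widehat{\Sym}^*(V_0) \otimes_{\Sym^*(V_0)} k$, reducing to the case already treated.

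The main obstacle is justifying the second splitting in the reduction phase, since $\widehat{\Sym}^*$ involves an infinite product that does not distribute over tensor products without care; a degreewise connectivity analysis together with \cref{prop-profinite} on inverse limits of profinite modules is needed here. The spectral-sequence computation afterwards is a routine combination of \cref{prop-tensorpi0} and \cref{lemma-koszul}.
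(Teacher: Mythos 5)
Your proposal is correct, and its core computation (reduce via \cref{lemma-idempotent} to showing $\widehat{\Sym}^*(V) \otimes_{\Sym^*(V)} k \simeq k$, then run the spectral sequence of \cref{lemma-spectralsequence} with the flatness-type identity of \cref{prop-tensorpi0} and the two Koszul resolutions of \cref{lemma-koszul}) is exactly the paper's argument for $V \in \ProVect$ in degree zero. Where you genuinely diverge is the reduction from general $V \in \Solid_{k, \geq 0}^{\aperf}$ to that base case. The paper resolves $V$ simplicially by profinite vector spaces and uses that $\Sym^*$, $\widehat{\Sym}^*$ and the cotangent fibre all commute with geometric realisations in $\Solid_{k, \geq 0}^{\aperf}$ (\cref{lemma-Symnlimits}, \cref{lemma-completedfreelimits}, \cref{prop-inversegeom}); you instead split $V \simeq \pi_0(V)[0] \oplus \tau_{\geq 1} V$ (legitimate, since bounded-below complexes with profinite homology are split) and observe that on the $1$-connective summand the map $\bigoplus_n \Sym^n \to \prod_n \Sym^n$ is already an equivalence by a connectivity count, so completion only "lives" in degree zero. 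Your route is more concrete and explains \emph{why} the statement is really about $\pi_0$, but it does shoulder the burden you identify: the multiplicative splitting $\widehat{\Sym}^*(V) \simeq \widehat{\Sym}^*(V_0) \otimes \Sym^*(V_{\geq 1})$ requires commuting an infinite product past a tensor product, which is exactly the kind of verification the paper carries out for $\widehat{F}(U \oplus W) \cong \widehat{F}(U) \otimes \widehat{F}(W)$ inside the proof of \cref{prop-tensorpi0}, using \cref{prop-profinite} and the fact that inverse limits commute with homology in $\Solid_k^{\pro}$; one should also check this splitting as a map of $\Sym^*(V)$-algebras so that the base change along $\Sym^*(V_{\geq 1}) \to k$ is justified. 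The paper's geometric-realisation reduction avoids these checks by reusing lemmas already in place, at the cost of being less transparent.
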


\begin{proof}
	Both the completed and uncompleted free functor commute with geometric realisations in $\Solid_{k, \geq 0}^{\aperf}$ (\cref{lemma-Symnlimits} and \cref{lemma-completedfreelimits}), and so does the cotangent complex, so it is enough to show this for $V \in \Pro(\Vect_k^\omega)$. For ease of notation, we will write $A = \Sym^* (V)$ and $B = \widehat{\Sym}^* (V)$.

	By \cref{lemma-idempotent}, it is enough to show that $B \otimes_A k \simeq k$. By \cref{lemma-spectralsequence}, there is a spectral sequence $\{E_r^{p,q}, d_r\}_{r, \geq 2}$ with $E_2$-page given by $E_2^{p,q} = \Tor_p^{\pi_* A} (\pi_* B, \pi_* k)_q$ converging to $\pi_{p+q} (B \otimes_A k)$. We can now apply \cref{prop-tensorpi0} to see that $\pi_* B \simeq \pi_* A \otimes_{\pi_0 A}^L \pi_0 B$. The $E_2$-page of the spectral sequence then turns into
	\begin{align*}
		E_2^{p,q} = \Tor_p^{\pi_0 A} (\pi_0 B, \pi_* k)_q = \Tor_p^{ \LSym^* (V)} (\widehat{\LSym}^* (V), \pi_* k)_q
	\end{align*}
	Since $\pi_* k = 0$ for $* > 0$ we see that $E_2^{p,q} = 0$ for $q > 0$. All that remains is to compute $\widehat{\LSym}^* (V) \otimes^L_{\LSym^* (V)} k$. But by \cref{lemma-koszul}, this is equal to $k[0]$. It follows that the spectral sequence degenerates and $B \otimes_A k \simeq k$.
\end{proof}

We will now show that $\widehat{\Sym}^* (V)$ indeed behaves like a free complete algebra.

\begin{prop}
	\label{prop-mapsfromcomplete}
	Suppose that $A \in \CAlg^\blacksquare_{k/ /k}$ is such that $\pi_0 (A) \in \CAlg_{k/ /k}^{\blacksquare, \heartsuit}$ is complete. Let $V \in \Solid_{k, \geq 0}^{\aperf}$. Then, the map $\Sym^* (V) \to \widehat{\Sym}^*  (V)$ induces an equivalence
	\begin{align*}
		\Map_{\CAlg^\blacksquare_{k/ /k}} (\widehat{\Sym}^*  (V), A) \xrightarrow{\simeq} \Map_{\CAlg_{k/ /k}^\blacksquare} (\Sym^* (V), A)
	\end{align*}
\end{prop}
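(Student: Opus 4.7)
My strategy is to reduce to the discrete case handled by \cref{prop-mapsfromcompletediscrete} via two standard reductions and then to induct on the Postnikov level of $A$.

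\textbf{Step 1: Reduce to $V \in \ProVect$ and to $A$ truncated.} By \cref{prop-ultrasolidPSigma} I would present any $V \in \Solid_{k,\geq 0}^{\aperf}$ as a geometric realisation $V \simeq |V_\bullet|$ with $V_n \in \ProVect$; the realisation remains in $\Solid_{k,\geq 0}^{\aperf}$ by \cref{prop-complexesprohomology}. Since both $\Sym^*$ and $\widehat{\Sym}^*$ commute with geometric realisations on $\Solid_{k,\geq 0}^{\aperf}$ (\cref{lemma-Symnlimits} and \cref{lemma-completedfreelimits}) and mapping spaces send colimits in the source to limits, I may assume $V \in \ProVect$. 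Similarly, Postnikov convergence (\cref{cor-EinftyPostnikov}) and preservation of limits by $\Map$ let me assume $A$ is $n$-truncated, noting each $\tau_{\leq n} A$ still has complete $\pi_0$.

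\textbf{Step 2: Induct on the truncation level.} For $n = 0$, $A$ is discrete and complete. Maps from the connective objects $\Sym^*(V)$ and $\widehat{\Sym}^*(V)$ into $A$ factor through their $\pi_0$'s $\LSym^*(V)$ and $\widehat{\LSym}^*(V)$ by \cref{prop-truncated}, so the required equivalence is exactly \cref{prop-mapsfromcompletediscrete}. For the inductive step, the square-zero extension $A \to \tau_{\leq n-1} A$ from \cref{cor-EinftyPostnikov} presents $A$ as the pullback
\begin{align*}
A \simeq \tau_{\leq n-1} A \times_{\tau_{\leq n-1} A \oplus \pi_n(A)[n+1]} \tau_{\leq n-1} A.
\end{align*}
Applying $\Map(R, -)$ for $R \in \{\Sym^*(V), \widehat{\Sym}^*(V)\}$ preserves pullbacks, and the induction hypothesis handles the base $\tau_{\leq n-1} A$. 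What remains is the equivalence on the trivial square-zero extension $B := \tau_{\leq n-1} A \oplus \pi_n(A)[n+1]$, which via the standard description of mapping spaces into split square-zero extensions amounts to matching, over each $g\colon R \to \tau_{\leq n-1} A$, the derivation spaces
\begin{align*}
\Map_{\Solid_{\tau_{\leq n-1} A}}\!\bigl(L_{R/k} \otimes^g_R \tau_{\leq n-1} A,\; \pi_n(A)[n+1]\bigr).
\end{align*}

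\textbf{Step 3: The main obstacle.} The fundamental cofibre sequence reduces the derivation comparison to showing that the $\tau_{\leq n-1} A$-module $L_{\widehat{\Sym}^*(V)/\Sym^*(V)} \otimes^{\hat g}_{\widehat{\Sym}^*(V)} \tau_{\leq n-1} A$ admits no nontrivial maps into $\pi_n(A)[n+1]$. Since $\hat g$ is augmented, the composite $\widehat{\Sym}^*(V) \xrightarrow{\hat g} \tau_{\leq n-1} A \to k$ coincides with the augmentation of $\widehat{\Sym}^*(V)$, so \cref{theorem-cotangentcomplexcompletion} forces this module to vanish after a further base change to $k$. The hard part will be upgrading this ``vanishing modulo the augmentation ideal'' to the vanishing of the derivation space itself: I would attack it via a derived Nakayama argument that exploits the completeness of $\pi_0(A)$ together with the fact that $\pi_n(A)[n+1]$ is a $\pi_0(A)$-module on which the augmentation ideal of $\pi_0(A)$ acts topologically nilpotently, paralleling the nilpotent-factoring step that powers the discrete case in \cref{prop-mapsfromcompletediscrete}.
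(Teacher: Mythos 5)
Your Steps 1 and 2 match the paper's reductions (geometric realisations to get down to $V \in \ProVect$, Postnikov towers to get down to truncated $A$, and \cref{prop-mapsfromcompletediscrete} for the base case). The divergence, and the problem, is in how you set up the inductive step. You present $\tau_{\leq n}A \to \tau_{\leq n-1}A$ as a pullback over $\tau_{\leq n-1}A \oplus \pi_n(A)[n+1]$, which forces you to compare derivation spaces valued in a $\tau_{\leq n-1}A$-module, i.e.\ to show that $\Map\bigl(L_{\widehat{\Sym}^*(V)/\Sym^*(V)} \otimes_{\widehat{\Sym}^*(V)} \tau_{\leq n-1}A,\ \pi_n(A)[n+1]\bigr)$ is contractible. \cref{theorem-cotangentcomplexcompletion} only gives vanishing of this module after a further base change to $k$, and your Step 3 openly defers the upgrade to a ``derived Nakayama argument.'' That is a genuine gap, not a routine verification: the hypothesis is only that $\pi_0(A)$ is complete, with no assumption that $\pi_n(A)$ is profinite or that the augmentation ideal acts topologically nilpotently on it, so the nilpotence you invoke is not available. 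Vanishing of $N \otimes_{\tau_{\leq n-1}A} k$ does not in general force $\Map(N, M) \simeq *$ for an arbitrary module $M$, and you have supplied no mechanism that would.

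The paper avoids this entirely by a small but essential manoeuvre: since everything is augmented, the square-zero extension square for $\tau_{\leq n}A \to \tau_{\leq n-1}A$ can be pasted with the (pullback) square
\begin{align*}
\begin{array}{ccc}
\tau_{\leq n-1}A & \longrightarrow & \tau_{\leq n-1}A \oplus \pi_n(A)[n+1] \\
\downarrow & & \downarrow \\
k & \longrightarrow & k \oplus \pi_n(A)[n+1]
\end{array}
\end{align*}
so that the composite rectangle exhibits $\tau_{\leq n}A \simeq \tau_{\leq n-1}A \times_{k \oplus \pi_n(A)[n+1]} k$. Applying $\Map_{\CAlg^\blacksquare_{k//k}}(R,-)$, the only new corner to compare is $k \oplus \pi_n(A)[n+1]$, and maps into a split square-zero extension of $k$ are controlled by the cotangent \emph{fibre} $L_R \otimes_R k$ — exactly what \cref{theorem-cotangentcomplexcompletion} computes ($\simeq V$ for both $R = \Sym^*(V)$ and $R = \widehat{\Sym}^*(V)$). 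Rerouting your inductive step through this rectangle closes the gap; as written, your argument does not go through.
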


\begin{proof}
	Both $\Sym^*$ and $\widehat{\Sym}^*$ commute with geometric realisations in $\Solid_{k, \geq 0}^{\aperf}$ (\cref{lemma-Symnlimits} and \cref{lemma-completedfreelimits}), so it suffices to show this for $V \in \ProVect$.

	We can write $\Map_{\CAlg^\blacksquare_{k/ /k}} (\widehat{\Sym}^* (V), A) = \varprojlim \Map_{\CAlg^\blacksquare_{k/ /k}} (\widehat{\Sym}^* (V), \tau_{\leq n} A)$. Additionally, define $X_n := \Map_{\CAlg^\blacksquare_{k/ /k}} (\widehat{\Sym}^* (V), \tau_{\leq n} A)$ and $Y_n := \Map_{\CAlg^\blacksquare_{k/ /k}} (\Sym ^* (V), \tau_{\leq n} A)$.

	We will show by induction that the map $X_n \to Y_n$ is an equivalence for all $n$. For the case $n = 0$, we can truncate objects so it remains to show the equivalent result for ultrasolid $k$-algebras. This is true by completeness of $\pi_0$ (\cref{prop-mapsfromcompletediscrete}).

	Each truncation $\tau_{\leq n+1} A \to \tau_{\leq n} A$ fits in a square-zero extension diagram (\cite{HA}, Remark 7.4.1.29). We have the following diagram, where each of the squares is a pullback diagram.
	\[
		\begin{tikzcd}
			\tau_{\leq n+1} A \arrow{r} \arrow{d} & \tau_{\leq n} A \arrow{d} \\
			\tau_{\leq n} A \arrow{r} \arrow{d} & \tau_{\leq n} A \oplus \pi_{n+1} (A) [n+2] \arrow{d} \\
			k \arrow{r} & k \oplus \pi_{n+1} (A)[n+2] 
		\end{tikzcd}
	\]	
	The horizontal map $\tau_{\leq n} A \to \tau_{\leq n} A \oplus \pi_{n+1} (A)[n+2]$ is the inclusion map, and similar for the map $k \to k \oplus \pi_{n+1} (A)[n+2]$. This means the whole rectangle is a pullback diagram.

	Write $M := \pi_{n+1} (A) [n+2]$. Now we have that
	\begin{align*}
		\Map_{\CAlg^\blacksquare_{k/ /k}} (\widehat{\Sym}^* (V) , k \oplus M) & \simeq \Map_{\Solid_k} (L_{\widehat{\Sym}^* (V)} \otimes_{\widehat{\Sym}^* (V)} k , M) \\ 
			& \simeq \Map_{\Solid_k} (V, M ) \\
		& \simeq \Map_{\CAlg^\blacksquare_{k/ /k}} (\Sym^* (V), k \oplus M)
	\end{align*}
	where we have used that $L_{\widehat{\Sym}^* (V)} \otimes_{\widehat{\Sym}^* (V)} k \simeq V$. This is because $L_{\Sym^* (V)/k} \simeq \Sym^* (V) \otimes V$ (\cite{HA}, Proposition 7.4.3.14) and $L_{\widehat{\Sym}^* (V)/\Sym^* (V)} \otimes _{\widehat{\Sym}^* (V)} k \simeq 0$ (\cref{theorem-cotangentcomplexcompletion}), so by the fundamental cofibre sequence $L_{\widehat{\Sym}^* (V)} \otimes_{\widehat{\Sym}^* (V)} k \simeq V$.

	Applying the functor $\Map_{\CAlg^\blacksquare_{k/ /k}} (\widehat{\Sym}^* (V), -)$ to the square-zero extension diagram, we get a pullback diagram
	\[
		\begin{tikzcd}
			X_{n+1} \arrow{r} \arrow{d} & X_n \arrow{d} \\
			* \arrow{r} & \Map_{\CAlg^\blacksquare_{k/ /k}} (\widehat{\Sym}^* (V), k \oplus M)
		\end{tikzcd}
	\]
	and similar for $Y_n$. By induction and the equivalence shown above, the completion map $\Sym^* (V) \to \widehat{\Sym}^* (V)$ induces an equivalence on each element of the pullback, so $X_{n+1} \simeq Y_{n+1}$.

	Hence, the map is an equivalence at each step of the Postnikov tower, so it assembles into the desired equivalence.
\end{proof}

\begin{rem}
	Using the same proof, we may show that if we have a map $A \to B$ with $L_{B/A} \simeq 0$ and the map $\pi_0 (A) \to \pi_0 (B)$ induces an equivalence $\Map (\pi_0 (B), \pi_0(C)) \xrightarrow{\simeq} \Map (\pi_0 (A), \pi_0 (C))$, then $\Map (B,C) \simeq \Map (A,C)$.

	We can think of this as a relative version of the fact that if $f: A \to B$ is a map with $L_{B/A} \simeq 0$ inducing an isomorphism on $\pi_0$ then it is an equivalence.
\end{rem}

This allows us to show an easy characterisation of complete profinite $\EE_\infty$ ultrasolid $k$-algebras.

\begin{prop}
	\label{prop-completemonadic}
	Let $\widehat{\CAlg} _{k/ /k}^\blacksquare$ be the $\infty$-category of augmented complete profinite $\EE_\infty$ ultrasolid $k$-algebras. Then, there is a monadic adjunction
	\begin{align*}
		\widehat{\Sym}^*_{\nu} : \Solid_{k, \geq 0}^{\aperf} \leftrightarrows \widehat{\CAlg}_{k/ /k}^\blacksquare: \forget
	\end{align*}
\end{prop}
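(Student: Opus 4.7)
The plan is to establish the adjunction using \cref{prop-mapsfromcomplete} and then verify the hypotheses of the Barr-Beck-Lurie theorem (\cite{HA}, Theorem 4.7.3.5) for the right adjoint $\forget$. First I would verify that $\widehat{\Sym}^*_\nu$ takes values in $\widehat{\CAlg}_{k/ /k}^\blacksquare$: for $V \in \Solid_{k, \geq 0}^{\aperf}$, the decomposition $\widehat{\Sym}^*_\nu(V) \simeq \prod_{n \geq 1} \Sym^n(V)$ together with \cref{lemma-completedfreelimits} shows that the underlying chain complex is almost perfect; and since $\Sym^n$ agrees with $\LSym^n$ on $\pi_0$ for connective objects, $\pi_0 \widehat{\Sym}^*_\nu(V) \cong \widehat{\LSym}^*_\nu(\pi_0 V)$, which is complete profinite by construction.

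The adjunction then follows from \cref{prop-mapsfromcomplete}: for $A \in \widehat{\CAlg}_{k/ /k}^\blacksquare$, the completeness of $\pi_0 A$ yields
\begin{align*}
\Map_{\CAlg_{k/ /k}^\blacksquare}(\widehat{\Sym}^*_\nu V, A) \simeq \Map_{\CAlg_{k/ /k}^\blacksquare}(\Sym^*_\nu V, A) \simeq \Map_{\Solid_k}(V, \forget A),
\end{align*}
the second equivalence being the standard free-forget adjunction for augmented $\EE_\infty$ ultrasolid $k$-algebras applied to almost perfect $V$.

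For Barr-Beck-Lurie, conservativity of $\forget$ is immediate since equivalences in $\widehat{\CAlg}_{k/ /k}^\blacksquare$ are detected on underlying chain complexes. For preservation of $\forget$-split simplicial realizations, I would use that $\CAlg_{k/ /k}^\blacksquare$ is classically monadic over $\Solid_k$ via $\Sym^*_\nu$: given a $\forget$-split simplicial object $A_\bullet$ in $\widehat{\CAlg}_{k/ /k}^\blacksquare$, the realization $|A_\bullet|$ exists in $\CAlg_{k/ /k}^\blacksquare$ with underlying chain complex $|\forget A_\bullet| \in \Solid_{k, \geq 0}^{\aperf}$ by \cref{prop-complexesprohomology}(4). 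Splitness exhibits $\pi_0 |A_\bullet|$ as a retract in $\Solid_k^\heartsuit$ of the profinite $\pi_0 A_0$, giving profiniteness.

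The main obstacle is establishing completeness of $\pi_0|A_\bullet|$: the splitness does not lift to the algebra level, so the retract only provides a splitting in $\Solid_k^\heartsuit$, not in $\CAlg_{k/ /k}^{\blacksquare, \heartsuit}$, and one cannot directly transfer the adic-completeness of $\pi_0 A_0$. I would overcome this by extending the $\Sym^*_\nu$-algebra structure on $|A_\bullet|$ to a $\widehat{\Sym}^*_\nu$-action via the already-established adjunction (applying \cref{prop-mapsfromcomplete} once profiniteness of $\pi_0|A_\bullet|$ is in place), which forces $\pi_0|A_\bullet|$ to be complete as a $\widehat{\LSym}^*_\nu$-algebra and thus completes the verification of Barr-Beck-Lurie.
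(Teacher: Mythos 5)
Your overall architecture (obtain the adjunction from \cref{prop-mapsfromcomplete}, then verify Barr--Beck--Lurie by checking conservativity and preservation of the relevant geometric realisations) is the same as the paper's, and the first three steps are fine. The problem is exactly at the point you flag as the main obstacle: your proposed resolution of the completeness of $\pi_0|A_\bullet|$ is circular. \cref{prop-mapsfromcomplete} has as its \emph{hypothesis} that $\pi_0$ of the target algebra is complete --- profiniteness alone is not enough --- so you cannot invoke it to upgrade the $\Sym^*_{\nu}$-structure on $|A_\bullet|$ to a $\widehat{\Sym}^*_{\nu}$-structure before you already know that $\pi_0|A_\bullet|$ is complete, which is the very thing you are trying to prove. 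Moreover, even if you had such a $\widehat{\Sym}^*_{\nu}$-action in hand, the assertion that carrying a $\widehat{\LSym}^*_{\nu}$-algebra structure "forces" adic completeness of the underlying augmented ring is not justified: a structure map $\prod_{n>0}\LSym^n(R)\to R$ does not obviously imply that $R\to\varprojlim R/\mathfrak{m}^n$ is an isomorphism (injectivity, i.e.\ $\bigcap_n\mathfrak{m}^n=0$, is not addressed at all).

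The gap is closed by a much more direct observation, which is what the paper does: for any simplicial object $A_\bullet$ in $\widehat{\CAlg}_{k/ /k}^\blacksquare$ one has
\begin{align*}
\pi_0|A_\bullet|\;\cong\;\coeq\bigl(\pi_0 A_1\rightrightarrows \pi_0 A_0\bigr),
\end{align*}
so $\pi_0|A_\bullet|$ is the quotient of the complete profinite algebra $\pi_0 A_0$ by a profinite ideal. Such a quotient is again complete with respect to its augmentation ideal because completeness is an inverse-limit condition and inverse limits are exact in $\ProVect$ (\cref{prop-profinite}). Note this also shows closure under \emph{all} geometric realisations, not just the $\forget$-split ones, so the splitting-retract argument you use for profiniteness of $\pi_0$ is not needed either (it already follows from the underlying complex lying in $\Solid_{k,\geq 0}^{\aperf}$ by \cref{prop-complexesprohomology}). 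With this substitution the rest of your argument goes through.
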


\begin{proof}
	By \cref{lemma-completedfreelimits}, the functor $\widehat{\Sym}^*_{\nu}$ preserves $\Solid_{k, \geq 0}^{\aperf}$, so the above functors are well-defined. Remember that there is a monadic adjunction
	\begin{align*}
		\Sym^*_{\nu}: \Solid_{k, \geq 0} \leftrightarrows \CAlg_{k/ /k}^{\blacksquare} : \forget
	\end{align*}
	By \cref{prop-mapsfromcomplete}, this induces an adjunction $\widehat{\Sym}^*_{\nu} : \Solid_{k, \geq 0}^{\aperf} \leftrightarrows \widehat{\CAlg}_{k/ /k}^\blacksquare : \forget$.

	We will now show that $\widehat{\CAlg}_{k/ /k}^\blacksquare$ is closed under geometric realisations. By \cref{prop-complexesprohomology}, $\Solid_{k, \geq 0}^{\aperf}$ is closed under geometric realisations, and sifted colimits of $\EE_\infty$ ultrasolid $k$-algebras are computed on the underlying chain complex, so we only need to check completeness. Let $X_\bullet$ be a simplicial diagram in $\widehat{\CAlg}_{k/ /k}^\blacksquare$. Then,
	\begin{align*}
		\pi_0 |X_\bullet| \cong \pi_0 \colim_{i \leq 1} X_\bullet \cong \coeq (\pi_1 X \rightrightarrows \pi_0 X)
	\end{align*}
	so that $\pi_0 |X_\bullet|$ is a quotient of $\pi_0 X_0$ by a profinite ideal. Since completeness is an inverse limit condition, and inverse limits are exact in $\ProVect$, we see that $\pi_0 |X_\bullet|$ is also complete.

	The functor $\forget$ is conservative and commutes with geometric realisations that are computed in $\widehat{\CAlg}_{k/ /k}^\blacksquare$, so by the Barr-Beck-Lurie Theorem (\cite{HA}, Theorem 4.7.3.5), the adjunction is monadic.
\end{proof}

\begin{rem}
	\label{rem-completeinverse}
	Complete profinite $\EE_\infty$ ultrasolid $k$-algebras are closed under inverse limits. To see this, notice that an inverse limit of profinite chain complexes is still profinite (\cref{prop-complexesprohomology}). Additionally, inverse limits are exact, so $\pi_0$ of an inverse limit is the inverse limit of the $\pi_0$. Given that completion is a limit condition, this inverse limit is still complete.
\end{rem}

The cotangent complex is exceptionally well-behaved in the case of complete profinite $\EE_\infty$ ultrasolid $k$-algebras.

\begin{prop}
	\label{prop-cotinverselimits}
	Consider the functor
	\begin{align*}
		\cot :   \widehat{\CAlg}_{k/ /k}^\blacksquare  & \to \Solid_{k, \geq 0} \\
		R & \mapsto L_{R/k} \otimes_R k
	\end{align*}
	Then, for all $R \in \widehat{\CAlg}_{k/ /k}^\blacksquare$, $\cot (R) \in \Solid_{k, \geq 0}^{\aperf}$, and the functor $\cot$ commutes with inverse limits of complete profinite $\EE_\infty$ ultrasolid $k$-algebras.
\end{prop}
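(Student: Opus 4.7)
The plan is to reduce to the free case via the monadic bar resolution of \cref{prop-completemonadic}, exploiting the fact that $\cot$ is left adjoint to the trivial square-zero extension $M \mapsto k \oplus M$ as a functor $\CAlg_{k//k}^{\blacksquare} \to \Solid_k$, and therefore preserves geometric realisations. Since geometric realisations in $\widehat{\CAlg}_{k//k}^{\blacksquare}$ are computed inside $\CAlg_{k//k}^{\blacksquare}$ (as verified in \cref{prop-completemonadic}), the restricted functor continues to commute with them. The base case is $\cot(\widehat{\Sym}_{\nu}^*(V)) \simeq V$ for $V \in \Solid_{k, \geq 0}^{\aperf}$, which falls out of the cotangent cofibre sequence for $k \to \Sym^*(V) \to \widehat{\Sym}^*(V)$, the identification $L_{\Sym^*(V)/k} \simeq \Sym^*(V) \otimes V$, and \cref{theorem-cotangentcomplexcompletion} killing the relative term.

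Writing $F := \widehat{\Sym}_{\nu}^*$, $U$ for the forgetful functor of \cref{prop-completemonadic}, and $T := UF$ for the induced monad, the standard bar resolution yields $R \simeq |B_{\bullet}(R)|$ with $B_n(R) = F(T^n U(R))$ for every $R \in \widehat{\CAlg}_{k//k}^{\blacksquare}$. Applying $\cot$ and combining with the base case computation will give $\cot(R) \simeq |T^{\bullet} U(R)|$, a geometric realisation of a simplicial object in $\Solid_{k, \geq 0}^{\aperf}$. Closure of this subcategory under geometric realisations (\cref{prop-complexesprohomology}) then establishes the first claim.

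For the inverse limit statement, I will verify that every layer of the resolution commutes with inverse limits. The forgetful $U$ does since it is a right adjoint, and $F = \prod_{n > 0} \Sym^n$ does on $\Solid_{k, \geq 0}^{\aperf}$ by \cref{lemma-Symnlimits} combined with closure of profinite almost perfect modules under products (\cref{prop-complexesprohomology}). Hence $T$ and all its iterates preserve inverse limits, so for $R = \varprojlim_i R_i$ one has $T^{\bullet} U(R) \simeq \varprojlim_i T^{\bullet} U(R_i)$ pointwise. Invoking \cref{prop-inversegeom} to interchange inverse limits with geometric realisations in $\Solid_{k, \geq 0}^{\aperf}$ will then produce $\cot(\varprojlim_i R_i) \simeq \varprojlim_i \cot(R_i)$.

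I do not foresee a serious obstacle, since every step reduces to results already in place. The only genuinely delicate observation is that the left adjoint $F$ nevertheless preserves inverse limits when restricted to $\Solid_{k, \geq 0}^{\aperf}$, which relies crucially on both the explicit product formula for $\widehat{\Sym}_{\nu}^*$ and the unusually good behaviour of profinite almost perfect modules under limits given by \cref{prop-complexesprohomology}.
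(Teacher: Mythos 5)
Your proposal is correct and follows essentially the same route as the paper: resolve $R$ by the monadic bar construction of \cref{prop-completemonadic}, use $\cot(\widehat{\Sym}^*_{\nu}(V))\simeq V$ (via \cref{theorem-cotangentcomplexcompletion} and the cofibre sequence) to identify $\cot(R)\simeq |\Bar_\bullet(\id,\widehat{\Sym}^*_\nu,R)|$, and then conclude with \cref{lemma-completedfreelimits}, the closure of $\Solid_{k,\geq 0}^{\aperf}$ under geometric realisations, and \cref{prop-inversegeom}. No gaps.
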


\begin{proof}
	By \cref{prop-completemonadic}, we can use the bar construction to write, for any complete profinite $R$, $R \simeq |\Bar_\bullet (\widehat{\Sym}^*_{\nu}, \widehat{\Sym}^*_{\nu}, R)|$. The functor $\cot$ commutes with sifted colimits, so by \cref{theorem-cotangentcomplexcompletion} we can write $\cot (R) \simeq |\Bar_\bullet (\id, \widehat{\Sym}^*_{\nu}, R)|$, from which it is clear that $\cot (R) \in \Solid_{k, \geq 0}^{\aperf}$.

	By \cref{lemma-completedfreelimits}, $\widehat{\Sym}^*_{\nu}$ commutes with inverse limits in $\Solid_{k, \geq 0}^{\aperf}$. Also, geometric realisations commute with inverse limits in $\Solid_{k, \geq 0}^{\aperf}$ (\cref{prop-inversegeom}). It follows that the $\cot$ functor commutes with inverse limits.
\end{proof}

\begin{prop}
	\label{prop-cotcompleteiso}
	Suppose $f: A \to B$ is a map of complete profinite $\EE_\infty$ ultrasolid $k$-algebras and $L_{B/A} \simeq 0$. Then, $f$ is an equivalence.
\end{prop}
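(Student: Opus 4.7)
My plan is to first establish that $\pi_0(f)$ is an isomorphism, at which point \cref{lemma-cotangentcomplexiso}, together with the fact that $L_{B/A} \simeq 0$ is $n$-connective for every $n$, shows that $f$ is $n$-connective for every $n$; convergence of Postnikov towers (\cref{cor-EinftyPostnikov}) in $\widehat{\CAlg}_{k/ /k}^\blacksquare$, which is closed under inverse limits by \cref{rem-completeinverse}, then forces $f$ to be an equivalence.

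To obtain surjectivity of $\pi_0(f)$, I would first use the fundamental cofibre sequence $\cot(A) \to \cot(B) \to L_{B/A} \otimes_B k$: the hypothesis $L_{B/A} \simeq 0$ makes $\cot(f): \cot(A) \to \cot(B)$ an equivalence. Next, using the adjunction between $\cot$ and the trivial square-zero extension $V \mapsto k \oplus V$, together with Yoneda on $\ProVect$, I would identify $\pi_0 \cot(R) \cong \mathfrak{m}_R/\mathfrak{m}_R^2$ for every $R \in \widehat{\CAlg}_{k/ /k}^\blacksquare$ (augmented-algebra maps $R \to k \oplus W$, for $W \in \ProVect$ placed in degree $0$, are derivations that factor through $\mathfrak{m}_R/\mathfrak{m}_R^2$). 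This makes $\pi_0 \cot(f)$ into the isomorphism $\mathfrak{m}_A/\mathfrak{m}_A^2 \xrightarrow{\simeq} \mathfrak{m}_B/\mathfrak{m}_B^2$, so $\mathfrak{m}_B = f(\mathfrak{m}_A) + \mathfrak{m}_B^2$. Setting $N := \mathfrak{m}_B/(f(\mathfrak{m}_A) \cdot \pi_0(B))$ as a $\pi_0(B)$-module, $N$ is profinite (and therefore complete, by \cref{lemma-profinitecoherent} and \cref{prop-compactcomplete}) and satisfies $\mathfrak{m}_B N = N$, because in $N$ every class is represented by an element of $\mathfrak{m}_B^2 = \mathfrak{m}_B \cdot \mathfrak{m}_B$; ultrasolid Nakayama then gives $N = 0$, i.e., $\mathfrak{m}_B = f(\mathfrak{m}_A) \cdot \pi_0(B)$. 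A second application of ultrasolid Nakayama to $M := \pi_0(B)/f(\pi_0(A))$ viewed as a $\pi_0(A)$-module---the identity $\mathfrak{m}_A M = M$ being immediate from the previous step---yields $M = 0$, so $\pi_0(f)$ is surjective.

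For injectivity, \cref{lemma-II2} applied with $\pi_1 L_{B/A} = 0$ and $\pi_0(f)$ surjective gives $I = I^2$ where $I := \ker \pi_0(f)$. Since $f$ preserves augmentations, $I \subseteq \mathfrak{m}_A$, whence $I = I^2 \subseteq \mathfrak{m}_A \cdot I \subseteq I$ forces $\mathfrak{m}_A I = I$. A third invocation of ultrasolid Nakayama, now on the profinite $\pi_0(A)$-module $I$, concludes $I = 0$ and hence $\pi_0(f)$ is injective.

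The step I expect to require most care is the identification $\pi_0 \cot(R) \cong \mathfrak{m}_R/\mathfrak{m}_R^2$ in the ultrasolid setting, together with the bookkeeping that keeps each of the three Nakayama applications well-posed: at every stage one must verify that the module in question is genuinely profinite so that \cref{prop-compactcomplete} supplies the completeness needed. Once this is arranged, the remainder is a clean orchestration of tools from the paper: the cotangent cofibre sequence, three applications of ultrasolid Nakayama, \cref{lemma-II2}, and finally \cref{lemma-cotangentcomplexiso} combined with Postnikov convergence.
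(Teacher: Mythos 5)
Your proposal is correct and follows essentially the same route as the paper's proof: reduce to showing $\pi_0(f)$ is an isomorphism via \cref{lemma-cotangentcomplexiso}, deduce surjectivity from the equivalence $\cot(A) \simeq \cot(B)$ together with ultrasolid Nakayama, and deduce injectivity from \cref{lemma-II2} plus a final application of Nakayama to $I = I^2 \subseteq \mathfrak{m}_A$. The only difference is that you spell out details the paper leaves implicit (the identification $\pi_0\cot(R) \cong \mathfrak{m}_R/\mathfrak{m}_R^2$ and the two-step Nakayama argument upgrading surjectivity of $\mathfrak{m}_A/\mathfrak{m}_A^2 \to \mathfrak{m}_B/\mathfrak{m}_B^2$ to surjectivity of $\pi_0(A) \to \pi_0(B)$), and these fillings-in are accurate.
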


\begin{proof}
	It is enough to show that $f$ is an isomorphism in $\pi_0$ (\cref{lemma-cotangentcomplexiso}). By the fundamental cofibre sequence, the map $k \otimes_A L_{A/k} \to k \otimes_B L_{B/k}$ is an equivalence. We have $\pi_0 (k \otimes_A L_{A/k}) = \mathfrak{m}/\mathfrak{m}^2$, where $\mathfrak{m}$ is the augmentation ideal of $\pi_0 (A)$.

	Let $\mathfrak{n}$ be the augmentation ideal of $\pi_0 (B)$. Then, the map $\mathfrak{m}/\mathfrak{m}^2 \to \mathfrak{n} / \mathfrak{n}^2$ is surjective. In particular, by ultrasolid Nakayama, the map $\mathfrak{m} \to \mathfrak{n}$ is surjective. Hence, the map $\pi_0 (A) \to \pi_0 (B)$ is surjective.

	By \cref{lemma-II2}, we have that $I = I^2$, where $I$ is the kernel of $\pi_0 (A) \to \pi_0 (B)$. Since $I \subseteq \mathfrak{m}$, we have that $I \supseteq \mathfrak{m} I \supseteq I^2 = I$. Hence, $I \mathfrak{m} = I$ and it is profinite, so by ultrasolid Nakayama, $I = 0$. Hence, $\pi_0 (A) \xrightarrow{\simeq} \pi_0 (B)$ is an isomorphism.
\end{proof}

We can actually characterise complete profinite ultrasolid $k$-algebras in a much more classical way.

\begin{defi}
	An augmented $\EE_\infty$ ultrasolid $k$-algebra $R$ is \emph{Artinian} if $R \in \Mod_{k, \geq 0}$, $\pi_0 (R)$ is local and $\pi_* (R)$ is a finite-dimensional vector space. We write $\CAlg_{k/ /k}^{\art}$ for the full subcategory of Artinian $\EE_\infty$ ultrasolid $k$-algebras.
\end{defi}

\begin{rem}
	\label{rem-artinianlimits}
	Equivalently, this is the smallest subcategory of $\CAlg_{k/ /k}^{\blacksquare, \cn}$ closed under finite limits that contains $k \oplus V$ for $V \in \Mod_{k, \geq 0}$ with $\pi_* V$ finite-dimensional (c.f. \cite{DAGX}, Proposition 1.1.11).
\end{rem}

We now reach the main result of this section.

\begin{theorem}
	\label{theorem-proartinian}
	Write $\widehat{\CAlg}_{k/ /k}^\blacksquare$ for the $\infty$-category of complete profinite $\EE_\infty$ ultrasolid $k$-algebras. Then, there is an equivalence
	\begin{align*}
		\Pro(\CAlg_{k/ /k}^{\art}) \simeq \widehat{\CAlg}_{k/ /k}^\blacksquare
	\end{align*}
\end{theorem}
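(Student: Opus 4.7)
The plan is to exhibit $\widehat{\CAlg}_{k/ /k}^\blacksquare$ as the $\Pro$-completion of $\CAlg_{k/ /k}^{\art}$. Since $\widehat{\CAlg}_{k/ /k}^\blacksquare$ admits cofiltered limits (\cref{rem-completeinverse}), the universal property of $\Pro$ produces a canonical cofiltered-limit-preserving extension $\Phi: \Pro(\CAlg_{k/ /k}^{\art}) \to \widehat{\CAlg}_{k/ /k}^\blacksquare$ of the obvious fully faithful inclusion. I will show $\Phi$ is an equivalence by checking full faithfulness and essential surjectivity.

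Full faithfulness reduces via the standard Pro-Hom formula to showing that every Artinian $A$ is cocompact in $\widehat{\CAlg}_{k/ /k}^\blacksquare$: that is, $\Map(\lim_\alpha R_\alpha, A) \simeq \colim_\alpha \Map(R_\alpha, A)$ for any cofiltered system $\{R_\alpha\}$. By \cref{rem-artinianlimits}, $\CAlg^{\art}$ is generated under finite limits by trivial square-zero extensions $k \oplus V$ with $V$ a bounded finite-dimensional $k$-module, and cocompactness is preserved by finite limits, so it suffices to treat $A = k \oplus V$. Then $\Map_{\CAlg^\blacksquare}(R, k \oplus V) \simeq \Map_{\Solid_k}(\cot R, V)$, and by \cref{prop-cotinverselimits} we have $\cot(\lim_\alpha R_\alpha) \simeq \lim_\alpha \cot(R_\alpha)$ in $\Solid_{k, \geq 0}^{\aperf}$. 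Applying the contravariant duality $\Solid_{k, \geq 0}^{\aperf} \simeq \Mod_{k, \leq 0}^{\op}$ converts this inverse limit to a filtered colimit in $\Mod_k$, after which compactness of the perfect $k$-module $V^\vee$ yields the required interchange.

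For essential surjectivity, Postnikov convergence (\cref{cor-EinftyPostnikov}) together with stability of complete profiniteness under truncation reduces the claim to showing each $\tau_{\leq n} R$ is pro-Artinian, which I will prove by induction on $n$. For the base case $n = 0$, let $A = \pi_0 R$, a discrete complete profinite augmented ultrasolid $k$-algebra; writing $A = \lim_j A/\mathfrak{m}^j$ further reduces to the case where the augmentation ideal is nilpotent. Using \cref{prop-profinite} to identify the linear dual $A^\vee$ as a discrete coaugmented $k$-coalgebra (via compatibility of duality with tensor on profinite modules), the fundamental theorem of coalgebras presents $A^\vee = \colim_\alpha C_\alpha$ as a filtered union of finite-dimensional subcoalgebras. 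Dualizing gives $A = \lim_\alpha C_\alpha^\vee$, where each $C_\alpha^\vee$ is a finite-dimensional augmented $k$-algebra inheriting nilpotent augmentation ideal; the Artinian structure theorem then forces each $C_\alpha^\vee$ to be local with residue field $k$, hence Artinian in our sense.

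The inductive step uses that the Postnikov layer $\tau_{\leq n} R \to \tau_{\leq n-1} R$ is a square-zero extension (\cref{cor-EinftyPostnikov}), presenting $\tau_{\leq n} R$ as a pullback
\[
	\tau_{\leq n} R \simeq \tau_{\leq n-1} R \times_{\tau_{\leq n-1} R \oplus \pi_n(R)[n+1]} \tau_{\leq n-1} R.
\]
By induction $\tau_{\leq n-1} R = \lim_\beta W_\beta$ is pro-Artinian, and writing the profinite module $\pi_n(R) = \lim_\alpha V_\alpha$ as a cofiltered limit of finite-dimensional quotients displays the trivial square-zero extension in the middle as the cofiltered limit of $W_\beta \oplus V_\alpha[n+1]$, each of which is Artinian. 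Since Artinians are closed under finite limits (\cref{rem-artinianlimits}), the pullback is pro-Artinian, completing the induction. The main obstacle will be the base case: carefully implementing the duality between profinite augmented algebras with nilpotent augmentation ideal and discrete coaugmented coalgebras, and then verifying that locality (with residue field $k$) of the resulting finite-dimensional quotient algebras is forced by nilpotency of the augmentation ideal together with the augmentation condition.
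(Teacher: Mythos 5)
Your overall architecture matches the paper's: extend the inclusion to a cofiltered-limit-preserving $\Phi$, prove full faithfulness by showing Artinian objects are cocompact (reducing via \cref{rem-artinianlimits} to $k \oplus V$, then using $\cot$, \cref{prop-cotinverselimits} and the duality $\Solid_{k,\geq 0}^{\aperf} \simeq \Mod_{k,\leq 0}^{\op}$ --- this half is essentially identical to the paper's), and prove essential surjectivity by Postnikov induction. Your base case, however, is a genuinely different route: the paper handles $0$-truncated objects by a second induction along the $\mathfrak{m}$-adic filtration, realizing each $R/\mathfrak{m}^{i+1} \to R/\mathfrak{m}^i$ as a pullback over $k \oplus V[1]$ via \cref{prop-nsmallextensions}, whereas you dualize to a discrete coaugmented coalgebra and invoke the fundamental theorem of coalgebras. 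Your version is self-contained and exhibits an explicit cofinal system of finite-dimensional local quotients, at the cost of importing coalgebra theory; the paper's version reuses the same square-zero-extension machinery it needs anyway for the higher Postnikov layers.

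The gap is in your inductive step. You take the pullback over the square-zero base $\tau_{\leq n-1}R \oplus \pi_n(R)[n+1]$ and claim that writing $\tau_{\leq n-1}R = \lim_\beta W_\beta$ and $\pi_n(R) = \lim_\alpha V_\alpha$ ``displays'' this base as $\lim_{\alpha,\beta} \bigl(W_\beta \oplus V_\alpha[n+1]\bigr)$. For $W_\beta \oplus V_\alpha[n+1]$ to be an augmented algebra receiving a map from $\tau_{\leq n-1}R \oplus \pi_n(R)[n+1]$, the finite-dimensional quotient $V_\alpha$ must carry a $W_\beta$-module structure compatible with the given $\tau_{\leq n-1}R$-module structure on $\pi_n(R)$, and finite-dimensional vector-space quotients of a profinite module need not be module quotients; producing a cofinal system of compatible pairs requires an argument you do not give. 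The paper sidesteps this entirely: it pastes the square-zero pullback square with the square induced by the augmentation $\tau_{\leq n-1}R \to k$, so that $\tau_{\leq n}R$ is the pullback of the outer rectangle over $k \oplus \pi_n(R)[n+1]$, where the module structure is over $k$ and the decomposition $k \oplus \pi_n(R)[n+1] \simeq \lim_\alpha \bigl(k \oplus V_\alpha[n+1]\bigr)$ holds for any presentation of $\pi_n(R)$ as a cofiltered limit of finite-dimensional vector spaces. Adding that one pasting step closes the gap; the rest of your argument stands.
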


\begin{proof}
	There is a clear fully faithful functor $\CAlg_{k/ /k}^{\art} \to \CAlg_{k/ /k}^\blacksquare$ that extends uniquely to an inverse-limit-preserving functor $F: \Pro(\CAlg_{k/ /k}^{\art}) \to \CAlg_{k/ /k}^\blacksquare$. Given that all Artinian $k$-algebras are complete and profinite, by \cref{rem-completeinverse} this functor lands on $\widehat{\CAlg}_{k/ /k}^\blacksquare$. By (\cite{HTT}, Proposition 5.3.5.11), we only need to show that Artinian $k$-algebras are cocompact in $\widehat{\CAlg}_{k/ /k}^\blacksquare$ and that they generate all of $\widehat{\CAlg}_{k/ /k}^\blacksquare$ under inverse limits.

	We will first show that $k \oplus V$ is cocompact, where $V \in \Mod_{k, \geq 0}$ with $\pi_* V$ finite-dimensional.	Let $\{R_i\}_{i \in I}$ be an inverse system in $\widehat{\CAlg}_{k/ /k}^\blacksquare$. Then,
	\begin{align*}
		\Map_{\CAlg_{k/ /k}^\blacksquare} (\varprojlim R_i, k \oplus V) & \simeq \Map _{\Solid_k} (\cot (\varprojlim R_i), V) \\
																						  & \simeq \Map_{\Solid_k} (\varprojlim \cot (R_i), V) \\
																						  & \simeq \varinjlim \Map (\cot (R_i), V) \\
																						  & \simeq \varinjlim \Map_{\CAlg_{k/ /k}^\blacksquare} (R_i, k \oplus V)
	\end{align*}
	where in the second equivalence we used \cref{prop-cotinverselimits} and in the third equivalence we used that $V$ is cocompact in $\Solid_{k, \geq 0}^{\aperf}$. The latter is true because clearly $V^\vee$ is compact in $\Mod_{k, \leq 0}$ and duality induces an equivalence $\Solid_{k, \geq 0}^{\aperf} \simeq \Mod_{k, \leq 0}^{\op}$.

	Finite limits of cocompact objects are cocompact, so by \cref{rem-artinianlimits}, all Artinian $k$-algebras are cocompact.

	We now must show that Artinian $k$-algebras generate all of $\widehat{\CAlg}_{k/ /k}^{\blacksquare}$ under inverse limits. Given that $\CAlg_{k/ /k}^{\art}$ is closed under finite limits, $\Pro(\CAlg_{k/ /k}^{\art})$ has all limits. Additionally, the functor $\CAlg_{k/ /k} ^{\art} \to \widehat{\CAlg}_{k/ /k}^\blacksquare$ commutes with finite limits. By (\cite{HTT}, Proposition 5.5.1.9), $F$ commutes with all limits. Since it is fully faithful, we only need to show that Artinian $k$-algebras generate all of $\CAlg_{k/ /k}^{\art}$ under limits.

	By taking Postnikov towers, we only need to show that $n$-truncated objects are generated by Artinian objects under limits. Let $R$ be complete and profinite. By \cref{prop-nsmallextensions}, for every $n$, the map $\tau_{\leq n+1} R \to \tau_{\leq n} R$ fits in a diagram
	\[
		\begin{tikzcd}
			\tau_{\leq n+1} R \arrow{r} \arrow{d} & \tau_{\leq n} R \arrow{d} \\
			\tau_{\leq n} R \arrow{d} \arrow{r} & \tau_{\leq n} R \oplus \pi_{n+1} (R)[n+2] \arrow{d} \\
			k \arrow{r} & k \oplus \pi_{n+1} (R)[n+2]
		\end{tikzcd}
	\]
	where the map $\tau_{\leq n} R \to k$ is the augmentation, the map $k \to k \oplus \pi_{n+1} (R) [n+2]$ is the inclusion and the map $\tau_{\leq n } R \oplus \pi_{n+1} (R)[n+2] \to  k \oplus \pi_{n+1} (R)[n+2]$ is induced by the augmentation. Each of the squares is a pullback diagram so the whole rectangle is a pullback diagram.

	The ultrasolid module $\pi_{n+1} (R)[n+2]$ is profinite, so we can write $k \oplus \pi_{n+1} (R)[n+2] \simeq \varprojlim k \oplus V_i [n+2]$ for $\{V_i\}$ an inverse system of finite-dimensional vector spaces. Hence, $k \oplus \pi_{n+1} (R)[n+2]$ can be written as an inverse limit of Artinian algebras.

	Since the image of $F$ is closed under limits, by induction, we only need to show that $0$-truncated objects are inverse limits of Artinian $k$-algebras. Let $R$ be $0$-truncated. Then, $R = \varprojlim R/\mathfrak{m}^i$, where $\mathfrak{m}$ is the augmentation ideal, and each map $R/\mathfrak{m}^{i+1} \to R/\mathfrak{m}^i$ also fits in a square-zero extension diagram (\cref{prop-nsmallextensions})
	\[
		\begin{tikzcd}
			R/\mathfrak{m}^{i+1} \arrow{r} \arrow{d} & R/\mathfrak{m}^i \arrow{d} \\
			k \arrow{r} & k \oplus V[1]
		\end{tikzcd}
	\]
	for some $V \in \Pro(\Vect_k^\omega)$. We may again procced by induction, so we only need to show that $k \oplus V[1]$ is in the image of $F$, for $V \in \Pro(\Vect_k^\omega)$. But this last part is clear because square-zero extensions commute with inverse limits and $k \oplus V[1] \in \CAlg_{k/ /k}^{\art}$ for $V \in \Vect_k^\omega$.
\end{proof}

\section{Animated ultrasolid $k$-algebras}
\label{section-animated}

\subsection{The $\LSym^*$ monad}

The goal is to define animated ultrasolid $k$-algebras as algebras over a sifted-colimit-preserving extension of the monad $\LSym^*$ defined in \cref{rem-ringmonad}. We will first need the following technical lemma.

\begin{lemma}
	\label{lemma-monadextension}
	Left Kan extension gives a symmetric monoidal equivalence of $\infty$-categories
	\begin{align*}
		\End_{\omega} (\Solid_k^\heartsuit) \simeq \End_{\Sigma}^\heartsuit (\Solid_{k, \geq 0})
	\end{align*}
	where $\End_\omega (\Solid_k^\heartsuit)$ is the full subcategory of filtered-colimit-preserving endofunctors and $\End_\Sigma^\heartsuit (\Solid_{k, \geq 0})$ is the full subcategory of sifted-colimit-preserving endofunctors preserving $\Solid_k^\heartsuit$.

	In particular, any filtered-colimit-preserving monad on $\Solid_k^\heartsuit$ may be uniquely extended to a sifted-colimit-preserving monad on $\Solid_{k, \geq 0}$ preserving the heart.
\end{lemma}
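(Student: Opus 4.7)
The approach is to identify both sides with the common $\infty$-category $\Fun(\ProVect, \Solid_k^\heartsuit)$ via restriction along $\ProVect$, and then verify that this identification is compatible with the composition monoidal structures on each side.

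First I would analyse each endomorphism category via a universal property. For the left, \cref{lemma-ultrasolidind} gives $\Solid_k^\heartsuit \simeq \Ind(\ProVect)$, so the universal property of $\Ind$-completions (\cite{HTT}, Proposition 5.3.5.10) identifies $\End_\omega(\Solid_k^\heartsuit)$ with $\Fun(\ProVect, \Solid_k^\heartsuit)$ via restriction, with left Kan extension as the inverse. For the right, \cref{prop-ultrasolidPSigma} gives $\Solid_{k, \geq 0} \simeq \mathcal{P}_\Sigma(\ProVect)$, and the universal property of $\mathcal{P}_\Sigma$ (\cite{HTT}, Proposition 5.5.8.15) identifies $\End_\Sigma(\Solid_{k, \geq 0})$ with $\Fun(\ProVect, \Solid_{k, \geq 0})$ via restriction. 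I would then show that heart-preserving functors correspond exactly to those whose restriction to $\ProVect$ lands in $\Solid_k^\heartsuit$. One direction is trivial; for the converse, any $X \in \Solid_k^\heartsuit$ is a filtered colimit of objects of $\ProVect$, so if $F$ is sifted-colimit-preserving with $F(\ProVect) \subseteq \Solid_k^\heartsuit$, then $F(X)$ is a filtered colimit of heart objects, which remains in $\Solid_k^\heartsuit$ because filtered colimits commute with homology in $\Solid_k$ (\cref{prop-filteredhomology}), so the heart is closed under filtered colimits in $\Solid_{k, \geq 0}$.

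Combining the two identifications gives the claimed equivalence of underlying $\infty$-categories. To verify compatibility with composition, I would first establish the key observation that if $\tilde F$ denotes the extension of $F \in \End_\omega(\Solid_k^\heartsuit)$, then $\tilde F|_{\Solid_k^\heartsuit} \simeq F$: both functors preserve filtered colimits and agree on $\ProVect$, and $\Solid_k^\heartsuit$ is closed under filtered colimits inside $\Solid_{k, \geq 0}$, so they must coincide. Then, given $F, G \in \End_\omega(\Solid_k^\heartsuit)$ with extensions $\tilde F, \tilde G$, the composite $\tilde F \circ \tilde G$ preserves sifted colimits and the heart, and its restriction to $\ProVect$ sends $c$ to $\tilde F(G(c)) = F(G(c))$, using $G(c) \in \Solid_k^\heartsuit$. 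By the uniqueness of sifted-colimit-preserving extensions, $\tilde F \circ \tilde G \simeq \widetilde{F \circ G}$, so the equivalence intertwines composition.

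The final statement about monads follows formally, since monads on any $\infty$-category are by definition $\EE_1$-algebras in the endofunctor $\infty$-category equipped with composition (\cite{HA}, Definition 4.7.0.1), so a monoidal equivalence of endofunctor categories transports monads to monads. The main technical subtlety throughout is the closure of $\Solid_k^\heartsuit$ under filtered colimits in $\Solid_{k, \geq 0}$, which I use both to match heart-preservation with the restriction condition on $\ProVect$ and to verify the identity $\tilde F|_{\Solid_k^\heartsuit} \simeq F$; once this is established, everything else reduces to routine applications of the universal properties of $\Ind$ and $\mathcal{P}_\Sigma$.
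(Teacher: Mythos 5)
Your proof is correct and follows essentially the same route as the paper: both identify each side with $\Fun(\ProVect, \Solid_k^\heartsuit)$ via the universal properties of $\Ind$ and $\mathcal{P}_\Sigma$, use that filtered colimits commute with homology to match heart-preservation with the condition that the restriction to $\ProVect$ lands in the heart, and establish $\tilde F|_{\Solid_k^\heartsuit} \simeq F$ to conclude that the composite equivalence is left Kan extension. If anything, you are more careful than the paper on the monoidal compatibility, which the paper dismisses as clear but which you verify explicitly via the uniqueness of sifted-colimit-preserving extensions.
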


\begin{proof}
	By \cref{prop-ultrasolidPSigma}, left Kan extension gives an equivalence
	\begin{align*}
		\Fun(\ProVect, \Solid_k^\heartsuit) \simeq \End'_\Sigma (\Solid_{k, \geq 0})
	\end{align*}
	where $\End'_\Sigma (\Solid_{k, \geq 0})$ is the full subcategory of sifted-colimit-preserving endofunctors sending $\ProVect$ to $\Solid_k^\heartsuit$. Remember that (restricting to a suitable cardinality) $\Solid_k^\heartsuit \simeq \Ind (\ProVect)$ (\cref{lemma-ultrasolidind}), and filtered colimits commute with homology in $\Solid_k$. This gives an equivalence $\End'_\Sigma (\Solid_{k, \geq 0}) \simeq \End^\heartsuit _\Sigma (\Solid_{k, \geq 0})$.

	On the other hand, by \cref{lemma-ultrasolidind}, restriction gives an equivalence
	\begin{align*}
		\End_\omega (\Solid_k^\heartsuit) \simeq \Fun (\ProVect, \Solid_k^\heartsuit)
	\end{align*}
	Now it remains to show that the composition of equivalences
	\begin{align*}
		\End_\omega (\Solid_k^\heartsuit) \simeq \Fun (\ProVect, \Solid_k^\heartsuit) \simeq \End_\Sigma^\heartsuit (\Solid_{k, \geq 0})
	\end{align*}
	is given by left Kan extension. Let $F \in \End_\omega (\Solid_k^\heartsuit)$, and let $\tilde{F} \in \End_\Sigma^\heartsuit (\Solid_{k, \geq 0})$ be the left Kan extension of its restriction to $\ProVect$. Since $\Solid_k^\heartsuit \simeq \Ind(\ProVect)$ and filtered colimits commute with homology, we see that $\tilde{F}_{| \Solid_k^\heartsuit}$ has its image in $\Solid_k^\heartsuit \subset \Solid_{k, \geq 0}$, and since it preserves sifted colimits we get that $F \simeq \tilde{F}_{| \Solid_k^\heartsuit}$. Then, $\tilde{F}$ is the left Kan extension of $\tilde{F}_{| \Solid_k^\heartsuit} \simeq F$ (since it is left Kan extended from a smaller subcategory), which completes the proof of the equivalence.

	Finally, it is clear that left Kan extension gives a symmetric monoidal functor.
\end{proof}

\begin{defi}
	\label{defi-animatedring}
	By \cref{lemma-monadextension}, there is a unique sifted-colimit-preserving monad $\LSym^* : \Solid_{k, \geq 0} \to \Solid_{\geq 0}$ preserving the heart such that its restriction to $\Solid_k^\heartsuit$ agrees with the monad defined in \cref{rem-ringmonad}. That is, for $V \in \Solid_k^\heartsuit$,
	\begin{align*}
		\LSym^* (V) := \bigoplus_{n \geq 0} V^{\otimes n}_{\Sigma_n}
	\end{align*}
	where the orbits are taken in the ordinary category $\Solid_k^\heartsuit$.

	The $\infty$-category of \emph{animated ultrasolid $k$-algebras} is the $\infty$-category of algebras over $\LSym^*$. We will write this as $\CAlg_k^{\blacktriangle}$.

	Alternatively, for an uncountable strong limit cardinal $\sigma$, we can write $\Poly_k^\sigma$ for the full subcategory of ultrasolid $k$-algebras spanned by $\LSym^* (V)$ for $V \in \ProVectsigma$. Then, the $\infty$-category of animated ultrasolid $k$-algebras is the filtered colimit of $\mathcal{P}_\Sigma (\Poly_k^\sigma)$ over all uncountable strong limit cardinals $\sigma$
\end{defi}

\begin{rem}
	The $\LSym^*$ monad admits a natural grading. Write $\LSym^n: \Solid_{k, \geq 0} \to \Solid_{k, \geq 0}$ for the sifted-colimit-preserving extension of the functor $V \mapsto V^{\otimes n}_{\Sigma_n}$ on $\ProVect$. Then, $\LSym^* = \bigoplus_{n \geq 0} \LSym^n $.
\end{rem}

\begin{rem}
	Just as in the spectral analogue, the $\infty$-category of augmented animated ultrasolid $k$-algebras, $\CAlg_{k/ /k}^\blacktriangle$, can be written as algebras over the augmented animated ultrasolid $k$-algebra monad, $\LSym^*_{\nu}$. For $V \in \Solid_{k, \geq 0}$,
	\begin{align*}
		\LSym^*_{\nu} (V) := \bigoplus_{n > 0} \LSym^n (V) 
	\end{align*}
	Notice that the indexing starts at $1$, whereas the indexing for the $\LSym^*$ monad starts at $0$.

	Similarly, if $A \in \CAlg_k^\blacktriangle$, we can write the $\infty$-category $\CAlg_A^\blacktriangle := (\CAlg_k^\blacktriangle)_{A/}$ as algebras over the monad $\LSym_A^*$ on $\Solid_{A, \geq 0}$. This is given by the sifted-colimit-preserving extension of the functor sending $A \otimes V$ to $\LSym^* (V) \otimes A$ for $V \in \ProVect$.
\end{rem}

\begin{construction}[completed free animated ultrasolid $k$-algebras]
	\label{construction-freedeltacomplete}
	Just like in \cref{construction-completedfree}, we obtain a sequence of $\infty$-operads
	\begin{align*}
		\Comm^{\nu} \to \dots \to \Comm^{\nu}_{\leq i} \to \dots \to \Comm^{\nu}_{\leq 2} \to \Comm^{\nu}_{\leq 1}
	\end{align*}
	By (\cite{HA}, Example 4.7.3.11), $\Comm^{\nu}_{\leq i}$-algebras in $\Solid_k^\heartsuit$ are equivalent to algebras over the free $\Comm^{\nu}_{\leq i}$-algebra monad on $\Solid_k^\heartsuit$, which we will call $\LSym^{\leq i}_{\nu}$. By \cref{lemma-monadextension}, we can extend these monads to monads on $\Solid_{k, \geq 0}$, and we get a sequence of monads
	\begin{align*}
		\LSym^* _{\nu} \to \dots \to \LSym^{\leq i}_{\nu} \to \dots \to \LSym^{\leq 2}_{\nu} \to \LSym^{\leq 1}_{\nu}
	\end{align*}
	For $X \in \Solid_{k, \geq 0}$ we have that
	\begin{align*}
		\LSym^{\leq i}_{\nu} (X) \simeq \bigoplus_{0 < n \leq i} \LSym^n (X)
	\end{align*}
	Define the monad
	\begin{align*}
		\widehat{\LSym}^*_{\nu} := \varprojlim \LSym^{ \leq i} _{\nu}
	\end{align*}
	Remember that a limit of monads is taken as the limit of the underlying functors (\cite{HA}, Corollary 3.2.2.5). Hence, the monad $\widehat{\LSym}^*_{\nu}$ is given on $X \in \Solid_{k, \geq 0}$ by
	\begin{align*}
		\widehat{\LSym}^*_{\nu} (X) = \prod_{n > 0} \LSym^n (X)
	\end{align*}
	The above tower of monads induces a map of monads $\LSym^*_{\nu} \to \widehat{\LSym}^*_{\nu}$, so we get a functor
	\begin{align*}
		\Alg_{\widehat{\LSym}^*_{\nu}} (\Solid_{k, \geq 0}) \to \CAlg_{k/ /k}^{\blacktriangle}
	\end{align*}
	that is the identity on the underlying chain complexes. This means that any $\widehat{\LSym}^*_{\nu}$-algebra has the structure of an augmented animated ultrasolid $k$-algebra.

	Given $X \in \Solid_{k, \geq 0}$, we will abuse notation and write $\widehat{\LSym}^*_{\nu} (X)$ for the augmented animated ultrasolid $k$-algebra that is the image of the free $\widehat{\LSym}^*_{\nu}$-algebra on $X$. Additionally, we will write $\widehat{\LSym}^* (X)$ for the animated ultrasolid $k$-algebra obtained after forgetting the augmentation.
\end{construction}

\begin{lemma}
	\label{lemma-freedeltaalgebraslims}
	Let $n \geq 0$.
	\begin{enumerate}
		\item The functors $\LSym^n$ and $\LSym^*$ commute with sifted colimits.
		\item $\LSym^n$ and $\widehat{\LSym}^*$ preserve the subcategory $\Solid_{k, \geq 0}^{\aperf}$.
		\item $\LSym^n$ and $\widehat{\LSym}^*$ commute with inverse limits in $\ProVect$.
		\item If $V \in \Solid_{k, \geq 0}$ is $1$-connective, then $\LSym^n(V)$ is $n$-connective. If $V$ is $m$-connective for $m \geq 2$, $\LSym^n (V)$ is $(m + 2n -2)$-connective.
	\end{enumerate}
\end{lemma}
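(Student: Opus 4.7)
The strategy is to dispatch Parts (1)--(3) by direct arguments using the foundational properties of $\Solid_k$, and to reduce Part (4) to the classical theory of derived symmetric powers. Part (1) is immediate from \cref{defi-animatedring}, which defines $\LSym^n$ as the unique sifted-colimit-preserving extension of its restriction to $\ProVect$; then $\LSym^* = \bigoplus_n \LSym^n$ inherits the property since direct sums commute with all colimits. For Part (2), on $\ProVect$ the functor $\LSym^n(V) = V^{\otimes n}_{\Sigma_n}$ lies in $\ProVect$ by closure under tensor products and finite colimits (\cref{prop-profinite}); for general $V \in \Solid_{k, \geq 0}^{\aperf}$, I would realise $V$ as a geometric realisation of a simplicial object in $\ProVect$ (possible since such $V$ is split with profinite homology) and apply $\LSym^n$ levelwise using (1), then invoke closure of $\Solid_{k, \geq 0}^{\aperf}$ under geometric realisations (\cref{prop-complexesprohomology}). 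For $\widehat{\LSym}^*(V) \simeq \prod_n \LSym^n(V)$, products in $\Solid_k$ are pointwise in the spectrum-valued functor model (\cref{rem-Solidasfunctors}) and hence commute with $\pi_i$; combined with closure of $\ProVect$ under products (\cref{prop-profinite}), each $\pi_i \widehat{\LSym}^*(V)$ is profinite, and connectivity is preserved factorwise. Part (3) is then formal: tensor products commute with inverse limits in each variable on $\ProVect$, and orbits are cokernels which commute with the exact inverse limits of \cref{prop-profinite}; for $\widehat{\LSym}^*$ one uses that products commute with all limits.

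For Part (4), I plan a two-step reduction to the case of a shifted finite-dimensional vector space. Since $\LSym^n$ preserves sifted colimits by (1) and each $\Solid_{k, \geq c}$ is closed under sifted colimits, and since shifting \cref{prop-ultrasolidPSigma} by $m$ shows $\Solid_{k, \geq m}$ is generated under sifted colimits by $\ProVect[m]$, I may assume $V = P[m]$ for $P \in \ProVect$. To pass from $P \in \ProVect$ to $W \in \Vect_k^\omega$, I would model $P[m]$ via Dold--Kan as a geometric realisation of a simplicial object whose levels are finite direct sums of copies of $P$, hence in $\ProVect$; applying $\LSym^n$ levelwise, using (3) to commute each level with the cofiltered presentation $P = \varprojlim W_i$, and commuting geometric realisations with inverse limits in $\Solid_{k, \geq 0}^{\aperf}$ (\cref{prop-inversegeom}), this gives $\LSym^n(P[m]) \simeq \varprojlim_i \LSym^n(W_i[m])$. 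Since $\Solid_{k, \geq c}^{\aperf}$ is closed under inverse limits (\cref{prop-complexesprohomology}), connectivity of the limit is controlled by that of the terms, reducing to $V = W[m]$ with $W$ finite-dimensional.

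The remaining task is the classical computation: for $W \in \Vect_k^\omega$, show that $\LSym^n(W[1])$ is $n$-connective and $\LSym^n(W[m])$ is $(m+2n-2)$-connective for $m \geq 2$. For $m = 1$, I would use the Dold--Kan resolution $\Gamma(W[1])_k = W^{\oplus k}$ and expand $\LSym^n(W^{\oplus k}) = \bigoplus_{n_1 + \cdots + n_k = n} \LSym^{n_1}(W) \otimes \cdots \otimes \LSym^{n_k}(W)$ by weak compositions; the nondegenerate simplices are those with all $n_i \geq 1$, so they sit in simplicial degrees $[1, n]$, and a direct analysis of the associated differential (which recovers the Koszul complex in characteristic zero, with divided-power corrections in characteristic $p$) shows the homotopy is concentrated in degrees $\geq n$. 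For $m \geq 2$, I would obtain the $(m + 2n - 2)$-bound by suspension induction, transferring the $m = 1$ estimate through the bar-construction relation between $\LSym^*$ and suspension. I expect the main obstacle to be the combinatorial verification of connectivity in positive characteristic, where divided-power / Dyer--Lashof-type operations can a priori produce homotopy classes below the characteristic-zero bottom degree; here I would appeal to the classical connectivity estimates of Dold--Puppe, Quillen, and Illusie (\emph{Complexe Cotangent}) to close the gap.
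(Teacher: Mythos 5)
Your proposal is correct and follows essentially the same route as the paper: parts (1)--(3) by the same formal arguments from \cref{prop-profinite}, \cref{prop-complexesprohomology} and \cref{prop-inversegeom}, and part (4) by the same two-step reduction (sifted-colimit generation of $\Solid_{k,\geq m}$ by $\ProVect[m]$, then inverse limits and geometric realisations to reduce to $W[m]$ with $W$ finite-dimensional). The only difference is at the final step, where the paper simply cites (\cite{SAG}, Proposition 25.2.4.1) for the finite-dimensional connectivity estimate, while you sketch the Dold--Kan/suspension argument before deferring to the same classical literature -- substantively the same input.
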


\begin{proof}
	It is clear that $\LSym^n$ and $\LSym^*$ commute with sifted colimits since they have been defined as sifted-colimit-preserving extensions.

	Profinite modules are closed under tensor products and finite colimits, so $\LSym^n (V) \in \Solid_{k, \geq 0}^{\aperf}$ for $V \in \ProVect$. Given that $\LSym^n$ commutes with geometric realisations, and $\Solid_{k, \geq 0}^{\aperf}$ is closed under geometric realisations (\cref{prop-complexesprohomology}), $\LSym^n$ preserves $\Solid_{k, \geq 0}^{\aperf}$. From this, it follows that $\widehat{\LSym}^*$ also preserves $\Solid_{k, \geq 0}^{\aperf}$.

	By \cref{prop-profinite}, each $\LSym^n$ commutes with inverse limits considered as a functor $\ProVect \to \ProVect$. Since inverse limits commute with homology in $\Solid_{k, \geq 0}^{\aperf}$, each $\LSym^n$ commutes with inverse limits as a functor $\ProVect \to \Solid_{k, \geq 0}^{\aperf}$. The same follows for $\widehat{\LSym}^*$.

	For the last part, $\Solid_{k, \geq m}$ is generated under sifted colimits by objects of the form $V[m]$, for $V \in \ProVect$, so it suffices to show the result for objects of this form. Furthermore, $\LSym^n (\Sigma^m V[0])$ can be written as a geometric realisation of objects of the form $\LSym^n (V^{r_i} [0])$. Given that $\LSym^m$ commutes with inverse limits in $\ProVect$ and preserves the subcategory $\Solid_{k, \geq 0}^{\aperf}$, and inverse limits commute with geometric realisations and homology in $\Solid_{k, \geq 0}^{\aperf}$ (\cref{prop-inversegeom}), we can just show the result for objects of the form $V[m]$ where $V \in \Vect_k^\omega$. But this is (\cite{SAG}, Proposition 25.2.4.1).
\end{proof}

\begin{construction}[Forgetful functor from animated to $\EE_\infty$ ultrasolid $k$-algebras]
	\label{construction-simplicialtoeinfty}
	Given an animated ultrasolid $k$-algebra, we can construct an associated $\EE_\infty$ ultrasolid $k$-algebra. This is given by the unique sifted-colimit-preserving functor $\Theta: \CAlg_k^{\blacktriangle} \to \CAlg_k^\blacksquare$ that maps $\LSym^* (V)$ for $V \in \Solid_k^\heartsuit$ to the associated discrete $\EE_\infty$ ultrasolid $k$-algebra.	

	For $A \in \CAlg_k^{\blacktriangle}$, we will usually write $A^\circ := \Theta (A)$.
\end{construction}

\begin{prop}
	\label{prop-derivedcomparison}
	Consider the functor $\Theta: \CAlg_k^{\blacktriangle} \to \CAlg_k^\blacksquare$ of \cref{construction-simplicialtoeinfty}. Then,
	\begin{enumerate}
		\item $\Theta$ preserves small limits and colimits.
		\item $\Theta$ is conservative.
		\item If $\operatorname{char} (k) = 0$ then it is an equivalence.
	\end{enumerate}
\end{prop}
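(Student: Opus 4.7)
The strategy is to use the monadic presentations of both categories over $\Solid_{k, \geq 0}$: $\CAlg_k^{\blacktriangle}$ for the monad $\LSym^*$, and the essential image of $\Theta$ (which lies in $\CAlg_k^{\blacksquare, \cn}$) for the monad $\Sym^*$, the latter preserving connectivity since tensor products and colimits do. Under this perspective, $\Theta$ is induced by the natural transformation of monads $\Sym^* \to \LSym^*$ whose $n$-th piece is the quotient map $V^{\otimes n}_{h\Sigma_n} \to V^{\otimes n}_{\Sigma_n}$ from homotopy to strict coinvariants; this agrees with $\Theta$ on free algebras and hence everywhere by sifted cocontinuity.

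For part $(1)$, $\Theta$ preserves sifted colimits by construction. Since all colimits in an algebraic $\infty$-category are generated by sifted colimits together with finite coproducts, it suffices to verify preservation of the latter, which by sifted density reduces to the coproduct of two free algebras $\LSym^*(V) \sqcup \LSym^*(W) = \LSym^*(V \oplus W)$ with $V, W \in \ProVect$. Under $\Theta$ we obtain the discrete $\EE_\infty$ algebra $\LSym^*(V \oplus W)$, while $\Theta(\LSym^*(V)) \otimes_k \Theta(\LSym^*(W))$ is a derived tensor product of discrete ultrasolid algebras that coincides with the underived one by flatness (\cref{prop-flat}), giving the same answer. For limits: both forgetful functors to $\Solid_{k, \geq 0}$ and $\Solid_k$ preserve and reflect small limits by monadicity, the inclusion $\Solid_{k, \geq 0} \hookrightarrow \Solid_k$ preserves limits as a right adjoint to truncation, and $\Theta$ is the identity on underlying chain complexes; so the canonical comparison $\Theta(\lim A_\bullet) \to \lim \Theta(A_\bullet)$ becomes an equivalence in $\Solid_k$ after applying the forgetful functor, hence is already an equivalence by conservativity.

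Part $(2)$ is immediate: equivalences in both $\CAlg_k^\blacktriangle$ and $\CAlg_k^\blacksquare$ are detected on the underlying chain complex, which $\Theta$ leaves unchanged.

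For part $(3)$, in characteristic zero the map $V^{\otimes n}_{h\Sigma_n} \to V^{\otimes n}_{\Sigma_n}$ is an equivalence on $\Solid_k^\heartsuit$: this holds on $\Vect_k^\omega$ since the higher group homology of $\Sigma_n$ vanishes when $n!$ is invertible, extends to $\ProVect$ because both sides commute with inverse limits there (\cref{lemma-Symnlimits}, \cref{lemma-freedeltaalgebraslims}), and then to all of $\Solid_k^\heartsuit$ by filtered colimits. In particular $\Sym^n$ preserves the heart in characteristic zero. Applying \cref{lemma-monadextension} to each piece shows that $\Sym^n \simeq \LSym^n$ as sifted-colimit-preserving endofunctors of $\Solid_{k, \geq 0}$, and summing over $n$ gives $\Sym^* \simeq \LSym^*$ as monads. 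Passing to algebras then produces the equivalence $\CAlg_k^{\blacktriangle} \simeq \CAlg_k^{\blacksquare, \cn}$ and identifies it with $\Theta$. The main obstacle I expect is to verify that the comparison $\Sym^* \to \LSym^*$ is a morphism of monads and not merely of underlying functors; this should follow from its operadic origin, namely that the quotient map between derived and strict coinvariants is compatible with operadic composition.
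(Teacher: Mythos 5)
Your proposal follows essentially the same route as the paper: identify $\forget \circ \Theta$ with the forgetful functor of $\CAlg_k^{\blacktriangle}$ by checking on free algebras and invoking sifted cocontinuity (your monad-map formulation is just a structured repackaging of this), deduce conservativity and limit-preservation from the two monadic forgetful functors, verify finite coproducts on free algebras, and in characteristic zero compare $\Sym^n$ with $\LSym^n$ on $\Vect_k^\omega$ and extend first along inverse limits to $\ProVect$ and then along sifted colimits. The paper obtains the finite-dimensional comparison, together with the fact that it is a map of monads, by citing (\cite{SAG}, Proposition 25.1.2.2), which disposes of the coherence worry you raise at the end.

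One concrete slip: the inclusion $\Solid_{k, \geq 0} \hookrightarrow \Solid_k$ is a \emph{left} adjoint (its right adjoint is $\tau_{\geq 0}$), so it preserves colimits, not limits; a general limit of connective objects computed in $\Solid_k$ need not be connective (e.g.\ $\lim^1$ terms in sequential inverse limits), so your bridge from limits in $\Solid_{k,\geq 0}$ to limits in $\Solid_k$ does not work as stated. The cleaner route, and the one the paper takes, is to stay entirely over $\Solid_{k,\geq 0}$: $\Theta$ lands in connective $\EE_\infty$ algebras, and both forgetful functors to $\Solid_{k,\geq 0}$ are conservative and preserve the relevant limits, so no passage through $\Solid_k$ is needed. (The paper itself is slightly loose about whether the target of the limit claim is $\CAlg_k^{\blacksquare}$ or $\CAlg_k^{\blacksquare,\cn}$, but your explicit appeal to the inclusion being a right adjoint is the one step that is actually false as written.)
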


\begin{proof}
	To show that $\Theta$ preserves small colimits, since it preserves sifted colimits, we only need to show that it preserves finite coproducts when restricted to $\Poly_k$, which is clear.

	Remember that there is a forgetful functor $\forget: \CAlg_k^\blacksquare \to \Solid_{k, \geq 0}$ that is conservative and preserves small limits and sifted colimits. Write $\psi: \CAlg_k^{\blacktriangle} \to \Solid_{k, \geq 0}$ for the functor $A \mapsto \forget (A^\circ)$.

	For objects of the form $\LSym^* (V)$, for $V \in \Pro(\Vect_k)$, the functor $\psi$ coincides with the forgetful functor $\CAlg_k^\blacktriangle \xrightarrow{\forget} \Solid_{k, \geq 0}$. Since both functors commute with sifted colimits, $\psi$ is the forgetful functor, so it preserves limits and is conservative. Given that the forgetful functor on $\EE_\infty$ ultrasolid $k$-algebras preserves limits and is conservative, we can deduce the same thing for $\Theta$.

	For the final part, by (\cite{SAG}, Proposition 25.1.2.2 (3)), there is a map $\Sym^* (V) \to \LSym^* (V)$ that is an equivalence for $V \in \Vect_k^\omega$. Given that both $\Sym^n$ and $\LSym^n$ commute with inverse limits of profinite vector spaces (\cref{lemma-completedfreelimits} and \cref{lemma-freedeltaalgebraslims}), we have a map $\Sym^* (V) \to \LSym^* (V)$ that is an equivalence on profinite vector spaces. By extending along sifted colimits, both monads are equivalent.
\end{proof}

\begin{rem}
	In particular, we see by the Barr-Beck-Lurie Theorem that the functor $\Theta$ is both monadic and comonadic. That is, we can understand animated ultrasolid $k$-algebras as algebras or coalgebras over a certain monad or comonad on $\EE_\infty$ ultrasolid $k$-algebras.
\end{rem}

\begin{defi}
	Given an animated ultrasolid $k$-algebra $A$, we will write $\Solid_A := \Solid_{A^\circ}$ for the $\infty$-category of $A$-modules.
\end{defi}

\subsection{The cotangent complex}

We will now follow the strategy in (\cite{SAG}, \S 25.3) to construct the cotangent complex of animated ultrasolid $k$-algebras.

\begin{construction}
	Write $\Solid^\Delta$ for the $\infty$-category of pairs $(A,M)$ where $A \in \CAlg_k^{\blacktriangle}$ and $M \in \Mod_A$.

	Alternatively, for an uncountable strong limit cardinal $\sigma$, we let $\catC_\sigma$ be the $\infty$-category of pairs $(A,M)$ where $A \in \CAlg_k^{\blacksquare, \heartsuit}$ is of the form $\LSym^*(V)$ for $V \in \Pro(\Vect_k^\omega)_{<\sigma}$ and $M \in \Mod_A$ is of the form $A \otimes W$ for $W \in \Pro(\Vect_k^\omega)_{< \sigma}$. Then, $\Solid^\Delta$ is the filtered colimit of $\mathcal{P}_\Sigma (\catC_\sigma)$ over all uncountable strong limit cardinals $\sigma$.

	Given $V,W \in \Pro(\Vect_k)$, we can form the square-zero extension $\LSym^* (V) \oplus (W \otimes \LSym^*(V)) \in \CAlg_k^{\blacksquare, \heartsuit} \subseteq \CAlg_k^{\blacktriangle}$. Hence, we obtain a sifted-colimit-preserving functor $\Solid^\Delta \to \CAlg_k^{\blacktriangle}$. We will write the image of $(A,M) \in \Solid^\Delta$ as $A \oplus M$ and refer to it as the \emph{trivial square-zero extension of $A$ by $M$}.
\end{construction}

\begin{defi}
	We will say that a map $A \to B$ in $\CAlg_k^\blacktriangle$ is a \emph{square-zero extension} if there is a pullback diagram of the form
	\[
		\begin{tikzcd}
			A \arrow{r} \arrow{d} & B \arrow{d} \\
			B \arrow{r} & B \oplus M[1]
		\end{tikzcd}
	\]
	for some $M \in \Solid_B$, where the horizontal map $B \to B \oplus M[1]$ is the $B$-algebra map. We then say that $A$ is a \emph{square-zero extension of $A$ by $M$}.
\end{defi}

\begin{defi}
	Let $A$ be an animated ultrasolid $k$-algebra and let $M$ be a connective $A$-module. We will write
	\begin{align*}
		\Der (A,M) := \Map_{\CAlg_{k / / A}^{\blacktriangle}} (A, A \oplus M)
	\end{align*}
	for the space of \emph{derivations of $A$ into $M$}.
\end{defi}

\begin{rem}
	The functor $M \mapsto A \oplus M$ commutes with all limits. To see this, we can compose the square-zero extension functor $\Solid_{A, \geq 0} \to \CAlg_k^{\blacktriangle}$ with the forgetful functor to $\Solid_k$, which commutes with all limits and sifted colimits. The functor obtained $\Solid_{A, \geq 0} \to \Solid_{k, \geq 0}$ commutes with sifted colimits and is given by $(A \otimes V)[0] \mapsto A \oplus (A \otimes V)[0]$ for $V \in \Pro(\Vect_k^\omega)$. Hence, it is given by $M \mapsto A \oplus M$ for any $M \in \Solid_{A, \geq 0}$; i.e. the functor $\Solid_{A, \geq 0} \to \Solid_{k, \geq 0}$ is just the sum of ultrasolid modules with $A$. This clearly commutes with all limits. Since the forgetful functor is conservative and commutes with all limits, the statement follows.
\end{rem}

\begin{rem}
	The forgetful functor to $\EE_\infty$ ultrasolid $k$-algebras commutes with square-zero extensions. Since both constructions commute with sifted colimits, we only need to check this on objects of the form $\LSym^* (V) \otimes W$ for $V, W \in \ProVect$, where it is obviously true.

	This means that if $A \in \CAlg_k^\blacktriangle$ and $M \in \Solid_{A, \geq 0} \simeq \Solid_{A^\circ, \geq 0}$, then $(A \oplus M)^\circ \simeq A^\circ \oplus M$.
\end{rem}

\begin{prop}
	\label{prop-cotcorepresentable}
	Let $A \in \CAlg_k^\blacktriangle$. Then, the functor $\Solid_{A, \geq 0} \to \mathcal{S}$ given by $M \mapsto \Der(A,M)$ is corepresentable.
\end{prop}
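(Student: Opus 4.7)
The plan is to apply the adjoint functor theorem after reducing to a fixed set-theoretic cardinality, along the lines of \cite{SAG}, \S 25.3. Fix an uncountable strong limit cardinal $\sigma$ large enough that $A$ lies in the subcategory $\CAlg_k^{\blacktriangle, \sigma}$ spanned by sifted colimits of $\LSym^*(V)$ for $V \in \ProVectsigma$. Both $\CAlg_k^{\blacktriangle, \sigma}$ and $\Solid^\sigma_{A, \geq 0}$ are presentable $\infty$-categories, and the square-zero extension functor $M \mapsto A \oplus M$ restricts to a functor $\Solid^\sigma_{A, \geq 0} \to \CAlg^{\blacktriangle, \sigma}_{k//A}$ (this is seen by writing $A$ as a sifted colimit of free algebras on objects of $\ProVectsigma$ and $M$ as a sifted colimit of objects $A \otimes W$ with $W \in \ProVectsigma$, then observing that the square-zero extension preserves such colimits).

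I would then verify the two hypotheses of the adjoint functor theorem (\cite{HTT}, Corollary 5.5.2.9) for the functor $F: \Solid^\sigma_{A, \geq 0} \to \Space$ given by $F(M) = \Map_{\CAlg_{k//A}^{\blacktriangle, \sigma}}(A, A \oplus M)$. Preservation of limits is immediate from the preceding remark, which established that $M \mapsto A \oplus M$ preserves small limits, combined with the fact that $\Map(A, -)$ preserves limits in its second argument. For accessibility, presentability of $\CAlg_k^{\blacktriangle, \sigma}$ guarantees that $A$ is $\kappa$-compact for some regular cardinal $\kappa$; since $M \mapsto A \oplus M$ preserves sifted colimits (the square-zero extension functor is a sifted-colimit-preserving extension from the heart, as is the forgetful functor to $\Solid_k$), $F$ commutes with $\kappa$-filtered colimits.

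The adjoint functor theorem then produces an object $L_A \in \Solid^\sigma_{A, \geq 0}$ corepresenting $F$. To confirm independence of $\sigma$, I would note that for $\sigma' > \sigma$ the left Kan extension $\Solid^\sigma_{A, \geq 0} \hookrightarrow \Solid^{\sigma'}_{A, \geq 0}$ is compatible with both the square-zero extension and the derivations functor, so the corepresenting object for $\sigma'$ is the left Kan extension of $L_A$ and in particular lives in $\Solid^\sigma_{A, \geq 0}$ already. Assembling over all $\sigma$ yields a well-defined $L_A \in \Solid_{A, \geq 0}$ corepresenting $\Der(A, -)$.

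The main technical nuisance, rather than any deep obstacle, is the cardinality bookkeeping: because $\CAlg_k^\blacktriangle$ itself is not presentable (it is a filtered union of presentable subcategories), one cannot apply the adjoint functor theorem globally and must work one cardinal at a time, then check that the outputs glue. Everything else is a formal consequence of the construction of $A \oplus M$ and the standard adjoint functor machinery.
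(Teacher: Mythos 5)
Your proof is correct and follows essentially the same route as the paper: verify that $M \mapsto \Der(A,M)$ preserves small limits and is accessible (using compactness of $A$ and the fact that square-zero extension preserves filtered colimits), then invoke the representability/adjoint functor machinery from HTT \S 5.5.2. The only difference is that you spell out the cardinality bookkeeping and the independence of the corepresenting object from $\sigma$, which the paper's terser proof leaves implicit.
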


\begin{proof}
	Square-zero extension commutes with filtered colimits, and since any object is $\kappa$-compact for some cardinal $\kappa$, the functor $M \mapsto \Der (A,M) = \Map_{\CAlg_{k/ /A}^{\blacktriangle}} (A, A \oplus M)$ is accessible.

	Additionally, it preserves all limits. We are now done by (\cite{HTT}, Proposition 5.5.2.7).
\end{proof}

\begin{defi}
	Given $A \in \CAlg_k^{\blacktriangle}$, we will write $L_A$ for the $A$-module corepresenting the functor of \cref{prop-cotcorepresentable}. We will refer to this as the \emph{cotangent complex} of $A$.

	Given a map $A \to B$ in $\CAlg_k^{\blacktriangle}$, we will write $L_{B/A}$ for the cofibre of the map $L_A \otimes_A B \to L_B$, which is the \emph{cotangent complex of the map $A \to B$}.
\end{defi}

\begin{rem}
	By construction, the cotangent complex gives a left adjoint to the square-zero extension functor $\Solid^\Delta \to \CAlg_k^\blacktriangle$ where $R \in \CAlg_k^\blacktriangle$ gets mapped to the pair $(R, L_R)$.

	From the cofibre sequence, the relative cotangent complex gives a colimit-preserving functor $\Fun(\Delta^1, \CAlg_k^\blacktriangle) \to \Solid^\Delta$ that takes a map $A \to B$ to the pair $(B, L_{B/A})$.
\end{rem}

\begin{example}
	The cotangent complex is compatible with base change. That is, if we have a pushout diagram
	\[
		\begin{tikzcd}
			A \arrow{r} \arrow{d} & B \arrow{d} \\
			A' \arrow{r} & B' 
		\end{tikzcd}
	\]
	We can consider the following pushout diagram in $\Fun (\Delta^1, \CAlg_k^\blacktriangle)$
	\[
		\begin{tikzcd}
			(A \to A) \arrow{r} \arrow{d} & (A \to B) \arrow{d} \\
			(A' \to A') \arrow{r} & (A' \to B')
		\end{tikzcd}
	\]
	which leads to the pushout diagram in $\Solid^\Delta$
	\[
		\begin{tikzcd}
			(A,0) \arrow{r} \arrow{d} & (B, L_{B/A}) \arrow{d}  \\
			(A',0) \arrow{r} & (B', L_{B'/A'})
		\end{tikzcd}
	\]
	This implies that $L_{B'/A'} \simeq L_{B/A} \otimes_B B'$.
\end{example}

The following gives us a good approximation to the cotangent complex.

\begin{prop}
	\label{prop-cofibconnective}
	Let $f: A \to B$ be an $n$-connective map in $\CAlg_k^\blacktriangle$ for some $n \geq 0$; that is, $\pi_i (A) \to \pi_i (B)$ is an isomorphism for $i < n$ and a surjection for $i = n$. Let $K = \cofib (A \to B)$. Then, there is an $(n+2)$-connective map $K \otimes_A B \to L_{B/A}$.
\end{prop}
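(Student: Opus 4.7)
The plan is to construct a natural comparison map $\phi : K \otimes_A B \to L_{B/A}$ from the universal derivation, reduce to the case of a free $A$-algebra on a sufficiently connective module by a sifted-colimit argument, and conclude from the connectivity bounds in \cref{lemma-freedeltaalgebraslims}(4).

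First I would construct $\phi$. The identity of $L_{B/A}$ corresponds under the defining adjunction to a derivation $d : B \to B \oplus L_{B/A}$ in $\CAlg_{A/ /B}^\blacktriangle$; projecting to the second factor yields a map of $A$-modules $B \to L_{B/A}$ whose precomposition with $f$ is null (since $d$ is a map under $A$), and which therefore factors through a natural map $\bar d : K \to L_{B/A}$ in $\Solid_A$. Extending $B$-linearly gives the comparison
\begin{align*}
	\phi : K \otimes_A B \to L_{B/A}.
\end{align*}

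Both functors $(A \to B) \mapsto K \otimes_A B$ and $(A \to B) \mapsto L_{B/A}$ from $\Fun(\Delta^1, \CAlg_k^\blacktriangle)$ to $\Solid_k$ preserve sifted colimits, since cofibres and relative tensor products are colimits, and the relative cotangent complex is a left adjoint to the square-zero extension functor. I would then reduce to the free case $B = \LSym_A^*(V)$ with $V \in \Solid_{A, \geq n+1}$ by writing $B$ as a sifted colimit over the arrow category of free $A$-algebras of this form, constructed from $A$ by successively attaching cells that kill the relative homotopy groups $\pi_{n+1}, \pi_{n+2}, \ldots$. Since $\Solid_{k, \geq n+2}$ is closed under sifted colimits, it suffices to verify the connectivity of $\phi$ on each free piece.

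For $B = \LSym_A^*(V)$, we have $L_{B/A} \simeq B \otimes_A V$ and $K \simeq \bigoplus_{j \geq 1} \LSym_A^j(V)$. A direct Leibniz-rule computation identifies the restriction of $\phi$ to the $j=1$ summand $V \otimes_A B$ with the swap equivalence onto $B \otimes_A V = L_{B/A}$ (it sends $v \otimes b$ to $b \cdot \bar d(v) = b \otimes v$), so $\phi$ admits a section and its fibre is equivalent to $\bigoplus_{j \geq 2} \LSym_A^j(V) \otimes_A B$. The $A$-linear analogue of \cref{lemma-freedeltaalgebraslims}(4) gives that each $\LSym_A^j(V)$ with $j \geq 2$ is $j$-connective when $n = 0$ (where $V$ is $1$-connective) and $(n + 2j - 1)$-connective when $n \geq 1$ (where $V$ is $(n+1)$-connective with $n+1 \geq 2$); in either case the fibre is at least $(n+2)$-connective, and so is $\phi$.

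The main obstacle will be arranging the cellular reduction in the second step: the naive monadic bar resolution $B \simeq |\LSym_A^{*, \bullet+1}(B)|$ does not preserve the $n$-connectivity of $f$ on each simplicial level, so one must instead build $B$ from $A$ by inductively attaching free algebras on $(n+1)$-connective modules, in the spirit of a minimal cell attachment for simplicial commutative rings; only then do the connectivity estimates propagate through the colimit to yield the desired bound.
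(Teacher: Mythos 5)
Your construction of the comparison map is exactly the paper's (the derivation $d$ minus the zero derivation factors through $K$, then extend by base change), and your local estimate --- the linear summand of $K \otimes_A B$ maps by an equivalence onto $L_{B/A}$ while the $j \geq 2$ symmetric powers of an $(n+1)$-connective module are at least $(n+2)$-connective by \cref{lemma-freedeltaalgebraslims}~(4) --- is precisely the paper's key computation. The gap is the reduction step, and it is not merely a technicality to be ``arranged'' later: the class of unit maps $C \to \LSym_C^*(V)$ with $V \in \Solid_{C, \geq n+1}$ cannot generate general $n$-connective maps under sifted colimits in $\Fun(\Delta^1, \CAlg_k^\blacktriangle)$. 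Each such unit map is an equivalence on $\pi_0$ (since $\LSym^j(V)$ is $j$-connective for $1$-connective $V$), $\tau_{\leq 0}$ preserves colimits, and colimits of pointwise equivalences are equivalences; so every map obtained this way is an isomorphism on $\pi_0$, and already $k[x] \to k$ (with $n = 0$) is out of reach. Cell attachment necessarily proceeds by pushouts along \emph{augmentations} $\LSym^*(M) \to k$, not by colimits of units.

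The paper's actual argument sets $A_0 = A$ and $A_{i+1} = A_i \otimes_{\LSym^*(M_i)} k$ with $M_i = \fib(A_i \to B)$, uses a section argument to show $\fib(A_i \to A_{i+1})$ is a direct summand of $A_i \otimes_{\LSym^*(M_i)} \bigoplus_{m>0} \LSym^m(M_i)$ so that $A_{i+1} \to B$ gains one degree of connectivity, and passes to the (filtered) colimit. Since both $(A \to B) \mapsto K \otimes_A B$ and $(A \to B) \mapsto L_{B/A}$ are compatible with base change along pushouts, the comparison map for $A_i \to A_{i+1}$ is pulled back from the one for $\LSym^*(M_i) \to k$, where $\cofib(\LSym^*(M) \to k) \otimes_{\LSym^*(M)} k \simeq \bigoplus_{m>0} \LSym^m(M[1])$ and $L_{k/\LSym^*(M)} \simeq M[1]$ --- which is your free-case computation with $V$ replaced by $M[1]$. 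One must then still propagate the estimate along the tower via the fibre sequences $A_{i+1} \otimes_{A_i} M_i \to M_{i+1} \to N_i$ comparing the fibres of the successive comparison maps. So your local analysis is correct, but the gluing you defer to the final paragraph is where the content of the proof lives, and the specific colimit presentation you propose would fail.
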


\begin{proof}
	We first construct the map. The identity map classifies a map $d: B \to B \oplus L_{B/A}$. Let $z: B \to B \oplus L_{B/A}$ be the map classified by the zero map. Then, we get a map $d - z : B \to L_{B/A}$. By construction, the composition $A \to B \to L_{B/A}$ is trivial so that it factors through $K \to L_{B/A}$. Using the base change adjunction we get the desired map $K \otimes_A B \to L_{B/A}$.

	It remains to show the result regarding connectivity. For this, we will construct $B$ from $A$ by taking suitable pushouts with symmetric powers. More precisely, we will construct an inductive system of $A$-algebras $\{A_i\}$ such that $B = \colim A_i$ and the map $A_i \to B$ is $(n+i)$-connective, adapting the strategy of (\cite{HA}, Lemma 7.4.3.15) to our setting.

	Set $A_0 := A$ and suppose that $f_i: A_i \to B$ has been constructed. Let $M := \fib (A_i \to B)$. Then, we have a commutative diagram in $\CAlg_k^\blacktriangle$,
	\[
		\begin{tikzcd}
			\LSym^* (M) \arrow{r} \arrow{d} & k \arrow{d} \\
			A_i \arrow{r}{f_i} & B
		\end{tikzcd}
	\]
	where the map $\LSym^* (M) \to k$ is the natural augmentation. Now write $A_{i+1} := A_i \otimes_{\LSym^* (M)} k$. We have a diagram
	\[
		\begin{tikzcd}
			\LSym^* (M) \arrow{r} \arrow{d} & k \arrow{d}  \\
			A_i  \arrow{r} \arrow[dr, "f_i", swap]  & A_{i+1} \arrow{d}{f_{i+1}}  \\
												& B
		\end{tikzcd}
	\]
	Let $N:= \fib (A_i \to A_{i+1})$. The map $f_{i+1} : A_{i+1} \to B$ induces a map of fibre sequences
	\[
		\begin{tikzcd}
			N \arrow{d}{g} \arrow{r} & A_i \arrow{r} \arrow{d}{=} & A_{i+1} \arrow{d}{f_{i+1}} \\
			M \arrow{r} & A_i \arrow{r} & B
		\end{tikzcd}
	\]
	so that $\fib (f_{i+1}) \simeq \fib (g)[1]$. Hence, it suffices to show that $\fib(g)$ is $(n+i-1)$-connective. For this, we will show that the map $g$ has a section. There is a map $s: M \to N \simeq \fib (A_i \to A_{i+1})$ induced by the pushout diagram for $A_{i+1}$. Then, the canonical limit diagram for $M$ factorises through $N$ as follows.
	\[
		\begin{tikzcd}
			M  \arrow{dr} \arrow{drr} \arrow{ddr}  & & & \\
																& N \arrow{r} \arrow{d} & A_i \arrow{d} \arrow{dr} &  \\
																& 0 \arrow{r} & A_{i+1} \arrow{r} & B
		\end{tikzcd}
	\]	
	where the square is the limit diagram for $N$. This implies that $s$ must be a section.

	Hence, $\fib (g)$ is a direct summand of $N$. Let $I:= \fib (\LSym^* (M) \to k) \simeq \bigoplus_{m > 0} \LSym^m (M)$. Then, $N \simeq \fib (A_i \to A_{i+1}) \simeq A_i \otimes_{\LSym^* (M)} I$. Since $M$ was assumed to be $(n+i-1)$-connective, so is $I$ (\cref{lemma-freedeltaalgebraslims}), and thus the tensor product too.

	We will now prove the statement about the cotangent complex. The constructed map $K \otimes_A B \to L_{B/A}$ is compatible with filtered colimits, so we may just show that each map $\cofib (A \to A_i) \otimes_A A_i \to L_{A_i/A}$ is $(n+2)$-connective. Write $M_i$ for the fibre of this map. Then, by the fundamental sequence for the cotangent complex, for each $i$ there is a fibre sequence
	\begin{align*}
		A_{i+1} \otimes_{A_i} M_i \to M_{i+1} \to N_i
	\end{align*}
	where $N_i$ is the fibre of the map $\cofib (A_i \to A_{i+1}) \otimes_{A_i} A_{i+1} \to L_{A_{i+1}/A_i}$. Hence, it suffices to show that each $N_i$ is $(n+2)$-connective. Remember that $A_{i+1} = k \otimes_{\LSym^* (M)} A_i$, where $M \simeq \fib (A_i \to B)$ is $(n+i-1)$-connected. Since our construction is compatible with base change, it suffices to show that the fibre of the map $\cofib (\LSym^* (M) \to k) \otimes_{\LSym^* (M)} k \to L_{k/\LSym^* (M)}$ is $(n+2)$-connective. Let $\tilde{N}_i$ be this fibre. We will compute it explicitly. Firstly, one can use the fundamental fibre sequence for the cotangent complex to obtain $L_{k/\LSym^* (M)} \simeq M[1]$. On the other hand,
	\begin{align*}
		\cofib (\LSym^* (M) \to k) \otimes _{\LSym^* (M)} k & \simeq \cofib (k \to k \otimes _{\LSym^* (M)} k) \\
		& \simeq \cofib (k \to \LSym^* (M[1])) \\
		& \simeq \bigoplus_{m > 0} \LSym^m (M[1])
	\end{align*}
	where in the second equivalence we used that $\LSym^*$ commutes with colimits.

	Hence, $\tilde{N}_i \simeq \fib (\bigoplus_{m > 0} \LSym^* (M[1]) \to M[1])$. The module $\LSym^1 (M[1]) \simeq M[1]$ is mapped isomorphically to the codomain, so it suffices to show that $\LSym^m (M[1])$ is $(n+2)$-connective for $m \geq 2$. But remember that $M[1]$ is $(n+i)$-connective, so the result follows from \cref{lemma-freedeltaalgebraslims}.
\end{proof}

The above allows a comparison between the topological and the algebraic cotangent complex.

\begin{construction}[Map from the topological to the algebraic cotangent complex]
	Let $f: A \to B$ be a map in $\CAlg_k^\blacktriangle$. We then have the identity map $L_{B/A} \to L_{B/A}$ which is classified by a map $B \to B \oplus L_{B/A}$. Forgetting to $\EE_\infty$ ultrasolid $k$-algebras we get a map $B^\circ \to B^\circ \oplus L_{B/A}$, which corresponds to a map $L_{B^\circ/A^\circ} \to L_{B/A}$.	
\end{construction}

\begin{rem}
	The map $L_{B^\circ/A^\circ} \to L_{B/A}$ can also be constructed as follows. Given that the forgetful functor to $\EE_\infty$ ultrasolid $k$-algebras preserves square-zero extensions, for $M \in \Solid_{B, \geq 0} \simeq \Solid_{B^\circ, \geq 0}$, we get a map
	\begin{align*}
		\Map_{\CAlg_{A/ /B}^\blacktriangle} (B, B \oplus M) \to \Map_{\CAlg_{A^\circ/ /B^\circ}^\blacksquare} (B^\circ, B^\circ \oplus M)
	\end{align*}
	By the cotangent complex adjunctions, this is a natural map
	\begin{align*}
		\Map_{\Solid_B} (L_{B/A}, M) \to \Map_{\Solid_B} (L_{B^\circ/A^\circ}, M)
	\end{align*}
	for every $M \in \Solid_{B, \geq 0}$, which translates into a map $L_{B^\circ/A^\circ} \to L_{B/A}$.
\end{rem}

\begin{prop}
	\label{prop-cotcomparisongeneral}
	Let $f: A \to B$ be an $n$-connective map in $\CAlg_k^\blacktriangle$ for some $n \geq 0$. Then, the map $L_{B^\circ/A^\circ} \to L_{B/A}$ is $(n+2)$-connective.
\end{prop}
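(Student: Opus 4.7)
The approach is to compare $L_{B/A}$ and $L_{B^\circ/A^\circ}$ via a common ``cofibre of units'' approximation, then conclude by a fibre-sequence argument. First, the inductive proof of \cref{prop-cofibconnective} transports verbatim to the $\EE_\infty$ setting, with $\LSym^*$ replaced by $\Sym^*$: the relevant connectivity estimates for $\Sym^n$ on connective modules agree with those of \cref{lemma-freedeltaalgebraslims}(4) in characteristic zero by \cref{prop-derivedcomparison}(3), and are at least as strong in positive characteristic because $\Sym^n(V) \simeq V^{\otimes n}_{h\Sigma_n}$ is at least as connective as $V^{\otimes n}$. This produces an $(n+2)$-connective map $\alpha : K \otimes_{A^\circ} B^\circ \to L_{B^\circ/A^\circ}$ in $\Solid_{B^\circ}$, where $K := \cofib(A \to B)$. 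Since $\Theta$ preserves colimits (\cref{prop-derivedcomparison}(1)) and $\Solid_A = \Solid_{A^\circ}$ by definition, $K \otimes_A B$ and $K \otimes_{A^\circ} B^\circ$ are canonically identified as objects of $\Solid_B$.

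The crucial step is to verify the commutativity of the triangle
\[
\begin{tikzcd}[column sep=small]
& K \otimes_A B \arrow[dl, swap, "\alpha"] \arrow[dr, "\gamma"] & \\
L_{B^\circ/A^\circ} \arrow[rr, swap, "c"] & & L_{B/A}
\end{tikzcd}
\]
where $\gamma$ denotes the $(n+2)$-connective map furnished by \cref{prop-cofibconnective} and $c$ is the comparison map. Both $\alpha$ and $\gamma$ are obtained from a derivation classifying the identity on the respective cotangent complex, minus the zero derivation, restricted to $K$ and base-changed. Unwinding the definition of $c$, the animated derivation $d : B \to B \oplus L_{B/A}$ classifying $\id_{L_{B/A}}$ is sent by $\Theta$ to the $\EE_\infty$ derivation $B^\circ \to B^\circ \oplus L_{B/A}$, which by construction of $c$ factors as $(\id_{B^\circ} \oplus c)$ applied to the $\EE_\infty$ derivation classifying $\id_{L_{B^\circ/A^\circ}}$. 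Since $\Theta$ preserves zero derivations, restrictions, and base change, and is the identity on underlying chain complexes, one obtains $c \circ \alpha = \gamma$.

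Finally, the factorisation $\gamma = c \circ \alpha$ yields a fibre sequence $\fib(\alpha) \to \fib(\gamma) \to \fib(c)$ in the stable $\infty$-category $\Solid_B$. Since both $\alpha$ and $\gamma$ are $(n+2)$-connective, $\pi_i \fib(\alpha) = \pi_i \fib(\gamma) = 0$ for $i < n+2$, and the long exact sequence forces $\pi_i \fib(c) = 0$ in the same range. Thus $c$ is $(n+2)$-connective, which is the desired conclusion. The main obstacle is the bookkeeping that yields the identity $c \circ \alpha = \gamma$; once this compatibility is in place, the connectivity bound is a formal consequence of the stability of $\Solid_B$.
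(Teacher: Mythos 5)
Your proposal is correct and follows essentially the same route as the paper: factor the $(n+2)$-connective map $K \otimes_A B \to L_{B/A}$ of \cref{prop-cofibconnective} through the comparison map $L_{B^\circ/A^\circ} \to L_{B/A}$ and deduce the connectivity of the latter from the two-out-of-three property for fibres in the stable $\infty$-category $\Solid_B$. The only difference is the source of the connectivity bound for $K \otimes_A B \to L_{B^\circ/A^\circ}$: the paper cites (\cite{HA}, Proposition 7.4.3.12) for a $(2n+2)$-connective bound, whereas you re-run the argument of \cref{prop-cofibconnective} with $\Sym^*$ in place of $\LSym^*$ to obtain an $(n+2)$-connective bound --- both suffice.
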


\begin{proof}
	Let $K := \cofib (A \to B)$. The $(n+2)$-connective map $K \otimes_A B \to L_{B/A}$ from \cref{prop-cofibconnective} factorises through a map
	\begin{align*}
		K \otimes_A B \to L_{B^\circ/A^\circ} \to L_{B/A}
	\end{align*}
	By (\cite{HA}, Proposition 7.4.3.12), the map $K \otimes_A B \to L_{B^\circ/A^\circ}$ is $(2n + 2)$-connective, and $2n + 2 \geq n + 2$ for $n \geq 0$. This implies that the map $L_{B^\circ/A^\circ} \to L_{B/A}$ must also be $(n+2)$-connective.
\end{proof}

\begin{rem}
	Classically, the comparison between the topological and the algebraic cotangent complex of an $n$-connective map is $(n+3)$-connective (\cite{SAG}, Proposition 25.3.5.1), which is slightly sharper than our bound. To mimic that result, one needs to compute the category $\Sp (\CAlg^{\blacktriangle}_{k/ /A})$ for $A \in \CAlg_k^\blacktriangle$. There is an obvious candidate for this category; namely, the author believes that $\Sp (\CAlg_{k/ /A}^\blacktriangle) \simeq \LMod_{A^+}$, where $A^+ \simeq A \otimes (k \otimes_{\mathbb{S}} \ZZ)$ is an $\EE_1$ ultrasolid $k$-algebra (c.f. \cite{SAG}, Corollary 25.3.3.3 and Proposition 25.3.4.2), but the author has not managed to complete this computation.
\end{rem}

This allows us to show many results about the cotangent complex for animated ultrasolid $k$-algebras by comparing with their spectral analogue.

\begin{prop}
	Suppose that $f: A \to B$ is an $n$-connective map in $\CAlg_k^\blacktriangle$. Then, $L_{B/A}$ is $(n+1)$-connective. The converse holds if $f$ is an isomorphism on $\pi_0$.
\end{prop}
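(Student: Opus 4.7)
The key idea is to reduce both directions to \cref{prop-cofibconnective}, which compares $L_{B/A}$ with the naive cofibre $K := \cofib(A \to B)$ taken in $\Solid_k$. For the forward direction a single invocation suffices, while the converse requires an induction on $n$.

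For the forward direction, suppose $f : A \to B$ is $n$-connective. The long exact sequence of the cofibre sequence $A \to B \to K$ in $\Solid_k$ gives $\pi_i(K) = 0$ for $i \leq n$, so $K$ is $(n+1)$-connective. Since both $A$ and $B$ are connective, the tensor product $K \otimes_A B$ remains $(n+1)$-connective. By \cref{prop-cofibconnective}, the natural map $K \otimes_A B \to L_{B/A}$ is $(n+2)$-connective, hence an isomorphism on $\pi_i$ for $i \leq n+1$. In particular $\pi_i(L_{B/A}) \cong \pi_i(K \otimes_A B) = 0$ for $i \leq n$, so $L_{B/A}$ is $(n+1)$-connective.

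For the converse we argue by induction on $n$. The base case $n = 0$ is trivial: an isomorphism on $\pi_0$ is in particular a surjection on $\pi_0$, which is exactly $0$-connectivity. For the inductive step, assume the result is known for $n$ and suppose $L_{B/A}$ is $(n+2)$-connective with $\pi_0(A) \xrightarrow{\cong} \pi_0(B)$. Since $(n+2)$-connective implies $(n+1)$-connective, the induction hypothesis yields that $f$ is $n$-connective, so $K$ is $(n+1)$-connective. Now \cref{prop-cofibconnective} says $K \otimes_A B \to L_{B/A}$ is $(n+2)$-connective, and the $(n+2)$-connectivity of $L_{B/A}$ forces $K \otimes_A B$ to be $(n+2)$-connective as well. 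In lowest non-vanishing degree, the Künneth spectral sequence of \cref{lemma-spectralsequence} supplies $\pi_{n+1}(K \otimes_A B) \cong \pi_{n+1}(K) \otimes_{\pi_0(A)} \pi_0(B) \cong \pi_{n+1}(K)$, so $\pi_{n+1}(K) = 0$ and $K$ is $(n+2)$-connective. The long exact sequence for $A \to B \to K$ then delivers a surjection on $\pi_{n+1}$ and an isomorphism on $\pi_n$, upgrading $f$ to $(n+1)$-connective.

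The main technical subtlety is the low-degree identification $\pi_{n+1}(K \otimes_A B) \cong \pi_{n+1}(K)$, which amounts to reading off the $E_2^{0, n+1}$ term of the Künneth spectral sequence and using $\pi_0(A) \cong \pi_0(B)$ to kill the contributions of higher $\Tor$'s in that total degree. Once this is in hand, both implications are formal consequences of \cref{prop-cofibconnective} combined with the long exact sequence of $A \to B \to K$.
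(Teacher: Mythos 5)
Your proof is correct, but it takes a genuinely different route from the paper. The paper deduces this proposition in one line from the spectral statement (\cref{lemma-cotangentcomplexiso}) together with the $(n+2)$-connectivity of the comparison map $L_{B^\circ/A^\circ} \to L_{B/A}$ (\cref{prop-cotcomparisongeneral}); you instead stay entirely inside the animated category and run the "cofibre approximates the cotangent complex" argument directly from \cref{prop-cofibconnective}, with an explicit induction for the converse. Your version is more self-contained (it never passes through $\EE_\infty$-algebras) and, importantly, it makes visible the bootstrapping that the paper's "immediate consequence" hides: in the converse direction one cannot invoke the connectivity of the comparison map (or of $K \otimes_A B \to L_{B/A}$) without already knowing some connectivity of $f$, and your induction on $n$ is exactly the right way to supply that. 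The cost is length, plus a small Tor computation. One cosmetic quibble in that computation: the vanishing of the higher $\Tor_p$'s ($p \geq 1$) in total degree $n+1$ follows purely from the $(n+1)$-connectivity of $K$ and the connectivity of $B$ (every internal degree appearing is $\geq n+1$, so $p \geq 1$ pushes the total degree to $\geq n+2$); the isomorphism $\pi_0(A) \cong \pi_0(B)$ is only needed afterwards, to identify $\pi_{n+1}(K) \otimes_{\pi_0(A)} \pi_0(B)$ with $\pi_{n+1}(K)$. With that attribution corrected, the argument is complete.
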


\begin{proof}
	This is an immediate consequence of \cref{lemma-cotangentcomplexiso} and \cref{prop-cotcomparisongeneral}.
\end{proof}

Another application of the comparison with the topological cotangent complex is showing that Postnikov towers converge and that each step is a square-zero extension.

Firstly, we need to understand what each truncation looks like. The following has the exact same proof as in \cref{prop-truncated}.

\begin{prop}
	Let $R \in \CAlg_k^\blacktriangle$ and $A \in \CAlg_R^\blacktriangle$. Then, the following are equivalent:
	\begin{enumerate}
		\item $\pi_i (A) = 0$ for $i > n$.
		\item $A$ is an $n$-truncated object in $\CAlg_R^\blacktriangle$. That is, for each $B \in \CAlg_R^\blacktriangle$, the space $\Map_{\CAlg_R^\blacktriangle} (B,A)$ is $n$-truncated (so its $i$th homotopy group vanishes for $i > n$).
	\end{enumerate}
	Write $\tau_{\leq n} : \CAlg_R^\blacktriangle \to \tau_{\leq n} \CAlg _R ^{\blacktriangle}$ for the left adjoint to the inclusion of $n$-truncated objects. Then, it coincides with truncation on $\Solid_k$. That is, the underlying chain complex of $\tau_{\leq n} A$ is $n$-truncated and the map $A \to \tau_{\leq n} A$ is $n$-connective.
\end{prop}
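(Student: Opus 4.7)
The plan is to follow verbatim the proof of \cref{prop-truncated}, substituting the $\LSym_R^*$ monad for the $\Sym_R^*$ monad throughout. The only structural ingredients required are (i) that $\CAlg_R^\blacktriangle$ is generated under sifted colimits by the free algebras $\LSym_R^*(R \otimes V)$ for $V \in \ProVect$, which is built into its very definition as a sifted cocompletion, and (ii) the free--forgetful adjunction identification
\begin{align*}
    \Map_{\CAlg_R^\blacktriangle}(\LSym_R^*(R \otimes V), A) \simeq \Map_{\Solid_k}(V, A).
\end{align*}

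For the implication $(1) \Rightarrow (2)$, I would consider the functor $X = \Map_{\CAlg_R^\blacktriangle}(-, A)$, which takes colimits to limits. Since $n$-truncated spaces are closed under limits by (\cite{HTT}, Proposition 5.5.6.5), it suffices to check $n$-truncation of $X(B)$ for $B$ in a generating set under sifted colimits, i.e.\ for $B = \LSym_R^*(R \otimes V)$ with $V \in \ProVect$. The adjunction reduces this to showing that $\Map_{\Solid_k}(V, A)$ is $n$-truncated, which holds because $\pi_i(A) = 0$ for $i > n$ makes $A$ an $n$-truncated object of $\Solid_{k, \geq 0}$.

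For $(2) \Rightarrow (1)$, the same adjunction identification shows that if $A$ is $n$-truncated in $\CAlg_R^\blacktriangle$, then $\Map_{\Solid_k}(V, A)$ is $n$-truncated for every $V \in \ProVect$. Since profinite vector spaces generate $\Solid_{k, \geq 0}$ under sifted colimits, this forces $A$ to be $n$-truncated in $\Solid_{k, \geq 0}$, i.e.\ $\pi_i(A) = 0$ for $i > n$.

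For the final statement on the truncation functor, I would invoke (\cite{HA}, Proposition 2.2.1.9), which asserts that truncation in a category of algebras over a suitable monad coincides with truncation on the underlying object; the hypothesis is that the monad $\LSym_R^*$ preserves $n$-connective maps, which is a consequence of \cref{lemma-freedeltaalgebraslims}. I do not anticipate any genuine obstacle here, as the entire argument is a transcription; the only point worth verifying is that $\LSym_R^*$ interacts with the $t$-structure on $\Solid_{k, \geq 0}$ well enough for (\cite{HA}, Proposition 2.2.1.9) to apply, and this is already encoded in the behaviour of $\LSym^n$ on connective objects.
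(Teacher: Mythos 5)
Your proposal is correct and matches the paper exactly: the paper's own proof of this proposition is literally the statement that it has "the exact same proof as" the spectral version (\cref{prop-truncated}), which is the transcription you carry out. Your added observation that \cref{lemma-freedeltaalgebraslims} guarantees $\LSym_R^*$ interacts correctly with the $t$-structure so that (\cite{HA}, Proposition 2.2.1.9) applies is a sensible extra check, not a deviation.
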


From this, it is clear that Postnikov towers converge. We must now show that each step is a square-zero extension.

\begin{prop}
	Let $f: A \to B$ be a map in $\CAlg_k^\blacktriangle$ such that
	\begin{enumerate}
		\item $\fib (f) \in \Solid_{k, [n, n]}$
		\item The multiplication map $\fib(f) \otimes_A \fib(f)$ is null-homotopic.
	\end{enumerate}
	Then, $f$ is a square-zero extension.
\end{prop}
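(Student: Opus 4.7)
The plan is to reduce to the already-proven spectral case via the forgetful functor $\Theta: \CAlg_k^\blacktriangle \to \CAlg_k^\blacksquare$ of \cref{construction-simplicialtoeinfty}, lift the resulting extension class back to the animated setting using \cref{prop-cotcomparisongeneral}, and transfer the pullback square by invoking conservativity of $\Theta$.

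First I would observe that, since $\Theta$ preserves fibres and tensor products and commutes with trivial square-zero extensions, the map $f^\circ: A^\circ \to B^\circ$ satisfies $\fib(f^\circ) \simeq \fib(f) \in \Solid_{k,[n,n]} \subseteq \Solid_{k,[n,2n]}$ with null-homotopic multiplication on the fibre. By \cref{prop-nsmallextensions}, $f^\circ$ is therefore a square-zero extension, fitting into a pullback square in $\CAlg^\blacksquare_{k/ /B^\circ}$ along the trivial derivation $d_0: B^\circ \to B^\circ \oplus \fib(f)[1]$ and a classifying derivation $d_\eta$. Under the relative cotangent adjunction, $d_\eta$ corresponds to a class $c_\eta \in \pi_0 \Map_{\Solid_B}(L_{B^\circ/A^\circ}, \fib(f)[1])$.

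The key step, and the main obstacle, is to lift $c_\eta$ to the animated setting. Since $f$ is $n$-connective, \cref{prop-cotcomparisongeneral} supplies an $(n+2)$-connective comparison $L_{B^\circ/A^\circ} \to L_{B/A}$, whose cofibre $C$ in the stable category $\Solid_B$ is $(n+3)$-connective. Because $\fib(f)[1]$ is concentrated in degree $n+1$, a standard $t$-structure estimate (using $n+1 < n+3$) shows $\Map_{\Solid_B}(C, \fib(f)[1]) \simeq *$, and the cofibre sequence yields an equivalence
\[
\Map_{\Solid_B}(L_{B/A}, \fib(f)[1]) \xrightarrow{\simeq} \Map_{\Solid_B}(L_{B^\circ/A^\circ}, \fib(f)[1]).
\]
Hence $c_\eta$ lifts uniquely to $\tilde{c}_\eta$, which by the animated relative cotangent adjunction corresponds to a derivation $\tilde{d}_\eta: B \to B \oplus \fib(f)[1]$ in $\CAlg^\blacktriangle_{A/ /B}$. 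Note that the narrower hypothesis $\fib(f) \in \Solid_{k,[n,n]}$ (rather than the wider $[n,2n]$ range allowed in the spectral statement) is precisely what keeps the connectivity comparison tight enough to kill $\Map_{\Solid_B}(C, \fib(f)[1])$.

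Finally, form the pullback $\tilde{A} := B \times_{B \oplus \fib(f)[1]} B$ in $\CAlg_k^\blacktriangle$ along $\tilde{d}_\eta$ and $d_0$. Since $\tilde{d}_\eta \circ f \simeq d_0 \circ f$ (as $\tilde{d}_\eta$ lies in $\CAlg^\blacktriangle_{A/ /B}$), there is a canonical comparison $A \to \tilde{A}$. Applying $\Theta$ and using its preservation of limits and trivial square-zero extensions (\cref{prop-derivedcomparison}), one identifies $\tilde{A}^\circ$ with the spectral pullback along $\tilde{d}_\eta^\circ$; by the functoriality built into the lifting step, $\tilde{d}_\eta^\circ$ is classified by $c_\eta$, so $\tilde{A}^\circ \simeq A^\circ$ via the square-zero extension of the first paragraph. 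Conservativity of $\Theta$ then promotes $A \to \tilde{A}$ to an equivalence, exhibiting $f$ as a square-zero extension.
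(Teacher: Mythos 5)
Your proposal is correct and follows essentially the same route as the paper: apply \cref{prop-nsmallextensions} to $f^\circ$, use the $(n+2)$-connective comparison of \cref{prop-cotcomparisongeneral} (the paper phrases the lifting of the classifying derivation via $\tau_{\leq n+1}$ rather than via the cofibre, but it is the same $t$-structure argument) to transport the derivation to $\CAlg^\blacktriangle_{A//B}$, and conclude by conservativity and limit-preservation of $\Theta$.
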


\begin{proof}
	By \cref{prop-nsmallextensions}, the map $A ^\circ \to B^\circ$ is a square-zero extension. This is classified by a map of $A^\circ$-algebras $\eta^\circ: B ^ \circ \to B^\circ \oplus \Sigma \fib(f)$. Then,
	\begin{align*}
		\Map_{\CAlg_{A/ /B}^\blacktriangle} (B, B \oplus \Sigma \fib(f)) & \simeq \Map_{\Solid_B} (L_{B/A}, \Sigma \fib (f) ) \\
																							  & \simeq \Map_{\Solid_B} (\tau_{\leq n+1} L_{B/A}, \Sigma \fib(f)) \\
			& \simeq \Map_{\Solid_B} (\tau_{\leq n+1} L_{B^\circ/A^\circ}, \Sigma \fib(f)) \\
			& \simeq \Map_{\Solid_B} (L_{B^\circ/A^\circ}, \Sigma \fib(f)) \\
			& \simeq \Map_{\CAlg_{A^\circ/ /B^\circ}^\blacksquare} (B^\circ, B^\circ \oplus \Sigma \fib(f))
	\end{align*}
	where in the third equivalence we used \cref{prop-cotcomparisongeneral}.

	Hence, the square-zero extension diagram in $\CAlg_k^\blacksquare$ lifts to a commutative diagram in $\CAlg_k^\blacktriangle$
	\[
		\begin{tikzcd}
			A \arrow{r}{f} \arrow{d}{f} & B \arrow{d} \\
			B \arrow{r} & B \oplus \Sigma \fib(f)
		\end{tikzcd}
	\]
	which is a pullback diagram when forgetting to $\CAlg_k^\blacksquare$. Since the functor to $\EE_\infty$-rings is conservative and preserves limits, it is also a pullback diagram in $\CAlg_k^\blacktriangle$.
\end{proof}

\begin{cor}
	Postnikov towers converge in $\CAlg_k^\blacktriangle$. Moreover, each step in the Postnikov tower
	\begin{align*}
		\dots \to \tau_{\leq 2} A \to \tau_{\leq 1} A \to \tau_{\leq 0} A
	\end{align*}
	is a square-zero extension.
\end{cor}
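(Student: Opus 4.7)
The plan is to apply the square-zero extension criterion established in the preceding proposition to each map in the Postnikov tower, after first securing convergence.

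For convergence, I would use that the previous proposition identifies $\tau_{\leq n}$ on $\CAlg_k^\blacktriangle$ with chain-level truncation. The forgetful functor $\CAlg_k^\blacktriangle \to \Solid_{k, \geq 0}$ is monadic by the very definition of $\CAlg_k^\blacktriangle$ as algebras over $\LSym^*$, hence conservative and limit-preserving. Since Postnikov towers converge on bounded-below objects of $\Solid_k$ (by left-completeness of the $t$-structure on the derived $\infty$-category of the Grothendieck abelian $\Solid_k^\heartsuit$), the canonical map $A \to \varprojlim \tau_{\leq n} A$ in $\CAlg_k^\blacktriangle$ becomes an equivalence after forgetting, and is therefore already an equivalence in $\CAlg_k^\blacktriangle$.

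For each truncation step $f: \tau_{\leq n+1} A \to \tau_{\leq n} A$ (with $n \geq 0$), I would apply the preceding square-zero extension criterion taking its parameter to be $n+1$. The agreement of animated truncation with chain-level truncation gives a fibre sequence in $\Solid_k$ whose fibre is $\pi_{n+1}(A)[n+1]$, which lies in $\Solid_{k, [n+1, n+1]}$, verifying the first hypothesis. For the second hypothesis, $\fib(f) \otimes_{\tau_{\leq n+1} A} \fib(f)$ is the tensor product of two $(n+1)$-connective modules over a connective animated ring, hence is $(2n+2)$-connective in $\Solid_k$; since $\tau_{\leq n} A$ is $n$-truncated and $2n + 2 > n$, any map $\fib(f) \otimes_{\tau_{\leq n+1} A} \fib(f) \to \tau_{\leq n} A$ is automatically null-homotopic. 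The preceding proposition then yields that $f$ is a square-zero extension.

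The only real subtlety is the additive-connectivity bound for the relative tensor product over a connective base, which I would invoke as a standard fact (visible either from the Künneth spectral sequence of \cref{lemma-spectralsequence}, since ultrasolid modules are flat, or from the two-sided bar description of the relative tensor product). Granted this and the preceding two propositions, the corollary is immediate.
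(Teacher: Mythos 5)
Your proposal is correct and is exactly the argument the paper intends: the corollary is stated without proof precisely because it follows from the two preceding propositions in the way you describe (truncation agrees with chain-level truncation, each fibre $\pi_{n+1}(A)[n+1]$ sits in a single degree, and the multiplication on the fibre is null for connectivity reasons since the relative tensor product of $(n+1)$-connective modules over a connective base is $(2n+2)$-connective). The only point worth making explicit is that left-completeness of the $t$-structure on $\Solid_{k,\geq 0}$ holds because homotopy groups and limits are both computed pointwise in $\Mod_k$, so products are exact and $A \simeq \varprojlim \tau_{\leq n} A$ on bounded-below objects.
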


\subsection{Complete profinite animated ultrasolid $k$-algebras}

We can reach the same results for complete profinite objects as for the spectral analogue.

\begin{defi}
	An augmented animated ultrasolid $k$-algebra $R$ is \emph{Artinian} if the underlying chain complex is in $\Mod_{k, \geq 0}$, $\pi_0 (R)$ is local and $\pi_* (R)$ is a finite-dimensional vector space. We write $\CAlg_{k/ /k}^{\art, \Delta}$ for the full subcategory of Artinian animated $k$-algebras.

	An augmented animated ultrasolid $k$-algebra $R$ is \emph{complete and profinite} if $\pi_0 (R)$ is complete and the underlying chain complex is in $\Solid_{k, \geq 0}^{\aperf}$. We will write $\widehat{\CAlg}^{\blacktriangle}_{k/ /k}$ for the full subcategory of complete profinite animated ultrasolid $k$-algebras.
\end{defi}

We will not include proofs for the following results, since they are identical to their spectral analogue (\cref{prop-mapsfromcomplete}, \cref{prop-completemonadic}, \cref{prop-cotcompleteiso}, \cref{prop-cotinverselimits} and \cref{theorem-proartinian})

\begin{prop}
	\label{prop-animatedcomplete}
	Let $\widehat{\CAlg}_{k/ /k}^\blacktriangle \subset \CAlg_{k/ /k}^\blacktriangle$ be the full subcategory of complete profinite animated ultrasolid $k$-algebras. Then,
	\begin{enumerate}
		\item For any $R \in \widehat{\CAlg}_{k/ /k}^\blacktriangle$ such that $\pi_0 (R) \in \CAlg_{k/ /k}^\heartsuit$ is complete, and any $V \in \Solid_{k, \geq 0}^{\aperf}$, the completion map induces an equivalence
			\begin{align*}
				\Map_{\CAlg_{k/ /k}^\blacktriangle} (\widehat{\LSym}^* (V), R) \simeq \Map_{\CAlg_{k/ /k}^\blacktriangle} (\LSym^* (V), R)
			\end{align*}
		\item There is a monadic adjunction $\widehat{\LSym}^*_{\nu} : \widehat{\CAlg}_{k/ /k}^\blacktriangle \leftrightarrows \Solid_{k, \geq 0}^{\aperf}: \forget$.
		\item If $f: A \to B$ is a map in $\widehat{\CAlg}_{k/ /k}^\blacktriangle$ and $L_{B/A} \simeq 0$, then it is an equivalence. Additionally, the functor $R \mapsto \cot (R) := L_{R/k} \otimes _R k$ commutes with inverse limits and lands in the category $\Solid_{k, \geq 0}^{\aperf}$.
		\item Let $\CAlg_{k/ /k}^{\art, \Delta} \subset \CAlg_{k/ /k}^\blacktriangle$ be the $\infty$-category of Artinian animated $k$-algebras. Then, there is an equivalence
			\begin{align*}
				\widehat{\CAlg}_{k/ /k}^\blacktriangle \simeq \Pro(\CAlg_{k/ /k}^{\art, \Delta})
			\end{align*}
	\end{enumerate}
\end{prop}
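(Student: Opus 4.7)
The approach is to transport the four spectral-side proofs from Section \ref{subsection-completeprofiniteEinfty} to the animated setting, using that essentially all the ingredients used there (square-zero extensions, Postnikov convergence, ultrasolid Nakayama, bar resolutions, cotangent cofibre sequences) have already been developed for $\CAlg_k^\blacktriangle$ earlier in Section \ref{section-animated}. The load-bearing step is the animated analogue of Theorem \ref{theorem-cotangentcomplexcompletion}: that for $V \in \Solid_{k,\geq 0}^{\aperf}$ one has $L_{\widehat{\LSym}^*(V)/\LSym^*(V)} \otimes_{\widehat{\LSym}^*(V)} k \simeq 0$. Both $\LSym^*$ and $\widehat{\LSym}^*$ commute with geometric realisations in $\Solid_{k,\geq 0}^{\aperf}$ (Lemmas \ref{lemma-freedeltaalgebraslims} and the monad definition), so this reduces to $V \in \ProVect$. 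The idempotence argument of Lemma \ref{lemma-idempotent} transports verbatim, reducing the vanishing to the statement $k \otimes^L_{\LSym^*(V)} \widehat{\LSym}^*(V) \simeq k$. Here the situation is strictly cleaner than in the $\EE_\infty$ case: both algebras are discrete, so the derived tensor product may be computed directly in $\Solid_k$ using the Koszul resolution of Lemma \ref{lemma-koszul}, with no spectral sequence or flatness reduction needed. Part (1) then follows by replicating the Postnikov induction of Proposition \ref{prop-mapsfromcomplete}: each step $\tau_{\leq n+1} R \to \tau_{\leq n} R$ of the target's Postnikov tower is a square-zero extension by $\pi_{n+1}(R)[n+2]$, and the inductive step uses the cotangent-fibre vanishing together with the adjunction $\Map_{\CAlg^\blacktriangle_{k//k}}(\widehat{\LSym}^*(V), k \oplus M) \simeq \Map_{\Solid_k}(V, M)$.

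For (2), part (1) immediately yields the desired adjunction; it remains to check that $\widehat{\CAlg}_{k//k}^\blacktriangle$ is closed under geometric realisations so that Barr--Beck--Lurie applies. Closure of $\Solid_{k,\geq 0}^{\aperf}$ under geometric realisations is Proposition \ref{prop-complexesprohomology}, and for the completeness of $\pi_0$ we use that $\pi_0 |X_\bullet|$ is a coequaliser of profinite $\pi_0$'s, which is profinite and stays complete because inverse limits are exact in $\ProVect$. For (3), the vanishing-cotangent-complex half copies Proposition \ref{prop-cotcompleteiso}: the fundamental cofibre sequence forces $\mathfrak{m}/\mathfrak{m}^2 \twoheadrightarrow \mathfrak{n}/\mathfrak{n}^2$, hence $\mathfrak{m} \twoheadrightarrow \mathfrak{n}$ by ultrasolid Nakayama, and the animated analogue of Lemma \ref{lemma-II2} (whose proof only uses the square-zero extension structure and truncation behaviour already established in the animated setting) then yields $I = I^2$ for $I = \ker(\pi_0 A \to \pi_0 B)$, which gives $I = 0$ by Nakayama again. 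For the inverse-limit statement, use the monadicity from (2) to write $R \simeq |\Bar_\bullet(\widehat{\LSym}^*_\nu, \widehat{\LSym}^*_\nu, R)|$, so that $\cot(R) \simeq |\Bar_\bullet(\id, \widehat{\LSym}^*_\nu, R)|$; both the functor $\widehat{\LSym}^*_\nu$ and geometric realisations commute with inverse limits in $\Solid_{k,\geq 0}^{\aperf}$ (Lemma \ref{lemma-freedeltaalgebraslims} and Lemma \ref{prop-inversegeom}), which gives inverse-limit compatibility of $\cot$.

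For (4), I construct $F: \Pro(\CAlg_{k//k}^{\art,\Delta}) \to \widehat{\CAlg}_{k//k}^\blacktriangle$ as the inverse-limit-preserving extension of the fully faithful inclusion; by (\cite{HTT}, Proposition 5.3.5.11) it suffices to show Artinian algebras are cocompact and generate under inverse limits. Cocompactness of $k \oplus V$ (for $V \in \Mod_{k,\geq 0}$ with finite-dimensional total homotopy) reduces via the derivation formula to cocompactness of $V$ in $\Solid_{k,\geq 0}^{\aperf}$, which follows from the duality equivalence $\Solid_{k,\geq 0}^{\aperf} \simeq \Mod_{k,\leq 0}^{\op}$ and the inverse-limit-preservation of $\cot$ from (3); finite limits of cocompact objects are cocompact, so Remark \ref{rem-artinianlimits} covers all of $\CAlg_{k//k}^{\art,\Delta}$. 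For generation, reduce to the $n$-truncated case by Postnikov convergence and then induct on $n$: each Postnikov step is a square-zero extension by $\pi_{n+1}(R)[n+2]$ with $\pi_{n+1}(R)$ profinite, hence an inverse limit $\varprojlim V_i$ of finite-dimensional pieces, so the square-zero extension is an inverse limit of Artinian ones. The base case $n = 0$ is handled by the adic filtration $R \simeq \varprojlim R/\mathfrak{m}^i$, where each step is again a square-zero extension by a profinite module in degree $1$ that decomposes as an inverse limit of finite-dimensional pieces.

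The hard part is really the cotangent-fibre vanishing in the first paragraph, but here the animated setting is actually gentler than the spectral one: because the reduction to $V \in \ProVect$ lands on discrete rings, the Koszul machinery of Lemma \ref{lemma-koszul} can be invoked directly, sidestepping the spectral-sequence argument that was needed in Theorem \ref{theorem-cotangentcomplexcompletion}. Once this is secured, the remaining obstacles are only notational: one must consistently replace $\Sym^*$ by $\LSym^*$ and $\widehat{\Sym}^*$ by $\widehat{\LSym}^*$, and use the animated versions of square-zero extensions and Postnikov truncation established earlier in this section in place of their $\EE_\infty$ counterparts.
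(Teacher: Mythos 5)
Your proposal is correct and follows the route the paper intends: the paper omits the proof of this proposition entirely, stating only that the arguments are identical to their spectral analogues (\cref{prop-mapsfromcomplete}, \cref{prop-completemonadic}, \cref{prop-cotcompleteiso}, \cref{prop-cotinverselimits}, \cref{theorem-proartinian}), and your write-up is a faithful transport of exactly those proofs, correctly identifying the animated analogue of \cref{theorem-cotangentcomplexcompletion} as the one ingredient that must be supplied rather than merely relabelled.

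The one place where you genuinely deviate is in proving that analogue, and your shortcut is both valid and cleaner than the spectral argument. In the spectral case $\Sym^*(V[0])$ has higher homotopy (Dyer--Lashof classes), which forced the paper through \cref{prop-DLprofinite}, \cref{prop-tensorpi0} and the Tor spectral sequence of \cref{lemma-spectralsequence} before the Koszul resolution could be brought to bear. In the animated case $\LSym^*$ preserves the heart, so for $V \in \ProVect$ both $\LSym^*(V)$ and $\widehat{\LSym}^*(V) = \prod_n \LSym^n(V)$ are discrete (the product of profinite discrete modules is discrete since inverse limits are exact in $\ProVect$), and the Koszul complex $K(i)$ of \cref{lemma-koszul} is a flat resolution of $k$ whose base change along $\LSym^*(V) \to \widehat{\LSym}^*(V)$ is precisely $K(j)$, again a resolution of $k$; this computes $k \otimes^L_{\LSym^*(V)} \widehat{\LSym}^*(V) \simeq k$ directly. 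This buys you independence from the characteristic-$p$ homology computations of \S\ref{subsection-Einftyfree}, which are irrelevant to the animated theory anyway. The remaining three parts are, as you say, notational transports, with the minor remark that for the animated version of \cref{lemma-II2} one could alternatively avoid redoing the proof by noting that \cref{prop-cotcomparisongeneral} makes $\pi_1 L_{B/A} \cong \pi_1 L_{B^\circ/A^\circ}$ for $\pi_0$-surjective $f$, so the $\EE_\infty$ statement applies verbatim to $A^\circ \to B^\circ$.
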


\section{The Lurie-Schlessinger criterion}
\label{section-fmp}

We will finally prove our generalisation of the Lurie-Schlessinger criterion.

\begin{defi}
	Throughout the rest of this section, let $\catC$ be either $\CAlg_{k/ /k}^\blacksquare$ or $\CAlg_{k/ /k}^\blacktriangle$, and let $\catC_{\art}$ be the $\infty$-category of Artinian objects in $\catC$. That is, $\catC_{\art}$ consists of those objects $X \in \catC$ such that the underlying chain complex of $X$ is in $ \Mod_{k, \geq 0}$, $\pi_0 (X)$ is local and $\pi_* (X)$ is finite-dimensional.

	We will also write $\widehat{\catC}$ for the $\infty$-category of complete profinite objects in $\catC$, so that $\Pro(\catC_{\art}) \simeq \widehat{\catC}$.
\end{defi}

\begin{rem}
	Since $\catC_{\art}$ has finite limits, $\widehat{\catC}$ must have all limits. Additionally, these limits are computed in $\catC$. This is because the inclusion $\catC_{\art} \to \catC$ commutes with finite limits, and the equivalence $\widehat{C} \simeq \Pro(\catC_{\art})$ is constructed as the inverse-limit-preserving extension of the inclusion $\catC_{\art} \to \catC$ (see the proof of \cref{theorem-proartinian}). The inclusion $\widehat{\catC} \to \catC$ then commutes with all limits (\cite{HTT}, Proposition 5.3.5.13).
\end{rem}

\begin{defi}
	A \emph{formal moduli problem} is a functor $X: \catC_{\art} \to \Space$ such that $X(k) \simeq *$ and for any map $A \to k \oplus k[n]$ in $\catC_{\art}$ with $n > 0$, the functor $X$ preserves the fibre product $A' := A \times_{k \oplus k[n]} k$. That is, there is a pullback diagram
	\[
		\begin{tikzcd}
			X(A') \arrow{r} \arrow{d} & X(A) \arrow{d} \\
			X(k) \arrow{r} & X(k \oplus k[n])
		\end{tikzcd}
	\]
	We write $\Moduli_k$ for the $\infty$-category of formal moduli problems. Notice that this does not carry any ultrasolid structure.
\end{defi}

\begin{rem}
	By right Kan extending, we can consider any formal moduli problem as an inverse-limit-preserving functor $X: \widehat{\catC} \to \Space$ that preserves the described fibre products in $\catC_{\art} \subset \widehat{\catC}$ and $X(k) \simeq *$.
\end{rem}

\begin{defi}
	A morphism $\phi: A \to B$ in $\catC_{\art}$ is \emph{elementary} if there is a pullback diagram
	\[
		\begin{tikzcd}
			A \arrow{r}{\phi} \arrow{d} & B \arrow{d} \\
			k \arrow{r} & k \oplus k[n]
		\end{tikzcd}
	\]
	for some $n > 0$.

	A map in $\catC_{\art}$ is \emph{small} if it is an equivalence or a finite composition of elementary morphisms.
\end{defi}

We have the following characterisation of small morphisms. We paraphrase the proof for $\EE_\infty$-rings from (\cite{DAGX}, Lemma 1.1.20) and adapt it for animated rings.

\begin{prop}
	A map $f: A \to B$ in $\catC_{\art}$ is small if and only if it induces a surjection $\pi_0 (A) \to \pi_0 (B)$.
\end{prop}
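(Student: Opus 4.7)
The forward direction is immediate: an elementary map $\phi\colon A\to B$ is a base change of $k\to k\oplus k[n]$, so $\fib(\phi)\simeq k[n-1]$, which is connective, and the long exact sequence of $\fib(\phi)\to A\to B$ forces $\pi_0(A)\twoheadrightarrow\pi_0(B)$. Since $\pi_0$-surjectivity is closed under composition and equivalences are trivially $\pi_0$-surjective, every small map has this property.

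For the converse, fix $f\colon A\to B$ with $\pi_0(f)$ surjective and set $K:=\fib(f)$; then $K\in\Solid_{k,\ge 0}$ by connectivity of $A,B$, and $\pi_*(K)$ is finite-dimensional since $A,B$ are Artinian, so $\delta(f):=\sum_{i\ge 0}\dim_k\pi_i(K)<\infty$. I would induct on $\delta(f)$: the base case $\delta(f)=0$ is just that $f$ is an equivalence. The inductive step aims to produce an intermediate $A'\in\catC_{\art}$, an elementary $A'\to B$, and a factorization $A\to A'\to B$ of $f$ with $\delta(A\to A')=\delta(f)-1$; the induction then closes once one also checks $A\to A'$ remains $\pi_0$-surjective.

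To build $A'$, let $m:=\min\{i\ge 0:\pi_i(K)\ne 0\}$, so $f$ is $m$-connective. By \cref{prop-cofibconnective} (and its $\EE_\infty$-analogue) the comparison $\cofib(f)\otimes_A B\to L_{B/A}$ is $(m+2)$-connective, which identifies
\[
\pi_{m+1}(\cot(B/A))\;\cong\;\pi_m(K)\otimes_{\pi_0(A)}k\;\cong\;\pi_m(K)/\mathfrak{m}_A\pi_m(K).
\]
Locality of $\pi_0(A)$ plus Nakayama applied to the finite-dimensional $\pi_0(A)$-module $\pi_m(K)$ makes this quotient nonzero; pick any nonzero functional $\phi$ on it, lift the resulting derivation $\cot(B/A)\to k[m+1]$ to a derivation $\cot(B)\to k[m+1]$ along the fundamental cofibre sequence $\cot(A)\to\cot(B)\to\cot(B/A)$ (so that it pulls back to the zero derivation on $\cot(A)$), and let $A':=k\times_{k\oplus k[m+1]}B$. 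By construction $A'\to B$ is elementary, the vanishing on $\cot(A)$ supplies the factorization $A\to A'\to B$, and $A'\in\catC_{\art}$ follows from closure of $\catC_{\art}$ under finite limits together with a short argument that $\pi_0(A')$ is local (its augmentation kernel over $\pi_0(B)$ is a square-zero, hence nilpotent, extension).

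The octahedral axiom produces a fibre sequence $\fib(A\to A')\to K\to k[m]$ in which the induced map $\pi_m(K)\to k$ factors through $\pi_m(K)/\mathfrak{m}_A\pi_m(K)\xrightarrow{\phi}k$ (since $k[m]$ carries trivial $\mathfrak{m}_A$-action) and is therefore surjective; the long exact sequence then yields $\pi_{m-1}(\fib(A\to A'))=0$, $\dim\pi_m(\fib(A\to A'))=\dim\pi_m(K)-1$, agreement in higher degrees, and $\pi_0$-surjectivity of $A\to A'$, so $\delta(A\to A')=\delta(f)-1$ and the induction closes. The main obstacle is the cotangent-complex bookkeeping here: tying together the Nakayama identification of $\pi_{m+1}(\cot(B/A))$, the lifting of the chosen derivation through $\cot(B)$, and the precise control of the homotopy of $\fib(A\to A')$ so that $\delta$ drops by exactly one. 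A pleasant feature of the approach is that the two natural subcases $m=0$ (reducing $\ker\pi_0(f)$ via a socle element of $\pi_0(A)$) and $m\ge 1$ (reducing a higher homotopy group when $\pi_0(f)$ is already an iso) are dispatched uniformly by the same construction.
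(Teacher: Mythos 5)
Your proof is correct and follows essentially the same route as the paper's: induction on the total dimension of the homotopy of the (co)fibre, Nakayama plus the connectivity estimate of \cref{prop-cofibconnective} to produce a nonzero derivation to $k[m+1]$ vanishing on $A$, and the resulting pullback square exhibiting an elementary map through which $f$ factors. The only difference is that you make explicit the verification (via the fibre sequence $\fib(A\to A')\to K\to k[m]$) that the induction parameter drops and that $\pi_0$-surjectivity is preserved, which the paper leaves implicit.
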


\begin{proof}
	Given that each elementary morphism induces a surjection on $\pi_0$, one direction is clear.

	For the other direction, let $C = \cofib(f)$, so that $\pi_* (C)$ is a finite-dimensional $k$-vector space. We will prove the statement by induction on the dimension of $\pi_* (C)$. If it is zero, then $f$ is an equivalence so we are done.

	Let $n$ be minimal so that $\pi_n (C) \neq 0$. Since $f$ is a surjection on $\pi_0$, we have that $n > 0$. Let $\mathfrak{m}$ be the maximal ideal of $\pi_0 (A)$. By Nakayama's lemma, $\pi_n (C)/\mathfrak{m} \neq 0$. This is a finitely generated $\pi_0 (A) /\mathfrak{m} \cong k$-module, so it is a finite-dimensional vector space. We can then collapse onto a subspace to obtain a surjective $\pi_0 (A)$-module map $\phi: \pi_n(C) \to k$.

	By (\cite{HA}, Theorem 7.4.3.1) in the spectral case and \cref{prop-cofibconnective} in the derived case, there is a $(n+1)$-connective map $C \otimes_A B \to L_{B/A}$. By $n$-connectivity of $C$, $\pi_n (C \otimes_A B) = \Tor^0 _{\pi_0 (A)} (\pi_0 (B), \pi_n (C)) \cong \pi_n (C)/(\ker (\pi_0 (A) \to \pi_0 (B))$, where the last isomorphism is by surjectivity of the map $f$ on $\pi_0$. Then, $\pi_n L_{B/A} \cong \Tor^0_{\pi_0 (A)} (\pi_n (C), \pi_0 (B)) \cong \pi_n (C)/(\ker (\pi_0 (A) \to \pi_0 (B)))$ (here we crucially use that $n > 0$ since the map $C \otimes_A B \to L_{B/A}$ is $(n+1)$-connective). 

	By truncating, giving a map $L_{B/A} \to k[n]$ is equivalent to giving a map $\tau_{\leq n} L_{B/A} \simeq \pi_n L_{B/A} [n] \to k[n]$. Hence, the map $\phi$ determines a map $L_{B/A} \to k[n]$, since $\ker (\pi_0 (A) \to \pi_0 (B)) \subseteq \mathfrak{m}$. This determines a map $ \eta: B \to B \oplus k[n]$ in $\catC_{A/ /B}$. We can then form the following diagram
	\[
		\begin{tikzcd}
			B' \arrow{r}{f'} \arrow{d} & B \arrow{d}{\eta} \\
			B \arrow{r} \arrow{d} & B \oplus k[n] \arrow{d} \\
			k \arrow{r} & k \oplus k[n]
		\end{tikzcd}
	\]
	where each of the squares is a pullback diagram, so the whole rectangle is a pullback diagram. It follows that $B' \simeq B \times_{k \oplus k[n]} k$, so that $f'$ is elementary.

	Since the map $\eta: B \to B \oplus k[n]$ is a map of $A$-algebras, we have a commutative diagram
	\[
		\begin{tikzcd}
			A \arrow{r}{f} \arrow{d}{f} & B \arrow{d}{\eta} \\
			B \arrow{r} & B \oplus k[n]
		\end{tikzcd}
	\]	
	where the horizontal map $B \to B \oplus k[n]$ is the natural inclusion. By the universal property of the pullback, we have a factorisation $A \to B' \xrightarrow{f'} B$. It now remains to show that $f'$ is small but this is true by the inductive hypothesis.
\end{proof}

The following is an immediate consequence of (\cite{DAGX}, Proposition 1.1.15).

\begin{prop}
	A functor $X: \catC_{\art} \to \Space$ is a formal moduli problem if and only if it preserves pullbacks of morphisms that are surjective on $\pi_0$.
\end{prop}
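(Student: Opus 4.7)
My plan is to reduce the equivalence to the preceding proposition's characterisation of small morphisms as exactly those surjective on $\pi_0$, combined with the pasting lemma for pullback squares.

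For the forward direction, I would suppose $X$ is a formal moduli problem and let $g: A \to B$ be any morphism in $\catC_{\art}$ surjective on $\pi_0$. By the preceding proposition, $g$ is small, so it admits a factorisation $g = f_1 \circ f_2 \circ \cdots \circ f_r$ with each $f_i$ elementary. Given any second map $h: C \to B$, I would argue by induction on $r$ that $X$ preserves the fibre product $A \times_B C$. For the base case $r = 1$, the elementary morphism $f_1$ fits in a pullback square $A \simeq B \times_{k \oplus k[n]} k$ for some $n > 0$; by the pasting lemma for pullback squares, $A \times_B C$ is equivalent to $C \times_{k \oplus k[n]} k$, where the map $C \to k \oplus k[n]$ is the composite of $h$ with the structure map $B \to k \oplus k[n]$. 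This pullback is preserved by $X$ precisely by the definition of a formal moduli problem. The inductive step would follow similarly by factoring $g$ as $A \xrightarrow{f_r} A' \xrightarrow{g'} B$, applying pasting to rewrite $A \times_B C \simeq A \times_{A'} (A' \times_B C)$, and invoking the inductive hypothesis twice.

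For the reverse direction, I would also carry along the standing assumption $X(k) \simeq *$ from the definition of a formal moduli problem. Then for any map $A \to k \oplus k[n]$ with $n > 0$, the structure map $k \to k \oplus k[n]$ is an isomorphism on $\pi_0$, hence in particular surjective on $\pi_0$, so by hypothesis $X$ preserves the pullback $A \times_{k \oplus k[n]} k$, which gives the FMP pullback axiom.

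I do not expect a serious obstacle here; the substance of the argument is already packaged in the preceding proposition. The only care required is in setting up the pasting argument at the $\infty$-categorical level, but this is a standard manoeuvre.
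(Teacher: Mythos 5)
Your argument is correct, and it is in substance the proof of the result that the paper outsources: the paper's own ``proof'' is a one-line citation of (DAGX, Proposition 1.1.15), whereas you reconstruct the underlying argument from the preceding characterisation of small maps as the $\pi_0$-surjections, by induction on the number of elementary factors together with the pasting lemma. Two points should be made explicit. First, in the base case your sentence only establishes that $X$ takes the \emph{outer} rectangle (exhibiting $A \times_B C \simeq C \times_{k \oplus k[n]} k$) to a pullback; to conclude that $X$ takes the original square over $B$ to a pullback you must also use that $X$ takes the defining square of the elementary morphism $A \simeq B \times_{k \oplus k[n]} k \to B$ to a pullback (again by the formal moduli problem axiom, now applied to $B \to k \oplus k[n]$) and then invoke the cancellation direction of the pasting lemma in $\Space$: if the right square and the outer rectangle are pullbacks, so is the left square. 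You name the pasting lemma, so this is a gloss rather than a gap, but the cancellation step is precisely where the second application of the axiom enters and should be stated. In the inductive step one also needs $A' \times_B C \in \catC_{\art}$ so that the base case applies to $f_r$ pulled back along $A' \times_B C \to A'$; this follows from $\catC_{\art}$ being closed under finite limits. Second, your caveat about $X(k) \simeq *$ is a genuine one: as literally stated, the proposition's right-hand side does not force contractibility of $X(k)$ (a constant functor with non-contractible value preserves all pullbacks), so carrying that condition along, as you do, is the correct reading; the imprecision is in the paper's statement, not in your argument.
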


\begin{rem}
	\label{rem-genpullbacks}
	Let $X: \catC_{\art} \to \Space$ be a formal moduli problem, which we can right Kan extend to obtain an inverse-limit-preserving functor $X: \widehat{\catC} \to \Space$. 

	Suppose we have a pullback diagram in $\widehat{\catC}$ of the form
	\[
		\begin{tikzcd}
			R' \arrow{r} \arrow{d} & R \arrow{d}{\eta} \\
			k \arrow{r} & k \oplus V[n]
		\end{tikzcd}
	\]
	where $V \in \Pro(\Vect_k)$ and $n > 0$. Then, the map $\eta$ can be written as an inverse limit of maps of the form $R_i \to k \oplus V_i[n]$ where $V_i$ is finite-dimensional and $R_i \in \catC_{\art}$ (\cite{HTT}, Proposition 5.3.5.15). All the maps $R_i \to k \oplus V_i[n]$ are small because $\pi_0 (k \oplus V_i[n]) = k$. This means that we can write this pullback diagram as an inverse limit of pullback diagrams, all of which are preserved by $X$, so the functor $X$ preserves the above pullback.
\end{rem}

\begin{defi}
	Given a formal moduli problem $X$, we can form a spectrum object $\{X(k \oplus k[n])\}_{n > 0} \in \Sp$. Actually, this can be endowed with the structure of a $k$-module spectrum (\cite{DAGX}, Warning 1.2.9). We call this object, written as $T_X \in \Mod_k$, the \emph{tangent fibre} of $X$.

	We will call a formal moduli problem $X$ \emph{coconnective} if $T_X$ is itself coconnective.
\end{defi}

\begin{rem}
	The objects $k \oplus k[n]$ generate all of $\catC_{\art}$ under pullbacks that are preserved by formal moduli problems. It follows that a map $X \to Y$ of formal moduli problems is an equivalence if and only if the induced map $T_X \to T_Y$ is an equivalence.
\end{rem}

For the following proof, we follow the strategy of (\cite{dag}, Theorem 6.2.13) and adapt it to the ultrasolid setting.

\begin{theorem}
	\label{theorem-schlessinger}
	Let $X: \catC_{\art} \to \Space$ be a formal moduli problem. Then, it is corepresentable by a complete profinite ultrasolid object in $\widehat{\catC}$ if and only if $T_X$ is coconnective.
\end{theorem}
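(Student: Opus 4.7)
The plan is to handle the two directions separately, with the forward implication being a direct computation and the reverse implication requiring an inductive construction.

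For the forward direction, given $X \simeq \Map_\catC(R, -)$ with $R \in \widehat{\catC}$, I would use the cotangent complex adjunction to identify
\begin{align*}
X(k \oplus k[n]) \simeq \Map_{\Solid_k}(\cot(R), k[n])
\end{align*}
for $n > 0$, so that the spectrum $T_X$ is the $k$-linear dual $\RHom_{\Solid_k}(\cot(R), k)$. By \cref{prop-cotinverselimits} (and \cref{prop-animatedcomplete}(3) in the animated case) we have $\cot(R) \in \Solid_{k, \geq 0}^{\aperf}$, and the duality $\Solid_{k, \geq 0}^{\aperf} \simeq \Mod_{k, \leq 0}^{\op}$ from \cref{prop-complexesprohomology} then places $T_X$ in $\Mod_{k, \leq 0}$, exhibiting it as coconnective.

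For the reverse direction, suppose $T_X$ is coconnective. Via the equivalence $\widehat{\catC} \simeq \Pro(\catC_{\art})$ from \cref{theorem-proartinian} (or \cref{prop-animatedcomplete}(4)), it suffices to produce a pro-Artinian $R \in \Pro(\catC_{\art})$ together with an equivalence $\Map_{\widehat{\catC}}(R, -) \simeq X$ on $\catC_{\art}$. The plan is to build $R$ as a cofiltered inverse limit $R = \varprojlim R_n$ of Artinian objects, together with compatible natural transformations $\eta_n : \Map_\catC(R_n, -) \to X$, arranged inductively so that the induced map on tangent fibres $T_{\Map(R_n,-)} \to T_X$ becomes an equivalence after $(-n)$-truncation. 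Starting from $R_0 = k$, at each stage I would use the lowest nonzero homotopy group of $\fib(T_{\Map(R_n,-)} \to T_X)$ (which is supported in coconnective degrees, by hypothesis, and whose $\Solid_k$-dual lives in $\Solid_{k,\geq 0}^{\aperf}$) to classify a square-zero extension $R_{n+1} \to R_n$ in $\catC_{\art}$ in the sense of \cref{prop-nsmallextensions}, and then use the formal moduli pullback axiom applied to the defining square of this extension to lift $\eta_n$ to $\eta_{n+1}$. Since $\widehat{\catC}$ is closed under inverse limits (by \cref{rem-completeinverse} and its animated analogue) and the $\cot$ functor commutes with such limits (\cref{prop-cotinverselimits}), the resulting $R$ lies in $\widehat{\catC}$ and the map $\Map(R, -) \to X$ induced by the $\eta_n$ is an equivalence on tangent fibres, hence an equivalence of formal moduli problems by the remark following the definition of the tangent fibre.

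The main obstacle will be making the inductive step precise: at each stage one must identify the obstruction to extending $\eta_n$ as a class in the non-positive homotopy of $T_X$ (which is where coconnectivity kicks in), realise this class concretely as classifying a square-zero extension of $R_n$, and verify that vanishing of the next obstruction after extending to $R_{n+1}$ is automatic. Carrying this out cleanly requires combining the square-zero extension technology of \cref{prop-nsmallextensions} with the formal moduli pullback axiom (converting vanishing of obstruction classes into the existence of lifts in $X$), while tracking the behaviour of $\cot$ and $T_{(-)}$ through the cofiltered limit so that the limit object is genuinely complete profinite rather than merely pro-Artinian (these being identified by \cref{theorem-proartinian}/\cref{prop-animatedcomplete}(4), which is precisely the structural input that makes this argument possible).
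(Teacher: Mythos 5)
Your forward direction is exactly the paper's argument and is fine. The reverse direction has the right overall shape (an inductive tower of square-zero extensions whose limit corepresents $X$), but as written it contains two genuine gaps.

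First, the extensions you construct do not stay in $\catC_{\art}$. At each stage the controlling object is $V = \pi_{-n-2}\,T_{X/R_n}$, whose dual $V^\vee$ is a profinite vector space that is in general infinite-dimensional; the pullback $R_{n+1} = R_n \times_{k \oplus V^\vee[n+2]} k$ is therefore complete profinite but not Artinian. This is not a cosmetic point: it forces the whole construction to take place in $\widehat{\catC}$ rather than $\Pro(\catC_{\art})$-with-Artinian-stages, and it requires knowing that the right Kan extension of $X$ to $\widehat{\catC}$ still preserves pullbacks along maps $R \to k \oplus V[n]$ for \emph{profinite} $V$. The paper supplies exactly this in \cref{rem-genpullbacks}, by writing such a square as an inverse limit of Artinian small-extension squares; your sketch invokes only the pullback axiom for Artinian inputs, which does not directly apply.

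Second, and more seriously, a single square-zero extension per stage does not achieve what you claim. Killing the class $V[-n-2] \to T_{X/R_n}$ by passing to $R_{n+1}$ only guarantees that the \emph{transition map} $\pi_{-n-2}T_{R_n/X} \to \pi_{-n-2}T_{R_{n+1}/X}$ is zero; the new object $R_{n+1}$ can acquire fresh classes in that same degree coming from the extension itself, so $\pi_{-n-2}T_{X/R_{n+1}}$ need not vanish and "the next obstruction is automatic" is false. The paper resolves this with a double induction: for each outer index $i$ it builds an infinite inner tower $\{R^{i+1}_j\}_j$ of such pullbacks and sets $R^{i+1} = \varprojlim_j R^{i+1}_j$; since Artinian objects are cocompact in $\widehat{\catC}$ one has $F_{R^{i+1}} \simeq \colim_j F_{R^{i+1}_j}$, and $\pi_{-i-2}T_{R^{i+1}/X}$ is then a filtered colimit along zero maps, hence zero. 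Your plan needs this inner iteration (together with the cocompactness and the fact that $\cot$ commutes with the inverse limits involved, via \cref{prop-cotinverselimits}) to close the induction.
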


\begin{proof}
	Suppose $X = \Map_{\catC} (R, -)$. Then,
	\begin{align*}
		X(k \oplus k[n]) \simeq \Map _{\catC} (R, k \oplus k[n]) \simeq \Map_{\Solid_k} (\cot (R), k[n]) \simeq \Map_{\Solid_k} (k[-n], \cot^\vee (R))
	\end{align*}
	Here we have used that $\cot (R) \in \Solid_{k, \geq 0}^{\aperf}$ (\cref{prop-cotinverselimits} and \cref{prop-animatedcomplete}) and the contravariant equivalence $\Solid_{k, \geq 0}^{\aperf} \simeq \Mod_{k, \leq 0}^{\op}$ given by duality. Hence, $T_X \simeq \cot^\vee (R)$, which is clearly coconnective.

	Suppose now $T_X$ is coconnective. We fix the following notation. For $R \in \widehat{\catC}$, we will write $F_R$ for the formal moduli problem corepresented by $R$, and we will write $T_R := T_{F_R}$. Given a map $X \to Y$ of formal moduli problems, we will write $T_{Y/X}$ for the fibre of the map $T_X \to T_Y$.

	We will construct by induction an inverse system $\{R^i\}$ in $\widehat{\catC}$ with compatible maps $F_{R^i} \to X$ such that $R^{i+1} \to R^i$ is $(i+1)$-connective and $T_{X/R^i} \in \Mod_{k, \leq -i-2}$.

	Let $R^{-1} = k$, so $T_{X/R^{-1}} = T_X [-1]$ and the result is clear. Suppose that $\phi_i: F_{R^i} \to X$ has been constructed, so that $T_{X/R^i} \in \Mod_{k, \leq -i-2}$. We will now construct an inverse system $\{R_j^{i+1}\}$ of objects in $\widehat{\catC}$, where every map in the inverse system is $(i+1)$-connective, together with compatible maps $\phi_j^{i+1} : R_j^{i+1} \to X$. We begin by setting $R_0^{i+1} = R^i$.

	Now suppose $\phi_j^{i+1} : R_j^{i+1} \to X$ has been constructed. Let $V = \pi_{-i-2} T_{X/R^{i+1}_j}$, so we have a map $V[-i-2] \to T_{X/R^{i+1}_j}$. This is classified by a homotopy commutative diagram
	\[
		\begin{tikzcd}
			V[-i-2] \arrow{r}{\eta} \arrow{d} & T_{R^{i+1}_j} \arrow{d}{\phi_j^{i+1}} \\
			0 \arrow{r} & T_X
		\end{tikzcd}
	\]
	together with a homotopy between the two compositions. We also have
	\[
		\begin{tikzcd}
			\Map_{\Mod_k} (V[-i-2], T_{R^{i+1}_j}) = \Map_{\catC} (R^{i+1}_j, k \oplus V^\vee [i+2])
		\end{tikzcd}
	\]
	so that $\eta$ classifies a map $\eta: R^{i+1}_j \to k \oplus V^\vee[i+2]$. Hence, we can rewrite the above diagram as
	\[
		\begin{tikzcd}
			F_{k \oplus V^\vee [i+2]} \arrow{r}{\eta} \arrow{d} & F_{R^{i+1}_j} \arrow{d}{\phi_j^{i+1}} \\
			F_k \arrow{r} & X
		\end{tikzcd}
	\]
	with a $2$-cell exhibiting the commutativity of the diagram.

	Define $R^{i+1}_{j+1} = R_j^{i+1} \times_{k \oplus V^\vee[i+2]} k$, and this pullback is preserved by $X$ by \cref{rem-genpullbacks}. Remember that for $R \in \widehat{\catC}$, by the Yoneda embedding we have that $X(R) \simeq \Map (F_R, X)$. This means that the maps $F_k \to X$ and $F_{R_j^{i+1}} \to X$ factor through $F_{R^{i+1}_{j+1}}$. Let $\phi_{j+1}^{i+1} : F_{R^{i+1}_{j+1}} \to X$ be the induced map. Clearly, the map $R_{j+1}^{i+1} \to R_j^{i+1}$ is $(i+1)$-connective, and since $\hat{\mathcal{C}}$ is closed under limits, $R_{j+1}^{i+1} \in \hat{\mathcal{C}}$.

	We now claim that the induced map $\pi_{-i-2} T_{R_j^{i+1}/X} \to \pi_{-i-2} T_{R_{j+1}^{i+1}/X}$ is zero. It suffices to check that the map $V[-i-2] \to  T_{R^{i+1}_j/X} \to T_{R^{i+1}_{j+1}/X}$ vanishes. This map is induced by the commutative diagram
	\[
		\begin{tikzcd}
			F_{k \oplus V^\vee [i+2]} \arrow{r} \arrow{d}  & F_{R^{i+1}_j} \arrow{d} \arrow{ddr} & \\
			F_k \arrow{r} \arrow{drr} & F_{R^{i+1}_{j+1}} \arrow{dr} & \\
							  & & X
		\end{tikzcd}
	\]
	At the level of tangent complexes, we can omit $R_j^{i+1}$, identify maps from $F_{k \oplus V^\vee [i+2]}$ as maps from $V [-i-2]$ into the tangent complex, and write this as
	\[
		\begin{tikzcd}
			V[-i-2] \arrow{r} \arrow{d} & T_{R^{i+1}_{j+1}} \arrow{d} \\
			T_k \simeq 0 \arrow{ur} \arrow{r} & T_X
		\end{tikzcd}
	\]
	To check that the map $V[-i-2] \to T_{R^{i+1}_{j+1}/X} \simeq \fib (T_{R^{i+1}_{j+1}} \to T_X)$ is trivial we need to check that both the map $V[-i-2] \to T_X$ and the homotopy between the compositions $V[-i-2] \to T_X$ are trivial. These conditions are both satisfied because both the map and the homotopy factor through $T_k \simeq 0$.

	We have now constructed $R^{i+1}_{j+1}$. Set $R^{i+1} = \varprojlim R^{i+1}_j$. Given that Artinian objects are cocompact in $\hat{\mathcal{C}}$ we see that $F_{R^{i+1}} = \colim F_{R^{i+1}_j}$. By the universal property of colimits, there is a factorization $F_{R^i} \to F_{R^{i+1}} \to X$. The map $R^{i+1} \to R^i$ is clearly $(i+1)$-connective by construction. Given that this is also true of cotangent complexes, we see that the fibre of the map $T_{R^i/X} \to T_{R^{i+1}/X}$ is in $\Mod_{k, \leq -i-2}$. Additionally, $\pi_{-i-2} T_{R^{i+1}/X} = \colim \pi_{-i-2} T_{R^{i+1}_j/X}$. All of the maps in this filtered system vanish, so this colimit is zero. Hence, $\pi_{-i-2} T_{R^{i+1}/X} = 0$.

	We can now set $R = \varprojlim R^i$, so $R \in \hat{\mathcal{C}}$. This clearly induces a map $F_R \to X$ that is an equivalence on tangent complexes so $X \simeq F_R$.
\end{proof}

\begin{rem}
	The covariant Yoneda $R \mapsto \Map (R, -)$ gives a functor $\widehat{\catC} \to \Moduli_k^{\op}$. This coincides with the obvious fully faithful embedding $\widehat{\catC} \simeq \Pro(\catC_{\art}) \to \Moduli_k^{\op}$. This implies that $\widehat{\catC}^{\op}$ embeds fully faithfully into formal moduli problems.
\end{rem}

In \cite{partition} they classify formal moduli problems in terms of \emph{partition Lie algebras}, algebras over a monad $\Lie^\pi_k$ on $\Mod_k$. For $V \in \Mod_{k, \leq 0}^{\ft}$ (that is, $\pi_i (V)$ is finite-dimensional for every $i$), we have that $\Lie^\pi_k (V) \simeq \cot^\vee (k \oplus V^\vee)$. One can extend this to a sifted-colimit-preserving monad on all of $\Mod_k$ through Goodwillie calculus (\cite{partition}, Theorem 4.20 (2)).

Using the equivalence between partition Lie algebras and formal moduli problems, we can actually understand the objects in $\widehat{\catC}$ in terms of partition Lie algebras.

\begin{theorem}
	\label{theorem-partitionmonadicadj}
	There is a monadic adjunction
	\begin{align*}
		k \oplus (-^\vee) : \Mod_{k, \leq 0} \leftrightarrows \widehat{\catC}^{\op} : \cot^\vee
	\end{align*}
	The induced monad coincides with the partition Lie algebra monad defined in \cite{partition}.
\end{theorem}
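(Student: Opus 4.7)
The plan is to first set up the adjunction, then verify monadicity through the Barr–Beck–Lurie theorem, and finally identify the induced monad with the partition Lie algebra monad via the results recalled from \cite{partition}.

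For the adjunction, the contravariant duality $\Mod_{k,\leq 0}\simeq (\Solid_{k,\geq 0}^{\aperf})^{\op}$ from \cref{prop-complexesprohomology} places $V^\vee$ in $\Solid_{k,\geq 0}^{\aperf}$, so the trivial square-zero extension $k\oplus V^\vee$ lies in $\widehat{\catC}$: its $\pi_0$ is $k$, which is trivially complete. On the other side, $\cot(R)=L_{R/k}\otimes_R k\in\Solid_{k,\geq 0}^{\aperf}$ by \cref{prop-cotinverselimits} (and its animated analog \cref{prop-animatedcomplete}(3)), so $\cot^\vee(R)\in\Mod_{k,\leq 0}$. The adjunction is then read off from
\begin{align*}
\Map_{\widehat{\catC}^{\op}}(k\oplus V^\vee,R)
&\simeq \Map_{\widehat{\catC}}(R,k\oplus V^\vee)\\
&\simeq \Map_{\Solid_k}(\cot(R),V^\vee)\\
&\simeq \Map_{\Mod_{k,\leq 0}}(V,\cot^\vee(R)),
\end{align*}
where the middle equivalence is the defining universal property of $\cot$ and the last uses duality.

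For monadicity, I would apply Barr–Beck–Lurie (\cite{HA}, Theorem 4.7.3.5) to $\cot^\vee$. Conservativity follows cleanly from the Lurie–Schlessinger criterion \cref{theorem-schlessinger}: the covariant Yoneda $R\mapsto F_R$ embeds $\widehat{\catC}^{\op}$ fully faithfully into $\Moduli_k$, and under this embedding $\cot^\vee(R)$ identifies with the tangent fibre $T_{F_R}$. Since the objects $k\oplus k[n]$ generate $\catC_{\art}$ under formal-moduli-preserved pullbacks, tangent fibres detect equivalences of formal moduli problems, hence $\cot^\vee$ is conservative. For the remaining hypothesis, \cref{prop-cotinverselimits} (and \cref{prop-animatedcomplete}(3)) shows that $\cot$ commutes with inverse limits on $\widehat{\catC}$, so $\cot^\vee$ sends colimits in $\widehat{\catC}^{\op}$ (in particular $\cot^\vee$-split geometric realisations) to colimits in $\Mod_{k,\leq 0}$; BBL then delivers monadicity.

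Finally, I would identify the induced monad $V\mapsto\cot^\vee(k\oplus V^\vee)$ with $\Lie^\pi_k$. By the definition of the partition Lie algebra monad recalled just before the theorem, the two functors agree on $\Mod_{k,\leq 0}^{\ft}$. The composite $V\mapsto\cot^\vee(k\oplus V^\vee)$ preserves sifted colimits: reading through the composition, $(-)^\vee$ is a contravariant equivalence, $k\oplus(-)$ preserves limits in $\Solid_{k,\geq 0}^{\aperf}\hookrightarrow\widehat{\catC}$, $\cot$ preserves limits by \cref{prop-cotinverselimits}, and dualising once more converts back to colimits. Since $\Lie^\pi_k$ is by construction the sifted-colimit-preserving extension of its restriction to finite-type objects, the two monads agree on underlying endofunctors of $\Mod_{k,\leq 0}$. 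The main obstacle is matching the monad structures (unit and multiplication), not merely the underlying functors; the cleanest way to secure this is to note that the equivalence with coconnective formal moduli problems from \cref{theorem-schlessinger} identifies our adjunction with the restriction to coconnective tangent fibre of the monadic adjunction $\Mod_k\leftrightarrows\Moduli_k^{\op}$ of \cite{partition}, and the partition Lie algebra structure on both sides is then the same by construction.
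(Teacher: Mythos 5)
Your adjunction setup is fine (modulo one slip: for $V\in\Mod_{k,\leq 0}$ with $\pi_0 V\neq 0$, the ring $\pi_0(k\oplus V^\vee)$ is $k\oplus(\pi_0 V)^\vee$, not $k$; it is still complete because the augmentation ideal is square-zero). The genuine gap is in your Barr--Beck--Lurie step. You claim that because $\cot$ commutes with inverse limits (\cref{prop-cotinverselimits}), the functor $\cot^\vee$ preserves $\cot^\vee$-split geometric realisations in $\widehat{\catC}^{\op}$. But a geometric realisation in $\widehat{\catC}^{\op}$ is a \emph{totalisation} of a cosimplicial object in $\widehat{\catC}$, whereas ``inverse limits'' in \cref{prop-cotinverselimits} (and throughout the paper, e.g.\ \cref{prop-inversegeom}, \cref{lemma-completedfreelimits}) means cofiltered limits; nothing in the paper shows that $\cot$ commutes with totalisations, and the bar-resolution description $\cot(R)\simeq|\Bar_\bullet(\id,\widehat{\Sym}^*_{\nu},R)|$ gives no such commutation since geometric realisations do not commute with totalisations. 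The same confusion reappears when you argue that $V\mapsto\cot^\vee(k\oplus V^\vee)$ preserves sifted colimits: dualising turns geometric realisations in $\Mod_{k,\leq 0}$ into totalisations in $\Solid_{k,\geq 0}^{\aperf}$, which again are not covered by the cited lemmas.

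The paper sidesteps this entirely: it identifies $\widehat{\catC}^{\op}$ with the full subcategory $\Moduli_{k,\leq 0}\subset\Moduli_k$ of formal moduli problems with coconnective tangent fibre (via $\widehat{\catC}\simeq\Pro(\catC_{\art})$ and \cref{theorem-schlessinger}), under which $\cot^\vee$ becomes the tangent fibre functor; monadicity \emph{and} the identification of the monad with $\Lie^\pi_k$ are then imported simultaneously from the monadicity of the tangent fibre functor on $\Moduli_k$ (\cite{partition}, Theorem 1.12), using that $\Lie^\pi_k$ preserves coconnectivity so the adjunction restricts. Your final paragraph invokes exactly this identification, but only to match the monad structures; once you have it, it already yields monadicity, making your BBL detour both redundant and, as written, incomplete. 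The fix is to restructure: lead with the equivalence $\widehat{\catC}^{\op}\simeq\Moduli_{k,\leq 0}$ and derive everything from it.
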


\begin{proof}
	There is a fully faithful contravariant embedding from $\widehat{\catC}$ into formal moduli problems. This is because $\widehat{\catC} \simeq \Pro(\catC_{\art})$, which can be described as the opposite category of finite-limit-preserving functors $\catC_{\art} \to \Space$ (\cite{HTT}, Corollary 5.3.5.4). Since formal moduli-problems are functors $\catC_{\art} \to \Space$ preserving some finite limits, we get the fully faithful embedding $\widehat{\catC}^{\op} \xhookrightarrow{} \Moduli_k$.

	By \cref{theorem-schlessinger}, the image is precisely those formal moduli problems with coconnective tangent fibre. Write $\Moduli_{k, \leq 0}$ for the full subcategory of formal moduli problems with coconnective tangent fibre. By taking tangent fibres, we have a functor
	\begin{align*}
		\widehat{\catC}^\op \simeq \Moduli_{k, \leq 0} \to \Mod_{k, \leq 0}
	\end{align*}
	which corresponds to the functor $\cot^\vee$.

	By (\cite{partition}, Theorem 1.12), the tangent fibre functor is precisely the forgetful monadic functor from coconnective partition Lie algebras to chain complexes. Since the partition Lie algebra monad preserves coconnective complexes (\cite{partition}, Proposition 5.35 and Proposition 5.49), the functor $\Moduli_{k, \leq 0} \to \Mod_{k, \leq 0}$ is monadic, so we are done.
\end{proof}

\begin{rem}
	Note that one of the features of this construction is obtaining the partition Lie algebra monad on $\Mod_{k, \leq 0}$ as part of an adjunction (c.f. \cite{partition}, Theorem 4.20 (2)).
\end{rem}

\begin{rem}
	Remember that an augmented $\EE_\infty$ $k$-algebra or animated $k$-algebra $R$ is \emph{complete local Noetherian} if $\pi_0 (R)$ is complete, local and Noetherian and $\pi_i (R)$ is a finitely generated $\pi_0 (R)$-module for every $i$.

	Combining \cref{theorem-partitionmonadicadj} and (\cite{partition}, Proposition 5.31 and Corollary 5.46), we get a fully faithful embedding from complete local Noetherian $\EE_\infty$ or animated $k$-algebras into $\Moduli_{k, \leq 0} \simeq \widehat{\catC}^{\op}$. We will write $\widehat{\catC}^{\cN}$ for the essential image of this embedding. The category $\widehat{\catC}^{\cN}$ is precisely the $R \in \widehat{\catC}$ such that $\cot (R) \in \Mod_{k, \geq 0}^{\ft}$. That is, $\pi_i (\cot (R))$ is a finite-dimensional vector space for all $i$.
\end{rem}

\appendix

\section{Hypersheaves on the site of profinite sets}

In \cref{subsection-Einftyfree} we described the homology of free $\EE_\infty$ ultrasolid $k$-algebras via the homology of condensed anima. Here we recall some well-known results about condensed anima whose proof we recollect for completeness.

\begin{defi}
	The category of \emph{profinite sets} $\ProFin$ is the category of topological spaces that can be written as an inverse limit of finite discrete spaces. Equivalently, these are compact, Hausdorff spaces that are totally disconnected (\cite[\href{https://stacks.math.columbia.edu/tag/08ZY}{Tag 08ZY}]{stacks-project}). They form a site where coverings are finite families of jointly surjective maps.

	For a cardinal $\kappa$, we write $\ProFin_\kappa$ for the site of $\kappa$-small profinite sets. Given an $\infty$-category $\catC$ with small limits, we will write $\Shv^{\hyper} (\ProFin_\kappa, \catC)$ for the $\infty$-category of hypersheaves.

	A space is \emph{extremally disconnected} if it is compact, Hausdorff, and the closure of every open is open. We write $\ED$ for the category of extremally disconnected spaces, and $\ED_\kappa$ for the subcategory of $\kappa$-small extremally disconnected spaces.
\end{defi}

\begin{rem}
	By (\cite{Gleason}, Theorem 1.2), extremally disconnected spaces are projective in compact Hausdorff spaces. That means that if $S$ is extremally disconnected, for any solid diagram of compact Hausdorff spaces with $X \twoheadrightarrow Y$ surjective,
	\[
		\begin{tikzcd}
			& X  \arrow[d, twoheadrightarrow] \\
			S \arrow[ur, dashed] \arrow{r} & Y
		\end{tikzcd}
	\]
	there is a dotted arrow making the diagram commute.
\end{rem}

\begin{prop}
	\label{prop-condhyper}
	Let $\kappa$ be an uncountable strong limit cardinal and let $\catC$ be an $\infty$-category with all small limits and colimits, where finite products commute with filtered colimits. Then, restriction gives an equivalence
	\begin{align*}
		\Shv^{\hyper} (\ProFin_\kappa, \catC) \xrightarrow{\simeq} \Fun_{\sqcup} (\ED_\kappa^{\op}, \catC)
	\end{align*}
	where $\Fun_{\sqcup} (\ED_\kappa^{\op}, \catC) \subset \Fun (\ED_\kappa^{\op}, \catC)$ is the full subcategory of finite-product-preserving functors. The inverse is given by the sheafification of the left Kan extension.
\end{prop}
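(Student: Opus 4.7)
The strategy is classical in the style of Clausen--Scholze's condensed mathematics: first one collapses the hypersheaf condition on $\ED_\kappa$ to the finite-product condition using Gleason projectivity, and then one transfers along hypercovers between $\ED_\kappa$ and $\ProFin_\kappa$.

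\emph{Step 1: Restriction is well-defined.} For a hypersheaf $F$ on $\ProFin_\kappa$ and any finite family $\{S_i\}$ of extremally disconnected spaces, the maps $S_i \hookrightarrow \sqcup_i S_i$ form a cover, so $F(\sqcup_i S_i) \simeq \prod_i F(S_i)$. Thus restriction takes values in $\Fun_{\sqcup}(\ED_\kappa^{\op}, \catC)$.

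\emph{Step 2: Hyperdescent on $\ED_\kappa$ collapses to finite products.} By Gleason, every surjection $T \twoheadrightarrow S$ of extremally disconnected spaces admits a section. Hence every cover in $\ED_\kappa$ is refined by a single split surjection, whose Cech nerve is split augmented; for a finite-product-preserving $F$ the totalization of this Cech nerve is then computed trivially and equals $F(S)$. For a general hypercover $U^\bullet \to S$ in $\ED_\kappa$ one inductively builds compatible sections using the projectivity of each matching object (which is again extremally disconnected), obtaining a simplicial contraction of $U^\bullet \to S$; applying a finite-product-preserving $F$ yields $F(S) \simeq \lim_\Delta F(U^\bullet)$. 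This shows $\Shv^{\hyper}(\ED_\kappa, \catC) \simeq \Fun_{\sqcup}(\ED_\kappa^{\op}, \catC)$. The hypothesis that finite products commute with filtered colimits in $\catC$ is used here to ensure that the relevant filtered-colimit arguments in the sheafification procedure preserve the finite-product condition.

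\emph{Step 3: Transfer between $\ED_\kappa$ and $\ProFin_\kappa$.} Every profinite set $S \in \ProFin_\kappa$ admits a Gleason cover $T_0 \twoheadrightarrow S$ with $T_0 \in \ED_\kappa$, and iterating this construction on matching objects gives a hypercover $T^\bullet \to S$ by extremally disconnected spaces (one must check cardinality bounds, but $\kappa$ being an uncountable strong limit suffices). For a hypersheaf $F$ on $\ProFin_\kappa$ the hyperdescent condition forces $F(S) \simeq \lim_\Delta F(T^\bullet)$, so $F$ is determined by its restriction. Conversely, given $G \in \Fun_{\sqcup}(\ED_\kappa^{\op}, \catC)$, define $\tilde{G}(S) := \lim_\Delta G(T^\bullet)$; well-definedness up to canonical equivalence follows by refining any two hypercovers to a common one and applying Step 2 to the induced hypercover-of-hypercovers. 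The resulting $\tilde G$ is a hypersheaf on $\ProFin_\kappa$, and restriction recovers $G$ because identity hypercovers suffice for $S \in \ED_\kappa$.

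\emph{Step 4: Identification with sheafified left Kan extension.} For the left Kan extension $\mathrm{Lan}\, G \colon \ProFin_\kappa^{\op} \to \catC$, the comma category indexing its value at $S \in \ED_\kappa$ has $(S, \id)$ as a terminal object, so $(\mathrm{Lan}\, G)(S) = G(S)$. Then hypersheafifying does not disturb the values on $\ED_\kappa$ (by Step 2, these values already form a hypersheaf on $\ED_\kappa$) and, by universality of hypersheafification, agrees with $\tilde G$ on all of $\ProFin_\kappa$.

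\emph{Main obstacle.} The crux is Step 2: verifying that finite-product-preserving presheaves on $\ED_\kappa$ satisfy \emph{hyper}descent, not merely Cech descent. One must build a coherent simplicial splitting of $U^\bullet \to S$, filling in sections at each level using Gleason projectivity of the matching objects, and then show that applying $F$ sends this simplicial deformation retract to an equivalence. This uses the finite-product condition repeatedly---and this is precisely where the hypothesis on $\catC$ (finite products commuting with filtered colimits) enters, since the filtered colimits arising in the sheafification/left Kan extension procedure must interact correctly with the product structure controlling descent.
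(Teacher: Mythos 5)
Your proposal is correct and follows essentially the same route as the paper: split hypercovers of extremally disconnected spaces via Gleason projectivity, hypercover $\kappa$-small profinite sets by $\kappa$-small extremally disconnected ones (using that $\kappa$ is an uncountable strong limit to control cardinalities of Stone--\v{C}ech compactifications), and invert restriction by the sheafified left Kan extension. The one point you leave vaguer than the paper is exactly where the hypothesis that finite products commute with filtered colimits enters: it is used to show that the left Kan extension to $\ProFin_\kappa$ of a finite-product-preserving functor on $\ED_\kappa$ still sends finite disjoint unions to products, via a cofinality argument factoring maps $T \sqcup T' \to S$ through the codiagonal $S \sqcup S \to S$ and then commuting the resulting filtered colimit past the binary product.
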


\begin{proof}
	Let $S \in \ProFin_\kappa$. Then, by taking disjoint unions, any hypercover of $S$ can be refined into a hypercover by single objects $T_\bullet \to S$.

	Let $X: \ED_\kappa^{\op} \to \catC$ be a finite-product-preserving functor, and let $\tilde{X}$ be its left Kan extension to all of $\ProFin_\kappa^{\op}$. We will now show that $\tilde{X}$ still preserves finite products. Let $T, T' \in \ProFin_\kappa$. Then, we have that
	\begin{align*}
		\tilde{X} (T \sqcup T') = \colim _{T \sqcup T' \to S} X(S)
	\end{align*}
	where the colimit runs over all $\kappa$-small extremally disconnected $S$ with a map $T \sqcup T' \to S$. Factoring each of these maps to the codiagonal $S \sqcup S \to S$, we see that (the opposite of) this category is cofinal to the category of pairs of maps $T \to S$ and $T' \to S$ for $S \in \ED_\kappa$. Then,
	\begin{align*}
		\tilde{X} (T \sqcup T') = \colim _{T \to S, T' \to S} X(S \sqcup S) = \colim _{T \to S , T' \to S} X(S) \times X(S) = \tilde{X} (T) \times \tilde{X}(T')
	\end{align*}
	where we used that finite products commute with filtered colimits in $\catC$.

	Let $S \in \ED_\kappa$. Any hypercover $T_\bullet \to S$ can be extended to a split diagram since $S$ is extremally disconnected and hence projective in compact Hausdorff spaces. Write $\tilde{X}^\vee$ for the sheafification. Then,
	\begin{align*}
		\tilde{X}^\vee (S) = \colim_{T_\bullet \to S} \Tot(\tilde{X} (T_\bullet)) \simeq \tilde{X} (S) \simeq X(S)
	\end{align*}
	where the colimit runs over all hypercovers of $S$. Here we used that $\tilde{X}$ will necessarily preserve split diagrams.

	This shows that the composition
	\begin{align*}
		\Fun_{\sqcup} (\ED_\kappa^{\op}, \catC) \to \Shv^{\hyper} (\ProFin_\kappa, \catC) \to \Fun_{\sqcup} (\ED_\kappa^{\op}, \catC)
	\end{align*}
	is an equivalence.

	Given a $\kappa$-small profinite set $S$, we can choose a hypercover $T_\bullet \to S$ by $\kappa$-small extremally disconnected spaces. This is clearly true if we don't restrict cardinalities by taking Stone-\v{C}ech compactifications. That is, for every profinite set $S$, we can cover $S$ by $\beta S^\delta$, where $S^\delta$ refers to taking the underlying discrete set. Now it is critical that we chose an uncountable strong limit cardinal, because if $S$ was $\kappa$-small, so will $\beta S^\delta$.

	Hence we see that any hypercomplete sheaf on $\ProFin_\kappa$ is determined by its values on $\ED_\kappa$. We just showed that left Kan extension doesn't change the values on extremally disconnected spaces so the composition
	\begin{align*}
		\Shv^{\hyper} (\ProFin_\kappa, \catC) \to \Fun_{\sqcup} (\ED_\kappa^{\op} , \catC) \to \Shv^{\hyper} (\ProFin_\kappa, \catC)	
	\end{align*}
	is also an equivalence.
\end{proof}

\begin{defi}
	\label{rem-cond}
	For an $\infty$-category $\catC$ and an uncountable strong limit cardinal $\kappa$, the $\infty$-category of \emph{$\kappa$-condensed objects of $\catC$}, $\Cond_\kappa(\catC)$, is the $\infty$-category of hypercomplete sheaves $\ProFin_\kappa^{\op} \to \catC$. Then, the $\infty$-category of \emph{condensed objects of $\catC$} is $\Cond(\catC) := \colim_\kappa \Cond_\kappa (\catC)$, where the colimit is taken over all uncountable strong limit cardinals.	

	By \cref{prop-condhyper}, $\Cond_\kappa(\catC)$ is the $\infty$-category of finite-product-preserving functors $\ED_\kappa^{\op} \to \catC$. We saw above that if $\ED_\kappa^{\op} \to \catC$ preserves finite products, so does its left Kan extension. Hence, $\Cond(\catC)$ can be described as the $\infty$-category of finite-product-preserving functor $\ED^{\op} \to \catC$ that are left Kan extended from $\ED_{\kappa}^{\op}$ for some $\kappa$.
\end{defi}

\begin{example}
	Let $R$ be a commutative ring. Then, $\Cond(\Mod_R^\heartsuit)$ is the category of condensed $R$-modules. We can form its derived category $D(\Cond(\Mod_R^\heartsuit))$ (as an $\infty$-category). By (\cite{condensed}, Warning 2.8), we see that $D(\Cond(\Mod_R^\heartsuit)) \simeq \Cond(\Mod_R)$.

	Under this correspondence, given a chain complex $V \in D(\Cond(\Mod_R^\heartsuit))$, interpreted as a hypercomplete $\Mod_R$-valued sheaf on $\ProFin$, and $S \in \ProFin$, we can associate $V(S)$ with $\RHom (R[S], V)$, where $R[S]$ is the free condensed $R$-module on $S$ (\cite{condensed}, Lecture II).
\end{example}

\begin{lemma}
	\label{lemma-hypercoverfree}
	Let $R$ be a ring, let $S$ be profinite and let $T_\bullet \to S$ be a hypercover by extremally disconnected spaces. Then, the complex of condensed $R$-modules
	\begin{align*}
		\dots \to R[T_1] \to R[T_0] \to R[S] \to 0
	\end{align*}
	is exact.
\end{lemma}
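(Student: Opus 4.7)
The plan is to reduce exactness of this complex to a statement about simplicial sets by evaluating on extremally disconnected spaces. Since condensed $R$-modules form an abelian category in which exactness can be tested by evaluating on a generating family of extremally disconnected spaces, I would fix an arbitrary $U \in \ED$ and show that the chain complex
\begin{align*}
	\dots \to R[T_1](U) \to R[T_0](U) \to R[S](U) \to 0
\end{align*}
is exact. For $U$ extremally disconnected and $X$ compact Hausdorff, the free condensed $R$-module satisfies $R[X](U) = R[C(U,X)]$, so this reduces to showing that the free $R$-module complex associated to the augmented simplicial set $C(U, T_\bullet) \to C(U, S)$ is exact.

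Next I would argue that $C(U, T_\bullet) \to C(U, S)$ is a hypercover of the constant simplicial set $C(U,S)$ in the trivial topology on sets (where covers are surjections). By definition, $T_\bullet \to S$ being a hypercover in $\ProFin$ means that the maps $T_n \to M_n T_\bullet$ to the matching objects are jointly surjective for every $n$. Each matching object $M_n T_\bullet$ is a finite limit of profinite sets and is therefore profinite, and the functor $C(U,-)$ commutes with all small limits, so it sends matching objects to matching objects. Since $U$ is projective in compact Hausdorff spaces (by Gleason), $C(U,-)$ preserves surjections between compact Hausdorff spaces. Thus each $C(U,T_n) \to M_n C(U, T_\bullet)$ is surjective, which is precisely the hypercover condition in the trivial topology.

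Finally, I would invoke the classical fact that a hypercover in the trivial topology on sets is a trivial Kan fibration onto the augmentation, so the geometric realisation $|C(U, T_\bullet)|$ is weakly equivalent to the constant simplicial set $C(U,S)$. Applying the free $R$-module functor produces a simplicial $R$-module whose normalised chain complex has homology $R[C(U,S)]$ concentrated in degree zero, which is exactly the exactness assertion after evaluation at $U$.

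The main obstacle I expect is the verification that $C(U,-)$ sends the hypercover in $\ProFin$ to a hypercover of sets, specifically the interaction with matching objects. The limit-preservation of $C(U,-)$ is formal, but one must be careful that the matching object formed in $\ProFin$ coincides with the matching object formed in sets after applying $C(U,-)$, and that the resulting surjectivity uses Gleason's theorem on projectivity of extremally disconnected spaces in compact Hausdorff spaces rather than in arbitrary topological spaces. Everything else is routine once this is in place.
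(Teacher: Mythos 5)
The paper offers no argument for this lemma at all---it simply cites the Stacks Project---so your proof is necessarily a different route: it is the standard direct verification via projectivity of extremally disconnected sets, and its overall architecture (test exactness on $U \in \ED$, transport the hypercover condition through $C(U,-)$, conclude via the trivial-Kan-fibration characterisation) is the right one and essentially complete.

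There is one genuine error in the reduction step, though it is repairable. The identity $R[X](U) = R[C(U,X)]$ is false for $U$ extremally disconnected: the presheaf $U \mapsto R[C(U,X)]$ does not take finite disjoint unions to products, so it is not a sheaf and must be sheafified. Already for $X = \ast$ it returns $R$ rather than the locally constant functions $U \to R$. The paper itself records the correct formula
\begin{align*}
	R[X](U) = \colim_{\sqcup U_i = U} \prod_i R[C(U_i,X)],
\end{align*}
the colimit running over finite clopen decompositions of $U$. This does not sink the argument: the colimit is filtered, and filtered colimits and finite products of $R$-modules are exact, so exactness of the evaluated complex at $U$ reduces to exactness of $\dots \to R[C(U',T_1)] \to R[C(U',T_0)] \to R[C(U',S)] \to 0$ for each clopen $U' \subseteq U$; such $U'$ are again extremally disconnected, and the remainder of your proof establishes exactly this statement. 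Everything after that point is correct as written: the matching objects are finite limits of profinite sets, hence profinite and preserved by the limit-preserving functor $C(U',-)$; Gleason's theorem gives surjectivity of $C(U',T_n) \to C(U',M_n)$ because the relevant surjection is one of compact Hausdorff spaces; and an augmented simplicial set whose matching maps are surjective is a trivial Kan fibration onto its constant augmentation, so applying $R[-]$ yields a complex with homology $R[C(U',S)]$ concentrated in degree zero. Insert the sheafification step and the proof is complete.
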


\begin{proof}
	This is \cite[\href{https://stacks.math.columbia.edu/tag/0517}{Tag 01GF}]{stacks-project} 
\end{proof}

\begin{example}
	\label{example-condani}
	The $\infty$-category of \emph{condensed anima} consists of finite-product-preserving functors $\ED^{\op} \to \Space$ that are left Kan extended from $\ED_\kappa^{\op}$ for some $\kappa$ (\cite{catrinmair}). By taking filtered colimits in (\cite{HTT}, Theorem 5.5.8.22), we see that if $\catC$ admits sifted colimits, then restriction gives an equivalence
	\begin{align*}
		\Fun_\Sigma (\Cond(\Space), \catC) \xrightarrow{\simeq} \Fun (\ED, \catC)
	\end{align*}
	where $\Fun_\Sigma (\Cond(\Space), \catC)$ is the full subcategory of sifted-colimit-preserving functors.

	Condensed sets are finite-product-preserving functors $\ED^{\op} \to \Set$ that are left Kan extended from $\ED_\kappa^{\op}$ for some $\kappa$, so we see that there is a fully faithful embedding $\Cond(\Set) \to \Cond(\Space)$. In particular, by (\cite{condensed}, Proposition 1.7), there is a fully faithful embedding from the $1$-category of compactly generated topological spaces into condensed anima.

	Also, there is a fully faithful embedding from finite sets into extremally disconnected spaces, and the latter are projective in $\Cond(\Space)$, so by (\cite{HTT}, Theorem 5.5.8.22), there is a fully faithful embedding $\Space \to \Cond(\Space)$ (here we use that $\Space$ is the sifted cocompletion of finite sets).

	Hence, condensed anima form a category that fits both topological phenomena from the $1$-category of compactly generated spaces and homotopical phenomena from the $\infty$-category of spaces.
\end{example}

\newpage
\bibliographystyle{alpha}
\bibliography{references}

\end{document}